\theoremstyle{plain}
  \newtheorem{thm}{Theorem}[section]
  \newtheorem{prop}[thm]{Proposition}
  \newtheorem{lem}[thm]{Lemma}
  \newtheorem{cor}[thm]{Corollary}
\theoremstyle{definition}
  \newtheorem{defn}[thm]{Definition}
  \newtheorem{ex}[thm]{Example}
  \newtheorem{qstn}[thm]{Question}
\theoremstyle{remark}
  \newtheorem{rmk}[thm]{Remark}
\let\c@thm\c@algocf
\numberwithin{equation}{section}
\def\reals{{\mathbb R}}
\begin{document}

\title[Generalized recursive atom ordering]{Generalized recursive atom ordering and equivalence  to CL-shellability}

\author{Patricia Hersh}
\address{Department of Mathematics, University of Oregon, Eugene, OR, 97403}
\email{plhersh@uoregon.edu}

\author{Grace Stadnyk}
\address{Department of Mathematics, Furman University, Greenville, SC 29613} 
\email{grace.stadnyk@furman.edu}

\thanks{P. Hersh  is supported by NSF grant DMS-1953931.}

\begin{abstract}
Bj\"orner and Wachs introduced 
 CL-shellability as a technique for studying the topological structure of order complexes of partially ordered sets.  They also introduced the 
 notion of recursive atom ordering, and   
 they proved  that a finite bounded poset   is CL-shellable if and only if it admits a recursive atom ordering. 

In this paper, a generalization  of  the notion of recursive atom ordering is introduced.  A finite bounded poset is proven to  admit  
such a generalized recursive atom ordering if and only if it admits  a traditional   recursive atom ordering.
This is also proven equivalent to admitting  a CC-shelling  
(a type of shelling   introduced by Kozlov) with a  further property  called 
 self-consistency.   Thus, CL-shellability  is proven  equivalent to self-consistent CC-shellability.   
 As an application,  
 the uncrossing posets, namely  the face posets for  stratified spaces of planar electrical networks, are proven to be dual CL-shellable.   
\end{abstract}

\maketitle 

\section{Introduction} 
%\commentph{Perhaps we should for now just keep ``generalized recursive atom ordering'' but say in the response to the referee that we are somewhat hesitant to change the name given that the paper is already at the arxiv with this in the title and that we have already given talks with this name, but that if the referee feels strongly we would be willing to change to ``relaxed recursive atom ordering'' -- this gives them another chance to reconsider but also allows them to insist on this change if they really feel strongly (or for the editor to intervene).  This would also allow us to send back to the journal without that further piece of work to do right now.  Just an idea...}
This paper introduces a new  tool % \sout{technique} 
for studying the topological structure of 
order complexes
of finite  partially ordered sets (posets). % \sout{ namely we introduce 
%generalized recursive atom orderings. } \gs{
This tool, called  generalized recursive atom ordering, %  \sout{The objects that these}
%\commentph{ generalizes }
 %\commentph{ encompasses }
  is a relaxation of  % \commentph{If we change ``generalized'' to ``relaxed'' then we might want to change ``is a relaxation of...'' to ``encompasses''}
%That's fine if we stick with generalized recursive atom ordering but too repetitive here if we change the name to relaxed recursive atom ordering}  
%\sout{,} 
the  %\gs{
fundamental and widely used  recursive atom ordering technique  introduced by % \sout{of}
 Bj\"orner and Wachs % \sout{(introduced in \cite{bw}) 
 %introduced}
  in \cite{bw}. 
  % \sout{are one of the most fundamental and widely used methods available for studying the topological structure of order complexes (also known as nerves) of posets. }
Any recursive atom ordering  (a notion that is reviewed  in Section \ref{bg-section}) of  %\sout{any} 
a finite bounded poset  gives rise to a lexicographic shelling for the poset, 
thereby % \commentplh{
yielding the result %}  \commentplh{implying} % \sout{proving} 
that  the order complex of the  poset  % \sout{(a   simplicial complex whose $i$-dimensional  faces are the chains of  $i+1$ comparable poset elements) }
 is either homotopy equivalent to a wedge of spheres or contractible. 
 % \commentph{Moreover, this shelling gives a way to count how many $i$-dimensional spheres there are in this wedge of spheres for each $i\ge 0$. }
 %  \commentph{ambiguous next bit} \sout{, doing so in a way that 
%enumerates  the spheres of each dimension.    }

We establish  a number of fundamental properties of these generalized recursive atom orderings  %\commentph{ Change to (RRAOs)?} 
(GRAOs), including the property  that any generalized recursive atom ordering may be transformed into a traditional recursive atom ordering (RAO)  by a process we introduce called the atom reordering process.    Since GRAOs are 
easier to construct than RAOs, this may give a useful new pathway to proving  a poset is CL-shellable. 
These  generalized recursive atom orderings further  allow us to prove  that several different forms of % \commentph{
lexicographic shellability  %\commentph{(in the not necessarily graded case)}
  are all equivalent to each other, by which 
we mean  that  a finite bounded poset admits any one of these  types of lexicographic shelling if and only if it  admits  each of the others.
%}.\sout{ every   other one of these types of lexicographic shelling\sout{;} }
One might expect this to 
imply the stronger statement that any instance of any one of these types of lexicographic shelling is also an instance of any other of these types of lexicographic shelling, but this 
%\sout{stronger statement} 
is not always true. 
%\sout{ (for instance it fails for ``self-consistent CC-shellability'' and CL-shellability by virtue of not every generalized recursive atom ordering being
%a recursive atom ordering).} 
%\commentph{Here is my stab at how to fix this: ``
%\commentph{This next sentence is only true if we regard GRAOs and RAOs as chain-atom orders.
%, but it seems too technical to mention that here. 
% I added some clarification  about this point  at the very start of Section 3, in the remark just before the definition of GRAO -- at which point we have the definition of chain-atom ordering at our disposal.}
  %Probably we should leave this as it is, but I am thinking about it, in case there is a nice way to avoid this issue.  The issue is that we actually prove that any atom ordering which may be extended to a GRAO may also be extended to an RAO.}
    For instance, one may deduce that not every ``self consistent CC-shelling'' is a CL-shelling from the fact 
    that not every generalized recursive atom ordering is a recursive atom ordering;  
    this latter fact is 
    stated  more precisely in Remark ~\ref{more-general}, and  % \commentph{(when we regard these as so-called chain-atom orderings, not just as orderings on atoms)}; 
an example demonstrating this latter fact appears in Figure 2.
%} \gs{For instance, while any poset that is (self-consistently?) CC-shellable in a self-consistent way is also a poset that is CL-shellable, the shelling orders that arise (are constructed?) from these classifications are not necessarily the same.
% }
These equivalence results  clarify 
the hierarchy of different  techniques for proving  that a finite bounded poset  %\sout{to be} 
is lexicographically shellable. 
 Figure \ref{implications-figure} gives a schematic of  many of the   implications proven in this paper,   
 and it points  readers  (by way of  the labels on the implication arrows) 
 to  %\sout{the result number} 
 where each result is  proven in this paper or elsewhere in the literature.

\begin{figure}\label{implications-figure}
\begin{center}
\begin{tikzpicture}
\draw  (1, 0) node [align=center,minimum size=3cm]  {P admits \\ GRAO}; 
\draw  (1, -5) node [align=center,minimum size=3cm] {P admits \\ RAO}; 
\draw (-4, 5) node[align=center,minimum size=3cm] {P is CC-shellable \\ in a \\self-consistent manner};
\draw (1, 5) node[align=center,minimum size=3cm] {P is CC-shellable \\via labeling  \\ with UE property}; 
\draw (-4, 2) node[align=center,minimum size=3cm] {P is \\CC-shellable}; 
\draw (-4, -0.5) node[align=center,minimum size=3cm] {P is \\ TCL-shellable}; 
\draw (-4, -3) node[align=center,minimum size=3cm] {P is \\ shellable}; 
\draw (5, 5) node[align=center,minimum size=3cm] {P is TCL-shellable in \\a self-consistent \\manner}; 
\draw (-9, 2) node[align=center,minimum size=3cm] {P is \\ EC-shellable}; 
\draw (-9, -5) node[align=center,minimum size=3cm] {P is \\ EL-shellable}; 
\draw (-4, -5) node[align=center,minimum size=3cm] {P is \\ CL-shellable}; 
\draw [double, thick, <->](1, -0.5)--node [midway, right]{\ref{cccl}}(1,-4); 
\draw [thick, double, ->](1, 0.5)-- node [midway, right] {\ref{altgraothencc}}(1,4); 
%\draw [double, thick, <-](-0.2, 0.5)-- node [near end, right, align=center,minimum size=2cm] {\ref{CCimpliesTopolCL} then \ref{TopolCLthengrao}
\draw [thick, double, ->] (-4, 6) to [out=20, in=160] node [midway, below]{\ref{CCimpliesTopolCL}} (5, 6) ; 
%; \\ \ref{grao2thenCC}

\draw [double, thick, <-](1.2, 0.5)--node [midway, right=0.5em]{%\ref{grao2thenCC} then \ref{CCimpliesTopolCL}; \\ 
\ref{TopolCLthengrao}}(4.5,4); 
\draw [thick, double, ->] (-4, 4) -- (-4,2.5); 
\draw [thick, double, ->] (-4, -1.25) -- node [midway, right]{\ref{tclthenshell}} (-4,-2.5); 
\draw [thick, double, ->] (-4, 1.5)--node [midway, right]{\ref{CCimpliesTopolCL}}(-4,0.25); 
\draw [thick, double, ->] (-7.75, 2)--node [align=center, minimum size=2cm, midway, above=-1.5em]{\cite{kozlov}}(-5.25, 2); 
\draw [thick, double,->] (-7.75, -5)--node [align=center, minimum size=2cm, midway, below=-1.5em] {\cite[2.3]{bw}}(-5.25, -5); 
\draw [double, thick, <->] (-2.75, -5)--node [align=center, minimum size=2cm, midway, below=-1.5em] {\cite[3.2]{bw}}(0,-5); 
\draw [thick, double,->] (-0.4, 5)--node [midway, above] {\ref{UEimpliesSC}} (-2.7, 5); 
\draw [thick, double, ->] (-4.75, -4.5) to [out=150, in=210] node [midway, left]{\ref{RAOimpliesCC}} (-4.75, 1.5) ; 

\end{tikzpicture}
\end{center}
\caption{Implications and where they are proven.}
\end{figure}
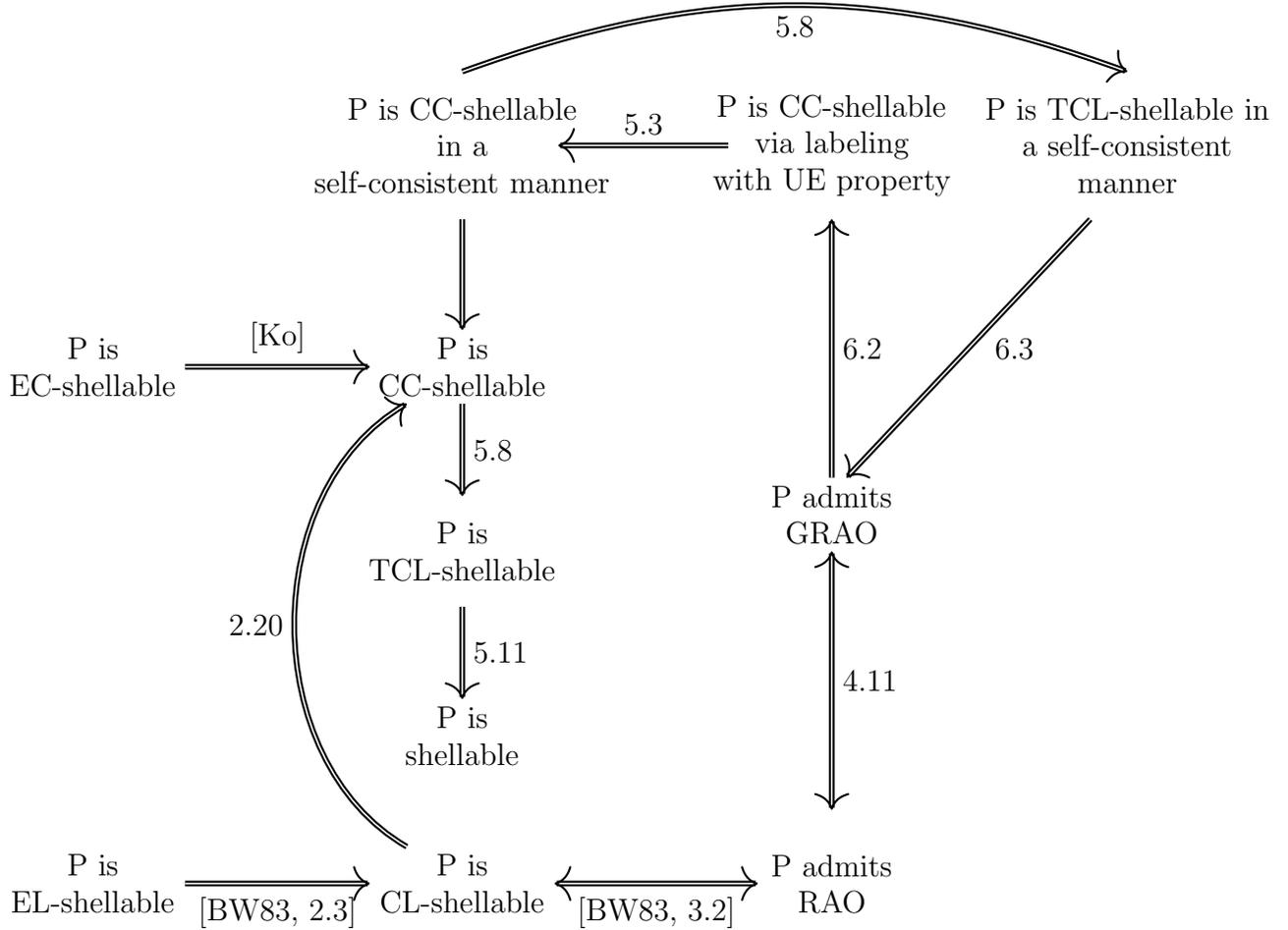

The importance of the  notion of recursive atom ordering  stems from the fact  that a finite bounded poset admits a recursive atom ordering if and only if it is CL-shellable. %}
EL-shellability  and 
CL-shellability are  the original  and  %\commentph{as well as being  the}   \commentplh{and continue to be the predominant}
 %\gs{and  the 
 predominant techniques  % \sout{(and still  the predominant ones)}
  for proving  that  posets  %\commentph{``to be'' is not incorrect grammatically and to me seems easier to parse here, since it is more consistent with the subjunctive tense we are in here}
   are 
%\sout{lexicographically} 
 shellable.
EL-shellability was first introduced by Bj\"orner in \cite{bjornercm}.  %\sout{with this} \commentph{Start a new sentence here for CL-shellability?}   \gs{and was} 
It  was generalized to the notion of   CL-shellability 
by Bj\"orner and Wachs in \cite{bw} % \commentph{timing unclear}
 when  they observed that their idea for a way to shell Bruhat order
%\sout{  construction of a} %  \sout{lexicographic} 
%\sout{ shelling for  Bruhat order}  \sout{in \cite{BW82}} % \sout{which}
 did not meet the requirements of an 
EL-shelling but nonetheless gave rise to a shelling % \sout{of the order complex}  
of a very similar flavor  to an EL-shelling. 
%\commentplh{I think it's important here to say ``reformulation'' so it is clear that we have an if and only if.  I also think it's important to discuss that this is useful for posets with recursive structure -- this paragraph motivates what we plan to do.  I don't think what was previously written here was too technical, but rather helps the reader know when one would want to use RAO (and likewise when GRAO may be called for).} \gs{
Recursive atom orderings were  % \sout{also} 
 introduced  in \cite{bw} as an % \commentph{
 alternative approach %} \sout{entirely different pathway} 
 to proving 
 CL-shellability.
 %  \sout{A reformulation of CL-shellability   
%as a recursive property that  a poset might  have, namely a  ``recursive atom ordering",  was also introduced in \cite{bw}.} \sout{This gave  an entirely different pathway to proving 
 %CL-shellability, one  that is  well suited to handling   posets with recursive structure inside of  them.  } 
  %\commentph{ Maybe add back  something like ``
  For families of  posets with inherent recursive structure, such as the partition lattice, 
  %(where intervals in these posets are themselves products of  smaller  posets in the same family), 
  recursive atom ordering can often be 
  the easiest and  most natural way to prove these posets are shellable.  
  % % are intrinsically recursive in nature and as such 
 % are particularly well-suited to posets with recursive structure to them; for instance, every upper interval in the partition lattice is isomorphic to a smaller partition lattice which makes it relatively easy to construct a recursive atom ordering on the partition lattice.''} 
 Both CL-shellability and the  related notion 
 of recursive atom ordering were extended to the non-graded case in \cite{non-pure1} and \cite{non-pure2}.
 %, \commentph{the setting we will work in throughout this paper as well}. 
 %\commentgs{Move this to later?/background} \sout{We will work in the non-graded  %(or more precisely, not necessarily graded)   
% case throughout this paper.  }
In \cite{kozlov}, EL-shellability and CL-shellability were generalized to the notions of EC-shellability and CC-shellability, which allowed more flexibility in constructing labelings that could be used to establish lexicographic shellability. Some important examples of posets that have been proven to be lexicographically shellable (or dual lexicographically shellable), in some cases 
 by way of a recursive  atom ordering,  
  include  Bruhat order (\cite{BW82}), posets with exponential structures (\cite{Sa}), supersolvable lattices (\cite{bjornercm}), geometric lattices  (\cite{bjornercm}), geometric semilattices (\cite{WW}), various posets from finite group theory (\cite{Sh01}, \cite{Wo}) and combinatorial commutative algebra (\cite{PRS})),  intersection posets of $k$-equal subspace arrangements (\cite{non-pure1}),  and face posets of  shellable 
 $d$-complexes (\cite{Bj84}).

\begin{figure} 
\begin{tikzpicture}[scale=0.8]
\draw [fill] (4,0) circle [radius=0.05]; 
\draw [fill] (1,2) circle [radius=0.05]; 
\node [left] at (1, 2) {\small \textbf{}}; 
\draw [fill] (4,2) circle [radius=0.05]; 
\node [below right] at (4, 2) {\footnotesize \textbf{a}}; 
\draw [fill] (7,2) circle [radius=0.05]; 
\draw [fill] (0,4) circle [radius=0.05]; 
\draw [fill] (2,4) circle [radius=0.05]; 
\draw [fill] (4,4) circle [radius=0.05]; 
\draw [fill] (6,4) circle [radius=0.05]; 
\draw [fill] (8,4) circle [radius=0.05]; 
\draw [fill] (1,6) circle [radius=0.05]; 
\draw [fill] (4,6) circle [radius=0.05]; 
\draw [fill] (7,6) circle [radius=0.05]; 
\draw [fill] (4,8) circle [radius=0.05]; 
%\node [above right] at (4,8){\footnotesize $\hat{1}$};

\draw (4,0)--node [midway,  circle, fill=white, inner sep=0.5pt,minimum size=1pt]{\scriptsize 1}(1,2)--node [midway, circle, fill=white, inner sep=0.5pt,minimum size=1pt]{\scriptsize 1}(0,4)--node [midway, circle, fill=white, inner sep=0.5pt,minimum size=1pt]{\scriptsize 1}(1,6)--(4,8); 

\draw (1,2)--node [circle, fill=white, midway, inner sep=0.5pt,minimum size=1pt]{\scriptsize 2}(2,4)--node [ near start, circle, fill=white, inner sep=0.5pt,minimum size=1pt ]{\scriptsize 1}(1,6); 
\draw (1,2)--node [near start, circle, fill=white, inner sep=0.5pt,minimum size=1pt]{\scriptsize 3}(4,4)--node [near start, circle, fill=white, inner sep=0.5pt,minimum size=1pt]{\scriptsize \textcolor{red} 3}(1,6);

\draw (4,0)--node [midway, circle, fill=white, inner sep=0.5pt,minimum size=1pt ]{\scriptsize 2}(4,2)--node [ near start, circle, fill=white, inner sep=0.5pt,minimum size=1pt,  ]{\scriptsize 1}(2,4)--node [  near start, circle, fill=white, inner sep=0.5pt,minimum size=1pt ]{\scriptsize 2}(4,6)--(4,8); 
\draw (4,2)--node [midway, circle, fill=white, inner sep=0.5pt,minimum size=1pt]{\scriptsize \textcolor{red} 3}(4,4)--node [midway, circle, fill=white, inner sep=1pt,minimum size=1pt ]{\scriptsize 1}(4,6); 
\draw (4,2)--node [ near start, circle, fill=white, inner sep=0.5pt,minimum size=1pt]{\scriptsize \textcolor{red} 2}(6,4)--node [ near start,  circle, fill=white, inner sep=0.5pt,minimum size=1pt]{\scriptsize 1}(4,6);
\draw (4,0)--node [midway, circle, fill=white, inner sep=0.5pt,minimum size=1pt ]{\scriptsize 3}(7, 2)--node [near start, circle, fill=white, inner sep=0.5pt,minimum size=1pt ]{\scriptsize 1}(4,4)--node [near start, circle, fill=white, inner sep=0.5pt,minimum size=1pt ]{\scriptsize \textcolor{red} 2}(7, 6)--(4,8); 
\draw (7, 2)--node [midway, circle, fill=white, inner sep=0.5pt,minimum size=1pt]{\scriptsize \textcolor{red} 3}(6, 4)--node [ near start, circle, fill=white, inner sep=0.5pt,minimum size=1pt ]{\scriptsize  2}(7,6); 
\draw (7, 2)--node [midway, circle, fill=white, inner sep=0.5pt,minimum size=1pt ]{\scriptsize \textcolor{red} 2}(8,4)--node [midway, circle, fill=white, inner sep=0.5pt,minimum size=1pt ]{\scriptsize 1}(7,6); 

\draw (7,6)--node [midway, circle, fill=white,inner sep=0.5pt,minimum size=1pt] {\scriptsize 1} (4,8);
\draw (4,6)--node [midway, circle, fill=white,inner sep=1pt,minimum size=1pt] {\scriptsize 1}(4,8);
\draw (1,6)--node [midway, circle, fill=white,inner sep=0.5pt,minimum size=1pt] {\scriptsize 1}(4,8);

%\draw [fill, color=red] (4,2) circle [radius=0.1]; 
\end{tikzpicture}
\hskip 0.5in
\begin{tikzpicture}[scale=0.8]

\draw (4,0)--node [midway, circle, fill=white, inner sep=0.5pt,minimum size=1pt ]{\scriptsize 1}(1,2)--node [midway, circle, fill=white, inner sep=0.5pt,minimum size=1pt]{\scriptsize 1}(0,4)--node [midway, circle, fill=white, inner sep=0.5pt,minimum size=1pt]{\scriptsize 1}(1,6)--(4,8); 
\draw (1,2)--node [midway, circle, fill=white, inner sep=0.5pt,minimum size=1pt ]{\scriptsize 2}(2,4)--node [near start, circle, fill=white, inner sep=0.5pt,minimum size=1pt ]{\scriptsize 1}(1,6); 
\draw (1,2)--node [near start, circle, fill=white, inner sep=0.5pt,minimum size=1pt]{\scriptsize 3}(4,4)--node [near start, circle, fill=white, inner sep=0.5pt,minimum size=1pt ]{\scriptsize \textcolor{red} 2}(1,6);

\draw (4,0)--node [midway, circle, fill=white, inner sep=0.5pt,minimum size=1pt ]{\scriptsize 2}(4,2)--node [near start, circle, fill=white, inner sep=0.5pt,minimum size=1pt ]{\scriptsize 1}(2,4)--node [near start, circle, fill=white, inner sep=0.5pt,minimum size=1pt]{\scriptsize 2}(4,6)--(4,8); 
\draw (4,2)--node [midway, circle, fill=white, inner sep=0.5pt,minimum size=1pt]{\scriptsize \textcolor{red} 2}(4,4)--node [midway, circle, fill=white, inner sep=1pt,minimum size=1pt ]{\scriptsize 1}(4,6); 
\draw (4,2)--node [near start, circle, fill=white, inner sep=0.5pt,minimum size=1pt ]{\scriptsize \textcolor{red} 3}(6,4)--node [near start, circle, fill=white, inner sep=0.5pt,minimum size=1pt]{\scriptsize 1}(4,6);
\draw (4,0)--node [midway, circle, fill=white, inner sep=0.5pt,minimum size=1pt]{\scriptsize 3}(7, 2)--node [near start, circle, fill=white, inner sep=0.5pt,minimum size=1pt ]{\scriptsize 1}(4,4)--node [near start, circle, fill=white, inner sep=0.5pt,minimum size=1pt ]{\scriptsize \textcolor{red} 3}(7, 6)--(4,8); 
\draw (7, 2)--node [midway, circle, fill=white, inner sep=0.5pt,minimum size=1pt ]{\scriptsize \textcolor{red}  2}(6, 4)--node [near start, circle, fill=white, inner sep=0.5pt,minimum size=1pt ]{\scriptsize 2}(7,6); 
\draw (7, 2)--node [midway, circle, fill=white, inner sep=0.5pt,minimum size=1pt ]{\scriptsize \textcolor{red}  3}(8,4)--node [midway, circle, fill=white, inner sep=0.5pt,minimum size=1pt ]{\scriptsize 1}(7,6); 

\draw (7,6)--node [midway, circle, fill=white, inner sep=0.5pt,minimum size=1pt] {\scriptsize 1}(4,8);
\draw (4,6)--node [midway, circle, fill=white, inner sep=1pt,minimum size=1pt] {\scriptsize 1}(4,8);
\draw (1,6)--node [midway, circle, fill=white, inner sep=0.5pt,minimum size=1pt] {\scriptsize 1}(4,8);

\draw [fill] (4,0) circle [radius=0.05]; 
\draw [fill] (1,2) circle [radius=0.05]; 
\node [left] at (1, 2) {\small \textbf{}}; 
\draw [fill] (7,2) circle [radius=0.05]; 
%\node[star, fill=green, draw=black , inner sep=0.05cm,minimum size=0.05cm ] at (4,4){}; 
\draw [fill] (4,2) circle [radius=0.05]; 
\node [below right] at (4, 2) {\footnotesize \textbf{a}}; 
\draw [fill, ] (0,4) circle [radius=0.05]; 
\draw [fill] (2,4) circle [radius=0.05]; 
\draw [fill] (4,4) circle [radius=0.05]; 
\draw [fill] (6,4) circle [radius=0.05]; 
\draw [fill] (8,4) circle [radius=0.05]; 
\draw [fill] (1,6) circle [radius=0.05]; 
%\node [diamond, fill=blue, draw=black, inner sep=0.05cm, minimum size=0.05cm] at (1,6) {};
\draw [fill] (4,6) circle [radius=0.05]; 
%\node [diamond, fill=blue, draw=black, inner sep=0.05cm, minimum size=0.05cm] at (4,6) {};
%\node [left] at (4, 6){$F$}; 
\draw [fill] (7,6) circle [radius=0.05];
%\node [right] at (7, 6){$G$}; 
\draw [fill] (4,8) circle [radius=0.05];
%\node [above right] at (4,8){\footnotesize $\hat{1}$};
%\draw [fill, color=red] (4,2) circle [radius=0.1]; 

\end{tikzpicture}
\caption{Poset with GRAO that is not RAO (left) and with  RAO (right), with edge labels indicating the atom ordering.} 
\label{graotorao}
\end{figure}
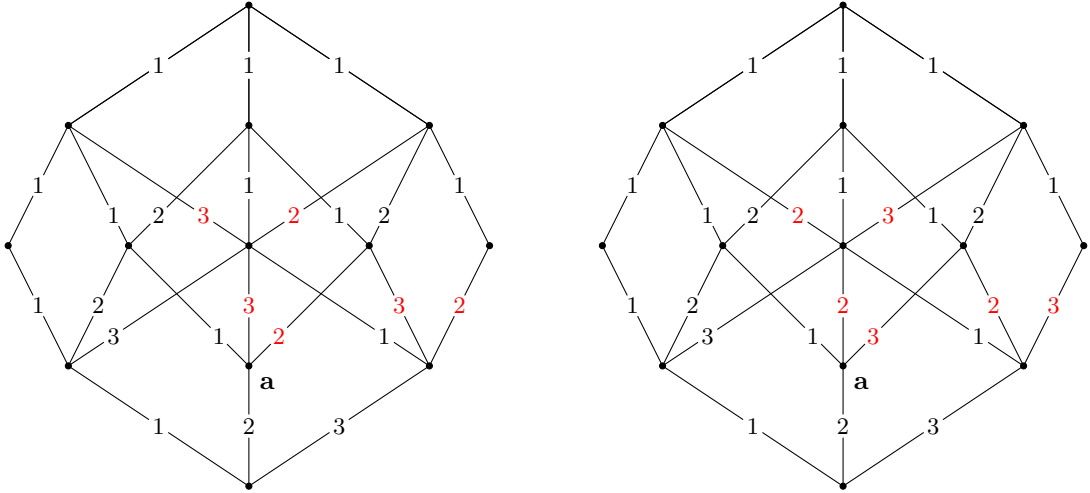

%\commentph{Is there any place to point out here that we are not assuming our posets are graded?}  

%\commentph{Less sure whether to get rid of this paragraph or just make it gentler.}  
%\commentph{We should split the next sentence into two sentences.  Something like: 
In Section \ref{bg-section}, we review background, including %reviewing 
some key ideas of Bj\"orner and Wachs that we will build upon later in the paper.  
%We establish some fundamental properties... in Sections \ref{graosection} and \ref{GRAOiffRAO}, respectively.
%After providing necessary background for our results in Section \ref{bg-section}, w
We establish  some fundamental  properties of GRAOs and of the atom 
reordering process  
in Sections \ref{graosection} and \ref{GRAOiffRAO}, respectively. 
%%%%%%%%%%THE BELOW WAS MOVED FROM BEFORE THE SENTENCE ABOVE%%%%%%%%%%%%%%%%
%\commentplh{Delete this paragraph?  I'm less sure it's a good idea to get rid of this paragraph, though at least we do already mention the reordering process in the second paragraph of the introduction, which seems important to mention in the introduction.} 
%\commentph{Something is wrong with the next sentence -- it does not make sense as now written.}  \gs{
In particular, Section \ref{GRAOiffRAO} presents   %\commentph{includes?} 
one of the main results of the paper, Theorem \ref{cccl};  % \commentph{
this result %} %  which \sout{we prove}
 %\gs{
 establishes that a finite bounded 
poset admits a recursive atom ordering (RAO) if and only if it admits a generalized recursive atom ordering (GRAO).
%While every RAO is a GRAO, as is proven in Lemma \ref{raothengrao}, the converse is not true, as  Figure 2 demonstrates.  
%\commentph{The next sentence needs some transition language to relate it better to the prior sentence.  Something like ``
Figure 2 gives an example highlighting the subtlety of Theorem \ref{cccl}.
% \commentph{should we add red to highlight where things change?} 
 It shows a small example of a poset
%...''}  Figure 2 % \ref{graotorao} 
%shows   a small example of a poset 
$P$ endowed  with a GRAO that is not an RAO (on the left in the figure) and  the same poset endowed with an RAO (on the right in the figure).  
%\commentgs{Probably not the best place for the below remark now that the intro is much shorter. Still questionable whether we want to include the remark at all. Maybe put it where we introduce RAO/GRAO} 
%\gs{\begin{rmk}\label{atomorderlabel}
%In all figures showing RAOs or GRAOs in this paper, we follow the convention that if the cover relation $u \lessdot v$ is labeled by $k$, this means that $v$ is the $k$th atom of $u$ in the RAO or GRAO.  
%\end{rmk}}
% gives an example showing this.  
A key ingredient to  the proof of Theorem \ref{cccl} is an algorithm %we provide  %\commentplh{I changed Definition to Algorithm, but maybe move up to here?} 
%an algorithm 
that  transforms  any GRAO into an RAO on the same  poset $P$,  a procedure we call the ``atom reordering process''  (see Algorithm \ref{reorder-def}).
% \commentplh{Delete remainder of this paragraph?  Or move with Figure 2?}  
 %This   proceeds from bottom to top in $P$, reordering the atoms of each rooted interval $[u,\hat{1}]_r$ in turn, doing so in a way that moves the elements of $F_r(u)$ ahead of the elements of $G_r(u)$ for each $u$ and each $r$ while preserving the  relative order  of the elements within  $F_r(u)$ and within $G_r(u)$.    
  This procedure transforms the GRAO shown on the left in Figure 2 
 into the RAO shown  on the right in Figure 2. 
 
%\commentph{Delete remainder of this paragraph?} 
%before proving Theorem \ref{cccl}.  In Lemma \ref{GRAOrestricts}, we show  that the restriction of a GRAO  to any 
%rooted interval is itself a GRAO.  In Theorem \ref{switch-thm}, we prove that any  local move on a GRAO switching the order of two consecutive atoms in a rooted interval
%$[u,\hat{1}]_r$ where neither is the first atom, subject to closely  related further  conditions, 
%yields a new chain-atom ordering that is itself a GRAO.   
%We prove in Lemma \ref{alwaysfirst}  that whichever atom comes first in any given rooted interval of a GRAO  still comes first at the conclusion of the atom reordering process.  
%We use Lemma \ref{alwaysfirst} to  deduce  in  Lemma \ref{equiv-of-chain-atom-orderings}  that the  atom reordering process may  be carried out by  a series of  local moves of the type arising in  Theorem 
%\ref{switch-thm} above, implying that applying  the atom reordering process  to a GRAO  yields   a GRAO.   Equipped with these results, 
% Theorem \ref{cccl} shows that applying the reordering process to a  GRAO yields not only a GRAO but an RAO.
%
%
%\commentph{Keep or omit?}  
%\commentplh{The edits to this sentence make it ambiguous and I'm afraid will cause many readers to miss interpret.}  
Sections \ref{UE-section} and \ref{CLiffCCsection} establish a link that is not necessarily an equivalence  % \commentph{(but not an equivalence)} 
 between finite bounded posets that admit a GRAO and those that are CC-shellable. 
This link  %\sout{Our work 
%relating  Kozlov's CC-shellability to the property of admitting a GRAO also} 
 led us  to introduce  
a variation on the notion of CC-shellability
in Section \ref{topol-CL-section}, namely TCL-shellability  (see Definition \ref{topol-CL}).  
%It is easy to see that every CL-labeling is a topological CL-labeling, though it is not true that every 
%CL-labeling is a CC-labeling.  We also  prove that CC-shellability implies topological CL-shellability.  
%This mild variation on  Kozlov's notion of  
%CC-shellability (and similarly on  Kozlov's  closely related  edge-labeling condition called  EC-shellability)  seems like it could be   a worthwhile  gentle 
%update to his  theory. 
%
%\commentplh{There is some new text in the next paragraph, trying to make things flow and emphasize key points, though this could be a bit repetitive right now with things said early in this section.}

%\commentph{Very short paragraph?}
%\commentplh{This next sentence  is now a run-on sentence.  I agree the sentences did not read very well before, but I have not come up with a good way yet to rewrite them.  It may be  a necessary evil to have  to repeat the name of the theorem after splitting into two sentences.}
Many of the results proven throughout the paper are  tied together  %\sout{in one place} 
 in  Theorem \ref{TFAE-theorem},  %\sout{.  %, appears just below.  
%
%Theorem \ref{TFAE-theorem}} %is  % collects together  %in one place
 %many of the implications proven throughout the   paper in a way that
a result which   shows   that  several different notions of lexicographic shellability are 
%\sout{all} 
equivalent to each other.  % \commentph{
In this result, we specify %/list out}
%More precisely, we \sout{discuss} \commentph{enumerate}  
 eight different versions of lexicographic shellability and 
show % \sout{ in Theorem \ref{TFAE-theorem} } 
that a finite bounded poset admits any  one of these types 
of lexicographic shelling if and only if it admits all of the others.  
%As a word of caution, it still does happen that one type of lexicographic shelling may be strictly easier to construct than another.   
%(along  with two of the main  results from \cite{bw}).  
%\commentph{Delete statement of  the next theorem from introduction, since we need to state it where we prove it to make the proof readable.}
%The statement of Theorem \ref{TFAE-theorem} is as follows: 
%
%
%\begin{thm} %\label{TFAE-theorem}
% Let $P$ be a finite, bounded poset. Then the following are equivalent:
%\begin{enumerate} % [label=\alph*)]
%\item $P$ admits a recursive atom ordering
%\item $P$ admits a generalized recursive atom ordering
%\item $P$ admits a CL-labeling
%\item $P$ admits a CL-labeling with the UE property
%\item $P$ admits a self-consistent CC-labeling. 
%\item $P$ admits a CC-labeling with the UE property
%\item $P$ admits a self-consistent topological CL-labeling
%\item $P$ admits a topological CL-labeling with the UE property
%\end{enumerate}
%Moreover, all of these implications are proven constructively.  That is, for each implication  either it is shown  how to 
%construct the latter type of object from the former  or else  the   former type of object is proven also to be the latter type of object.
%\end{thm} 

In Section \ref{CC-UE},  we apply our results % regarding techniques for proving lexicographic shellability 
 to
deduce  that a class of posets previously % \gs{
shown to be %} \sout{proven} 
  CC-shellable in \cite{elect}  is  in fact CL-shellable.  That is, we prove that the dual posets to the uncrossing orders (conjectured to be  lexicographically shellable by  Lam in \cite{lam}) are  CL-shellable.   These uncrossing orders arise naturally as face posets of  stratified spaces of planar electrical networks (see e.g. \cite{K-elect}, \cite{lam}, and  references therein).
  % \commentplh{We could fix the next sentence by changing CW posets to so-called CW posets -- I think this sentence is important in that it helps motivate our application, and it's very hard to get a paper accepted without it including an impressive application.   Without this sentence, I worry editors may think ``who cares that another class of posets is shellable''.   I also don't think this is what the referee was calling too technical.} 
   %\commentgs{Too technical:} \commentph{
   The fact that they are shellable posets combines with Lam's result from \cite{lam} that they are Eulerian posets to imply that they are  CW posets, i.e. face posets of 
regular CW complexes with finitely many cells.  Thus, the shellability of uncrossing orders  provides 
an important step in understanding the topological structure of these spaces of planar electrical networks. %}
%,  by virtue of being Eulerian (proven in \cite{lam})  and shellable.  }
%\commentph{Delete remainder of this paragraph?} 
% To deduce dual CL-shellability  of uncrossing orders, we check  that the EC-labeling (a special type of CC-labeling) from \cite{elect} for the dual uncrossing orders has the UE property.  Equipped with this, we may use our upcoming  results  (1) that  the UE property implies self-consistency, (2) that  any finite bounded poset with a self-consistent CC-labeling has a GRAO, and (3) that any such  GRAO may be transformed into an RAO.  Thus, we  show that the dual uncrossing posets have an RAO.  But  \cite{bw} proved that having an  
%RAO implies  CL-shellability, giving the final  step  in  the proof  
%that dual uncrossing orders are CL-shellable.  

The paper concludes with  further results,  observations,  and  open questions  in Section \ref{Consequences-section}. 

\section{Background}\label{bg-section} 

%For this section, we l
Let $P$ denote a partially ordered set  (poset).  All posets throughout this paper are assumed to be  finite and bounded  but are not assumed to be  graded.   % posets.
%, whether or not that is indicated in each place where posets arise.   
%They are also assumed to be bounded, namely to have a unique minimal element and a unique maximal element.  We do not assume posets are graded.   
%
For background on posets,  poset topology, and shellability  %associated simplicial complexes called order complexes 
beyond what appears below, we refer readers to \cite{St12},  \cite{Wachs}, \cite{Bj-Top-Meth}, \cite{Stanley-green},  and  \cite{Zi}.

A \textbf{cover relation} $u\lessdot v$ in a poset $P$ is an order relation $u<v$ with the further requirement that there does not exist any $z\in P$ with $u<z<v$.  In this case, we say that $v$ \textbf{covers} $u$. % or that $u$ is \textbf{covered by} $v$.
%
%
%\begin{defn} 

A poset $P$ is \textbf{bounded} if it has both a unique least element (often denoted $\hat{0}$) and a unique greatest element (often denoted $\hat{1}$).
%, namely elements $\hat{0}$ and $\hat{1}$ such that $\hat{0}\le u \le \hat{1}$ for all $u\in P$.
%\end{defn} 
A \textbf{closed interval}, denoted $[u,v]$, in a poset $P$ is the subposet consisting of all elements $z\in P$ such that $u\le z \le v$.  
%Closed intervals are always bounded with minimal element $u$ and maximal element $v$.  

The \textbf{atoms}  of a bounded poset $P$  (resp. a  closed interval $[u,v]$)  
are those elements $a$  in $P$  (resp. $[u,v]$)  
that cover  $\hat{0}$ (resp. $u$).  
Likewise the \textbf{coatoms} of a bounded poset $P$  (resp. a closed interval $[u,v]$)  
are the elements in $P$  (resp. $[u,v]$)  
covered by  $\hat{1}$ (resp. $v$).  

%\begin{defn} A poset is \textbf{graded} if it is bounded and pure. 
%\end{defn}

%\begin{defn}
A \textbf{chain} in a poset $P$  is a totally ordered subset $u_1 < \cdots < u_r$  of $P$. The \textbf{length} of a chain  $u_1 < \cdots < u_r $ is the number  $r-1$ of order relations in the chain.   The {\bf length} of a poset $P$, % (for instance, in the next definition), 
is the length of the longest chain in $P$.  
 A chain is \textbf{maximal} in $P$  if no additional elements of $P$  may be inserted in it.  A chain $u_1 < u_2 < \cdots < u_k$ is \textbf{saturated} in $P$  if it is a maximal  chain of $[u_1,u_k]$. We will also make the convention of   sometimes referring to the maximal chains of a closed interval $[u_1,u_k]$ as the saturated chains of $[u_1,u_k]$. %is one less than the number of elements in the chain. 
%\end{defn}
%
%\begin{defn}
If all maximal chains in a poset  $P$ are of the same length, the poset is said to be \textbf{graded}. % \textbf{pure}. 
%\end{defn}

\begin{defn} 
The \textbf{order complex} of  a poset $P$, denoted $\Delta (P)$,  is the abstract simplicial complex whose  $k-$faces are  the chains of length $k$ of $P$. In particular, this means the vertices are the chains consisting of single elements of $P$.  Note that a face $\sigma$ in $\Delta (P)$ is contained in another face $\tau$ in $\Delta (P)$  if and only if  the chain corresponding to $\sigma$ is contained in the chain corresponding to $\tau$.
\label{ordercomplex}
\end{defn}

When we say that a poset $P$ has  a topological property (such as shellability or homotopy equivalence to a wedge of spheres), we mean that  %a simplicial complex derived from the poset, called
$\Delta(P)$ %, % (see Definition \ref{ordercomplex}), 
has this topological property.   %The order complex $\Delta (P)$ of a poset $P$  and in particular t

The \textbf{dual} of a poset $P$, denoted $P^*$,  has  the same elements as $P$ with $u \le v$ in $P^*$ if and only if $v\le u$ in $P$.  %We will soon utilize 
%\commentph{Removed text: We will soon use the fact that  $\Delta (P) = \Delta (P^*)$  % as abstract  simplicial complexes  
 %%%will soon be useful to us in that it 
%%%.  % due to having the same faces as each other.  W
%%%Specifically, we will  use the fact
%and specifically its consequence  that any  ``lexicographic shelling'' for $P^*$   will give  a shelling for $\Delta (P)$.}
%%%make use of the consequent fact % ce of this, namely 
%%% that $P$ is shellable if and only if $P^*$ is shellable.  
%\commentplh{New text: 
%\commentph{Should the next bit be a remark so that we can reference it later?  For instance in the section on dual uncrossing orders.}
\begin{rmk}
Virtually everything in this paper has a dual version for the simple reason that a poset and its dual have the same order complex.  This fact  enables  any  poset theoretic technique  to be applied to the dual poset to derive the same consequence regarding the order complex.  There are indeed posets where this is a helpful thing to do (e.g. the uncrossing orders as discussed in Section ~\ref{CC-UE}).  We leave it to the interested reader to fill in the dual versions of our results.
%}
\end{rmk}

\begin{defn}
Given %any regular CW complex $K$ such as for example 
any simplicial complex $K$, its \textbf{face poset} $P(K)$ consists of  the faces % cells 
of $K$ %(called faces in the case of a simplicial complex)  
with order relation 
$\sigma \le \tau $ if and only if the set of vertices in $\sigma $ is a subset of the set of vertices in $\tau $.  The \textbf{closure} of a face $\tau $, denoted $\overline{\tau }$,  is the set of faces $\sigma $ such that $\sigma \le \tau $ in $P(K)$.  
%$\sigma $ is contained in the closure of $\tau $; additionally,  a unique minimal element $\hat{0}$ is adjoined to to $P(K)$ to  represent the empty face.  In the  special 
%case of simplicial complexes,  the order relation  on faces % my be equivalently described as
%is exactly  the order  by containment of the sets of vertices in the faces.  
%ordered by inclusion with a $\hat{0}$ adjoined.

 The \textbf{augmented face poset} $\hat{P}(K)$ is $P(K)$ with a maximal element  $\hat{1}$ adjoined if $P(K)$ does not already have a unique maximal element, and $\hat{P}(K) = P(K)$ otherwise. 
\end{defn}

\begin{defn}\label{shelling-def}
 A simplicial complex is \textbf{shellable} if there is a total order $F_1,\dots ,F_k$ on its maximal faces (known as \textbf{facets}) such that 
$\overline{F_j} \cap (\cup_{i<j} \overline{F_i})$ is a pure, codimension one subcomplex of $\overline{F_j}$ for each $j\ge 2$.  Such a facet ordering is known as a \textbf{shelling}.
\end{defn}

% If \todo[color=red!35]{more here?} $K$ is a regular CW complex, then $\Delta(P(K)-\hat{0}) \cong K$. Thus the incidence relations of cells as given in $P(K)$ determines the topology of $K$. This is not true of all CW complexes. See \cite{bjorner} for more details.
\begin{defn} A poset $P$ is said to be \textbf{shellable} if its order complex $\Delta(P)$ is shellable. 
\end{defn}

%\commentph{What you propose is confusing in two ways.  First:  ``These'' in the second sentence could refer to the labelings, and second, readers may 
%wonder in the first sentence what
%lexicographic shellings are.  Here is another attempt that hopefully achieves your goal of breaking up the sentence while resolving my above concerns:}
 Next we review various types of edge and chain-edge labelings of a finite poset $P$.  These labelings will induce lexicographic shellings for $\Delta (P)$, namely shellings 
obtained by taking the facets of $\Delta (P)$ in order according to the lexicographic (i.e. dictionary) order of the label sequences of the corresponding maximal
chains of $P$, breaking ties in any manner.
%\sout{Next we review various types of edge and chain-edge labelings of a finite  poset $P$  % relevant to this paper 
% that will induce lexicographic shellings for $\Delta (P)$.  These % \commentph{Lexicographic shellings} \sout{These}  
% are shellings obtained by taking the facets of $\Delta(P)$ in order according to the lexicographic (i.e. dictionary) order of the label sequences of the corresponding maximal chains of $P$. }
 % \sout{i.e. shellings obtained by ordering the facets of $P$ by ordering  label sequences on the corresponding maximal chains of $P$
 %lexicographically (namely in dictionary order).}
  Let $E(P)$ be the set of edges in the Hasse diagram of  a finite  poset $P$, that is, the pairs $x,y\in P$ such that $x\lessdot y$.  An {\bf edge labeling}  of $P$ is
a map $\lambda : E(P) \rightarrow Q$ for $Q$ a poset. Quite often,  $Q$ is the integers with their usual order.   %Given an edge-labeling of $P$, w

%\commentph{here we define ascent as a strict ascent.  we should check that we do not talk about weakly ascending chains later and/or use $\le $ later for ascent.}

We say that $x\lessdot y\lessdot z$ is an \textbf{ascent}  with respect to the edge labeling  $\lambda  $ if
$\lambda (x,y)  <_Q \lambda (y,z)$.  Any $x\lessdot y\lessdot z$ that is not an ascent with respect to $\lambda $ is said to be a \textbf{descent}.
 A maximal chain in a finite poset $P$ (or more generally in a closed interval $[u,v]$ in $P$)  
is an {\bf ascending chain} if it consists entirely of ascents.

\begin{defn}[\cite{bjornercm}, \cite{non-pure2}]
An edge labeling of a finite, bounded poset $P$  is called an \textbf{EL-labeling} (for edge lexicographical labeling) if for every interval $[x, y] \in P$ the following conditions are both met:
\begin{enumerate}[label=(\roman*)]
\item There is a unique ascending maximal chain $c$ in $[x, y]$.
\item The label sequence associated to $c$ lexicographically precedes the label sequences associated to every  other maximal chain in $[x, y]$. 
\end{enumerate}  
A finite, bounded poset admitting  such a labeling  is  \textbf{EL-shellable}. 
\end{defn}

%\commentgs{Could also turn the following into a definition for "lexicographic order of the maximal chains of $P$"} 
%\gs{
\begin{rmk}
In this paper, when we say that we take the lexicographic order on maximal chains of $P$, we mean that we take the maximal chains in order according to the lexicographic order on their label sequences.
\end{rmk}

%In defining EL-labelings, 
Bj\"orner  first introduced EL-labelings and EL-shellability in the graded case in \cite{bjornercm}. This was generalized by Bj\"orner and Wachs to the not necessarily 
graded case  when they introduced the notion on nonpure shellability in \cite{non-pure1} and \cite{non-pure2}.
%originally  included as a hypothesis  that the posets should  be graded; this requirement was removed later  in work of  Bj\"orner and Wachs when they introduced the more general  notion of nonpure shellability in \cite{non-pure2}.    %The usage of the  term EL-shellable above for a poset admitting an EL-labeling  is justified by t
The following  fundamental result  from \cite{bjornercm} %(which was generalized to the not necessarily graded 
%case in \cite{non-pure1}) 
explains the usage of the term EL-shellablility. 

%\commentph{I just added ``finite'' and  ``bounded'' as hypotheses below  to match what Bj\"orner wrote -- if you check Bj\"orner's paper, he defines "graded" to mean  finite, bounded and with all maximal chains having the same length as each other, which is not how we define graded, so this is an effort to reconcile what we are saying.}

% is a fundamental result in topological combinatorics. % and poset topology. 
\begin{thm} [%{Theorem~2.3}, 
\cite{bjornercm}, Theorem~2.3]
If $P$ is a finite, bounded, graded 
poset with an EL-labeling, then the lexicographic order of the maximal chains of $P$ is a shelling order for the corresponding facets of $\Delta(P)$. 
\end{thm}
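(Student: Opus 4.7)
The plan is to verify the shelling condition of Definition~\ref{shelling-def} directly. Listing the maximal chains as $m_1, m_2, \ldots, m_N$ in lexicographic order of their label sequences (with ties broken arbitrarily), I would show that for each $j \geq 2$ and each face $\sigma \in \overline{m_j} \cap \bigcup_{i < j} \overline{m_i}$, some codimension-one face of $\overline{m_j}$ both contains $\sigma$ and lies in $\overline{m_{k'}}$ for some $k' < j$. The engine is a purely local move supplied by the EL-property on length-two intervals: if $u \lessdot y \lessdot w$ is any saturated chain in $[u,w]$ that is not the unique ascending chain $u \lessdot y' \lessdot w$ guaranteed by EL, then replacing $y$ by $y'$ inside any maximal chain of $P$ that passes through $u, y, w$ yields a new maximal chain whose global label sequence is strictly lexicographically smaller, hence appears earlier in the shelling order regardless of tie-breaking.

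To locate such a move tailored to $\sigma$, fix any $m_k$ with $k < j$ and $\sigma \subseteq m_j \cap m_k$, write $m_j \colon \hat{0} = x_0 \lessdot x_1 \lessdot \cdots \lessdot x_n = \hat{1}$, and consider the set of ranks $I = \{\, i : x_i \notin m_k\,\}$, which is nonempty since $m_j \neq m_k$. Partition $I$ into maximal runs $[a,b]$ of consecutive indices. For each such run, both $x_{a-1}$ and $x_{b+1}$ lie in $m_j \cap m_k$, and the restrictions of $m_j$ and $m_k$ to the interval $[x_{a-1}, x_{b+1}]$ are distinct saturated chains there. I claim that for at least one run the restriction of $m_j$ fails to be ascending: otherwise, by the EL-property in each such interval the restriction of $m_j$ would be the unique ascending and hence lexicographically smallest maximal chain there, while outside the runs $m_j$ and $m_k$ share every edge (the relevant endpoints agree at all ranks outside $I$), and together these would force $\lambda(m_j) < \lambda(m_k)$ lexicographically, contradicting $k < j$.

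Pick a run in which $m_j$'s restriction is non-ascending and a descent $\lambda(x_{c-1}, x_c) > \lambda(x_c, x_{c+1})$ at an internal rank $a \leq c \leq b$. Then $x_c \in I$, so $x_c \notin \sigma$, and applying the local move on the length-two interval $[x_{c-1}, x_{c+1}]$ produces a maximal chain $m' = m_{k'}$ with $k' < j$ satisfying $\sigma \subseteq m_j \setminus \{x_c\} = m' \setminus \{x_c'\} \subseteq m_{k'}$, which is the required codimension-one face. The main obstacle is exactly the step that converts the global lex-ordering assumption $k < j$ into the existence of a local descent of $m_j$ inside a length-two subinterval; the partition of $I$ into maximal runs, together with the observation that $m_j$ and $m_k$ must share every label outside those runs, is what makes this reduction clean and lets the EL hypothesis be applied intervalwise.
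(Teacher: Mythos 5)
Your proof is correct and follows the classical argument of Bj\"orner (which the paper cites as \cite[Theorem~2.3]{bjornercm} rather than reproving): locate a non-ascent of $m_j$ inside a length-two interval over which $m_j$ and some earlier $m_k$ disagree, then replace the middle element with the unique ascending alternative, producing a maximal chain with a strictly lexicographically smaller label sequence that shares the required codimension-one face of $m_j$. Two minor points worth noting: in an EL-labeling ``descent'' means \emph{not an ascent} (so the condition you want is $\lambda(x_{c-1},x_c) \not<_Q \lambda(x_c,x_{c+1})$ rather than a strict $>$, which matters if labels repeat or lie in a non-totally-ordered $Q$ --- though your argument only actually uses that the restriction of $m_j$ to $[x_{c-1},x_{c+1}]$ is not the ascending chain, so nothing breaks), and Bj\"orner's argument needs only the first maximal run of disagreement rather than all runs; your all-runs version is correct but proves slightly more than required.
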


Now we turn to % the notion of CL-shellability, 
%a \commentplh{New: 
a generalization %}  %relaxation 
of EL-shellability due to Bj\"orner and Wachs (see \cite{bw})  
in which  edge labelings are replaced by more general chain-edge labelings (defined next).
In this context, we replace $E(P)$ by the set $E^*(P)$ defined as follows:  
%For a graded poset $P$ of length $n$,let  $E^*(P)$ be the set of edges of maximal chains in the Hasse diagram of $P$ i.e. 
$$E^*(P)=\{(c, x, y) : \mbox{$c$ is a maximal chain}; x, y \in c; x \lessdot y\} .$$   
% Given a labeling of $E^*(P)$ with elements of a poset $Q$, we say that a maximal chain in  a closed interval $[u,v]$ within $P$ is \textbf{ascending}  with respect to the edge-labeling if the sequence $(\sigma_1,\sigma_2,\dots ,\sigma_r)$ of labels has $\sigma_i  <_Q \sigma_{i+1}$ for $i=1,2,\dots ,r-1$.

%\commentgs{The wording for this definition is extremely similar to the definition given by Bjorner and Wachs in ``On Lexicographically Shellable Posets" Is this a concern?} 
\begin{defn} Let $P$  be a finite bounded poset and let $Q$ be any  poset.  A  \textbf{chain-edge labeling} (or \textbf{CE-labeling}) of $P$ is a map $\lambda: E^*(P) \rightarrow Q$  that satisfies the following condition: If
two maximal chains coincide along their first $d$ edges, then they have the same labels as each other on these edges.  
%their labels also coincide along these edges.
 In other words, if $c$ is a maximal  chain $\hat{0}=x_0 \lessdot x_1 \lessdot \ldots \lessdot x_n=\hat{1}$ and $c'$ is a maximal 
 chain $\hat{0}=x_0' \lessdot x_1' \lessdot \ldots \lessdot x_n'=\hat{1}$ where $x_i=x_i'$ for $i=0,1,2,\dots ,d$, then  $\lambda (c, x_{i-1}, x_i)=\lambda (c', x_{i-1}', x_i')$  for $i=1,2,\dots ,d$. % if we also have  $x_i=x_i'$ for $i=0, 1, \ldots , d$.
\end{defn} 

 %\commentph{Add here (or after CL-labeling below) something along the lines of the following: 
 To see  a naturally arising  example of a chain-edge labeling that is not an edge labeling, we refer readers to the chain-edge labeling due to Bj\"orner and Wachs for  the dual poset to Bruhat order.  This labeling % which is actually a 
% CL-labeling   (a notion defined shortly) 
 is reviewed  just prior to Proposition \ref{Bruhat-UE}.  Bj\"orner and Wachs proved in \cite{bw} that it  is a CL-labeling, a notion defined shortly. 
 
%\commentph{Note that we use the word relaxation to say that CL-shellability is a relaxation of EL-shellability.  Perhaps we should say ``generalization''.}
%\commentgs{Agreed, especially if we plan to change GRAOs to relaxed recursive atom orderings}
\begin{defn}
If $[x, y]$ is an interval in $P$ and $r$ is a saturated chain from $\hat{0}$ to $x$, then the pair $([x, y], r)$ is called a \textbf{rooted interval} with  $r$ as the {\bf root} of this rooted interval.  %Such a rooted interval
This   is denoted  by $[x, y]_r$.   
\end{defn}

%??? Tricia changed the phrasing next because this `association' is not bijective --- many saturated chains in general may have the same label sequence.  I think there is probably a better word such as ``assigned to it'' rather than ``associated''.  Also, we should get rid of $\sigma $ and just use $\lambda $.  One should always avoid using more symbols than necessary

Given a chain-edge labeling $\lambda$ of a finite bounded poset  $P$,  $\lambda $ associates to each maximal chain of $P$  a label sequence  as follows. If $m=(\hat{0}=x_0 \lessdot x_1 \lessdot \ldots \lessdot x_n=\hat{1})$, then the associated label sequence is 
$$\lambda(m):= (\lambda(m, x_0, x_1), \lambda(m, x_1, x_2), \ldots \lambda(m, x_{n-1}, x_n)).$$ By definition of a chain-edge labeling, any two  maximal chains both containing the same root $r$ from $\hat{0}$ to $x$ and the same saturated chain $c$ in the interval $[x, y]$ will both have the same label sequence assigned to $c$. We will denote this label sequence by $\lambda_r(c)$ and  the individual labels  comprising it as $\lambda_r(x_i,x_{i+1})$ for $i=0,\dots ,n-1$.  
%\commentph{Is this next what we actually do much in this paper?} \commentgs{``Sometimes we will write..."?} Typically  
Sometimes we will  write $\lambda (c)$ for the label sequence and  $\lambda (x_i,x_{i+1})$ for the label on the edge $x_i \lessdot x_{i+1}$ %$i=0,1,\dots ,n-1$   
if the choice of root $r$ is clear from context.   

%\commentph{here again we use strict ascents}

Given a label sequence $(\lambda_1,\dots ,\lambda_r)$, we say that a pair of consecutive labels $\lambda_i,\lambda_{i+1}$ comprises an {\bf ascent} if and only if $\lambda_i < \lambda_{i+1}$.  The pair $\lambda_i, \lambda_{i+1}$  comprises a {\bf descent} otherwise.  

\begin{defn}
A maximal chain $c$ in a rooted interval $[x, y]_r$ is \textbf{ascending} with respect to a %n edge-labeling or 
chain-edge labeling $\lambda  $  if the label sequence $\lambda_r(c) = (\lambda_1,\lambda_2,\dots ,\lambda_r)$  has $\lambda_i < \lambda_{i+1}$ for $i=1,2,\dots ,r-1$.   
\end{defn}

\begin{defn}[\cite{bw}] A CE-labeling $\lambda$ of a finite bounded poset  
$P$ is called a \textbf{CL-labeling} (for chain-lexicographical labeling) if for every rooted interval $[x, y]_r$ in $P$, 
\begin{enumerate}[label=(\roman*)]
\item there is a unique ascending chain $c$ in $[x, y]_r$ and
\item the label sequence $\lambda_r(c)$ lexicographically precedes the label sequence for  every other maximal chain in $[x, y]_r$.
\end{enumerate}
If a % graded
finite bounded  poset $P$ admits a CL-labeling, then $P$ is said to be \textbf{CL-shellable}. 
\end{defn}

%\commentph{Be careful to give the right reference next!}
Bj\"orner and Wachs proved in \cite{BW82} (resp. \cite{non-pure1}) that  whenever a finite bounded  poset $P$ that is graded (resp. is not necessarily graded) admits a CL-labeling, then any linear extension of the lexicographic order on its maximal chains  given by  the CL-labeling is a  shelling order on the corresponding  facets of  $\Delta(P)$. %corresponding to these maximal chains.
%; recall that a linear extension of a partial order is any total order that is consistent with that partial order.
 
% \commentplh{Removed dual statement here.}
% Since $\Delta(P)=\Delta(P^*)$,  
%a CL-labeling for $P^*$ also gives  a shelling for $\Delta(P)$. We call a  CL-labeling of $P^*$ a \textbf{dual CL-labeling} of $P$, and  we call a poset $P$ such that $P^*$ admits a CL-labeling  \textbf{dual CL-shellable}. 

One of the primary  techniques for proving that a finite bounded poset is CL-shellable is to  construct  a recursive atom ordering (see Definition \ref{rao}). The  notion of recursive atom ordering  (RAO)  was introduced by Bj{\"o}rner and Wachs in \cite{bw}.  They extended it to posets that are not necessarily graded 
in \cite{non-pure2}.   %Recursive atom orderings reflect the recursive nature of a shelling. %  (a viewpoint  that is utilized e.g.  in  Section \ref{d-complexes}).   
%\commentph{Tricia made a first pass at moving the notion of chain-atom ordering here, in the next few lines.   It's possible this should go even earlier, near the definition of CL-labeling, but I have my doubts about that.}
Before defining recursive atom ordering, we lay the groundwork with a notion we call chain-atom ordering that will encompass all recursive atom orderings and all  generalized recursive atom orderings (defined later) as special cases.  

%\commentph{%I would be naturally inclined to think of 
%$\Omega ([u,\hat{1}]_r)$ as a chain-atom ordering on this rooted interval, not as just an ordering of its atoms.  However, I am using different notation for such a full chain-atom ordering when I need to use that, as explained below.  So that helps.  
%We should make sure we are doing things in a way that does not create confusion, though there may be an important place for both of these notions.}

\begin{defn}\label{chain-atom-def}
A \textbf{chain-atom ordering} $\Omega$ of a finite bounded poset $P$  is a choice of ordering on the atoms of each rooted interval  
$[u,\hat{1}]_r$  of $P$. For the rooted interval $[u, \hat{1}]_r$, we will denote this ordering of atoms as $\Omega([u, \hat{1}]_r)$. 
%\commentgs{I introduced notation here for use in the algorithm}
%   \commentph{Add: 
%\commentph{I felt like I had to change this next notation adding a bar, not just because the referee suggested it but because the notation was too confusing some 
%places without the bar.  There are still more places I need to change this in the paper.}
   On the other hand, we denote the restriction of the full chain-atom ordering 
    $\Omega $ to a rooted interval $[u,\hat{1}]_r$ as $\Omega|_{[u,\hat{1}]_r}$, and more generally we denote 
   the restriction of $\Omega $  to  $[u,v]_r$ as 
$\Omega|_{[u,v]_r}$.  We sometimes call $\Omega|_{[u,v]_r}$ the chain-atom ordering on $[u,v]_r$ induced by $\Omega $.
\end{defn}

Whenever  a finite bounded poset has a recursive atom ordering, defined next, this by definition
guarantees the existence of an especially well-behaved type of chain-atom ordering.  We will often refer to these especially nice   chain-atom orderings themselves as recursive atom orderings.  A key place where we will do this is when we introduce  a relaxation of the notion of recursive atom ordering in Section \ref{graosection}. 
%\gs{We will often refer to these types of chain-atom orderings themselves as recursive atom-orderings.}
%\commentph{I  really don't like the repetition of the phrase well-behaved here.  Maybe you can find a different word, but I think it's clear from context that we are referring to the ones in the prior sentence.} 

\begin{defn} A finite bounded  poset $P$ admits a \textbf{recursive atom ordering} if $P$ has length 1 or if the atoms of $P$ can be ordered $a_1, a_2, \ldots a_t$ such that: 
\begin{enumerate}[label=(\roman*)]
\item 
\begin{enumerate}
\item For all $j=1, \ldots, t$, $[a_j, \hat{1}]$ admits a recursive atom ordering.
\item For $j \neq 1$, the atoms that come first in this recursive atom ordering  for $[a_j,\hat{1}]$ are those that are greater than some atom $a_k$ of $P$ for $k < j$. 
\end{enumerate}
\item For all $i < j$ and $y \in P$ satisfying $ y > a_i $ and $y >  a_j$, there exists  $k < j$ and  $z\in P$ such that $a_j \lessdot z$ and $a_k < z \leq y$. 
\end{enumerate}

\label{rao}
\end{defn}

%???  should this next theorem and the commentary before it say "bounded" or just "finite"?

The following theorem of Bj{\"o}rner and Wachs from \cite{bw}  (which they extended to the not necessarily graded case in \cite{non-pure2}) established  a very useful relationship between the existence  of  a recursive atom ordering and of a CL-labeling for  any finite bounded poset $P$. 

%\commentph{The version stated next is the version of \cite{non-pure2}.  Perhaps we should add the second citation below?}

\begin{thm}[\cite{bw}, Theorem 3.2; \cite{non-pure2}, Theorem 5.11] \label{CL-iff-RAO}
A finite  bounded poset $P$ admits a recursive atom ordering if and only if $P$ is CL-shellable. 
\end{thm}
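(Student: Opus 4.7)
The plan is to prove both implications constructively, using induction on the length of $P$ in each direction. For the implication RAO $\Rightarrow$ CL-shellable, I first extend the given RAO of $P$, via axiom (i)(a) applied recursively, to an RAO of every rooted interval $[u,\hat{1}]_r$, and then construct a chain-edge labeling $\lambda$ by setting $\lambda(m,x_{i-1},x_i)$ equal to the position of $x_i$ in the induced RAO of $[x_{i-1},\hat{1}]_{r_{x_{i-1}}}$, where $r_{x_{i-1}}$ is the initial portion of $m$ below $x_{i-1}$. To verify this is a CL-labeling I exhibit, in each rooted interval $[x,y]_r$, a unique ascending chain that is lexicographically first. The candidate chain is built greedily from $x$ upward by always climbing to the earliest-ordered atom of the current rooted interval $[u,\hat{1}]_{r_u}$ that lies at or below $y$. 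The key observation is that if at some step we select the atom at position $j$, say $a_j$, then by RAO axiom (i)(b) every atom occupying any position $\le j$ in the subsequent RAO lies above some earlier atom $a_k$ of the previous interval with $k<j$; by the greedy choice of $a_j$, none of those $a_k$ are $\le y$, hence none of those early-positioned atoms are either, forcing the next label to exceed $j$. Axiom (ii) is then used to show that any other maximal chain of $[x,y]_r$ must have a descent at or before its first point of departure from the greedy chain, yielding both uniqueness and lex-minimality.

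For the converse CL-shellable $\Rightarrow$ RAO, given a CL-labeling $\lambda$, I order the atoms $a_1,\ldots,a_t$ of $P$ by the integer label $\lambda(\hat{0},a_i)$, breaking ties by the lex order on the unique ascending chain from $a_i$ to $\hat{1}$. Each $[a_j,\hat{1}]$ inherits a CL-labeling through the root $\hat{0}\lessdot a_j$, so by induction it admits an RAO, yielding axiom (i)(a). Axiom (i)(b) follows by inspecting, for each atom $b$ of $[a_j,\hat{1}]_{r_{a_j}}$ that is first in its induced RAO, the rank-$2$ interval $[\hat{0},b]$: its unique ascending chain begins with some atom $a_k$ whose label out of $\hat{0}$ is smaller than $\lambda(a_j,b)$, and this forces $k<j$. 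Axiom (ii) is the technical core: given $i<j$ and $y>a_i,a_j$, I apply the unique-ascending-chain property to the rooted interval $[\hat{0},y]$; the resulting chain begins $\hat{0}\lessdot a_i\lessdot \cdots$ by the tie-break convention, and comparing its lex-first property with any maximal chain through $a_j$ to $y$ produces a cover $a_j\lessdot z$ inside $[a_j,y]$ whose label is small enough to force $z$ to lie above some $a_k$ with $k<j$.

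The main obstacle is axiom (ii) in the reverse direction, since it is the only RAO axiom that is not the direct translation of a single edge- or interval-wise feature of the CL-labeling: it demands an element $z$ satisfying three simultaneous order relations. The delicate step is extracting $z$ from the unique ascending chain of $[\hat{0},y]$ and verifying $z \le y$ using the chain-dependence of the labeling rather than just its edge values. A secondary hurdle, in the forward direction, is verifying that the greedy chain is strictly ascending at every step rather than merely at the first, which requires combining RAO axiom (i)(b) with the greedy choice in the careful manner sketched above.
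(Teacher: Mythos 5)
Your forward direction (RAO $\Rightarrow$ CL) has a genuine gap, and it is precisely the gap that the paper highlights by reviewing the Bj\"orner--Wachs construction in Section~\ref{CL-RAO}. You propose labeling $u \lessdot a_i$ inside $[u,\hat{1}]_r$ by the raw position $i$ of $a_i$ in the induced RAO, and you claim that if you greedily climb to the position-$j$ atom $a_j$, then ``every atom occupying any position $\le j$ in the subsequent RAO lies above some earlier atom $a_k$'' of the previous interval, so ``the next label exceeds $j$.'' That claim is false. Condition (i)(b) of an RAO puts $F_r(a_j)$ as an initial segment of the atoms of $[a_j,\hat{1}]_{r'}$, but gives no lower bound on $|F_r(a_j)|$: it can easily be strictly less than $j$. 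A concrete instance: take $P$ with atoms $a_1, a_2$ where $[a_2, \hat{1}]$ has atoms $b_1 > a_1$ and $b_2 \not> a_1$, so $F = \{b_1\}$, $G=\{b_2\}$ and $b_2$ sits at position $2$; then $\lambda(\hat{0},a_2)=2$ and $\lambda(a_2,b_2)=2$, so the unique maximal chain $\hat{0}\lessdot a_2 \lessdot b_2$ of $[\hat{0},b_2]$ has label sequence $(2,2)$, which is not ascending. Your labeling \emph{is} a valid CC-labeling (this is exactly Proposition~\ref{RAOimpliesCC} in the paper -- your greedy chain is the unique \emph{topologically} ascending chain), but it is not a CL-labeling. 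What is missing is the Bj\"orner--Wachs affine shift: once $j$ separates $F_{\hat{0}\lessdot a_i}(a_i)$ from $G_{\hat{0}\lessdot a_i}(a_i)$, one relabels $\lambda(a_i, x_{i'}) = i' - (j - \lambda(\hat{0}, a_i)) - 1$ for $i' \le j$ and $\lambda(a_i, x_{i'}) = i' + (\lambda(\hat{0}, a_i) - j) + 1$ for $i' \ge j+1$. This preserves the relative order of labels up from $a_i$ (so lex-first remains lex-first) while forcing a genuine descent at $\hat{0}\lessdot a_i \lessdot x_{i'}$ exactly when $x_{i'} \in F$ and an ascent when $x_{i'} \in G$; without this shift the ascent/descent structure you need does not materialize.

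Your reverse direction (CL $\Rightarrow$ RAO) follows the standard blueprint and is broadly right, but two steps are glossed. In axiom (ii) you assert that the unique ascending chain of $[\hat{0},y]$ begins with $a_i$ ``by the tie-break convention,'' which need not hold -- it begins with whichever atom below $y$ carries the smallest $\hat{0}$-label, and that could be some $a_m$ with $m < i$; fortunately the argument only needs the chain to start with \emph{some} $a_k$ with $k < j$, so this is repairable. In axiom (i)(b) you only treat the very first atom of $[a_j,\hat{1}]$, but (i)(b) demands that the \emph{entire} set $F_{\hat{0}\lessdot a_j}(a_j)$ precedes $G_{\hat{0}\lessdot a_j}(a_j)$; you need a uniform comparison, e.g., via the ascent/descent at $\hat{0} \lessdot a_j \lessdot b$ controlled by whether $b \in F$ or $b \in G$, not just the single leading atom. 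Note also that the paper itself does not supply an independent proof of this theorem -- it is imported from \cite{bw} -- so the only point of internal comparison is Section~\ref{CL-RAO}, which reviews exactly the label-shifting construction your forward argument is missing.
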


%\commentph{The referee wants us to give the next definition in Section 2 in the generality of chain-atom orderings rather than recursive atom orderings, then delete the generalization later.  Tricia thinks we should do this, so she is suggesting here language.}

%A key  ingredient in their proof % \commentph{omit the parenthetical remark: (that we will  also use and generalize later)}   %we will need here (and later in the paper) that was introduced in \cite{bw} 
% is the pair of sets $F_r(u)$ and $G_r(u)$ \commentph{old:, defined as follows.} which they defined for recursive atom orderings and we will define now more generally for chain-atom orderings.  

%\commentph{This next definition  seems pretty technical for this spot in the paper.  I am somewhat
%inclined to move it to later in this section, namely to right before we use it (first in the proof of Proposition  2.18 and then in the proof sketch of Theorem 2.19).  In the sentence or two just above, we could point readers to this definition and to the proof sketch as well at the end of this section.  The main argument for doing this definition now is if it helps with CC-shellability or other upcoming notions, but I don't that's really the case.}  

%\commentph{Be careful where to put the next paragraph and the exact phrasing, also watching for repetitiveness.}

%\commentph{Perhaps add: 
We sketch one direction of the proof of Theorem \ref{CL-iff-RAO} shortly,   %namely the implication that every recursive atom ordering gives rise to a CL-labeling,
 %later in this section  in Theorem \ref{one-direction-of-BW}, 
since many of the  ideas in this argument  will be used in other proofs later in the paper.   A key ingredient is the pair of sets $F_r(u)$ and $G_r(u)$ defined next.
% below in 
%Definition \ref{F-u}. 
% Readers may also find it useful to read the proof of Theorem \ref{RAOimpliesCC} as a warm-up for results later in the paper.  
%This proof sketch relies  heavily on the notions of $F_r(u)$ and $G_r(u)$ that we recall in Definition \ref{F-u}.
%
%\commentph{Perhaps move definition ahead of proof sketch?}
%
%\commentph{Be consistent throughout the paper where we put the subscript $r$ e.g. in $F_r(u)$.}
%
%\commentph{Be consistent with superscript notation somewhere, e.g.  $F^{\Lambda }$.}
%
Bj\"orner and Wachs introduced these sets $F_r(u)$ and $G_r(u)$ for recursive atom orderings, but we find it convenient to define them more
generally. %for chain-atom orderings. 

\begin{defn}\label{F-u}
Consider any rooted interval $[u,\hat{1}]_r$ in a finite bounded poset $P$.  Let $u^-$ be the element of $r$ covered by $u$ and
let  $r^-$ be the root for $[u^-,\hat{1}]$ obtained by omitting $u$ from $r$ and otherwise preserving $r$. 
Let $\Lambda $ be  either % either %  any entity which includes within its data 
a total order on the atoms of $[u^-,\hat{1}]_{r^-}$ % or a chain-atom ordering
 or  any richer structure,  such as a chain-atom ordering,  which specifies  such  a total order.
 % on the atoms of
%$[u^-,\hat{1}]_{r^-}$.}
% for instance $\Lambda $ could be an ordering of the atoms of $[u^-,\hat{1}]_{r^-}$  or it could be a chain-atom ordering.} % or any other such collection of atom orderings.}
%Let 
%\commentph{awkward to use $\Lambda $ for two 
%different things.} 
%$\Lambda$  be  \commentph{either a chain-atom ordering or a portion of a chain-atom ordering which includes within its data}  an ordering of the atoms of $[u^-, \hat{1}]_{r^-}$. %\commentph{or a partial chain-atom order or a  chain-atom ordering which includes within it such an ordering of the atoms of 
%$[u^-,\hat{1}]_{r^-}$}. \commentph{Let $\Lambda $ be a specification of an ordering on the atoms of ..., for example $\Lambda $ could be  a chain-atom ordering or a partial chain-atom ordering.} 

Define \textbf{ $F^{\Lambda }_r(u)$} to be the set of atoms $a$ of  $ [u,\hat{1}]_r$ such that 
%\commentph{It would remove ambiguity to replace the next inequality with 
$a>_P a'$
 %$a >a'$
  for some atom $a'$ in $[u^-,\hat{1}]_{r^-}$ that comes earlier than 
%\commentph{I added the phrase ``the atom'' to make this easier for readers to understand.}  
%the atom 
$u$  %\commentph{or skip saying atom and saying where}   of $[u^-,\hat{1}]_{r^-}$ 
in $\Lambda $. % \commentph{old: RAO.}
 Define 
\textbf{$G^{\Lambda }_r(u)$} to be the set of all atoms of $[u,\hat{1}]_r$ that are not contained in $F^{\Lambda }_r(u)$. 

 % \commentgs{ Give F and G an order here, or just in the reordering algorithm? Leaning towards the latter}\commentph{Tricia agrees not to give $F$ and $G$ an order here.  It would make various lemmas she proves later much more difficult to prove in a coherent way -- making the notation extremely challenging.}  
 
% \commentph{Add: More generally, we define $F_r(u,v)$ and $G_r(u,v)$...}
 
% \begin{defn}\label{F-and-G}
%Consider  any chain-atom ordering (see Definition \ref{chain-atom-def}) of a finite poset $P$ and suppose
%Suppose that  $[u,v]_r $ is a rooted interval of $P$, where $r$ is a 
%saturated chain $\hat{0}\lessdot u_1\lessdot \cdots \lessdot u_k \lessdot u$.  
%Let $r^-$ denote the saturated chain $\hat{0}\lessdot u_1\lessdot \cdots \lessdot u_k$.  

%\commentgs{Have we deliberately chosen not to match exactly the language below with the language above? E.g. why u' and not a'?} 
Given any $v\in P$ satisfying $u<v$, define $F^{\Lambda }_r(u,v)$ to be the set of atoms $a$ of $[u, v]_r$ such that $a >_P a'$ for some $a'$ that covers $u^-$ and comes earlier than $u$ in % \commentph{the restriction of $\Lambda $...} the given chain-atom ordering of the atoms of 
 $\Lambda|_{[u^-, v]_{r^-}}$. Define $G^{\Lambda }_r(u,v)$ to be the set of atoms of $[u, v]_r$ that are not contained in $F^{\Lambda }_r(u,v)$.   

  Sometimes we will denote these sets  simply by 
$F_r(u,v)$, $G_r(u,v)$, $F_r(u)$ and $G_r(u)$  when the choice of $\Lambda $ is clear from context.  
\end{defn}

\begin{rmk}
By definition,  % for $F_r(u)$ and $G_r(u)$ (see Definition \ref{F-u})  
we have $F^{\Lambda }_r(u) = F^{\Lambda }_r(u,\hat{1})$ and  %that
 $G^{\Lambda }_r(u) = G^{\Lambda }_r(u,\hat{1})$.
\end{rmk}

{\bf Proof sketch of how an RAO yields a CL-labeling:} 
%\begin{proof}
  Given an RAO $a_1,\dots ,a_n$ for $P$, start by labeling each  cover relation of the form $\hat{0}\lessdot a_i$ with the integer $i$.  
  By induction on the length of the longest saturated chain in $P$, 
 one  may assume that 
 each rooted interval $[a_i,\hat{1}]$  has its own RAO that induces a CL-labeling for $[a_i,\hat{1}]$.   The plan is to modify this CL-labeling for  $[a_i,\hat{1}]$ for each $i$ so that the modified labels may be taken together  with the   labels % $1,2,\dots ,n$ on the cover relations 
 $\lambda ( \hat{0} , a_i) = i$ for $i=1,2,\dots ,n$ to give a CL-labeling $\lambda $  for all of $P$.  
 It will suffice to  describe how to modify the labels on the  cover relations upward from $a_i$ to the atoms of $[a_i,\hat{1}]$, then to apply this same label modification process inductively to handle the rooted intervals higher in the poset.
 
 Letting $x_1,\dots ,x_u$ be our RAO for 
 $[a_i,\hat{1}]$ which is guaranteed to exist by  the definition of an RAO for $P$, Bj\"orner and Wachs made the key observation that there always exists  some $j\in \{ 1,\dots ,u\} $ such that % \commentph{We still need to make our notation for $F$ and $G$ in the rest of this proof sketch consistent with elsewhere in our paper.}
 $x_1,\dots ,x_j\in F_{\hat{0}\lessdot a_i} (a_i)$ while $x_{j+1},\dots ,x_u\in G_{\hat{0}\lessdot a_i} (a_i)$. %  (see Definition \ref{F-u}).    
 For example, one may label  each cover relation $a_i\lessdot x_{i'}$ for 
 $i'\le j$ with the label % \commentph{These next labels  were written in a confusing way, 
% so I corrected this part.  Bj\"orner and Wachs do not actually calculate these labels, so I also changed
 %the language to reflect that.}  
  $ \lambda (a_i,x_{i'}) = \lambda (\hat{0},a_i) - (j-i') - 1$, and one may label  each cover relation $a_i\lessdot x_{i'}$ for $i'\ge j+1$ with the label 
 $\lambda (a_i,x_{i'}) = \lambda (\hat{0},a_i) + (i' - j) $.     %Bj\"orner and Wachs show that t
 The resulting labeling has the following two properties:
% \commentph{I think we should put  $\ge $ in item 1 if we define ascending chains to be strictly ascending.  Also should double check how we do the shifting above, in case it ever gives equality}
 \begin{enumerate}
 \item
$ \lambda (\hat{0},a_i) \ge   \lambda (a_i,x_{i'}) $ if and only if $x_{i'} \in F_{\hat{0}\lessdot a_i}(a_i)$
 \item
 $\lambda (a_i,x_{i'}) < \lambda (a_i,x_{i''})$ if and only if $i'<i''$.
 \end{enumerate}
 In other words, the shifting of label values preserves the relative order of the labels on cover relations upward from a fixed element with the same fixed choice of root below, but at the same time it creates ascents and descents exactly where they  are needed in order to have a CL-labeling.
% \end{proof}
%\\
%{\bf End of proof sketch.}
%
See Theorem 3.2 in  \cite{bw} for further details of this proof.
% \commentph{Should we add something like:  Much of the remainder of the
%argument is similar to our proof of 
%Proposition \ref{RAOimpliesCC} above, since one may use the sets $F_r(u)$ and $G_r(u)$ to show  
%that the ascents (resp. descents)  in the chain-labeling we have just described are exactly the topological  ascents (resp. topological descents)  in this same chain-labeling.}\commentplh{Be careful in this possible
%addition whether to use $F_r(u,v)$ or $F_r(u,\hat{1})$ and likewise for $G$.}   

\noindent {\bf End of proof sketch.}

% \commentph{Remember to change to $F^{\Lambda }(u,v)_r$ and $G^{\Lambda }(u,v)_r$ assuming we do that...}
%\end{defn}

%\commentplh{Removed dual statement here.}
%Bj\"orner and Wachs also  introduced the following dual notion to recursive atom ordering: % known as recursive coatom ordering.

%\begin{defn} A finite bounded  poset $P$ admits a \textbf{recursive coatom ordering} if its dual poset $P^*$ admits a recursive atom ordering.
%\end{defn}

We next review the notions of  CC-shellability  and EC-shellability from \cite{kozlov}, doing so  using the language of topological ascents and descents (defined next) that was  
introduced in \cite{hersh}.  % where this notion was rediscovered.  
%
%\begin{defn} 
Let $\lambda$ be an edge labeling  on the cover relations of a poset $P$ by elements of some poset $Q$.
%Consider elements  $u \lessdot v \lessdot w$ in $P$. 
 A \textbf{topological ascent} occurs in $P$ at $u\lessdot v\lessdot w$  whenever the ordered pair of labels $(\lambda (u, v)$, $\lambda (v, w))$ is lexicographically earlier than all other ordered sequences  of labels on all other saturated chains from $u$ to $w$. If $(\lambda(u, v), \lambda(v, w))$ is not a topological ascent, then it is a \textbf{topological descent}. 
  
 For $\lambda $ a chain-edge labeling, we define topological ascents and descents in the same way, but now with respect to a choice of root.  That is, we have a \textbf{topological ascent} at $u\lessdot v\lessdot w$ with respect to root $r$ if the ordered pair $(\lambda (u,v), \lambda (v,w))$ is lexicographically smaller than all other label sequences  on saturated chains from $u$ to $w$ with  this same root $r$.   We have a \textbf{topological descent} at $u\lessdot v\lessdot w$ 
 % \commentph{Add: 
 with respect to  root $r$  otherwise.  
 
 If $c$ is a saturated chain  from  an element $u$ to an element  $v$ consisting entirely of topological ascents (in either an edge labeling or a chain-edge labeling), then we say that $c$ is a \textbf{topologically ascending} chain from $u$ to $v$.   If $c$ is a saturated chain from $u$ to $v$ consisting entirely of topological descents, then it is said to be \textbf{topologically descending}.

The next definition is rephrased from how it appears in \cite{kozlov}, using the language of topological ascents and descents, but is entirely equivalent to his definition.
%??? be careful that Kozlov never uses the terms topological ascent or descent but rather something equivalent to this

\begin{defn}[\cite{kozlov}]An \textbf{EC-labeling} of  a finite bounded poset $P$ is an edge labeling $\lambda:E(P) \rightarrow Q$ on the cover relations of $P$ with labels belonging to a poset $Q$, subject to the requirements  that (1) every interval $[x,y]$ has a unique saturated chain consisting entirely of topological ascents, and (2) the label sequences for the saturated chains of $P$ (and hence of each interval $[x,y]$) are all distinct with no label sequence being a prefix of any other label sequence.  

If $P$ admits an EC-labeling, then $P$ is said to be \textbf{EC-shellable}. 
\label{ec}
\end{defn}

Just as EL-shellability was generalized to CL-shellability, the notion of EC-shellability was likewise generalized by Kozlov
from edge labelings to chain-edge labelings.  His definition from \cite{kozlov} may be rephrased  as follows.

\begin{defn}[\cite{kozlov}] A \textbf{CC-labeling} of a finite bounded poset $P$ is a chain-edge labeling $\lambda: E^*(P) \rightarrow Q$ with labels belonging to a poset $Q$, subject to the requirements  that (1)  every rooted interval $[u,v]_r$ has a unique saturated chain consisting entirely of topological ascents, and  (2) the label sequences for the saturated chains of $[u,v]_r$ are all distinct with no label sequence being a prefix of another label sequence.

 If a finite bounded poset $P$ admits a CC-labeling, then $P$ is said to be \textbf{CC-shellable}. 
\label{cc}
\end{defn}

%\commentplh{Removed dual statement here.}
%\begin{defn} 
%%%Let $P$ be a finite  bounded  poset. % and $P^*$ its dual poset.
% If  the dual poset  $P^*$ to a finite bounded poset $P$  admits an EC-labeling (resp. CC-labeling), then 
% $P$ is said to be \textbf{dual EC-shellable} (resp. \textbf{dual CC-shellable}). 
%\end{defn} 

%As with CL-shellable posets, a 
Kozlov proved  in Theorem 3.8 of  \cite{kozlov} that the order complex of any finite bounded  poset  $P$ admitting a CC-labeling  is 
shellable, doing so  by taking any linear extension of the lexicographic
order on the  label sequences  for the maximal chains of $P$ as the shelling order for the 
corresponding  facets of $\Delta (P)$. 

Kozlov noted in \cite{kozlov} that any finite bounded poset that is CL-shellable is also CC-shellable, leaving the proof to the reader.  We include  a proof of this result as
Proposition \ref{RAOimpliesCC}.  Our proof  below closely follows parts of 
the proof of Bj\"orner and Wachs that any recursive atom ordering induces a CL-labeling  (see Theorem 3.2 in \cite{bw}).
%, filling in more details than appear in the proof sketch of that result earlier in this section.   %Again we will also use the sets $F_r(u)$ and $G_r(u)$ which we review in Definition \ref{F-u}.
This proof  is included because it  
introduces several additional   important  ideas that will be used in various ways  in other proofs 
later in the paper.

\begin{prop}\label{RAOimpliesCC}
Any recursive atom ordering $a_1,\dots ,a_t$ of a finite bounded poset $P$ gives rise to a CC-labeling of $P$.  Thus, 
any finite bounded poset that is CL-shellable is CC-shellable.
\end{prop}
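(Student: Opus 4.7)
The plan is to construct a chain-edge labeling directly from the given RAO and verify the two CC-axioms of Definition~\ref{cc}. Given a maximal chain $c = (\hat{0} = x_0 \lessdot \cdots \lessdot x_n = \hat{1})$, write $r_i$ for the initial segment $\hat{0} \lessdot \cdots \lessdot x_i$, and define $\lambda(c, x_i, x_{i+1})$ to be the position of $x_{i+1}$ in the RAO of the rooted interval $[x_i, \hat{1}]_{r_i}$ (the latter RAO being determined by the given RAO on $P$ upon iterating axiom (i)(a) of Definition~\ref{rao}). Since the value depends only on $r_i$, the map $\lambda$ is a genuine chain-edge labeling.

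Condition (2) of Definition~\ref{cc} is immediate: distinct atoms of any rooted interval receive distinct labels, so a label sequence in a rooted interval $[u,v]_r$ uniquely determines the corresponding saturated chain; any two such chains share the endpoints $u$ and $v$, ruling out prefix containment.

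For condition (1), I would construct a candidate topologically ascending chain $c^\ast$ in each $[u,v]_r$ by greedily taking at each step the earliest atom (in the relevant RAO) lying weakly below $v$. Each length-two window $x_{i-1} \lessdot x_i \lessdot x_{i+1}$ of this chain is then a topological ascent: because $x_i \leq x_{i+1} \leq v$, the element $x_i$ is also the earliest atom of $[x_{i-1}, \hat{1}]_{r_{i-1}}$ lying $\leq x_{i+1}$, so any competing saturated chain from $x_{i-1}$ to $x_{i+1}$ begins with a strictly larger first label (or, in the exceptional case that it begins with $x_i$, coincides with ours since $x_{i+1}$ covers $x_i$).

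The heart of the proof, and the main obstacle, is uniqueness. I would proceed by induction on the length of $[u,v]$. Let $c'$ be any topologically ascending chain in $[u,v]_r$ starting $u \lessdot a_j \lessdot y_2 \lessdot \cdots$, and let $r_+$ be the root to $a_j$ obtained by extending $r$ through the cover $u \lessdot a_j$. The inductive hypothesis applied to the strictly shorter interval $[a_j,v]_{r_+}$ forces the tail of $c'$ to coincide with the greedy chain there, so $y_2$ is the earliest atom of $[a_j,\hat{1}]_{r_+}$ lying $\leq v$. If $a_j$ were not already the earliest atom of $[u,\hat{1}]_r$ lying $\leq v$, some $a_i \leq v$ would have $i < j$; then RAO axiom (ii) applied with $y = v$ yields $k < j$ and $z$ with $a_j \lessdot z$ and $a_k < z \leq v$, placing $z$ in $F_{r_+}(a_j)$. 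RAO axiom (i)(b) places all of $F_{r_+}(a_j)$ before $G_{r_+}(a_j)$ in the RAO of $[a_j,\hat{1}]_{r_+}$, forcing $y_2 \in F_{r_+}(a_j)$ as well, so $y_2 > a_{k'}$ for some $k' < j$. But then the saturated chain $u \lessdot a_{k'} \lessdot \cdots \lessdot y_2$ has first label $k' < j = \lambda(u, a_j)$, contradicting the topological ascent condition at $u \lessdot a_j \lessdot y_2$. Hence $a_j$ is the earliest atom $\leq v$, and induction concludes $c' = c^\ast$. The final sentence of the proposition then follows by combining this construction with Theorem~\ref{CL-iff-RAO}.
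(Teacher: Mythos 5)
Your proposal is correct and takes essentially the same approach as the paper: you construct the identical chain-edge labeling (label $u\lessdot a_i$ by the position $i$ of $a_i$ in the RAO of the relevant rooted interval) and the crux of your verification — that a topological descent occurs at $u\lessdot a_j\lessdot y_2$ because $y_2\in F_{r_+}(a_j)$ whenever $a_j$ is not the earliest atom below $v$ — is exactly the paper's key observation that the CC-labeling has a topological descent at $u^-\lessdot u\lessdot a$ (with respect to root $r$) if and only if $a\in F_r(u)$. The only difference in presentation is that the paper cites the Bj\"orner--Wachs argument from \cite{bw} and instructs the reader to replace ascents and descents by their topological analogues, whereas you spell out the verification yourself via the greedy candidate chain $c^*$ and an induction on interval length. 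Your more explicit version is a useful elaboration of what the paper leaves implicit, but it is not a different route.
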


\begin{proof}
%\commentgs{Can Tricia please review?}\commentph{Done: notice my comments below, which hopefully you agree with.}
%\commentph{The reason I am taking the perspective of a chain-atom ordering is so the labeling is well-defined as we go up all the way in the poset, since we will need all these labels in our proof.} 
%\commentph{Add the phrase: 
Given a recursive atom ordering $\Lambda $,  we %}  \sout{We}
 start by describing the desired CC-labeling $\lambda $ %
 %\commentph{Add: 
 derived from $\Lambda $.  %\commentph{Add: 
Letting $a_1,\dots ,a_t$ denote the atom ordering of $P$ given by $\Lambda $,
%\sout{First,}  
we assign the label $\lambda (\hat{0}, a_i) = i$ to each cover relation of the form $\hat{0}\lessdot a_i$.  Now, for any $u\in P$ and any root $r$ leading up to $u$, consider  any % \commentph{Add: the} 
  recursive atom ordering $a'_{1},\dots ,a'_{t'}$ for $[u,\hat{1}]_r$ 
%\commentph{Add: given by $\Lambda $}
of the type guaranteed to exist recursively in the definition of RAO.    Assign the labels $\lambda_r(u,a_i') = i$ for $i=1,2,\dots ,t'$.

By construction, this labeling has two of  the requisite properties of a CC-labeling, namely  the requirements  that 1) no two saturated chains of $[u, \hat{1}]_r$ have the same label sequence and 2) no two saturated chains of $[u, \hat{1}]_r$ have the property that the label sequence of one is a prefix of the label sequence of the other.  
%One may use the reasoning from the proof of Bj\"orner and Wachs that any RAO gives rise to a chain-edge labeling in which each rooted interval has a unique ascending chain and that this label sequence is lexicographically smallest  to show likewise that for 
%labeling $\lambda $ that we have constructed  (which is a close relative of the labeling of Bj\"orner and Wachs) that 
Now we verify that each rooted interval $[u, v]_r$ in $P$ has a unique topologically ascending maximal chain and that  this chain is lexicographically first. We do so by induction on the length of the longest chain in $[u, v]_r$.  This hypothesis clearly holds for intervals of length two, the base case. % Specifically, we use the observation from \cite{bw} that 
Let $c$ which is  given by $u \lessdot u_1 \lessdot \ldots \lessdot u_k = v$ be the lexicographically first maximal chain in $[u, v]_r$. Because it is lexicographically first, this forces $c$ to be  topologically ascending. 

%\commentph{I added paragraph break here, since people need to pause while reading, so can't handle such long paragraphs.}  
Suppose that there is another topologically ascending maximal chain $c'$  in $[u,v]_r$.  Let $c'$ be  given by $u \lessdot u_1' \lessdot u_2' \lessdot \ldots \lessdot u_{k'}' = v$.   As $u \lessdot u_1' \lessdot u_2'$ must then be  a topological ascent, it is the lexicographically earliest maximal  chain from $u$ to $u_2'$. 
%\commentph{I need to think more carefully  about whether the next sentence really follows from this directly.  What is bothering me is that we can't just go freely back and forth between the chain-atom ordering and the chain-labeling, so we need to be careful to specify which  chain-atom ordering we are talking about in speaking of $G_{r\cup u}(u_1')$.} 
 This implies $u_2' \in G^{\Lambda }_{r \cup u_1'}(u_1')$. %  with respect to the RAO  $\Lambda $. 
 As $c$ is lexicographically earlier than $c'$,
 % \commentph{why can't we have that  $\lambda (u,u_1) = \lambda (u,u_1')$?  I edited the next bit to address this.} 
 we may conclude that $\lambda(u, u_1) \le   \lambda(u, u_1')$.  But our choice of chain-edge labeling based on an RAO ensures that 
$\lambda (u,u_1)\ne \lambda (u,u_1')$, implying  $\lambda (u,u_1) < \lambda (u,u_1') $  from which we deduce  
 that $u_1$ comes before $u_1'$ in the  % \commentph{Add: 
 given %}
   RAO of $[u, \hat{1}]_r$. By (ii) of Definition \ref{rao}, there exists an atom $a$ of 
 %\commentph{typo: $u$ needs to be an interval, I believe
  $[u,v]_r$ and an element $z \in P$ such that $a$ comes before $u_1'$ in the RAO,  $u_1' \lessdot z$ and $a < z \leq v$. If $z = u_2'$, then $u_2' \in F^{\Lambda }_{r \cup u_1'}(u_1')$,  %\commentph{Be careful throughout this proof where we put subscript for the root -- be consistent with elsewhere in paper.}, 
  contradicting our earlier claim that $u_2' \in G^{\Lambda }_{r \cup u_1'}(u_1')$. If $z \neq u_2'$, then $u_1' \lessdot u_2' \lessdot \ldots u_{k'}'=v$ is not lexicographically first in $[u_1', v]_{r \cup u_1'}$ and, by the induction hypothesis, not topologically ascending. This gives  a contradiction to the claim that $c'$ consists entirely of topological ascents. Thus, we confirm that the chain-edge labeling $\lambda $ has a unique topologically ascending chain in $[u,v]_r$, completing our proof that this is a CC-labeling.  

Combining the above argument  with the result of \cite{bw} that a finite bounded poset is CL-shellable if and only if it admits a recursive atom ordering shows that every  CL-shellable poset is CC-shellable.
\end{proof}

\section{Generalized recursive atom ordering} %: definition and fundamental properties}
 %and equivalence  %of existence 
%to self-consistent  CC-shellability }
\label{graosection}

In this section we introduce a  generalization of  
the notion of recursive atom ordering and prove  %the following
several   fundamental properties of these  generalized recursive atom orderings.  

%\begin{defn}\label{chain-atom-def}
%A \textbf{chain-atom ordering} of a finite bounded poset $P$  is a choice of ordering on the atoms of each rooted interval  
%$[u,\hat{1}]_r$  of $P$.  
%\end{defn}

% \commentph{Tricia commented out the definition of recursive atom ordering just above, but has not yet done anything about the next remark that should perhaps be deleted, or else moved and likely shortened.}

%\commentph{The next remark incorrectly speaks of things being graded.}

%\commentph{I re-read the definition of Bj\"orner and Wachs of recursive atom ordering and now believe (due to their  usage of the word ``admits'' rather than "is") that it is very ambiguous whether they are defining an RAO to be a  chain-atom ordering or just an atom ordering.  As a reminder, the referee asked us to remove the next remark saying it would likely only cause confusion, but we decided not to remove it but just to shorten it.  Probably this is the right call, but I wasn't sure now, so put this remark. }

\begin{rmk}\label{chain-atom-vs-atom}
%\sout{A recursive atom ordering is an ordering of just the atoms of a finite % \commentph{graded} 
%poset $P$, but it is an atom ordering for which their exists one or more extensions 
%to chain-atom orderings of $P$  of the type that is guaranteed to exist by the various requirements in the definition of  an RAO.
%% %in a manner that satisfies the requirements of the chain atom orderings guaranteed to exist for  $[a_i,\hat{1}]$ for each atom of $P$.  
%  It is often useful to abuse notation and  think of a recursive atom ordering as such a chain-atom ordering rather than as  an ordering of just the  atoms.  }
%
The generalized recursive atom orderings (GRAOs)  we are about to introduce 
will  %\sout{likewise} 
 be  defined  as 
  orderings on the atoms of a finite  %\commentph{graded} 
poset $P$ that are extendible to chain-atom orderings of $P$  meeting certain requirements.
%of the type  guaranteed to exist by the requirements for a GRAO.
%\commentplh{Move this next part to after the definition of GRAO: 
 %In upcoming proofs, i
 It will often be convenient to 
 think of a generalized recursive atom ordering as a % type of 
 chain-atom ordering of this type. % rather than as an ordering of just the atoms.
 \end{rmk}
 
% \commentph{This next remark also seems like a very good place to explain that when we say there are more GRAOs than RAOs, we are thinking of them as chain-atom orderings, not just as atom orderings.  I made a first attempt at making this point here.}

%??? should we change ``this GRAO'' in item (ii) into ''such a GRAO'' to allow for the possibility that there is a choice?

%\commentph{Keep:?}  
%Having   made these critical   remarks, %no
Now we are ready to give the  main new definition of the paper: %that  of generalized recursive atom ordering:

\begin{defn} A  finite bounded poset $P$ admits a \textbf{generalized recursive atom ordering} (GRAO) if the length of $P$ % (i.e. the length of the longest chain in $P$) 
 is 1 or if the length of $P$ is greater than 1 and there is an ordering $a_1, a_2, \ldots a_t$ on the atoms of $P$ satisfying: 
\begin{enumerate}[label=(\roman*)] 
\item \begin{enumerate}
\item For $1 \leq j \leq t$, $[a_j, \hat{1}]$ admits a  GRAO. %generalized recursive atom ordering 
\item For any  atom $a_j$ and any $x,w\in P$ satisfying % $a_j, x$, and $w$ where  
$a_j \lessdot x \lessdot w$, the following property holds when the chain-atom ordering given by the  
GRAO from  (i)(a) is restricted to $[a_j, w]$: either the first atom of $[a_j, w]$ is above an
atom  $a_i$ with $i < j$, or no atom of $[a_j, w]$ is above any atom  $a_i$ with $i < j$. 
\end{enumerate} 
\item % For all $i < j$, i
%\gs{Phrasing for RAO (better to use our own phrasing both to avoid plagiarism and to be more in the style we have written this definition): For all $i < j$ and $y \in P$ satisfying $ y > a_i $ and $y >  a_j$, there exists  $k < j$ and  $z\in P$ such that $a_j \lessdot z$ and $a_k < z \leq y$.} 
For  any   $y\in P$ and any  atoms  $a_i,a_j $ satisfying $a_i < y$ and $a_j < y$ with $i<j$,  %,  for $i<j$,
there  exists 
% an atom $a_k$ with   %some
 %$k < j$ and 
 an element 
$z\in P$  with  $z\le y$ and an atom $a_k$ with $k<j$  such that  $a_j \lessdot z$ and  $a_k < z$. 
%and  $a_j \lessdot z \leq y$. %  for some $k<j$. 
\end{enumerate}
\label{grao}

\end{defn}

\begin{rmk}\label{more-general}
 %  \sout{That is, many of our results regarding GRAOs are proven by taking the atom ordering of any GRAO and proving the result  for every chain-atom ordering of the type guaranteed to exist as an extension of the atom ordering of  the given GRAO.}
%\commentph{Add something like: 
When we say that not all recursive atom orderings are generalized recursive atom orderings  and when we say that  GRAOs are  strictly more general than RAOs, we are regarding  RAOs and GRAOs  as being types of  chain-atom orderings. 
%  rather than  being types of  atom ordering that are extendible to such chain-atom orderings.
% and observing that there are many more chain-atom orderings meeting the requi
\end{rmk}

\begin{rmk}
Conditions (i)(a) and (ii) in the definition of GRAO are exactly equivalent to corresponding statements within the definition of recursive atom ordering.  However,  
condition (i)(b)  above  is considerably less restrictive than (i)(b) of the definition of RAO.  
\end{rmk}

One of the main points of this relaxation of the notion of recursive atom ordering 
is % that it is designed to 
that it gives considerably more flexibility than a recursive atom ordering  in how we may  %choosing an atom 
order %ing %in terms of how to order 
those atoms in a rooted interval $[u,\hat{1}]_r$ that are not the 
first atom of that interval.    For this reason,  the following rephrasing of condition (ii) also seems quite useful to note:

\begin{rmk}\label{2-rephrased}
Condition (ii) in Definition ~\ref{grao} is logically equivalent to the following statement: if $a_i<y$ and $a_j<y$ for $i<j$, 
then there exists  $z\in P$ such that $a_j\lessdot z\le y$ where   $a_j$ is not the earliest atom in $[\hat{0},z]$.    
\end{rmk}

%\commentph{Do we explain/mention our usage of red anywhere?  If we use a color (which seems a good idea), we should say somewhere what it is indicating.}  

\begin{ex}
The chain-atom ordering  given on the left in Figure \ref{graoex} is a generalized recursive atom ordering but is not a (traditional) recursive atom ordering. In particular, the atom of $[a, \hat{1}]$ labeled 3 in the poset on the left causes condition  (i)(b) of Definition \ref{rao} to fail. The chain-atom ordering given on the right is a recursive atom ordering.    We highlight in color  the place where the two chain-atom orderings differ.
\end{ex}  
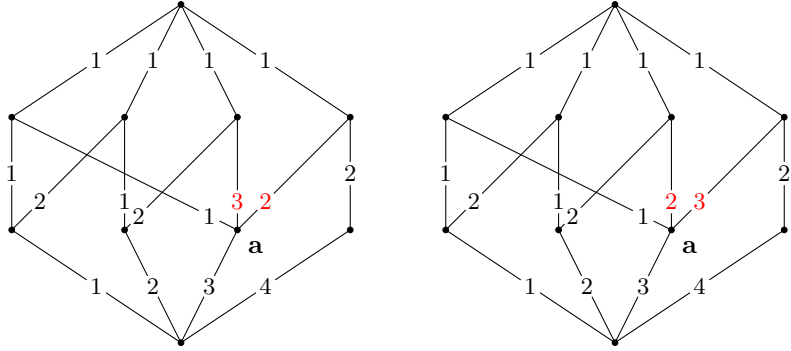
\begin{figure}
\begin{center}
\begin{tikzpicture}[scale=0.75]
\node at (3,0){};
\draw [fill] (3,0) circle [radius=0.05]; 
\node at (0,2){};
\draw [fill] (0,2) circle [radius=0.05];
\node at (2,2){};
\draw [fill] (2,2) circle [radius=0.05];
\node [below right] at (4,2){\footnotesize \textbf{a}};
\draw [fill] (4,2) circle [radius=0.05];
\node at (6,2){};
\draw [fill] (6,2) circle [radius=0.05];
\node at (0,4){};
\draw [fill] (0,4) circle [radius=0.05];
\node at (2,4){};
\draw [fill] (2,4) circle [radius=0.05];
\node at (4,4){};
\draw [fill] (4,4) circle [radius=0.05];
\node at (6,4){};
\draw [fill] (6,4) circle [radius=0.05];
\node at (3,6){};
\draw [fill] (3,6) circle [radius=0.05];
\draw (3,0)--node [midway, circle, fill=white, inner sep=0.5pt,minimum size=1pt] {\scriptsize 1}(0,2)--node [midway, circle, fill=white, inner sep=0.5pt,minimum size=1pt] {\scriptsize 1}(0,4)--node [midway, circle, fill=white, inner sep=0.5pt,minimum size=1pt] {\scriptsize 1}(3,6);
\draw (0,2)--node [near start, circle, fill=white, inner sep=0.5pt,minimum size=1pt] {\scriptsize 2}(2,4)--node [midway, circle, fill=white, inner sep=0.5pt,minimum size=1pt] {\scriptsize 1}(3,6);
\draw (3,0)--node [midway, circle, fill=white, inner sep=0.5pt,minimum size=1pt] {\scriptsize 2}(2,2)--node [near start, circle, fill=white, inner sep=0.5pt,minimum size=1pt] {\scriptsize 1}(2,4);
\draw (2,2)--node [very near start, circle, fill=white, inner sep=0.5pt,minimum size=1pt] {\scriptsize 2}(4,4)--node [midway, circle, fill=white, inner sep=0.5pt,minimum size=1pt] {\scriptsize 1}(3,6);
\draw (3,0)--node [midway, circle, fill=white, inner sep=0.5pt,minimum size=1pt] {\scriptsize 3}(4,2)--node [very near start, circle, fill=white, inner sep=0.5pt,minimum size=1pt] {\scriptsize 1}(0,4);
\draw (4,2)--node [near start, circle, fill=white, inner sep=0.5pt,minimum size=1pt] {\scriptsize \textcolor{red} 3}(4,4);
\draw (4,2)--node [near start, circle, fill=white, inner sep=0.5pt,minimum size=1pt] {\scriptsize \textcolor{red} 2}(6,4)--node [midway, circle, fill=white, inner sep=0.5pt,minimum size=1pt] {\scriptsize 1}(3,6);
\draw (3,0)--node [midway, circle, fill=white, inner sep=0.5pt,minimum size=1pt] {\scriptsize 4}(6,2)--node [midway, circle, fill=white, inner sep=0.5pt,minimum size=1pt] {\scriptsize 2}(6,4);
\end{tikzpicture}
\hskip 2em
\begin{tikzpicture}[scale=0.75]
\node at (3,0){};
\draw [fill] (3,0) circle [radius=0.05]; 
\node at (0,2){};
\draw [fill] (0,2) circle [radius=0.05];
\node at (2,2){};
\draw [fill] (2,2) circle [radius= 0.05];
\node [below right] at (4,2){\footnotesize \textbf{a}};
\draw [fill] (4,2) circle [radius=0.05];
\node at (6,2){};
\draw [fill] (6,2) circle [radius=0.05];
\node at (0,4){};
\draw [fill] (0,4) circle [radius=0.05];
\node at (2,4){};
\draw [fill] (2,4) circle [radius=0.05];
\node at (4,4){};
\draw [fill] (4,4) circle [radius=0.05];
\node at (6,4){};
\draw [fill] (6,4) circle [radius=0.05];
\node at (3,6){};
\draw [fill] (3,6) circle [radius=0.05];
\draw (3,0)--node [midway, circle, fill=white, inner sep=0.5pt,minimum size=1pt] {\scriptsize 1}(0,2)--node [midway, circle, fill=white, inner sep=0.5pt,minimum size=1pt] {\scriptsize 1}(0,4)--node [midway, circle, fill=white, inner sep=0.5pt,minimum size=1pt] {\scriptsize 1}(3,6);
\draw (0,2)--node [near start, circle, fill=white, inner sep=0.5pt,minimum size=1pt] {\scriptsize 2}(2,4)--node [midway, circle, fill=white, inner sep=0.5pt,minimum size=1pt] {\scriptsize 1}(3,6);
\draw (3,0)--node [midway, circle, fill=white, inner sep=0.5pt,minimum size=1pt] {\scriptsize 2}(2,2)--node [near start, circle, fill=white, inner sep=0.5pt,minimum size=1pt] {\scriptsize 1}(2,4);
\draw (2,2)--node [very near start, circle, fill=white, inner sep=0.5pt,minimum size=1pt] {\scriptsize 2}(4,4)--node [midway, circle, fill=white, inner sep=0.5pt,minimum size=1pt] {\scriptsize 1}(3,6);
\draw (3,0)--node [midway, circle, fill=white, inner sep=0.5pt,minimum size=1pt] {\scriptsize 3}(4,2)--node [very near start, circle, fill=white, inner sep=0.5pt,minimum size=1pt] {\scriptsize 1}(0,4);
\draw (4,2)--node [near start, circle, fill=white, inner sep=0.5pt,minimum size=1pt] {\scriptsize \textcolor{red} 2}(4,4);
\draw (4,2)--node [near start, circle, fill=white, inner sep=0.5pt,minimum size=1pt] {\scriptsize \textcolor{red} 3}(6,4)--node [midway, circle, fill=white, inner sep=0.5pt,minimum size=1pt] {\scriptsize 1}(3,6);
\draw (3,0)--node [midway, circle, fill=white, inner sep=0.5pt,minimum size=1pt] {\scriptsize 4}(6,2)--node [midway, circle, fill=white, inner sep=0.5pt,minimum size=1pt] {\scriptsize 2}(6,4);
\end{tikzpicture}
\caption{GRAO that is not RAO (left) and RAO for same poset (right).}
\label{graoex}
\end{center}
\end{figure}
%
%
%
%
%\commentplh{Removed dual statement here.}
%Naturally, it  also makes sense to establish the following dual notion:
%
%\begin{defn}  
%A  \textbf{generalized recursive coatom ordering} (GRCO) of a finite bounded poset  $P$ is 
%a GRAO of  the dual poset $P^*$.
%\end{defn}
%
%
\begin{rmk}\label{restricted-GRAO}
Implicit in the definition of GRAO is that a GRAO for a finite bounded poset $P$ induces a GRAO for $[u,\hat{1}]_r$ for each $u\in P$ and each root $r$.
Lemma \ref{GRAOrestricts}  will show that it  also induces a GRAO for each $[u,v]_r$.  This  result will allow us henceforth to  refer 
to the  restriction of any  GRAO for a finite bounded poset to the  rooted interval  % the atoms of
 $[u, v]_r$ as the  GRAO for $[u, v]_r$ induced by the  GRAO for $P$.
 %, as we indeed do in Lemmas \ref{ib-preserved} and \ref{2-flippable} as well as  in Theorem \ref{switch-thm}. 
\end{rmk}

\begin{lem}\label{GRAOrestricts}
Let $P$ be a finite bounded poset  with $\Gamma $   a GRAO for $P$. Then for any $u<v$ and any root $r$ for $[u,v]$, restricting % to $[u,v]_r$ 
the GRAO for $[u,\hat{1}]_r$  induced by $\Gamma $  to $[u,v]_r$  %that is guaranteed to exist by Definition \ref{grao}  
yields    a GRAO for $[u,v]_r$.
\end{lem}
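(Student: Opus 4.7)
The plan is to induct on the length of the interval $[u,v]$. The base case, length $1$, is immediate: $[u,v]_r$ then has a single atom and trivially admits a GRAO. For the inductive step, let $a_1,\ldots,a_t$ denote the atoms of $[u,\hat{1}]_r$ in the order specified by the GRAO of $[u,\hat{1}]_r$ induced by $\Gamma$, and define the restricted atom ordering on $[u,v]_r$ to be the subsequence $a_{i_1},\ldots,a_{i_s}$ consisting of those $a_j$ lying weakly below $v$. The three conditions in the definition of GRAO must now be verified for this restricted ordering.

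Condition (i)(a) follows by the inductive hypothesis. For each $a_{i_k}$, the induced GRAO restricts to a GRAO on $[a_{i_k},\hat{1}]$ with root obtained by extending $r$ through $u\lessdot a_{i_k}$. Since the length of $[a_{i_k},v]$ is strictly less than that of $[u,v]$, applying the inductive hypothesis to $[a_{i_k},v]$ viewed as a sub-interval of $[a_{i_k},\hat{1}]$ produces a GRAO on $[a_{i_k},v]$, as required.

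For conditions (i)(b) and (ii), the key observation is that every element appearing in the hypotheses of these conditions for $[u,v]_r$ lies weakly below $v$, so every element of $[u,\hat{1}]_r$ relevant to the corresponding conclusions also lies weakly below $v$ and therefore survives in the restricted ordering. For (i)(b), given $a_{i_k}\lessdot x\lessdot w$ with $w\le v$, the first atom of $[a_{i_k},w]$ under the restricted ordering coincides with the first atom under the GRAO of $[u,\hat{1}]_r$ induced by $\Gamma$, since no atom of $[a_{i_k},w]$ is discarded. Condition (i)(b) for $[u,\hat{1}]_r$ then transfers directly: if that first atom lies above some $a_j$ with $j<i_k$, then $a_j\le v$ forces $a_j=a_{i_\ell}$ for some $\ell<k$; and if no atom of $[a_{i_k},w]$ lies above any $a_j$ with $j<i_k$ in $[u,\hat{1}]_r$, then none lies above any $a_{i_\ell}$ with $\ell<k$ either. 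Condition (ii) is handled analogously: applying it in $[u,\hat{1}]_r$ to the atoms $a_{i_k},a_{i_\ell}$ with $k<\ell$ and the element $y\in[u,v]$ supplies an atom $a_j$ with $j<i_\ell$ and an element $z$ satisfying $a_{i_\ell}\lessdot z\le y$ and $a_j<z$; the inequalities $z\le y\le v$ and $a_j\le v$ then force $z\in[u,v]$ and $a_j=a_{i_m}$ for some $m<\ell$.

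The main potential obstacle is bookkeeping around successive restrictions of atom orderings: one must verify that restricting the induced GRAO first to $[a_{i_k},\hat{1}]$ and then to $[a_{i_k},v]$ produces the same atom ordering on $[a_{i_k},v]$ as first restricting from $[u,\hat{1}]_r$ to $[u,v]_r$ and then passing to $[a_{i_k},v]$. Both routes yield the subsequence of atoms of $[a_{i_k},\hat{1}]$ lying weakly below $v$, so the two restrictions agree, and the induction closes.
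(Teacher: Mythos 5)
Your proof is correct and follows essentially the same strategy as the paper's: induction (you on the length of $[u,v]$, the paper on the length of the longest chain from $u$ to $\hat{1}$, but these play the same role), with condition (i)(a) obtained from the inductive hypothesis applied to $[a_{i_k},v]$ inside $[a_{i_k},\hat{1}]$, and conditions (i)(b) and (ii) observed to restrict directly since all the elements they quantify over lie below $v$. The paper dispatches (i)(b) and (ii) with the one-line remark that they are ``immediate from the definition,'' whereas you spell out the transfer of quantifiers and add the short bookkeeping check that restricting to $[a_{i_k},\hat{1}]$ then to $[a_{i_k},v]$ agrees with restricting to $[u,v]_r$ then to $[a_{i_k},v]$; this extra detail is sound and arguably worth stating, but it is not a different argument.
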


\begin{proof}
Our proof will be by induction on the length of the longest saturated chain from $u$ to $\hat{1}$.  We can use maximum length 2 for our base case, obtaining this case by using the fact that  every possible chain-atom ordering on a finite bounded poset with maximum chain length of 1 or 2 is a GRAO.  
We then obtain condition (i)(a) by induction as follows.  Given any $a_i$ satisfying $u\lessdot a_i \le v$, the length of the longest saturated chain from $u$ to $\hat{1}$ is strictly larger than the length of the longest saturated chain from $a_i$ to $\hat{1}$.  
%So we may assume the lemma holds for $[a_i,v]_{r\cup a_i}$ in order to deduce the lemma for $[u,v]_r$.   
Thus, our inductive hypothesis gives us that the GRAO for $[a_i,\hat{1}]_{r\cup a_i}$ %guaranteed to exist as a result of the GRAO for $P$ 
will restrict to a GRAO for $[a_i,v]_{r\cup a_i}$.   But it is immediate from the definition of GRAO that the properties of a GRAO given in conditions (i)(b) and (ii) from the definition of GRAO will restrict from $[u,\hat{1}]_r$ to $[u,v]_r$.  Thus, the GRAO for $P$ restricts just as desired.
\end{proof}

Next we show  how the statement about cover relations in  condition (i)(b) in the definition of GRAO can be strengthened to a corresponding 
statement about all order 
relations.  

%\commentph{I added back the phrase ``with atom ordering'' in the first sentence for two reasons.  For one thing,  otherwise we would not have addressed one of the referee's complaints -- the complaint about how confusing it is to have the phrase GRAO immediately followed by symbols.  More importantly, I also felt that this statement inherently assumes we are talking about a chain-atom ordering, not just an atom ordering, which made the phrasing without ``with atom ordering'' problematic.}

%\commentph{Should we add roots to the intervals in the statement and the  proof of the next lemma? -- agreed and done.}

%\commentph{Should we use $\Lambda $ and $\Lambda| $ etc in the next lemma.}

%\commentph{It's annoying reading all the subscript $a_i$'s in the next lemma, and it is probably distracting to readers.  We should probably suppress them, perhaps saying we are doing so.}

\begin{lem}
 Let $P$ be  a finite bounded poset, and let $\Lambda $ be a GRAO for $P$ % \gs{that admits  a GRAO}  
 with atom ordering  
$a_1, a_2, \ldots a_t$.   
For each $\hat{0} \lessdot a_j < v$, restricting  $\Lambda|_{[a_j,\hat{1}]}$ 
%\gs{restricting % \commentph{which one?  Let's use $\Lambda $ to fix this.} 
 % the GRAO for $[a_j,\hat{1}]$}  
   to  %$[a_j,v]_
 %$a_j \gtrdot \hat{0}$ and each $v > a_j$ 
%the induced  GRAO for %of  $[a_j, \hat{1}]$ restricted to
 $[a_j, v]$ %_{\hat{0}\lessdot a_j}$ 
 %induced by the given GRAO 
 yields a GRAO, denoted $\Lambda|_{[a_j,v]}$,  for $[a_j,v]$ %_{\hat{0}\lessdot a_j}$ 
  with 
 the following property: either (a) the first atom of $[a_j, v]$  %_{\hat{0}\lessdot a_j}$
  is greater than some  atom $a_i$ satisfying $i < j$ or  (b) no atom of 
 $[a_j, v]$ %_{\hat{0}\lessdot a_j}$
  is greater than  any atom 
 $a_i$ satisfying $i < j$. 
\label{graoibequiv}
\end{lem}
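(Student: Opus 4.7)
The plan is to argue by contradiction via a minimality argument that combines conditions (i)(b) and (ii) of the GRAO definition with Lemma~\ref{GRAOrestricts}. Let $b_1, b_2, \ldots, b_m$ denote the atoms of $[a_j, v]$ listed in the order induced by the GRAO on $[a_j, \hat{1}]$. If no $b_k$ lies above any $a_i$ with $i < j$, then conclusion (b) holds and there is nothing more to do. Otherwise, I would let $k$ be the smallest index such that $b_k > a_i$ for some $i < j$, and aim to show $k = 1$, which is exactly conclusion (a).

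The core of the argument would be to assume $k \geq 2$ and derive a contradiction. By minimality of $k$, the atom $b_{k-1}$ is an atom of $[a_j, v]$ lying above no $a_i$ with $i<j$. By Lemma~\ref{GRAOrestricts}, the restriction of the GRAO to $[a_j, v]$ (with root $\hat{0}\lessdot a_j$) is itself a GRAO, so condition (ii) applied to the pair $(b_{k-1}, b_k)$ with common upper bound $v$ produces an index $k' < k$ and an element $z$ satisfying $b_k \lessdot z \leq v$ and $b_{k'} < z$.

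With such a $z$ in hand, I would next apply condition (i)(b) of the outer GRAO to the length-$2$ interval $[a_j, z]$, whose chain $a_j \lessdot b_k \lessdot z$ is exactly what triggers the hypothesis of (i)(b). Since $b_k$ is an atom of $[a_j, z]$ lying above some $a_i$ with $i < j$, the alternative in (i)(b) forces the first atom $b_{k_1}$ of $[a_j, z]$ to lie above some $a_{i'}$ with $i' < j$ as well. But $b_{k'}$ is itself an atom of $[a_j, z]$ appearing earlier than $b_k$ in the atom order (since $b_{k'}$ covers $a_j$ and $b_{k'} < z$), so $k_1 \leq k' < k$. Since $b_{k_1} \leq z \leq v$, the atom $b_{k_1}$ is also an atom of $[a_j, v]$, and it lies above an earlier $a_{i'}$, contradicting the minimality of $k$.

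The one thing requiring care is the bookkeeping between the three nested rooted intervals $[a_j, \hat{1}]$, $[a_j, v]$, and $[a_j, z]$, and keeping straight which GRAO condition is invoked on which: Lemma~\ref{GRAOrestricts} is what licenses the use of (ii) inside $[a_j, v]$, while (i)(b) of the ambient GRAO on $P$ is what transfers information about the earlier atoms $a_i$ of $P$ into the length-$2$ interval $[a_j, z]$. Beyond this, no induction or further machinery should be needed, and I do not anticipate any serious obstacle.
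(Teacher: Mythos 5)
Your proof is correct and uses essentially the same machinery as the paper's (combining condition~(ii) to produce an element $z$ above $b_k$, then condition~(i)(b) on the length-two interval $[a_j,z]$ to pass to an earlier atom still above some $a_{i'}$ with $i'<j$). The only difference is one of organization: where the paper iterates this step to produce a strictly descending sequence of atoms and then appeals to finiteness, you take the minimal index $k$ from the outset and derive an immediate contradiction, which is a cleaner packaging of the same argument.
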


\begin{proof}
By Lemma \ref{GRAOrestricts}, $\Lambda|_{[a_j,v]}$ %_{\hat{0}\lessdot a_j}}$
 is guaranteed to be a GRAO.  Let $x$ be the first atom in % the GRAO for
$[a_j,v]$ %_{\hat{0}\lessdot a_j}$ 
with respect to $\Lambda $.
%\gs{Let  $x$ be the first atom in the GRAO for $[a_j, v]_{\hat{0}\lessdot a_j}$
 %\commentph{Add something like: 
% that is guaranteed to be a GRAO by Lemma \ref{GRAOrestricts}.   }
Suppose % $x$ is not greater than any $a_i$ with $i < j$ but
 that some other atom $y$  of  $[a_j, v]$ %_{\hat{0}\lessdot a_j}$ 
  is greater than some  atom $a_i$  of $P$ with $i<j$. Since $x$ and $y$ are both less than  $v$ with $x$ coming before $y$ in $\Lambda|_{[a_j,\hat{1}]}$, %_{\hat{0}\lessdot a_j}}$, 
%   \gs{the GRAO of $[a_j, \hat{1}]_{\hat{0}\lessdot a_j}$},
    Definition \ref{grao} (ii)  guarantees the  existence of  elements  $w_1$ and  $y_1$, where  $y_1$ comes before $y$ in  $\Lambda|_{[a_j,\hat{1}]}$
    %_{\hat{0}\lessdot a_j}}$ 
  %\gs{ the GRAO of $[a_j, \hat{1}]_{\hat{0}\lessdot a_j}$} 
  and where 
 %\commentph{I think $w_1 < v$ needs to be $w_1\le v$ next}   
 $y \lessdot w_1 \le  v$ and $y_1 < w_1$. Since $y$ covers  $a_i$ with $i < j$, Definition \ref{grao} (i)(b) ensures that the first atom in $[a_j, w_1]$ %_{\hat{0}\lessdot a_j}$ 
 must be greater than  some atom $a_{i'}$  satisfying $i' < j$.  We may take $y_1$ to be  the first atom in $[a_j,w_1]$, %_{\hat{0}\lessdot a_j}$,
  i.e.,  we may  replace $y_1$ by this first  atom if it is not already this atom. 
If $y_1$ also equals $x$, then $x$ is greater than  some atom that comes before $a_j$ in $\Lambda $,
 %\gs{ the GRAO of $P$}, 
 namely $a_{i'}$, as desired. 

If, on the other hand, $y_1 \neq x$, then we repeat this process as many times as necessary to get the desired result that $x$ is greater than some atom that comes before $a_j$ in  $\Lambda $,
%\gs{ the GRAO of $P$}, 
doing so as follows.   %, this time using $y_1$ in place of $y$: 
Since $x$ and $y_1$ are both less than  $v$,  we may use Definition \ref{grao} (ii) to deduce the existence of  $y_2 $ and  $w_2$ where $y_2$ comes before $y_1$ in 
$\Lambda|_{[a_j,\hat{1}]}$ %_{\hat{0}\lessdot a_j}}$  %\gs{the GRAO of $[a_j, \hat{1}]_{\hat{0}\lessdot a_j}$ } 
and where % $a_j \lessdot 
$y_2 < w_2$ and $y_1 \lessdot w_2\le v$.  We may take 
$y_2$ to be the first atom in  $[a_j, w_2]$. %_{\hat{0} \lessdot a_j}$. 
But then Definition \ref{grao} (i)(b)  guarantees that  $y_2$  must be greater than 
some atom  $a_{i''}$ where  $i'' < j$. If $y_2=x$, this yields the desired result. % shows contradicts the fact 
%that $x$ is greater than  some atom that comes before $a_j$ in the GRAO of $P$. 
If $y_2 \neq x$, then we repeat this process again obtaining elements $y_3$ and $w_3$ in place of 
$y_2$ and $w_2$. 

 Continuing in this manner, we obtain a sequence $y_1,y_2,y_3,\dots  $ of atoms of $[a_j, v]$ %_{\hat{0}\lessdot a_j}$ 
 where for each $i \ge 2$ we have that:
 \begin{enumerate}
 \item  
 $y_i$  comes earlier in $\Lambda|_{[a_j,v]}$ % _{\hat{0}\lessdot a_j}}$ 
  %\gs{the GRAO of $[a_j, v]_{\hat{0}\lessdot a_j}$} 
   than $y_{i-1}$, and   
 \item
 %(ii) 
 $y_i$ is greater than an atom  of $P$ that comes before $a_j$ in $\Lambda $. % \gs{ the GRAO of $P$}.
 \end{enumerate}
Note that  (1) implies each $y_i$ is distinct, allowing us to deduce from % distinctness of the $y_i$'s, 
 finiteness of 
%Since there is a finite set of atoms of 
$[a_j, v]$ that the sequence $y_1,y_2,y_3,\dots $  must terminate at some $y_n$. 
% process must terminate, say, at $y_n$. %implying that some  such  $y_n$ is  the earliest atom in the GRAO of $[a_j, v]$.  
This terminal element  $y_n$  must satisfy  $y_n = x$ since we have shown that  otherwise there 
would be an atom $y_{n+1}$ of $[a_j,v]$ coming still earlier than $y_n$. %_{\hat{0}\lessdot a_j}$.    %for this $y_n$, 
Thus we obtain  the desired result  % contradiction to  the fact
 that $x$ is greater than  some atom that comes before $a_j$ in $\Lambda $. % \gs{ the GRAO of $P$}.
 % by virtue of $y_n$ having this property.
\end{proof}

%\commentph{I just edited the next bit more in light of your questions/comments.} 
%\commentph{Add superscripts to  notation next?}
%\gs{Is it the sets or the lemma that is playing a key role?}
The sets $F_r(u,v)$ and $G_r(u,v)$ appearing in the next lemma
 %, defined next,  of the sets $F_r(u)$ and $G_r(u)$ introduced by Bj\"orner and Wachs in  \cite{bw} 
will  play a key role  later in the paper %,   for instance 
in allowing us to transform any generalized recursive atom ordering into a recursive atom 
ordering.   
  % (\commentph{it's confusing to say ``for instance'' again, in that it's confusing what it modifies} 
%Two  places where these sets  will play an especially  important role are   in Algorithm \ref{reorder-def} and in  
%the proof of Lemma \ref{alwaysfirst}.   
Our justification that this atom reordering process converts a GRAO to an RAO will rely heavily on the property of
the sets $F_r(u,v)$ and $G_r(u,v)$ that we justify next.  See Definition \ref{F-u} for the definitions of the sets $F_r(u,v)$ and $G_r(u,v)$.

\begin{lem}\label{F-and-G-preserved}
Let $\Gamma $ be a chain-atom ordering  
 for a finite bounded poset $P$.  Consider  any  two   %consecutive 
  atoms $a_i,a_{i+1}$ of  a rooted interval $[u,\hat{1}]_r$ % in $P$  
   that are consecutive   %atoms 
 atoms  of $[u,\hat{1}]_r$ 
 with respect to $\Gamma $.  Suppose   the pair  of atoms  
  $a_i,a_{i+1}$   has  the further 
 property  for every   rooted interval $[u,v]_r$ % \sout{with our fixed choice of $u$ and $r$ 
  %has} 
  %\gs{
  containing both $a_i$ and $a_{i+1}$ % \sout{ $a_i,a_{i+1}\in [u,v]_r$ }
 %with  our fixed choice of $u$ and $r$  as above 
 % $a_i,a_{i+1}$ 
%have the further property 
%\commentph{I wanted this next phrase because otherwise the sentence is too difficult to parse} 
%\sout{  that this implies that} 
that  neither $a_i$ nor $a_{i+1}$ is the first atom of $[u,v]_r$  with respect to   %the restriction of  
$\Gamma|_{
% $ to  % any rooted interval  
[u,v]_r}$.
 %having $a_i,a_{i+1}\in [u,v]_r$  for our fixed choice of $u$ and $r$.
%  which  %for any  $v>u$ 
% has $a_i,a_{i+1}\in [u,v]$.  
 % both $a_i$ and $a_{i+1}$ as elements of $[u,v]$. % and having the same element $u$ and the same root $r$.
%This would be consistent with the upcoming lemmas the referee also asks about, and I think was just forgotten somehow here.  This would address the referee's concern, since it would preclude the case of $v = a_i$ or $v=a_{i+1}$.} 
 %as in the statement of Theorem  \ref{switch-thm}, and
Let $\Lambda $ be the chain-atom ordering on $P$   %$\Lambda $
obtained from $\Gamma $  by 
 switching the order of $a_i$ and $a_{i+1}$ in $[u,\hat{1}]_r$. % \sout{for our fixed $u$ and $r$}
%  and otherwise leaving $\Gamma $ unchanged. 
 %to obtain a new chain-atom ordering $\Lambda  $, then % does not impact 
% which atoms of $[a_i,\hat{1}]_{r\cup a_i}$ (resp. $[a_{i+1},\hat{1}]_{r\cup a_{i+1}}$) are
%in $F_{r\cup a_i} (a_i,v)$ (resp. $F_{r\cup a_{i+1}}(a_{i+1},v)$) and which are in $G_{r\cup a_i}(a_i,v)$ (resp. $G_{r\cup a_{i+1}}(a_{i+1},v)$) for any $v$ satisfying
%$v>a_i$ or  $v>a_{i+1}$.  More generally, 
Then  $F^{\Gamma  }_{r'}(u',v') = F^{\Lambda }_{r'}(u',v')$ and 
 %(resp.  
 $G^{\Gamma }_{r'}(u',v') = %$) given by $\Gamma  $ equals the set 
%$ (resp. $
G^{\Lambda }_{r'}(u',v')$ %) % given by $\Lambda  $  %are fixed by the swap of $a_i$ and $a_{i+1}$  
for each % rooted interval $[u',v']_{r'}$ in $P$.  
$u' <  v'$ in $ P$   and each   choice of 
 root $r'$ for $[u',v']$.  
\end{lem}

\begin{proof}
%\commentgs{Can we lay out all the cases here as in the next lemma? } \commentph{Given the nature of the referee's confusion, I think it's better to first show how we can restrict to $u'$ covering $u$ and $r' = r\cup \{ u' \} $ and then at that point give the case structure  for what remains to be proven (which is what we do).  I actually put a ton of thought into how I organized things and did things for good reasons.    I think doing this easy case before the case structure of the rest helps get readers understanding what needs to be proven, as it can be very confusing.}
Throughout this proof, let $r$ be the chain $\hat{0} = u_0 \lessdot u_1 \lessdot \cdots \lessdot u_k \lessdot u$, and let 
$r^-$ be the chain $\hat{0} = u_0 \lessdot u_1 \lessdot \cdots \lessdot u_k$ obtained from $r$ by deleting $u$. 
%\commentph{I made a correction in your comment necessitating us to define $r^-$.}
% Notice that our hypotheses preclude the possibility of having $v \in \{ a_i ,a_{i+1}\} $.  
We will  first show how to  handle  all cases  which do not  satisfy both of the conditions %have  %restrict attention to the case where 
%$u'$ covering  
$u\lessdot u'$ and $r' = r\cup \{ u'\} $.  % Then we will  outline  how we break the remainder of the proof into cases.  
% with  the root  $r'$  obtained from $r$ by adding $u'$ to it.
% the chain 
%$\hat{0} = u_0 \lessdot u_1 \lessdot \cdots \lessdot u_k \lessdot u \lessdot u'$ 
%obtained from $r$ by adding $u'$ to it.
%To verify  this claim, the 
%\commentph{I like the sentence you propose, but haven't figured out a better way yet to write this paragraph that uses  it.}
 %\gs{Notice that the sets $F_r(u, v)$ and $G_r(u,v)$  can only change  when the ordering on the atoms of $[u_k,v]_{r^-}$ changes.}
 % Thus we need to consider only the following cases: [lay out cases].}
%point is to o
Let $\hat{0}  
%\commentph{I like the sentence you propose, but haven't figured out a better way yet to rewrite this paragraph in light of it.}
 %\gs{Notice that the sets $F_r(u, v)$ and $G_r(u,v)$  can only change  when the ordering on the atoms of $[u_k,v]_{r^-}$ changes.}
 % Thus we need to consider only the following cases: [lay out cases].}
 \lessdot u_1'\lessdot \cdots \lessdot u_l'\lessdot u'$ be the root $r'$.  
%Let  $(r')^-$ denote  the root   $\hat{0}  \lessdot u_1'\lessdot \cdots \lessdot u_l'$ obtained from $r'$ by deleting $u'$. % \sout{from it}.
%Observe that unless we 
Observe that if we do not have  % of the conditions %, observe that 
 %\gs{Since $u \centernot\lessdot u'$ and $r' \neq r \cup \lbrace u' \rbrace$}   \sout{
 $u = u_l'$ and also have   $r$ equalling  $\hat{0}\lessdot u_1'\lessdot \cdots \lessdot u_l'$, % = (r')^-$,  
then % observe that % the  root $r'$ contains $u$, 
swapping  the  order of the atoms $a_i$ and $a_{i+1}$ in $[u,\hat{1}]_r$  has no impact on % the ordering of the atoms of $[u',v']_{r'}$ and also has no impact on 
the ordering of the 
%\commentph{Should not be $u^-$ and $r^-$ but rather $(u')^-$ and $(r')^-$ next}
atoms of $[u_l',v']_{\hat{0}\lessdot u_1'\lessdot \cdots \lessdot u_l'}$.
%(r')^-}$. % where $u^-$ is the element covered by $u'$ in $r'$ and where $r^-$ is the root obtained from $r'$ by deleting $u'$ from it.  
%This   implies that
Thus,  the sets
$F^{\Gamma }_{r'}(u',v')$ and $G^{\Gamma }_{r'}(u',v')$ cannot be impacted by the swap of $a_i$ and $a_{i+1}$ 
unless  $u\lessdot u'$ and $r' = r\cup \{ u'\} $.
% \sout{ $u$ is covered by $u'$ and  $r=(r')^-$.  }
%\commentgs{I am getting bogged down in notation and the above change is an attempt to help with that. It is an exact repetition of what was said at the beginning of the case.}
% This change means the reader doesn't have to work at all to keep track of what root is being discussed. At first glance, I had to think carefully about whether the root we were discussing had changed since a different representation was used.}

%What remains is to handle those situations  in which % \sout{$u'$ covers $u$} \commentgs{see comment immediately above} \gs{
%$u \lessdot u'$ and   $r' = r\cup \{ u' \} $.
We subdivide  the  task of handling those situations with $u\lessdot u'$ and $r'=r\cup \{ u'\} $
 into three  cases, namely: (a) $u' = a_i$,  (b) $u' = a_{i+1}$, and  (c) $u'\not\in \{ a_i ,a_{i+1} \} $.  
We start with the easiest of these cases, namely  (c). %}\sout{Let us now turn to  the easiest of these cases, namely  (c)}\gs{, 
%in which  $u'$ is  an atom % $a\not\in \{ a_i,a_{i+1}\} $ 
%of $[u,v']_r$  with $u'\not \in \{ a_i,a_{i+1} \} $. % and with  $r' = r\cup \{ u' \} $. %}  
% the case
%of $u'$ which covers $u$ where $u' \not\in \{ a_i,a_{i+1} \} $.
%Next consider 
%\commentph{I put this sentence in because this was a spot the referee had gotten really confused, so I was trying to make sure they would not get confused this time.  Therefore I restored it, changing the language a bit.}
%\sout{I
%\sout{It may help to keep in mind in this case  that  $u'$ is  an atom % $a\not\in \{ a_i,a_{i+1}\} $ 
%of $[u,v']_r$  with $u'\not \in \{ a_i,a_{i+1} \} $ and with  $r' = r\cup \{ u' \} $.  } \commentgs{I worry a bit about being so repetitive that the reader loses patience, though I understand we are trying to be very clear. My proposed change above is an effort to be very clear without making it seem like we are repeating the same thing in adjacent sentences.} % But for  such $a\not\in \{ a_i,a_{i+1}\} $,  
Our choice of $a_i,a_{i+1}$ as consecutive atoms of $[u,\hat{1}]_r$ with respect to $\Gamma $  implies that $a_i,a_{i+1}$  are consecutive atoms  
of $[u,v']_r$ with respect to  $\Gamma|_{[u,v']_r}$ for each $v'\in P$ %  $u < v'\le \hat{1}$ 
having $a_i,a_{i+1}\in [u,v']_r$. 
%\commentgs{This is the one of the only places we omit the $r$; it looks like a typo. Maybe it is?} \commentph{This wasn't a typo, I systemically leave it off for set containments which do not depend on choice of root.  But you have already added lots of other places, and it is not wrong to add it here too if you prefer including it.  It's just unnecessary.}  
 This  
%(again with respect to $\Gamma $) 
ensures  for such  $[u,v']_r$   we must either have both $a_i$ and $a_{i+1}$ before 
the atom $u'$ % with respect to   $\Gamma $  
or    both $a_i$ and $a_{i+1}$ after $u'$ with respect to $\Gamma|_{[u,v']_r}$.  In 
either case, swapping the order of $a_i,a_{i+1}$ does not impact which atoms of $[u,v']_r$ come earlier than $u'$.
% with respect to \gs{either?} our chain-atom ordering %\commentgs{which one?}   \commentph{we aren't talking about a single one but rather the relationship between two different ones under certain conditions, so we need something like an indefinite article here} \commentgs{upon rereading this, ``our chain-atom ordering" gives the impression (at least to me) that we are speaking of a specific one. Perhaps we can omit ``with respect to our chain-atom ordering" entirely? I think it is clear without saying this.} 
Thus, this swap does not impact  which atoms of $[u',v']_{r'}$ are in  
$F_{r'}(u', v')$.  %\commentplh{Next part of this paragraph is new.} 
  If, on the other hand,  we do not have both $a_i$ and $a_{i+1}$ in $[u,v']_r$, then swapping the order of $a_i$ and $a_{i+1}$ in $[u,\hat{1}]_r$ leaves the ordering of 
the atoms of $[u,v']_r$ unchanged.  In particular, the swap   preserves  which atoms of $[u,v']_r$  come earlier than $u'$ and which come later than $u'$.  Again this 
implies that the swap does not impact which atoms of $[u',v']_{r'}$ are in $F_{r'}(u',v')$.   
%the swap of $a_i$ and $a_{i+1}$ also does not change the ordering of the atoms of $[u^-,v']_{r^-}$.  
%, which is enough to show that 
% whether $a$ belongs to 
%$F_{r'}(u',v')$ or to $G_{r'}(u',v')$.
%In all of these cases, t
This shows that   $F^{\Gamma }_{r'}(u',v') = F^{\Lambda }_{r'}(u',v')$ and 
$G^{\Gamma }_{r'}(u',v') = G^{\Lambda }_{r'}(u',v')$ in case (c). % are each preserved under the swap of $a_i$ and $a_{i+1}$. %  in this case. 

%\commentph{%It would probably be easier for readers to follow this proof if we start with the easy cases that are in currently in the last paragraph of the proof, explaining why there are only a few cases that are not obvious.  Then turn to the  subtle cases, with readers  would now be more primed to understand what these cases are saying/doing. 
% For the easy cases, we could start by explaining why  unless $u'$ covers $u$ and both $a_i$ and $a_{i+1}$ are in $[u,v]$, we do not have both $a_i$ and $a_{i+1}$ as elements of 
%$F_{r'}(u',v')\cup G_{r'}(u',v')$, and that this implies that the swap cannot have any impact on either $F_{r'}(u',v')$ or on $G_{r'}(u',v')$.}

%\commentplh{Given the referee comment, perhaps we should point out why we cannot have $v=a_i$ or $v=a_{i+1}$?}

%\commentplh{In the next paragraph, I added an explanation that we have $a_{i+1}$ contained in $[u,a']$, since readers could be confused about whether we have indeed
%verified the hypotheses needed to be sure that $a_i$ cannot be the earliest atom of $[u,a']$.  I wonder if confusion about this part is part of what bothered the referee, in addition to the fact that the hypothesis had somehow gotten omitted that we were only considering $v$ with both $a_i$ and $a_{i+1}$ in $[u,v]$ in our claim about neither coming first.} 
Now we turn to % the  two  most subtle cases, namely  
the cases  (a) and (b),  the cases  in which $u'=a_i$ and  $u'=a_{i+1}$, respectively.
% \commentgs{We were very explicit about what case c was above before starting it, despite just having laid out the cases. It seems incongruous that we do not remind the reader of what cases a or b are here.}
%First we prove set containments in each of these cases.  Then we prove that these containments are actually equalities of sets, doing so in a way
%that causes these two cases to be intertwined with each other.  
 %) with $u'=a_i$ and with $u'=a_{i+1}$, respectively.  
%The task of proving the containments  
We handle these two cases simultaneously because the  latter parts  of the arguments for  these two cases  are intrinsically   intertwined with each other.   
%\commentgs{these cases don't seem intertwined here, rather they seem to be handled simultanteously.}  \commentph{It's very subtle, but they are intertwined and cannot be done in parallel.}
  If an interval $[u,v']_r$ does not contain both of the atoms $a_i$ and $a_{i+1}$, then swapping $a_i$ and $a_{i+1}$ has no impact on the ordering of the atoms of $[u,v']_r$, yielding the desired equalities in this case.  For the remainder of cases (a) and (b), we therefore may assume the rooted  interval $[u,v']_r$ under consideration  contains both $a_i$ and $a_{i+1}$.  
%First we  observe that  every element contained  in   $F^{\Gamma } (a_i,v')_{r\cup a_i}$   stays  in $F^{\Lambda }(a_i,v')_{r\cup a_i}$ after the 
%swap of $a_i$ and $a_{i+1}$; this claim  follows from the fact
Since swapping $a_i$ and $a_{i+1}$ 
moves  $a_i$ to after $a_{i+1}$  while  otherwise preserving   the chain-atom ordering, % of the atoms of $[u,v']_r$,  
every  atom of $[u,v']_r$  that came earlier than $a_i$ in $[u,v']_r$  before the swap still comes earlier than $a_i$  after the 
swap.  In other words, we deduce  %This implies 
the set containment 
$F^{\Gamma }_{r\cup a_i}(a_i,v') \subseteq F^{\Lambda }_{r\cup a_i}(a_i,v')$.  % producing $\Lambda $.   
%\commentplh{Maybe start by saying ``We begin by justifying that 
%$F_{r\cup a_i}(a_i,v')$ calculated before the swap is contained in $F_{r\cup a_i}(a_i,v')$ after the swap and that 
Next we show %the   more subtle set containment 
$F^{\Gamma }_{r\cup a_{i+1}}(a_{i+1},v') \subseteq F^{\Lambda }_{r\cup a_{i+1}}(a_{i+1},v')$  %splitting this into cases with 
by  % calculated after swapping $a_i$ and $a_{i+1}$.
%\commentplh{Maybe do $a_i$ first since that
%is easier, as a warm-up for $a_{i+1}$, rather than leaving $a_i$ to the reader after doing the tougher case.}
%We start with  the case of $u'=a_{i+1}$. 
% \commentplh{The next part did need a little work, or at least to be explained more completely. 
% We have $a'$ above both $a_{i+1}$ and above an earlier element which might be 
%$a_i$.  If it is $a_i$, then neither $a_i$ nor $a_{i+1}$ is the earliest atom of the interval $[u,a']$....}
proving   that  $a' \in F^{\Gamma }_{r\cup a_{i+1}}(a_{i+1},v')$ implies 
$a'\in F^{\Lambda }_{r\cup a_{i+1}}(a_{i+1},v')$.     
%Suppose $a'\in F^{\Gamma }(a_{i+1},v')_{r\cup a_{i+1}}$.  
% \commentgs{why is such an $a'$ guaranteed to exist?} \commentph{We do not need it to exist -- we are simply showing that for any such $a'$ that might exist, it must then belong also to the other set.  I will think about how to rephrase.  Perhaps changing "Consider" to "Suppose"...}
  % \sout{,  % given by $\Gamma $. 
%The $a'\in F(a_{i+1},v')_{r\cup a_{i+1}}$ assumption gives us
%which means  that } \gs{
%This implication is vacuous unless  
 %$F^{\Gamma }_{r\cup a_{i+1}}(a_{i+1},v') \neq \emptyset $,  so let us assume
 Suppose there exists some   $a'\in F^{\Gamma }_{r\cup a_{i+1}}(a_{i+1},v')$.
 Then  by definition of $F^{\Lambda }_{r\cup a_{i+1}}(a_{i+1},v')$ the rooted interval   %in which case   % for $r^-$ obtained from $r$ by eliminating $a_{i+1}$ 
%it follows that 
$[u, a']_r$ has an earlier atom than $a_{i+1}$ with respect to $\Gamma|_{[u,a']_r}$.  % \commentph{Explain next sentence better.}
% \sout{Since $u\lessdot a_{i+1}$ and $a_{i+1}\lessdot a'$,  % Our set-up ensures that 
% $a_{i+1}$ must be   an atom of $[u,a']_r$.  }
Denote by $a_1$  the earliest atom of $[u,a']_{r}$ with respect to    $\Gamma $. 
 %We now consider separately the possibilities that (1)  $a_i\in [u,a']_r$ and (2)  $a_i\not\in [u,a']_r$.  
%.\commentplh{Be careful in the next sentence whether we have 
%$a_i,a_{i+1}\in [u,a']_r$, since we just added a hypothesis about that.  It seems like we do not have $a_{i+1}\in [u,a']$, 
%since we assume $a'\in F_{r\cup a_{i+1}}(a_{i+1},v')$.}
If $a_i\in [u,a']_{r}$, then  both $a_i$ and $a_{i+1}$ are atoms of $[u,a']_r$;     %hypothesis
%our hypotheses in the statement of the lemma  about $a_i,a_{i+1}$ applied to the interval $[u,a']_{r}$ % ensure
% that $a_1\not\in  \{ a_i , a_{i+1}\} $. \commentgs{this previous sentence is difficult to process and seems vague (which hypothesis? Can we avoid making the reader go back and read through all the hypotheses again?). Can we simplify to something like: ``s
 since neither $a_i$ nor $a_{i+1}$ is allowed to be the  first atom  of any rooted interval $[u,w]_r$ containing both of them,
 %in $[u, a']_r$ due to the hypotheses of our lemma, 
 $a_1\not\in  \{ a_i , a_{i+1}\} $ in this case.  % \sout{,  % implying} \gs{. Thus} 
 %Thus,  $a'\in F^{\Lambda }(a_{i+1},v')_{r\cup a_{i+1}}$ in case (1), 
%\commentph{the first part of your next comment seems mathematically incorrect since $a' $ does not have to cover $a_1$, as is most evident when you think about the non-graded case.}  \commentgs{Apologies, I was sloppy. I should have written: $a' > a_1$... (likewise below)} 
 %\gs{,
%since  $a_1$ comes before $a_{i+1}$ in $\Lambda$  in this case and $a_1<a'$.  % (and that  $a_1 \ne a_{i+1}$). 
 On the other hand, for $a_i\not\in [u,a']_{r}$ we  must  have 
$a_1\ne a_i$ since $a_1\in [u,a']_{r}$; we may also deduce 
$a_1\ne a_{i+1}$ in this case 
 from our assumption  that $a'\in F^{\Gamma }_{r\cup a_{i+1}}(a_{i+1},v')$. % \sout{ which in particular says} \gs{. 
%This means that $a_{i+1}$ is not the earliest atom of $[u,a']_r$.  
%\commentph{This next proposed revision misleads the reader to think we are still in only the second of the two cases.   So I kept the old version}
Thus, regardless of whether $a_i \in [u,a']_r$ or not, % in either  case \gs{(a) or (b){?}} \commentph{It's not (a) or (b); it's the two scenarios we just discussed.  I'll think about how to clarify.} 
swapping 
%\sout{Swapping} 
the order of  $a_i$ and $a_{i+1}$ in  $[u,\hat{1}]_r$ % in  
%our chain-atom ordering \commentgs{can we take out ``our chain-atom ordering"? (see comment previous page regarding this)} 
does not change the fact that  $a_1$ is  the earliest atom of 
$[u,a']_{r}$.  % where   $a_1\not\in \{ a_i,a_{i+1}\} $,  %\commentph{your next comment is not mathematically correct} \commentgs{see above}  \gs{
%\commentph{Omit next as obvious? nor the fact that $a' > a_1$}.  
This means that after the swap  $a'$ % \commentph{again, this next comment of yours is not correct.}  
%\commentgs{see above} \gs{
is still above the  atom  $a_1$ which still comes earlier than $a_{i+1}$, 
implying  $a' \in F^{\Lambda }_{r\cup a_{i+1}}(a_{i+1},v')$.  
This shows    $F^{\Gamma }_{r\cup a_{i+1}}(a_{i+1},v') \subseteq 
F^{\Lambda }_{r\cup a_{i+1}}(a_{i+1},v')$. % calculated after the swap.   
%Before completing this case, we need an analogous statement for  the case of 
%$u'=a_i$.

%Using exactly the same reasoning for the case of $u'=a_{i+1}$, 

%\commentplh{In the next paragraph is where it is important we are working with chain-atom orderings rather than GRAOs, since we do not know that after the swap we still have a GRAO.  Otherwise, it might look like we are assuming what we are trying to prove.} 
 % careful in what follows that we might not have a GRAO after the first  swap, so be careful whether we need that.}

%\commentplh{This next sentence might be confusing to some readers, as written.}  
%Next we  use the pair of  containments we have just justified.  
If we swap $a_i$ with $a_{i+1}$ twice in succession, the second swap takes the chain-atom ordering $\Lambda $ and produces from it the original
chain-atom ordering $\Gamma $.  % In $\Lambda$, 
The atom $a_{i+1}$ comes earlier than $a_i$ in $[u, \hat{1}]_r$ with respect to $\Lambda $,  
but   $a_{i+1}$ and $a_i$ are still consecutive atoms of $[u,\hat{1}]_r$ with respect to $\Lambda $; 
also observe that $a_{i+1}$ and $a_{i}$   still satisfy our requirement with respect to $\Lambda $ 
that any rooted interval  $[u,v]_r$  containing 
both $a_{i+1}$ and $a_{i}$ must  have neither $a_{i+1}$ nor $a_{i}$ as its first atom. % with respect to $\Lambda $. 
%\sout{This is where the cases become intertwined, since the roles of $a_i$ and $a_{i+1}$ are reversed when we do the second swap.
% \commentgs{This next sentence seems too long, but I haven't found an obvious way to split/shorten} \commentph{I found a way to add a semi-colon to help here.} 
% That is, $a_{i+1}$ comes earlier than $a_i$ with respect to $\Lambda $, but   $a_{i+1}$ and $a_i$ are still consecutive atoms of $[u,\hat{1}]_r$ with respect to $\Lambda $;  also observe that $a_{i+1}$ and $a_{i}$   still satisfy our requirement that any rooted interval  $[u,v]_r$ that  contains both $a_{i+1}$ and $a_{i}$ with our fixed choice of $u$ and $r$ must  have neither $a_{i+1}$ nor $a_{i}$ as its first atom with respect to $\Lambda $. }
% \commentgs{Can we reword this next sentence for clarity? I prefer what was here originally, something like ``We may apply the argument from the prior paragraph twice in succession, first swapping $a_i$ and $a_{i+1}$ and then swapping them back to their original order, to obtain the set containments..."}    \commentph{I worked on wording the  next sentence better, but I think the version from a year ago hides the intertwining that is actually important to understanding the proof, so am hesitant  to go back to it.}  
Thus, we may  apply the argument from the prior paragraph to  the chain-atom ordering $\Lambda $ using the  swap of $a_{i+1}$ and $a_{i}$  in the rooted interval $[u,\hat{1}]_r$ which outputs the chain-atom ordering   $\Gamma $.   
%Since the same argument may  be applied twice in succession, first swapping $a_i$ and $a_{i+1}$ and then swapping them back to their original order, 
This yields  the set containments    %from the containments we have just obtained applied to the second of these two swaps   that 
$F^{\Lambda }_{r\cup a_{i+1}}(a_{i+1},v') \subseteq %$ % calculated after the first  swap
 %is also a subset of $
 F^{\Gamma }_{r\cup a_{i+1}}(a_{i+1},v')$  % after the second swap.
%   In other words,
%$F(a_{i+1},v')_{r\cup a_{i+1}}$ calculated after the first swap is a subset of $F(a_{i+1},v')_{r\cup a_{i+1}}$ calculated with respect to $\Gamma $,   %before the swap, 
and % likewise 
$F^{\Lambda }_{r\cup a_i}(a_i,v') %$ calculated after swapping $a_i$ and $a_{i+1}$ is a subset of $
\subseteq F^{\Gamma }_{r\cup a_i}(a_i,v')$. % calculated with respect to $\Gamma $.  
We may deduce $F^{\Gamma }_{r\cup a_i}(a_i,v') = F^{\Lambda }_{r\cup a_i}(a_i,v')$ 
from the series  of set containments $$F^{\Lambda }_{r\cup a_i}(a_i,v') \subseteq F^{\Gamma }_{r\cup a_i}(a_i,v') \subseteq F^{\Lambda }_{r\cup a_i}(a_i,v').$$
%\commentph{I disagree with crossing  out part of the next sentence, since our goal is to make the referee's/reader's  life as easy as possible so they will not give up from exhaustion, and since   it is really subtle what's happening here and how the cases intertwine.} \commentgs{I will defer to you but will mention that these inequalities are quite overwhelming symbolically. As this series of inequalities is identical to the previous except for replacing $a_i$ with $a_{i+1}$, it is personally more exhausting for me to wade through another symbol-laden line than it is to see the similarities between the previous series of inequalities}
Likewise  % \gs{Similarly} 
we deduce $F^{\Gamma }_{r\cup a_{i+1}}(a_{i+1},v') = F^{\Lambda }_{r\cup a_{i+1}}(a_{i+1},v')$  from the series of  set containments
$F^{\Lambda }_{r\cup a_{i+1}}(a_{i+1},v') \subseteq F^{\Gamma }_{r\cup a_{i+1}}(a_{i+1},v') \subseteq F^{\Lambda }_{r\cup a_{i+1}}(a_{i+1},v').$ 
%Combining pairs of  set containments we have proven  allows us to conclude that %Thus, we may conclude that 
%these containments of sets are actually set equalities. % of sets.
% with $a_i,a_{i+1}$ switched back to their original order, this containment of 
%sets is actually an equality of sets.  
%\commentph{Perhaps shorten the remainder of the proof by using notation more?} 
% In other words, 
%we have shown  that swapping $a_i$ and $a_{i+1}$ preserves the sets $F^{\Gamma }(a_{i+1},v')_{r\cup a_{i+1}}$ and $F^{\Gamma }(a_i,v')_{r\cup a_i}$.  
These set equalities  imply  the desired  set equalities $G^{\Gamma }_{r\cup a_i}(a_i,v')  = G^{\Lambda }_{r\cup a_i}(a_i,v')$ and 
$G^{\Gamma }_{r\cup a_{i+1}}(a_{i+1},v') = G^{\Lambda }_{r\cup a_{i+1}}(a_{i+1},v')$ of the complementary sets.  
% that  swapping $a_i$ and 
%$a_{i+1}$  also   preserves  the complementary sets 
%$G^{\Gamma }(a_{i+1},v')_{r\cup a_{i+1}}$ and $G^{\Gamma }(a_i,v')_{r\cup a_i}$. %, completing the proof. 
%
%
%
%??? move claim (just above) and this proof of claim below to much earlier in the proof?
%
\end{proof}

Before proving the main result of the remainder of this section, Theorem \ref{switch-thm}, we prove a pair of lemmas  that will provide two of the main  pieces of the proof of Theorem \ref{switch-thm}.

%\commentplh{The next two lemmas were  3.14, 3.15 in referee report.}

\begin{lem}\label{ib-preserved}
Let $\Gamma $ be  a GRAO  for a finite bounded poset $P$.  Suppose a pair of consecutive atoms $a_i,a_{i+1}$ in  $[u,\hat{1}]_r$ with chain-atom ordering
$\Gamma|_{[u,\hat{1}]_r}$ has 
the property for % that the earliest atom  in 
each rooted interval   $[u,w]_r$ containing both   $a_i$ and $a_{i+1}$ 
that neither $a_i$ nor $a_{i+1}$ is  the earliest atom of $[u,w]_r$ with respect to $\Gamma|_{[u,w]_r}$.  Then the chain-atom ordering
$\Lambda $ on $P$  
obtained from $\Gamma $ by swapping the order of $a_i$ and $a_{i+1}$ in $[u,\hat{1}]_r$ satisfies condition (i)(b) of a GRAO.  
\end{lem}

\begin{proof}
%\commentgs{I find these intros to the proofs very helpful.} \commentph{Good, but I didn't like your proposed stylistic edit here. The goal is not formality but to make it as palatable as possible to read this.}
We break the proof into  the following three cases based on 
the nature of  the element $u\in P$  that is covered by   $a_i$ and $a_{i+1}$: 
\begin{enumerate}
\item
%\sout{, using the following cases}\commentph{I disagree with this strikeout, because this is not grammatically correct with the strike-out and is confusing}: 
  $u = \hat{0}$
 \item
  $u$ is   an atom of $P$
 \item
   $u$  
 is  some other element  of $P$.
 \end{enumerate}
%Let $a_j$ denote an arbitrary atom of $P$. 
% Our task will be to confirm the requirement of condition (i)(b) from the definition of GRAO  
%for every $x,v\in P$ such that 
%$a_j\lessdot x\lessdot v$ 
%for the chain-atom ordering $\Lambda $.  \commentph{I just  added the next sentence to help readers.} 
%To verify that $\Lambda $ satisfies condition (i)(b) of a GRAO, i
%For each choice of $u$, % \commentph{and each choice of $a_i,a_{i+1}$ covering $u$}, 
To show that $\Lambda $ satisfies condition (i)(b), it suffices to prove  for each $u\in P$  the following property  for every atom $a_j\in P$ and every  
$v\in P$ such that there exists an $x\in P$ with  $a_j\lessdot x\lessdot v$: 
%  \sout{for some $x\in P$}:  
%\commentph{The previous phrase that I struck out was ambiguous where we fix an $x$ or consider all possible $x$, so could easily be misinterpreted}
%we must show 
either the first atom of $[a_j,v]_{\hat{0}\lessdot a_j}$ with respect to $\Lambda $
 is above an earlier atom of $P$ than $a_j$ or  no atom of $[a_j,v]_{\hat{0}\lessdot a_j}$ is above an earlier atom than $a_j$.  %\commentph{
% We will handle case (3) first, but when we get to 
 %We will  call what what must be checked  for a particular choice of such  $a_j$ and $v$ %  within $P$ 
We sometimes speak below of checking ``condition (i)(b)  for the rooted  interval $[a_j,v]_{\hat{0}\lessdot a_j}$'', by which we mean that we are checking the condition above just for that  fixed choice of $a_j$ and $v$.   In the remainder of the proof, we suppress the notation indicating our choice of root for intervals $[a_j,v]$ where $a_j$ is an atom  so as to simplify notation, since it is  clear from context for such  intervals that the root must be $\hat{0}\lessdot a_j$.  
We start with the easiest case, namely case (3) where we have   $u \ne \hat{0}$ and $u$  also not an atom of $P$.
%\commentph{It is disjarring notation to use $R$ for root.  I suggest $r'$ to avoid $r$.}
%\commentph{This paragraph was all wrong, so I just fixed it. } 
% The thing to check to verify condition (i)(b) is a statement 
%regarding atoms of $P$, not for general rooted intervals}.
%Finally consider the case where $u$ is neither $\hat{0} $ nor % at  $u\ne\hat{0}$ where $u$ is also not
% an atom of $P$. 
In this case, 
swapping the order of the  atoms  $a_i$ and $a_{i+1}$ in  the rooted interval $[u,\hat{1}]_r$  cannot impact the ordering of the atoms of $P$.  
% $[u,w]_R$ 
%\commentph{It is misleading to speak or roots other than $r$, so I changed that.  We only need to consider $r$ here.} % 
%for any choice of root $R$, 
The swap also cannot  impact the ordering  on the atoms of $[a_j,v]$ for any atom $a_j$ of $P$ and any $v\in P$.
% such that there exists $x\in P$ with 
%$a_j\lessdot x\lessdot v$.  
% which atoms of $[u,w]_R$ are greater than an earlier atom than $u$ in $[u^-,w]_{R^-}$ where $u^-$ is the element covered by $u$ in $R$ and where $R^-$ is the root obtained from $R$ by removing $u$.  
%The preservation of these orderings implies 
%that this swap of  consecutive atoms   $a_i,a_{i+1}$ of  $[u, \hat{1}]_r$ % in this case % where $u$  is neither $\hat{0}$ nor an atom   of $P$ 
% cannot % in any way
 % impact whether our chain-atom ordering for  $P$  \commentgs{which one?} \commentph{we aren't talking about a single chain-atom ordering here but rather the relationship between two different ones, namely $\Gamma $ and $\Lambda $.   This is really a general statement, so I did not think we needed to say which one, and it really does not make sense as written to do so.}  satisfies condition (i)(b).  %\commentgs{this is still confusing to me. What about ``
%  The preservation of these orderings implies 
%that  %\sout{in this case,} 
Thus, swapping the order  of   $a_i,a_{i+1}$ in   $[u, \hat{1}]_r$  while otherwise leaving a chain-atom ordering unchanged cannot
  cannot  whether (i)(b) is satisfied. % for our  given chain-atom ordering for  $P$.
  % (would then need to take out ``in this case" in the next sentence. } 
%  \commentph{It's really important to keep the phrase "in this case" in the next sentence, since otherwise readers are likely to think we're claiming to have proven more than we have.}
    % Thus, the fact that 
Since $P$   satisfies condition (i)(b) with respect to $\Gamma $,
$P$   therefore also satisfies condition (i)(b) with respect to $\Lambda $.  This completes case (3).

%\commentplh{In this next paragraph,  it was  confusing that we had two cases instead of three, so I edited to clear up how we split into cases.}
%\commentplh{It  still might be confusing how this fits our setup with regard to what we are supposed to be checking.}
 Next we handle   case (2),  i.e., the case  where  $u$  is an atom of $P$.   %We subdivide this case into  two subcases.  
 First we check condition (i)(b) for   %any  atom $a_j\in P $ with $a_j\ne u$ and  
 each  interval $[a_j,v]$  %within $P$ 
  %\commentgs{using both $a$ and $a_j$ here} 
 with $a_j\ne u $ where $a_j$ is  an atom of $P$  %such that $a_j\ne u$ where  
 and  $v$ satisfies 
    %} \sout{such that  }
 $a_j \lessdot x \lessdot v$ for some $x\in P$; then we will separately  handle the intervals with $a_j=u$.
 %  \commentgs{Why are we calling these atoms $a$ and not $a_j$?}
  Since 
 swapping the order of  $a_i$ and $a_{i+1}$ in  $[u,\hat{1}]_{\hat{0}\lessdot u}$ has no impact on the order of the atoms of $P$ or  on the order of the atoms of 
 $[a_j, v]$ for  $a_j \neq u$, it follows from the fact that $[a_j,v]$ satisfied condition (i)(b)   before the swap that it also satisfies condition (i)(b)  after the swap.
 Next  we  prove condition (i)(b) % in case (2) 
  for those intervals $[a_j,v]$ with  $a_j=u$    
 %$  for  each interval   $[u,v]_{\hat{0}\lessdot u}$  
 such that  
 there exists $x$ with $u\lessdot x\lessdot v$, 
 splitting this in two parts based  on whether or not
 $a_i$ and $a_{i+1}$ are both elements of $[u,v]_{\hat{0}\lessdot u}$. 
%  \sout{Thus, we now focus on intervals of the form $[u,v]_{\hat{0}\lessdot u}$ having some $x$ with  $u\lessdot x \lessdot v$.  }
 %, broken into two cases depending whether or not $a_i,a_{i+1}$ are both elements of $[u,v]_{\hat{0}\lessdot u}$.  F
 First consider % the subcase given by 
 any  such  interval   $[u,v]$  %with  our fixed $u$ 
that  contains at most one of $a_i$ and $a_{i+1}$.  % where $u\lessdot x\lessdot v$ for some $x\in P$. 
% and also  contains some  $x\in P$ with  $u\lessdot x \lessdot w$.  
%Since $u <  a_i$ and $u <  a_{i+1}$ by our set-up,  %we must have 
%$a_i\not\le v$ or $a_{i+1}\not\le v$. % in this case.  
%Therefore  
Notice   that  $[\hat{0},v]$  contains at most one of $a_i,a_{i+1}$  by virtue of our  having  
$u\lessdot a_i$ and $u \lessdot a_{i+1}$ with $a_i,a_{i+1}$ not both in $[u,v]$. %, since  $a_i$ and $a_{i+1}$ both cover $u$.  
%\commentph{We should be speaking about the GRAO on all of $P$ in what follows.}  
But then   $\Gamma|_{[\hat{0},v]} $  has the property  that
swapping the order of $a_i,a_{i+1}$ in $[u,\hat{1}]$ % $\Gamma $ 
 does not change the chain-atom ordering  $\Gamma|_{[\hat{0},v]}$ which therefore is  still 
a GRAO on $[\hat{0},v]$ after the swap. 
 %\commentgs{could use a bit more explanation here, this is a comparatively large leap} 
 % In particular, t
Therefore, %  This shows that   %GRAO structure on $[\hat{0},v]$ after the swap  
 condition (i)(b) of a GRAO holds  for  $\Lambda|_{[\hat{0},v]}$, implying  that  %such 
%interval 
$[u,v]$   with at most one of $a_i,a_{i+1}$ in $[u,v]$ % having some $x$ with $u\lessdot x \lessdot v$  
 satisfies condition (i)(b)  with respect to $\Lambda $.   
%\commentgs{I don't understand how $\Lambda$ satisfying (ii) of the defn of GRAO implies condition (i)(b) is satisfied.} 
%our chain-atom ordering on $[\hat{0},w]$ % after the swap 
%still  satisfies condition (i)(b) after the swap.    \commentph{Need to explain why we get (i)(b) for all of $P$ after the swap.}
%Thus, $P$ satisfies condition (i)(b) with respect to $\Lambda $ in the case that $u$ is an atom.  
%\commentph{What about  when $u$ is an atom but $a_j$ is a different atom?}  
%
% 
%
%To complete our proof of condition (i-b-int)  for  intervals $[u,v]_{\hat{0}\lessdot u}$  falling under case (2),  %the case where $u$ is an atom of $P$, 
 Next consider  % any  the subcase that handles 
 the   intervals
$[u,v]_{\hat{0}\lessdot u}$ containing both $a_i$ and $a_{i+1}$. % where $u$ is an atom of $P$ and
%such that  there exists  
% some $x\in P $ with $u\lessdot x\lessdot v$.
%By our assumptions \commentph{I want to keep this next phrase because we can't be vague with so many cases which all have different assumptions -- it makes the job of the reader too difficult.} % \sout{
%about $a_i$ and $a_{i+1}$ in the statement of this result, \commentgs{How about something like, ``Recall that we have assumed in the statement of this result that 
Neither $a_i$ nor $a_{i+1}$ may  be the first atom with respect to $\Gamma $   of any interval to which both belong, implying neither is the first atom  in 
$[u, v]$ with respect to $\Gamma$. %" I personally find it onerous to have to flip back to the statement of the lemma mid-sentence.}  %in this case that 
%neither $a_i$ nor $a_{i+1}$ is allowed to  be the first atom of $[u,v]_{\hat{0}\lessdot u}$ with respect to $\Gamma $.  
Therefore,
 swapping the order of  $a_i$ and $a_{i+1}$ has no impact on whether  the first atom  of $[u,v]$ is greater  than  an earlier atom of $P$ 
 %(with respect to our chain-atom ordering  \commentph{this is a statement about chain-atom orderings in general, not a particular one, and that is important here} \commentgs{which?}) 
%(resp. not being greater than an earlier atom) 
 than $u$.    This swap also has no impact on whether there exists an atom of $[u,v]$ that is greater than  %\sout{(with respect to $\le_P $) }
 an earlier atom of $P$  
  %(with respect to our chain-atom ordering \commentph{again, this is not a statement about a single one}  \commentgs{which?}) 
    than $u$.  Thus, we   use that 
 condition (i)(b) holds for  each such interval 
  $[u,v]$   with respect to $\Gamma $ to deduce that it also holds  for each such interval
 with respect to  $\Lambda $.  
  % is preserved under the swap of $a_i$ and $a_{i+1}$ (in both cases).  
% Thus, the fact that property (i)(b) holds for $\Gamma $
%implies that (i)(b) also holds for $\Lambda $.
% the desired condition given  by (i)(b)  
%on the earliest atom  of  $[u,w]_{\hat{0}\lessdot u}$  holds after the swap of $a_i,a_{i+1}$ by virtue of its holding 
%before the swap. 

%\commentplh{I edited the next paragraph substantially, combining two paragraphs into one and reducing repetitiveness and unnecessary arguments, while explaining some things more fully.}
Finally, we turn to case (1),  the case in which  $u=\hat{0}$.
%, working our way through  the various types of intervals $[a_j,v]_{\hat{0}\lessdot a_j}$ for $a_j$ an atom of the entire poset $P$ and $a_j\lessdot x\lessdot v$ for some $x\in P$. 
%
 % In the $u=\hat{0}$ case,
 Here we subdivide  the task of checking condition (i)(b) for  the pertinent  rooted intervals $[a_j,v]$   
 % for all of the  rooted  intervals $[a_j,v]_{\hat{0}\lessdot a_j}$ within $P$  
% having some $x$ satisfying  $a_j\lessdot x\lessdot v$, 
%splitting up the task   
based on
% \sout{the nature of the rooted interval $[a_i,v]_{\hat{0} \lessdot a_j}$ \gs{$[a_j,v]_{\hat{0} \lessdot a_j}$?}.  Our cases are split up based on} 
whether  we have % the 
%rooted interval  
%$[a_j,v]_{\hat{0}\lessdot a_j} $  %under consideration  
%has  
%\begin{itemize}
%\item
(a)  $a_j\in \{ a_i, a_{i+1}\} $ with   $a_i, a_{i+1}\in [\hat{0},v]$, 
%\item
(b)  
$a_j\in \{ a_i ,a_{i+1}\} $ with   exactly one of   $a_i, a_{i+1}$  in $[\hat{0},v]$, or 
%\item
 (c)     $a_j\not \in \{ a_i,a_{i+1}\} $. 
%\end{itemize}
%\commentgs{The changes to the previous sentence are intended to make it as easy as possible to see the differences between the cases and to see that they cover all possibilities.}  
It is not possible to  have 
$a_j\in \{ a_i,a_{i+1} \} $ with neither $a_i$ nor $a_{i+1}$ in $[\hat{0},v]$ since % \sout{our set-up ensures}
 $a_j\in [\hat{0},v]$. %, so this covers all possible scenarios.  

 First  consider  for $u=\hat{0}$ any rooted interval $[a_j,v]$  of type (a).
 % \commentph{it's not a subcase, rather there are three things to check in order to check this case} 
  %\gs{sub}case (a), 
 % namely suppose % and %,  and suppose also 
 %$a_j\in \{ a_i,a_{i+1}\} $ with  % $a_i$ and  $a_{i+1}$ both in 
 %$a_i,a_{i+1}\in [\hat{0}, v]$.
% \sout{$a_i <_P  v$,  and $a_{i+1} <_P v$. }% in $P$. % where $a_j\lessdot x\lessdot v$ for some $x\in P$. % and $r$ is any root for $[u,v]$. 
%Having $u=\hat{0}$ implies that  $a_i$ and $a_{i+1}$ are atoms of $P$.  
Since neither % \sout{Our requirements on } 
$a_i$  %\gs{
nor $a_{i+1}$  is allowed to be the first atom with respect to $\Gamma $  in any rooted 
 interval containing both $a_i$ and $a_{i+1}$,  %} \sout{in the statement of the theorem guarantee that} 
there exists  an atom $a$  
of $[\hat{0},v]$ that comes earlier than both $a_i$ and $a_{i+1}$  with respect to  $\Gamma $.  %, by virtue of our assumption about   $a_i$ and $a_{i+1}$.  % set-up.  
%\commentph{This next sentence isn't really stated quite properly, in that we don't exactly speak of condition (ii) for pairs of elements in the definition.}  
%\commentph{I guess our jargon we introduce next might be a surprising or disjarring choice  in light of other conditions I've introduced elsewhere.}
%Let us say   that an ordered pair of atoms $(a_l,a_m)$ satisfies condition (ii) of a GRAO with respect to $\Gamma $ where $a_l$ is an earlier atom than $a_m$ in
%$\Gamma $ if for each $[\hat{0},y]$ having $a_l,a_m\in [\hat{0},y]$ there exists $z\in [\hat{0},y]$ and an atom $a_{l'}$ coming earlier than $a_m$ in $\Gamma $ with 
%$a_{l'}  < z$ and $a_m \lessdot z$. 
% \commentgs{Why do we need the previous sentence?}\commentph{Because  the next sentence will not make any sense to people without it -- because condition (ii) is a condition on a poset, not on an ordered pair.}  
Since $\Gamma $ is 
a GRAO, % it satisfies  condition (ii) from the definition of GRAO for 
the ordered  pairs  of atoms $(a,a_i)$ and  $(a,a_{i+1})$ each  satisfy condition (ii) 
%\commentph{the old version seems a lot clearer to me, so I did not follow your suggestion here}  
%from the definition 
of a  %\gs{a} 
GRAO % \gs{with respect to $\Gamma$}  %\sout{
when using the chain-atom ordering $\Gamma $, in the following sense: for each $[\hat{0},y]$ having $a,a_i\in [\hat{0},y]$ (resp. $a,a_{i+1}\in [\hat{0},y]$) there exists 
$z\in [\hat{0},y]$ and an atom $a_{l'}$ coming earlier than $a_i$ (resp. $a_{i+1}$) in $\Gamma $ with $a_{l'}<z$ and $a_i\lessdot z$ (resp. $a_{i+1}\lessdot z$).  
Therefore,  there exists an atom $z$  %(resp. $b_{i+1}$)  
 of  $[a_i,v]$ (resp.   % $[a_{i+1},v]_{\hat{0}\lessdot a_i}$ \gs{
$[a_{i+1},v]$) 
 that is above an earlier atom $a_{l'}$  of $[\hat{0},v]$ than $a_i$ (resp. $a_{i+1}$)  % (resp. $a_{i+1}$)
  with respect to  $\Gamma $.  
This together with the fact  that 
$\Gamma $ is a GRAO  allows us 
to deduce that the earliest atom of $[a_i,v]$ (resp. $[a_{i+1},v]$) is above an earlier atom than $a_i$ (resp. $a_{i+1}$) with respect to $\Gamma $.  %\commentph{I think I disagree with this proposed change, since the phrase helps the reader keep straight the logic here.}
% \sout{But then %.  We may directly deduce that 
%the} The
But then the  earliest atom $b'$  of $[a_i,v]$ is still above an earlier atom than $a_i$ after swapping the order of $a_i$ and $a_{i+1}$ 
since every atom that came before $a_i$ with respect to 
$\Gamma $  %the GRAO before the swap 
still comes before $a_i$ after the swap. % (due to the swap moving $a_i$ later).  %This confirms % \commentph{next notation?} 
% (IB) in the case that $a_j=a_i$.  % \commentgs{This makes it seem like we somewhere assumed $a_j=a_i$. I think we mean something more like "t
 This confirms condition  (i)(b)  for  $[a_j,v]$ when $a_j=a_i$. % \commentgs{When $a_j=a_{i+1}$, we must also consider...} 
%The case of $a_j=a_{i+1}$ is similar,  % we use the same argument along with  one additional thing we need to check in that case.
% rule out in that  case % , simply ruling out just as before 
%with one further thing we need  to rule out  in the $a_j = a_{i+1}$  case.  
%\commentph{still cleaning up the rest of this paragraph.} 
Next consider the  possibility that  $a_j=a_{i+1}$.  We have already  shown above 
that the earliest atom $b'$  of $[a_{i+1},v]$ is above an earlier atom than $a_{i+1}$ with respect to $\Gamma $.  
%When $a_j=a_{i+1}$, w
What remains  is to rule out 
the possibility  that $a_i$ is the only earlier  atom than $a_{i+1}$ with respect to $\Gamma $ that is below $b'$. % of $[a_{i+1},v]_{\hat{0}\lessdot a_{i+1}}$. 
 But this would imply $a_i<b'$ and $a_{i+1}<b'$,  which by our hypotheses implies 
that %} \sout{implying that} %which ensures  that  %To rule  this out,  we use the fact that
neither $a_i$ nor $a_{i+1}$ is % allowed to  be 
the earliest atom of $[\hat{0},b']$. 
This  gives a contradiction to our assumption that 
$a_i$ was the only earlier atom than $a_{i+1}$ below $b'$, completing our treatment of intervals of type  (a). % for $b'$ satisfying $a_i<b'$ and $a_{i+1}<b'$.
%This completes the part of the  $u=\hat{0}$  case where we also have  $a_j=a_i$ or $a_j=a_{i+1}$ with both $a_i$ and $a_{i+1}$ contained in $[\hat{0},v]$.  

%\commentph{Maybe be more clear below what is $v$?}
%\commentplh{We may want to make it more clear how the cases fit together.}
%\commentph{This  next proposed revision of yours is problematic since it isn't a subcase.  I was trying to find a way to help readers keep track of the proof structure here in spite of it not being a subcase.  So I kept my version.}   %\sout{
Continuing  the  $u=\hat{0}$ case, next  % \gs{Next} 
we handle  the intervals of type % \gs{sub}case
 (b). % \gs{of the case where $u=\hat{0}$}.  
That is,  we
consider intervals  $[a_j,v]$   with % the further requirements that
%Next consider the case with  $u=\hat{0}$  and with 
$a_j = a_i$  % (resp. $a_j=a_{i+1}$) 
but $a_{i+1}\not\le v$ (resp.  $a_j=a_{i+1}$ but $a_i\not\le v$) where $a_j\lessdot x\lessdot v$ for some $x\in P$.
%Let $a_j = a_{i+1}$ (resp. $a_j = a_i$). 
 %Starting with $\Gamma$ and  s
 In each of these two  cases, swapping the order of the atoms  $a_i, a_{i+1}$  of   $P$ %, this  swap  
 % \sout{from what it is in  %  our chain-atom ordering
%  $\Gamma $}
%for $P$ 
leaves  $\Gamma|_{[\hat{0},v]}$ unchanged.  Thus,   we may use the fact that condition (i)(b) holds for $\Gamma $ to deduce condition (i)(b) 
for intervals of  type (b)    with respect to $\Lambda $.
%  follows from the fact that each  
% $[a_j,v]_{\hat{0}\lessdot a_j} $ % $a_j\lessdot x\lessdot v$ 
%satisfied   %  the requirement of 
%condition (i)(b) within $P$ 
%with respect to   $\Gamma $ by virtue of  $\Gamma $ being  a GRAO. %, implying  $\Lambda $ satisfies condition (i)(b) from the definition of GRAO.
%to $[\hat{0},v]$ also satisfies condition (1B).  % after the swap.  

%\commentph{Again it's not a subcase, so the suggestion doesn't work.}  \gs{We complete the $u=\hat{0}$ case by considering subcase (c).} % \sout{
We now  complete   the $u=\hat{0}$ case by  handling %inally, consider  %for  $u=\hat{0}$ 
the intervals  % falling under case
of type  (c).   That is, consider 
 the intervals $[a_j,v]$ where  $a_j\not\in \{ a_i,a_{i+1}\}$ such that  there exists $x\in P$ with 
 $a_j\lessdot x\lessdot v$. % for some $x\in P$ 
%with $a_j\not\in \{ a_i,a_{i+1} \} $. 
 %The key observation for such intervals  is that
 Since $a_i$ and $a_{i+1}$ are consecutive atoms in $\Gamma $,  $a_j$ must either come  before both $a_i$ and $a_{i+1}$ in $\Gamma $ or come 
after both $a_i$ and $a_{i+1}$ in 
$\Gamma $.  In either event, the following three  facts may  easily be  checked:  
 that $\Gamma $ satisfies condition (i)(b) due to $\Gamma $ 
being a GRAO, that swapping 
$a_i$ and $a_{i+1}$ preserves $\Gamma|_{[a_j,v]} $,   % \commentgs{why is (i)(a) relevant?} 
%\commentph{the next bit sounded like the wrong reasoning here, so I just replaced with correct reasoning}
and  that the set of atoms coming earlier than $a_j$ with respect to $\Gamma $ equals the set of atoms coming earlier than $a_j$ in the chain-atom ordering obtained 
from $\Gamma $ by  swapping the order of 
$a_i$ and $a_{i+1}$. % while otherwise leaving $\Gamma $ unchanged.  
%swapping $a_i$ and $a_{i+1}$ does  not impact  which atoms of $P$ come earlier than $a_j$. 
 These facts combine to imply % together allow us 
% nor does it change 
%$\Gamma|_{[a_j,v]_{\hat{0}\lessdot a_j}}$ 
% the restriction of our chain-atom ordering to $[\hat{0},v]$
% to deduce
that  % the % swap of $a_i,a_{i+1}$  preserves    the  fact that the 
%interval 
$[a_j,v]$  satisfies condition (i)(b) after the swap, namely with respect to $\Lambda $. 
%have  its earliest atom above an earlier atom than $a_j$ if any atom of  $[a_j,v]_{\hat{0}\lessdot a_j}$ 
%is above an earlier atom than $a_j$, completing this final case.
  %this interval is above an earlier atom than $a_j$.  
%
%
\end{proof}

\begin{lem}\label{2-flippable}
Let $\Gamma $ be  a GRAO  %$\Gamma $
 for a finite bounded poset $P$.  Suppose a pair of consecutive atoms $a_i,a_{i+1}$ 
 in the induced GRAO for $[u,\hat{1}]_r$ has the property that each rooted 
interval $[u,w]_r$ containing both $a_i$ and $a_{i+1}$ has neither $a_i$ nor $a_{i+1}$ as its earliest atom with respect to % the restriction of 
$\Gamma|_{[u,w]_r}$.
% \commentgs{ (should this be $[u, w]$?)}\commentph{Either way would be okay, though we have to be careful what we have defined by this point in
%the paper}.  
Then the chain-atom ordering 
$\Lambda $ obtained from $\Gamma $ by swapping the order of $a_i$ and $a_{i+1}$ in $[u,\hat{1}]_r$ satisfies condition (ii) from the definition of GRAO. 
%\commentgs{Is the induced GRAO for $[u, \hat{1}]_r$ the same as the restriction of $\Gamma$ to $[u, \hat{1}]_r$?}\commentph{Yes, and we should be careful which we say and which has been defined by now in the paper.}
% on all of $P$. 
\end{lem}

\begin{proof}
If $u \ne \hat{0}$, swapping  the order of the atoms $a_i$ and $a_{i+1}$ of $[u,\hat{1}]_r$
 does not impact the ordering of the atoms of $P$.  Since condition  (ii) held before the swap, % and since  the ordering of the atoms of $P$ is unaffected by the swap, 
 condition  (ii) therefore  also holds after the swap. 
Thus, we may assume $u = \hat{0}$ for the remainder of the proof.
%, we must take more care due to the fact that
%First consider the case of  $u\in P$ satisfying  $u \ne  \hat{0}$. 
% \commentph{I think deletions  like the next proposed one
%will make the proof a lot harder for readers, by not 
%guiding people how it is organized.  This is another place I'm trying hard to make sure the referee does not get off track.} 
%\commentph{I want to keep the next sentence, as I think it will help readers.}
 % \sout{This case is much easier than the $u=\hat{0}$ case, but w
%  We do this  quite simple case  first to  clarify for readers  
%what it is that needs to be verified in each case.} 
%Since $\Gamma $ is a  GRAO,  $\Gamma $ satisfies condition (ii) of a GRAO.  
%Our  $u\ne \hat{0}$ assumption ensures that swapping  the order of two atoms of $[u,\hat{1}]_r$ %namely swapping $a_i$ and $a_{i+1}$, 
% does not impact the ordering of the atoms of $P$.   
 %\sout{Thus, swapping the order of the atoms  $a_i$ and $a_{i+1}$ in $[u,\hat{1}]_r$ for $u>\hat{0}$  has no 
%impact on whether condition (ii) holds for  pairs of atoms of  the entire poset $P$.  % in $[u,\hat{1}]_r$ with $u\ne \hat{0}$.  
%Thus, $P$ will still satisfy condition (ii) after the swap, % the pair of atoms of $[u,\hat{1}]_r$ will satisfy condition (ii) after the swap, 
%completing the proof of this case.} \gs{
%Since (ii) held before the swap and since  the ordering of the atoms of $P$ is unaffected by the swap, (ii) also holds after the swap. 
%
%Now we turn to the  
%case where $u=\hat{0}$.  This case will require much more care  due to the fact that
  %$a_i$ and $a_{i+1}$ are atoms of $P$ in this case. 

For each 
interval $[\hat{0},y]$ in $P$, we 
need to show the following: for any pair of atoms $a_l,a_m$ of $[\hat{0},y]$ with $a_l$ coming before $a_m$ with respect to $\Lambda $,  %\sout{that}
there exists an atom $a_k$ coming earlier than $a_m$ in $\Lambda $ and an element $z\in [\hat{0},y]$ such that  $a_k < z$ 
and $a_m\lessdot z$.  
%We find it convenient w
Within this proof we  call  what must be checked  for any fixed choice of $y\in P$ 
 ``condition (ii-int) for the interval $[\hat{0},y]$ within $P$''.  We do not call this  condition (ii)  because we wish   to 
emphasize  the fact which  will be  important to our  proof 
that this condition on $[\hat{0},y]$  is strictly easier to check than 
%checking 
%condition (ii-int)  %condition (ii) on the interval $[\hat{0},y]$ 
%is strictly easier than 
 regarding $[\hat{0},y]$ as a poset with a chain-atom ordering and checking condition (ii) from the definition of  GRAO on that poset.
First we show for each  $y\in P$ having  $a_i,a_{i+1}\in [\hat{0},y]$  that condition (ii-int)  holds for $[\hat{0},y]$  with respect to $\Lambda $.  Consider any pair
$a_l,a_m$ of atoms of $[\hat{0},y]$ where $a_l$ comes earlier than $a_m$ with respect to $\Lambda $.  
%any pair $a_j,a_k$ of atoms of $[\hat{0},y]$ 
%
%
%%%\commentph{maybe we should do any pair of atoms in this interval as this case, then split based on whether or not both $a_i$ and $a_{i+1}$ are less than $z$?} 
% any pair $a_l,a_m$ of atoms of $[\hat{0},y]$  with $a_l$ coming before $a_m$  with respect to % \commentph{Be careful here, $\Gamma $ or $\Lambda $}
%   $\Lambda  $ 
%satisfies \commentph{Name this like in the last lemma?} condition (ii) of the definition of GRAO with respect to $\Lambda $; in other words, we prove that 
%%%%  which in this case  means 
%%% \commentph{phrase the next bit more generally to correspond with new way of splitting cases?}  that % for any  $y\in P$ having $a_i<y$ and $a_{i+1}<y$ that 
%there exists an atom $a_k$ coming earlier than $a_m$ in $\Lambda $ and an element $z\in [\hat{0},y]$ such that 
%$a_m \lessdot z $ and $a_k < z $.  
%%% Our assumption that neither $a_i$ nor $a_{i+1}$ may be the first atom of any interval having them both as atoms  
%%%implies that swapping their order cannot impact which atom is the earliest atom of 
%%%$[\hat{0},y]$.  
%%%\commentph{Possible new text (which with tweaking should handle any pair of atoms both below $y$): 
Since $a_m$ is not the first atom of $[\hat{0},y]$ with respect to $\Lambda $,  we claim that it  also 
%\commentph{you had crossed out "also" next, but I think it is important to keep it to make this part understandable, so I restored it} also 
cannot be the first atom of $[\hat{0}, y]$ with respect to $\Gamma $;  we confirm this claim  by noting  that the  swap of 
$a_i, a_{i+1}$ converting  $\Lambda $ back  to $\Gamma $ does not impact which atom is first in $[\hat{0},y]$ since neither $a_i$ nor $a_{i+1}$ is allowed to  be the 
first atom of $[\hat{0},y]$ with respect to $\Gamma $ which implies they also  cannot be first with respect to $\Lambda $.
% \commentgs{this is because neither $a_i$ nor $a_{i+1}$ can be first in $[\hat{0}, y]$ and I think should be mentioned}.  
%\commentph{awkward language in the next sentence now}
Since $\Gamma $ is a GRAO, 
%\sout{ the fact that  $a_m$ is  not the first atom of $[\hat{0},y]$ with respect to $\Gamma $ implies  that} 
  $[\hat{0},y]$ satisfies condition (ii-int) with 
respect to $\Gamma $. 
% \commentph{What you say next is not correct, it is a condition on pairs of atoms, one of which comes later than the other which does ensure that the later of the two atoms is not the overall first atom.  Perhaps I need to say something more clearly.}  \commentgs{If $\Gamma$ is a GRAO, ii should be satisfied with no restriction on $a_m$, so the wording in the previous sentence is potentially problematic}. 
Since $a_m$ is not the first atom of $[\hat{0},y]$ with respect to $\Gamma $, 
% then we may use the fact that 
%$\Gamma $ is a GRAO to  guarantee that  before the swap of $a_i$ and $a_{i+1}$  yielding $\Lambda $ 
there  exists an atom in  $[\hat{0},y]$  coming earlier than $a_m$ with respect to $\Gamma $.  But then condition (ii) of a GRAO implies there exists an atom 
$a_k\in [\hat{0},y]$ and an element $z\in [\hat{0},y]$ such that $a_k$ comes earlier than $a_m$ with respect to $\Gamma $, $a_k<z$ and $a_m\lessdot z$. %, provided that   $a_m$  is not  the  first atom of $[\hat{0},y]$ with respect to $\Gamma $. 
%But swapping $a_i$ with $a_{i+1}$ cannot change which atom is the first atom of $[\hat{0},y]$, since  neither $a_i$ nor $a_{i+1}$ is allowed to be 
% our assumption about $a_i,a_{i+1}$ implies that 
%neither of these is 
%the first atom of $[\hat{0},y]$.  % Since  $a_m$ is not the first atom after the swap,  it also cannot be the first atom before the swap. 
% Thus, there is such a $z$ and such
%an $a_k$ before the swap.  
%completing the proof of our claim.  
%Thus, we must have an atom $a_k$ that comes earlier than $a_m$ with respect to $\Gamma $ and an element $z$ satisfying 
%$a_m\lessdot z$ and $a_k < z$.  
We will show that we may  use this same $z$ and  $a_k$ after the swap of $a_i$ and $a_{i+1}$  to % verify condition (II) for $[\hat{0},y]$ with respect to $\Lambda $ 
demonstrate the existence of  both an atom  $a_k'\in [\hat{0},y]$ that comes  earlier than $a_m$ with respect to $\Lambda $ and  an element %the existence  
$z' \in [\hat{0},y]$ that covers $a_m$ and is greater than $a_k'$. %;  to show we may use $a_k'=a_k $ and $z'=z$, 
What needs to be checked in order to justify letting $a_k'=a_k$ and $z'=z$  is that swapping $a_i$ and $a_{i+1}$ cannot cause   
$a_k$ to come after $a_m$ with respect to $\Lambda $. 
% The only possible way  that could happen
%$a_k$ could switch from coming before $a_m$ to coming 
%after $a_m$  is
%would be  if we had 
% unless swapping 
%the  order of $a_i$ and $a_{i+1}$ causes $a_m$ to come earlier than $a_k$ in our chain-atom ordering.   
%But  this would imply % $a_m$ could only come after $a_k$ before the swap and earlier than $a_k$ after the swap
  %if 
This could only potentially  happen if  $a_k=a_{i}$ and $a_m=a_{i+1}$. 
% In the event that we do have  $a_k=a_i$ and $a_m=a_{i+1}$, we use  the fact %our assumption \sout{in this case}
But then we could use the fact    that % \sout{
   $a_i,a_{i+1}\in [\hat{0},z]$ (due to having $a_k,a_m\in [\hat{0},z]$)  to conclude that 
   %\commentph{this should be fixed now.  I had put the word `assumption' in the
  % wrong place}  \commentgs{we don't assume $a_i, a_{i+1} \in [\hat{0}, z]$}
 %our hypothesis  
  neither  $a_i$ nor $a_{i+1}$ could be   %\sout{allowed to   be} 
 the first atom of  $[\hat{0},z]$.  This would  imply the existence of   some  atom  $a \in [\hat{0},z]$ coming  earlier than both $a_i$ and $a_{i+1}$.  But then we could   use  this element $a$ to serve as our  desired atom  $a_k$ coming earlier than $a_m$,  and we could  use the same $z$ as before the swap. %, completing this case.  
 %\commentph{Language?  This completes our proof of condition (II) for all intervals...}  
 This confirms condition (ii-int) for all intervals $[\hat{0},y]$ having  %both 
 %$a_i$ and $a_{i+1}$ in 
 $a_i,a_{i+1}\in [\hat{0},y]$. 
Finally, we  verify  % for $u=\hat{0}$ 
that condition (ii-int) holds with respect to $\Lambda $ 
for all intervals $[\hat{0},y]$   such that $a_i,a_{i+1}$ are not both elements of $[\hat{0},y]$.
%  that  condition (II) holds for such $[\hat{0},y]$ with respect to $\Lambda $.
% any pair of  atoms $a_l,a_m$  of $[\hat{0},y]$ 
%%%%with $\{ a_k,a_l\} \ne \{ a_i,a_{i+1}\} $ 
%satisfies  the requirement given by 
%condition (ii) with respect to $\Lambda $. 
Again, consider any pair of atoms   $a_l,a_m\in [\hat{0},y]$  such that $a_l$ comes earlier than $a_m$ with respect to  %the chain-atom ordering 
$\Gamma|_{[\hat{0},y]}$.   Since $\Gamma $ is a GRAO, condition (ii) of a GRAO ensures that there exists $z\in [\hat{0},y]$ and an atom $a_k$  
of $[\hat{0},y]$ which comes earlier than $a_m$ with respect to $\Lambda $ such that we also have $a_k < z$ and $a_m\lessdot z$.  
%the pair $a_l,a_m$  meets the requirement needed for  
%condition (II) for $[\hat{0},y]$  with respect to $\Gamma $. %, since $\Gamma $ is a GRAO.  
Since at most one of the elements $a_i,a_{i+1}$ is an atom of $[\hat{0},y]$ in this case, the swap  of $a_i,a_{i+1}$ cannot change the relative order of the 
atoms of $[\hat{0},y]$. %, since we are in the case  
%where at most one of the elements $a_i,a_{i+1}$ is an atom of $[\hat{0},y]$. 
%  other than $a_i,a_{i+1}$, since $a_i$ and 
%$a_{i+1}$   are consecutive atoms in 
%$\Gamma $.  \commentph{I just added the next sentence, but this needs more tweaking by looking at interval up to element covering $a_l$.}  Moreover, if $a_i,a_{i+1}$ both belong to $[\hat{0},y]$ then there is also an atom $a\not\in \{ a_i,a_{i+1}\}$ that is the earliest atom
%of $[\hat{0},y]$ ensuring that $a_i$ is not the only atom of $[\hat{0},y]$ coming earlier than $a_l$.
 %\commentph{Need to be careful e.g.  that the order of $a_i$ and $a_l$ might change if $a_l = a_{i+1}$.  This situation is covered by the same argument as in the first paragraph, but we need to say that.} 
Thus, we may use this same $a_k$ and $z$ after the swap of $a_i$ and $a_{i+1}$  to demonstrate the existence of an element $z\in [\hat{0},y]$ and an atom $a_k$ coming earlier than $a_m$  such that
$a_k < z$ and $a_m\lessdot z$.  This confirms condition (ii-int) with respect to $\Lambda $ for all intervals $[\hat{0},y]$  not having both  $a_i$ and $a_{i+1}$ as elements  of $[\hat{0},y]$. %, thereby completing the $u=\hat{0}$ case.
% meets the requirement for condition (II).
%satisfies  \commentph{Language here?}  condition (ii).
%, as one may observe by using the fact that  $a_i,a_{i+1}$ are consecutive atoms with respect to $\Gamma $.  
%This completes the  proof. % now   follows. % from Remark \ref{2-rephrased}.
\end{proof}

While the statement of the next result may seem somewhat technical, it captures in a precise and seemingly useful 
way how certain types of localized moves are guaranteed to  transform a GRAO into a new 
GRAO.  Thus, this theorem  pins down a certain type of  flexibility one has  in choosing a chain-atom ordering that will be a GRAO.   % \commentph{Add: 
We will use 
this result in our proof later in the paper that any GRAO may be transformed into an RAO. %} 
% This result   is the 
%\commentph{Not sure next two words are quite right} culmination point of a    series of lemmas that we have just proven.

% \commentplh{New version:}
\begin{thm}\label{switch-thm}
Let $\Gamma $ be  a GRAO   for a finite bounded poset $P$. % with $a_1,\dots ,a_t$ the induced GRAO for the  rooted interval $[u,\hat{1}]_r$.  
Suppose
a pair of consecutive atoms $a_i,a_{i+1}$ in the % induced 
GRAO
 %\commentgs{We use the phrase ``induced GRAO" a lot; should we make a note of what we mean by this near the definition of RAO/GRAO if we don't already?} 
  for $[u,\hat{1}]_r$ induced by $\Gamma $ 
  has  the property that each rooted interval $[u,w]_r$ containing both $a_i$ and $a_{i+1}$ has neither $a_i$ nor $a_{i+1}$ as its earliest atom with respect
to $\Gamma $.
Then the chain-atom ordering $\Lambda $  obtained from $\Gamma $  % the GRAO for $P$ 
by swapping the order of $a_i$ and $a_{i+1}$ in $[u,\hat{1}]_r$ 
  is itself a GRAO for $P$.
\end{thm}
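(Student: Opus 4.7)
The plan is to verify each of the three conditions (i)(a), (i)(b), and (ii) from Definition~\ref{grao} for the chain-atom ordering $\Lambda$.  Conditions (i)(b) and (ii) follow immediately from Lemmas~\ref{ib-preserved} and~\ref{2-flippable}, whose hypotheses match exactly those of Theorem~\ref{switch-thm}.  Thus the substance of the proof lies in verifying (i)(a): that for each atom $a_j$ of $P$, the chain-atom ordering on $[a_j,\hat{1}]$ induced by $\Lambda$ is itself a GRAO.

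I would establish (i)(a) by induction on the length of $P$, with the base case of length at most $2$ handled by the observation that every chain-atom ordering on such a poset is automatically a GRAO.  For the inductive step, fix an atom $a_j$ of $P$.  Since $\Lambda$ and $\Gamma$ differ only in the atom ordering on the single rooted interval $[u,\hat{1}]_r$, the induced chain-atom ordering on $[a_j,\hat{1}]$ changes under $\Lambda$ only when the root $r$ begins with $\hat{0}\lessdot a_j$.  If it does not (which in particular includes the case $u=\hat{0}$, where $r$ is the trivial chain $\{\hat{0}\}$), then the induced ordering on $[a_j,\hat{1}]$ under $\Lambda$ coincides with the one under $\Gamma$, and the latter is a GRAO by Lemma~\ref{GRAOrestricts}.

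If $r$ does begin with $\hat{0}\lessdot a_j$, let $\tilde{r}$ denote the tail of $r$ from $a_j$ to $u$.  Then, viewed inside $[a_j,\hat{1}]$, passing from $\Gamma$ to $\Lambda$ amounts to a swap of the same pair $a_i,a_{i+1}$ of consecutive atoms, now in the rooted interval $[u,\hat{1}]_{\tilde{r}}$.  By Lemma~\ref{GRAOrestricts} the induced chain-atom ordering on $[a_j,\hat{1}]$ from $\Gamma$ is a GRAO, and the earliest-atom hypothesis transfers directly to this restriction: for any $w$ with $a_i,a_{i+1}\in [u,w]_{\tilde{r}}$, the earliest atom of $[u,w]_{\tilde{r}}$ inside $[a_j,\hat{1}]$ is by construction the earliest atom of $[u,w]_r$ in $P$, which by assumption is neither $a_i$ nor $a_{i+1}$.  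Since the length of $[a_j,\hat{1}]$ is strictly less than that of $P$, the inductive hypothesis applies to the restricted swap and yields a GRAO on $[a_j,\hat{1}]$, as required.

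The main obstacle is the careful bookkeeping needed to see that the swap, together with its earliest-atom hypothesis, restricts cleanly from $P$ down to $[a_j,\hat{1}]$.  Once one observes that the induced chain-atom ordering on any rooted interval inside $[a_j,\hat{1}]$ is literally the ordering that $\Gamma$ already assigns to the corresponding rooted interval of $P$, the hypothesis of Theorem~\ref{switch-thm} transfers verbatim, the induction closes, and the remaining conditions are supplied by Lemmas~\ref{ib-preserved}, \ref{2-flippable}, and~\ref{GRAOrestricts}.
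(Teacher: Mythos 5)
Your proof is correct and follows essentially the same route as the paper: invoke Lemmas~\ref{ib-preserved} and~\ref{2-flippable} for conditions (i)(b) and (ii), and handle condition (i)(a) by induction on length, observing that the swap either restricts to a swap of the same form inside $[a_j,\hat{1}]$ or leaves the induced chain-atom ordering there untouched. Your case split (on whether the root $r$ begins with $\hat{0}\lessdot a_j$, rather than merely on whether $u\in[a_j,\hat{1}]$) is in fact slightly more precise than the paper's phrasing, since it correctly isolates exactly when the restriction is affected, but it is the same underlying argument.
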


\begin{proof}
 Our proof is by induction on the length of the longest saturated chain in $P$.  
It will suffice to show  that each of the requirements of a GRAO is preserved under swapping 
the order of two consecutive atoms   $a_i$ and $a_{i+1}$ in a rooted interval $[u,\hat{1}]_r$, provided that neither $a_i$ nor $a_{i+1}$ is the earliest atom
of any rooted  interval $[u,w]_r$ that contains both $a_i$ and $a_{i+1}$.  
 
   To show  that 
condition (i)(a) in the 
definition of GRAO still holds after the swap,  it suffices to show for each of the atoms $a$ in $P$ that % the restriction of 
$\Lambda|_{[a,\hat{1}]}$ is a GRAO.
%\commentph{This is a run-on sentence next.  I  just split it up into two sentences, but might still  need to explain better...}
 First consider the case with  $u\in [a,\hat{1}]$.  One may easily observe in this case that
 %the restriction of  
 $\Lambda|_{[a,\hat{1}]}$ is the same chain-atom ordering one obtains % may alternatively be  constructed 
by restricting $\Gamma $ to $[a,\hat{1}]$ and then performing the swap on this restricted GRAO.
 %\sout{Our inductive hypothesis then  allows us to deduce from  the fact that}
  %\gs{
  Since  $\Gamma|_{[a,\hat{1}]}$ is a GRAO, our inductive hypothesis allows us to deduce % \sout{the fact} 
  that 
 $\Lambda|_{[a,\hat{1}]}$ is also a GRAO.  %  This completes the proof of (i)(a) in this case.  %; here we are using the fact 
%  \commentph{next phrase is too vague perhaps...full result or (i)(a)?}
 % the result by induction on the length of the longest saturated chain of our poset as we explain next, using that $[a,\hat{1}]_{\hat{0}\lessdot a}$ has strictly smaller
%such length than $P$.   
%Notice 
%, which by our inductive hypothesis is 
%itself is a GRAO.
Next consider the case with   $u\not\in [a,\hat{1}]$.  Then the swap of $a_i,a_{i+1}$  
leaves  $\Gamma|_{[a,\hat{1}]}$ unchanged;  %\sout{, implying} 
since this chain-atom ordering on $[a,\hat{1}]$  is a GRAO before the swap,  it  remains a GRAO 
after the swap.  This proves  condition (i)(a) for $\Lambda $.

Since $\Gamma $ is a GRAO, we may apply Lemma \ref{ib-preserved} to deduce that $\Lambda $ also  satisfies 
%By Lemma  \ref{ib-preserved},  %we prove  % \ref{F-and-G-preserved}
  %that  
  condition (i)(b) from the definition for GRAO. % holds for $\Lambda $ because of the fact that   $\Gamma $ is a GRAO.
  Likewise we may use that $\Gamma $ is a GRAO and 
  apply  Lemma  \ref{2-flippable} to deduce that condition (ii) from the definition of GRAO holds for $\Lambda $.
  %, again by virtue of  $\Gamma $ being  a GRAO.
 \end{proof}

\section{Equivalence of admitting a generalized recursive atom ordering to admitting a  recursive atom ordering}
\label{GRAOiffRAO}

In this section  we  prove that a  finite bounded poset admits a generalized recursive atom ordering (see Definition \ref{grao})  if and only if it admits a recursive atom ordering.   
We accomplish  the  more challenging half  of  this result  constructively  by a procedure that transforms any generalized recursive atom ordering into a recursive atom ordering, a process we call ``atom reordering.'' 
 Along the way, we will develop several properties of such reorderings.

First we carry out   the  other much  easier direction  of the result.

%\commentph{I commented out the next remark, at the suggestion of the referee.}

%\begin{rmk}
%In the proof of the next result and throughout this section, 
%we think of an RAO and a GRAO each as a type of chain-atom ordering rather than just as an atom ordering, following the conventions 
%explained already in Remark \ref{chain-atom-vs-atom}.
%\end{rmk}

\begin{lem} Every recursive atom ordering is a generalized recursive atom ordering. 
\label{raothengrao}
\end{lem}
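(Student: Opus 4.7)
My plan is to proceed by induction on the length of the longest saturated chain of $P$. The base case (length at most $1$) is immediate, since for such posets both Definition \ref{rao} and Definition \ref{grao} merely require \emph{some} ordering of the atoms, and there is nothing further to check. For the inductive step, I would fix an RAO $a_1, \ldots, a_t$ of $P$ together with the chain-atom extension supplied recursively by Definition \ref{rao}(i)(a), and then verify the three axioms of Definition \ref{grao} in turn.

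Two of the three axioms require essentially no work. Condition (i)(a) of Definition \ref{grao} falls out of the inductive hypothesis applied to each $[a_j, \hat{1}]$, since Definition \ref{rao}(i)(a) supplies an RAO on each such interval and these intervals have strictly smaller longest-chain length than $P$. Condition (ii) of Definition \ref{grao} is word-for-word identical to condition (ii) of Definition \ref{rao}, so it transfers without comment.

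The real content of the lemma lies in verifying condition (i)(b) of Definition \ref{grao}, and this is the single place where the GRAO axiom is strictly weaker than the RAO axiom. Fixing an atom $a_j$ and elements $x, w$ with $a_j \lessdot x \lessdot w$, I would apply Definition \ref{rao}(i)(b) to the chosen RAO of $[a_j, \hat{1}]$ to conclude that every atom in $F_{\hat{0}\lessdot a_j}(a_j)$ (those lying above some $a_i$ with $i < j$) precedes every atom in $G_{\hat{0}\lessdot a_j}(a_j)$. Restricting this total order to the atoms of $[a_j, w]$ preserves the relative ordering, so there are two cases: either $[a_j, w]$ contains no atom of $F_{\hat{0}\lessdot a_j}(a_j)$, giving the second alternative of the GRAO axiom, or $[a_j, w]$ contains at least one such atom, in which case the first atom of $[a_j, w]$ in the restricted order must itself lie in $F_{\hat{0}\lessdot a_j}(a_j)$ (since $F$-atoms globally precede $G$-atoms on $[a_j, \hat{1}]$), and therefore lies above some $a_i$ with $i < j$, giving the first alternative.

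I do not expect any serious obstacle; the argument is essentially the observation that a stronger ``all $F$ before all $G$'' ordering condition implies the weaker ``if any atom of the subinterval is in $F$ then the first one is'' condition. The only subtlety is the one flagged in Remark \ref{chain-atom-vs-atom}: an RAO (or GRAO) is strictly speaking an atom ordering together with a recursive chain-atom extension, and one must use a single consistent chain-atom extension at every level of the recursion so that the restrictions invoked above all refer to the same underlying chain-atom ordering. This is handled automatically by fixing the RAO data at the outset of the induction and carrying it through.
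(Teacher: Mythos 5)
Your argument is correct, and it reaches condition (i)(b) by a more direct route than the paper does. Both proofs run the same induction and dispatch conditions (i)(a) and (ii) the same way, so the only point of comparison is (i)(b). The paper's proof detours through a stronger claim: it argues that the restriction of the RAO on $[a_j,\hat{1}]$ to $[a_j,w]$ is again an RAO, and justifies this by a round trip through the Bj\"orner--Wachs constructions (RAO $\to$ CL-labeling, restrict the CL-labeling to $[a_j,w]$, CL-labeling $\to$ RAO). You skip this entirely by observing that restricting a total order to a subset preserves relative order, so the ``all of $F_{\hat{0}\lessdot a_j}(a_j)$ before all of $G_{\hat{0}\lessdot a_j}(a_j)$'' property on the atoms of $[a_j,\hat{1}]$ descends to the atoms of $[a_j,w]$, and this immediately yields the dichotomy required by GRAO (i)(b) (either no $F$-atom lies in $[a_j,w]$, or the first atom of $[a_j,w]$ is an $F$-atom). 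Your argument buys a self-contained, more elementary proof of the lemma; the paper's argument, while longer, establishes along the way that RAOs restrict to RAOs on lower rooted intervals, a fact of independent utility in the broader development. One small thing worth stating explicitly if you flesh this out: the set of atoms of $[a_j,w]$ lying above some $a_i$ with $i<j$ is exactly $F_{\hat{0}\lessdot a_j}(a_j)\cap[a_j,w]$, since any such $a_i$ is automatically an atom of $[\hat{0},w]$; this is the identification that makes the restriction argument go through cleanly. Your closing remark about carrying one fixed chain-atom extension through the induction addresses the genuine bookkeeping subtlety here, and matches the convention the paper sets up in Remark \ref{chain-atom-vs-atom}.
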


\begin{proof}  
We first  verify that  conditions (i)(a) and (i)(b) of Definition \ref{grao} hold  for any given  RAO,  doing so by  way of 
a proof by 
induction on the length  $l$ of the longest saturated chain of $P$. Any ordering of the atoms 
of a finite bounded poset whose longest saturated chain is of length 1 or 2   is  a generalized recursive atom ordering, giving the base case. 
Let $a_1, a_2, \ldots, a_t$ be an RAO  for  a finite bounded poset $P$ having  $l>2$.
By  induction, we may assume for each atom $a_j\in P$  that  the % recursive atom ordering
RAO on $[a_j, \hat{1}]$ guaranteed to exist  by condition  (i)(a)  of the definition of RAO  % Definition \ref{rao} 
is a GRAO, giving (i)(a) from the definition of GRAO. % for each atom $a_j$. %  $j=1, 2, \ldots t$. 

Now we turn to %the proof of 
%%% the task of proving 
condition   
(i)(b) of  Definition \ref{grao}.
By the definition of  
 RAO, %a recursive atom ordering, % part (ii) of Definition \ref{rao},
  each  atom in $F_{\hat{0}\lessdot a_j} (a_j)$ (see Definition \ref{F-u}) must come earlier in our given  RAO for $[a_j,\hat{1}]$  %recursive atom ordering
  than every  atom in $G_{\hat{0}\lessdot a_j}(a_j)$.
 %Let $A_w$ be the set of atoms of $[a_j, w]$, where $w$ satisfies $a_j \lessdot x \lessdot w$ for some $x$. 
 This implies that if  any  atom of $[a_j, \hat{1}]$ is greater than   $a_i$ for  some $i< j$, then % $F(a_j) \cap A_w \neq \emptyset$. In this case,
  the first atom of $[a_j, \hat{1}]$ is greater than  $a_{i'}$ for some $i' < j$.  
  Once we check that any RAO for $[a_j,\hat{1}]$ restricts to an RAO for $[a_j,w]$ for each  $w>a_j$ in $P$, we will likewise be able to deduce the following implication:
  if any atom of $[a_j,w]$ is greater than $a_i$ for some $i<j$, then the first atom of 
  $[a_j,w]$ is greater  than $a_{i'}$ for some $i'<j$. 
  % \commentgs{This next sentence is very long and therefore difficult to follow. At the very least, we should change the semi-colon to a period, but I think it would be even better to break it up even more.} 
  Now we verify the desired claim about restriction of an RAO.
  %To see that the desired claim about restriction of an RAO holds, 
  Given an RAO,  we first apply the construction of Bj\"oner and Wachs from 
  \cite{bw} that produces a CL-labeling from any RAO.  Then we note that this CL-labeling restricts to a CL-labeling for the interval $[a_j,w]$.
   Finally  we apply the construction of 
  Bj\"orner and Wachs from \cite{bw} which produces an RAO from any CL-labeling to get an RAO for $[a_j,w]$.  It is easy to 
  see that the chain-atom ordering obtained this 
  way is exactly the restriction of our given RAO to $[a_j,w]$.  
  %;  to see this, simply use the fact that 
 % any atom of $[a_j, w]$ that is in $F(a_j)$ comes before any atom of $[a_j, w]$ that is instead  in $G(a_j)$.  % with respect to the RAO. 
 Having verified this claim, we have completed the   confirmation of  %ation of  %condition (i)  %confirms  condition 
 condition (i)(b) 
  of Definition \ref{grao}.
  
 Condition (ii) in the definition of GRAO is the same as condition (ii) in the definition of RAO, hence is guaranteed to hold for any RAO.
 \end{proof}

\begin{rmk}
 It is not true that every generalized recursive atom ordering is a recursive atom ordering.  See Figure \ref{graoex} for an example illustrating this.  
 \end{rmk}
 
%\gs{
%\commentph{Next few paragraphs have been edited/expanded significantly.  This eliminated the big empty spaces  we had before the algorithm.}
Next, in Algorithm \ref{reorder-def}, we describe the atom reordering process that will allow us to transform any generalized recursive atom ordering % $\Lambda $
 into a recursive atom ordering. % that we denote by $\Lambda^{re}$. 
    This algorithm takes as its input any chain-atom ordering $\Lambda $, and it outputs a chain-atom ordering that we denote by $\Lambda^{re}$.  
One may think of  this superscript 
$re$ as shorthand for ``reordered."   %It may be  useful to keep in 
%mind %while reading the description of this  reordering process is that it 
%that t
The atom reordering process %  applied to any chain-atom ordering 
is designed to output  a chain-atom ordering that will satisfy condition (i)(b) from the definition of recursive atom ordering. Moreover, it is set up 
to do so in such a way  % we will prove 
%it will  do 
%so in such a way 
that when  %the atom reordering process 
%is 
applied to a GRAO, it 
%will do so in a way that 
preserves  %\gs{some of the} 
useful structure  that is present in a GRAO,   including  preserving 
condition  (ii) from the definition of GRAO.

 Broadly, the algorithm starts at the bottom of the poset $P$ and works its way to the top, reordering the atoms of each rooted interval in  a way that takes into account the reordering that has already occurred lower in the poset.   Each of these reordering steps moves those atoms of a rooted interval $[u,\hat{1}]_r$ that are above an earlier atom than $u$ in the rooted interval $[u^-,\hat{1}]_{r^-}$  ahead of those that are not, otherwise preserving the ordering on atoms.  
 %
 %
 %}
%\commentph{Perhaps we need a remark to make sure readers understand we are talking about a full chain-atom ordering at each step, not just how to order that set of atoms.} \commentgs{I don't think this is necessary.} 
%
%\commentph{Regarding the header for this algorithm -- it may confuse people to call this the ``reordering of $P$'' since it suggests changing the ordering on the elements of $P$.  What about something like ``Transforming a chain-atom ordering into a better behaved one'' or ``Atom reordering process''}
%
%\commentph{We may need to say more in English words what is going on below with calculating $F^{\Lambda^{pr}}(u_i)$ and with having these $\Lambda^{pr} $ components be atom orderings that get put together to form the chain-atom ordering $\Lambda^{re}$.}  
%
%\commentph{Possible  text to go before the algorithm:}
% -- text  I hope will help us come to a common understanding as to what type of object $\Lambda^{pr}$ is so that we can define $F^{\Lambda^{pr}}$ in a well-defined way): 
The  algorithm progressively  builds up a chain-atom ordering $\Lambda^{pr}$ by defining
%specify more and more of the data in the output chain-atom ordering $\Lambda^{re}$.  That is, we progressively
%define 
$\Lambda^{pr}([u,\hat{1}]_r)$ for more and more choices of $u\in P$ and of  root $r$ for $[u,\hat{1}]$.% as this ``reordering'' is done.  
The superscript $pr$ in $\Lambda^{pr}$ 
is shorthand for ``partially reordered''.
% To this end, we progressively specify $\Lambda^{pr}([u,\hat{1}]_r)$ for more and more %values of $u$ and $r$, and then o
Once  the algorithm  has made $\Lambda^{pr}$ into  an entire chain-atom ordering, it  outputs this chain-atom ordering %which it calls
and calls it
% sets $\Lambda^{re}$ equal to $\Lambda^{pr}$, letting
$\Lambda^{re}$.  Readers  may find it helpful to refer to  %the notation
  %presented in 
Definition \ref{F-u}  for an explanation of the notations  $F_r^{\Lambda^{pr}}(u,v)$ and $G_r^{\Lambda^{pr}}(u,v)$ 
as they read Algorithm \ref{reorder-def}. %as well as the notation $[u^-,\hat{1}]_{r^-}$ above.  

% be  the output 
%of the algorithm. % Here ``pr'' is shorthand for partially reordered. 
 
% \commentph{I still need to tighten up this next paragraph a bit more.}
 %In the proof of Lemma \ref{reordering-as-swaps}, we will need
 It is sometimes necessary  (e.g. in the proof of Lemma \ref{reordering-as-swaps})  
 to take a different viewpoint on this  algorithm, % at that point 
 %modifying the  data 
 keeping track of more  data   at the intermediate stages  
 in the algorithm in a way that does not impact  the output of the algorithm or the essentials of how the algorithm proceeds.  
 In this enriched  version of  the algorithm, 
$\Lambda^{pr}$ will denote   an entire  chain-atom ordering at each step of 
  the atom reordering process.    % algorithm; % e atom reordering process; 
This is accomplished by initializing   $\Lambda^{pr}$   to equal $\Lambda $ and then otherwise leaving  the algorithm unchanged.  The effect is that
progressively more and more of the values $\Lambda^{pr}([u,\hat{1}]_r)$ are re-set from what they equal in $\Lambda $ to what they will equal in 
$\Lambda^{re}$.
% data in $\Lambda^{pr}$ is modified  
%by setting new values for  $\Lambda^{pr}([u,\hat{1}]_r)$ for more and more choices of $u$ and $r$.  This setting of values for 
%$\Lambda^{pr}([u,\hat{1}]_r)$ for different choices of  $u$ and $v$ is done in exactly the manner described  in the algorithm below.  
%Giving  $\Lambda^{pr}$ this more enriched structure  at the intermediate steps  
This allows one to think of   $\Lambda $ as evolving  into $\Lambda^{re}$ over the course of the algorithm.
%; in this modified algorithm, we set  $\Lambda^{re}$ equal to $\Lambda^{pr}$ once all of the updating of $\Lambda^{pr}$ is done, and the output $\Lambda^{re}$ is the same in either case.
  Our upcoming proofs of  various  properties of the atom reordering process will  all  hold regardless of which of these two  viewpoints one takes, namely regarding $\Lambda^{pr}$ as growing or as evolving, as one may easily check by noting how  %since 
  these two versions of the algorithm  really only differ   in  terms of notation, not in substance. 
  % that in no way impacts those  proofs.  

The reason we take  the former viewpoint within %the algorithm itself
Algorithm \ref{reorder-def} itself   is so that  the sets % quantities
 $F^{\Lambda^{pr}}_r(u,v)$ and $G^{\Lambda^{pr}}_r(u,v)$ 
  %that are used both in the atom reordering process and in the proof of Lemma ~\ref{alwaysfirst} 
  will be  defined in an unambiguous way when they are used  later (e.g. within the proof of Lemma \ref{alwaysfirst}). 
     When we speak of the atom reordering process 
  transforming $\Lambda $ into $\Lambda^{re}$,  both in the introduction of the paper 
  and in the proof of Lemma \ref{reordering-as-swaps}, we are taking the latter viewpoint.  
% Keep in mind that we are building up a chain-atom ordering...}

%\commentph{A possible  alternative approach to describing  the following algorithm would be to initialize $\Lambda^{pr}$ to $\Lambda $, then keep updating 
%$\Lambda^{pr}$ as described in the algorithm and then at the end set $\Lambda^{re}$ equal to $\Lambda^{pr}$.  This would make all the intermediate objects chain-atom orderings, which would allow us to prove Lemma 4.10 without needing the first paragraph changing the definition of the algorithm for purpose of that lemma.  I don't think it would cause any problems anywhere else, since this is exactly how the original algorithm worked, and it would still allow us to speak of $F^{\Lambda^{pr}}$.    It would also make it somewhat more clear what we mean in the introduction when we say that the atom reordering process transforms a GRAO and an RAO, though I think people will figure out what is meant there anyway.  I'm not saying we need to do this, but trying to talk through how we could do it with minimal changes  
%if we wanted to do it.}

\begin{algorithm}[H]

\SetKwInput{Input}{input}
\SetKwInput{Output}{output}
%\SetKwOutput{Output}{output}

\Input{A finite bounded poset $P$ equipped with a chain-atom ordering $\Lambda$}
\Output{A chain-atom ordering of $P$, denoted $\Lambda^{re}$, also called the 
\textbf{atom reordering of $P$}.
%\sout{,  %   $\Omega^{re}$ of $P$,
% also called the \textbf{reordering of $\Lambda$}. }
}

\Begin{ 
	\begin{enumerate} 
	\item Choose any linear extension $u_0, u_1, u_2, \ldots, u_n$ of $P$\; % \commentph{next is wrong:} \sout{$\Lambda$}\;
	\item % \commentplh{Should we include this step at all?  Probably.}   
	%\commentph{
	 %\commentgs{I'm not as familiar with this language; would we be initializing $\Lambda^{re}$ or $\Lambda^{pr}$?}
	%\commentph{We should be initializing $\Lambda^{pr}$ not $\Lambda^{re}$ here.} 
	%\commentph{This is not a correct usage of the term "initialize".}
	  %It would be correct to "initialize $\Lambda^{pr}$" to equal $\Lambda $ and to make it a full chain-atom ordering, and then for each step of the reordering process to modify this chain atom-ordering so it evolves.}
	%   \commentph{I think it would be more clear and more readable to say something like: Notice that $u_0$ always equals $\hat{0}$ and begin the construction of our new chain-atom ordering by letting
	%$\Lambda^{pr}([\hat{0},\hat{1}] := \Lambda([\hat{0},\hat{1}])$\;} 
	%\sout{ Initialize % $\Lambda^{re} $ \gs{
	%$\Lambda^{pr} $ by setting} Set %} Let
	 %$\Lambda^{re}([\hat{0}, \hat{1}]) := \Lambda([\hat{0}, \hat{1}])=\Lambda([u_0, \hat{1}])$ \gs{
	 %Notice that $u_0=\hat{0}$ and s
	 Set  $\Lambda^{pr}([\hat{0}, \hat{1}]) := \Lambda([\hat{0}, \hat{1}])$, thereby determining 
	 $\Lambda^{pr}([u_0,\hat{1}])$\; %=\Lambda([u_0, \hat{1}])}$\;
	%\item  %Set $i:=1$\; %\tcc{note $u_0 = \hat{0}$}
%\end{enumerate}
\item
\For{ $i=1$ to $n$}
%\end{enumerate}
{    
\begin{enumerate}\setcounter{enumi}{2} 
	\item 
%	\commentph{You should have a number (4) in front of  ``for $i\le n$'' and not use the same numbering within this loop, as it is very misleading and not correct numbering as the loop is the fourth step of the algorithm, not the things inside the loop; it would be more appropriate  to use (4a),(4b),(4c) where you now have (4), (5), (6) as these are both part of step (4), namely the loop} 
	Choose an ordering of the roots $r_1, r_2 \ldots r_{t_i}$ % \commentph{subscript should probably be $t_i$ not $t$} 
	for $[u_i, \hat{1}]$\; 
	\item 
	%Set $j:=1$\; 
%
\For{ $ j=1$ to $t_i$ }%\commentph{should be: for $j=1$ to $t_i$, or at least that was the convention for pseudo-code when I took lots of programming classes}
	{
		\begin{enumerate}[label=\roman*.]
	\item Calculate $F^{\Lambda^{pr}}_{r_j}(u_i)$ and $G^{\Lambda^{pr}}_{r_j}(u_i)$\; % \commentph{
	%with respect to %} using
	% $\Omega^{re}([u_i^-, \hat{1}]_{r_j^-})$\;
	\item Order the elements of  $F^{\Lambda^{pr}}_{r_j}(u_i)$  in the same relative order % \commentph{ 
	as \\  in   %} given by
	 $\Lambda([u_i, \hat{1}]_{r_j})$\;
	\item Order the elements of  $G^{\Lambda^{pr}}_{r_j}(u_i)$  in the same relative order  % \commentph{
	as \\  in   %} given by 
	$\Lambda([u_i, \hat{1}]_{r_j})$\;
	\item %  \commentph{Should we change "obtain" to something like "set" if this is a full chain-atom ordering since part of it will change later} Obtain
	Determine  $\Lambda^{pr}([u_i, \hat{1}]_{r_j})$ as follows: %\sout{Modify $\Lambda^{re}$ by reordering the atoms of $[u_i,\hat{1}]_{r_j}$ as follows:}
	 put %\commentph{"put"} \\%\sout{\sout{Set   $\Omega^{re}([u_i, \hat{1}]_{r_j})$}  by taking} 
	all elements of  \\ $F^{\Lambda^{pr}}_{r_j}(u_i)$
	 in   the order from step ii, followed by all  elements \\ of  $G^{\Lambda^{pr}}_{r_j}(u_i)$  in the order from   step iii\;
	\item Increase $j$ by 1\;
	\end{enumerate}
	}
	\item Increase $i$ by 1\;
	\end{enumerate}
	}
	\end{enumerate}	
	\begin{enumerate}\setcounter{enumi}{3} 
\item  
	 Set $\Lambda^{re}$ to be the chain-atom ordering that has $\Lambda^{re}([u_i, \hat{1}]_{r_j})=\Lambda^{pr}([u_i, \hat{1}]_{r_j})$ for \\
	  every $u_i \in P$ and every choice of root $r_j$ for $[u_i, \hat{1}]$\;
	\end{enumerate}
}				
%\end{enumerate}
\begin{caption} {The atom reordering process applied to $\Lambda$ which outputs $\Lambda^{re}$.}
\label{reorder-def}
\end{caption}
\end{algorithm}

\begin{ex} 
 Figure 3 shows a poset $P$ with a GRAO on the left and a RAO on the right. The RAO on the right is obtained by applying the atom reordering process described in Algorithm \ref{reorder-def} to the GRAO on the left. In this case, the atom reordering process changes the order of the atoms above the element $a$, in particular swapping the 2nd and 3rd atoms above $a$ (highlighted in red). 
Note that this example was chosen to have the further property in both the ordering on the left and the reordering on the right
that the atom ordering for  any interval $[u, \hat{1}]_r$ is independent of  choice of root $r$.  This makes  the figures more understandable, but is  a very special case. 
\end{ex}

We next deduce several  fundamental  properties of atom reorderings.   

\begin{prop} \label{restricts-consistently}
Let $P$ be a finite bounded poset with a chain-atom ordering $\Lambda $. 
%\sout{ Let  $\Lambda^{re}$ be the chain-atom ordering on $P$ obtained by applying the 
%atom reordering process to $\Lambda $.}  
% given by a GRAO.  
Let  $\Lambda|_{[\hat{0}, v]}$ (resp.   $\Lambda^{re}|_{[\hat{0}, v]}$)  be the chain-atom ordering for $[\hat{0},v]$ obtained by restricting $\Lambda $ (resp.  $\Lambda^{re}$) 
to $[\hat{0},v]$.  Then  $\Lambda^{re}|_{[\hat{0}, v]}$ equals the 
chain-atom ordering for $[\hat{0},v]$ obtained by applying the atom reordering process to  $\Lambda|_{[\hat{0}, v]}$.
\end{prop}

\begin{proof}   
%\commentph{
%This proof  (and other  proofs  in our paper too, as I understood them) needs to use the chain-atom ordering I describe in the following possible preface
%to our proof:  
%At the step in the reordering process when we are about to reorder the atoms of 
%$[u,v]_r$ for a fixed $u$ and each choice of root $r$, denote by $\Lambda^{pr} $ the chain-atom ordering we have derived from  $\Lambda $ by applying all  
%of the reordering steps for $u_1,\dots ,u_{i-1}$ where $u=u_1$ in the linear order $u_1,u_2,\dots ,u_t$ on the elements of $P$ used in the reordering process.  
%Specifically, we will make extensive use of $F^{\Lambda^{pr}}_r(u,v)$ and $G^{\Lambda^{pr}}_r(u,v)$ in our proof below.}
%Let $\Lambda^{pr} $ be
%
%\sout{Throughout this proof, let  $F_r(u,\hat{1})$ and $G_r(u,\hat{1})$ be the sets of atoms of $[u,\hat{1}]_r$ each % sub both 
%calculated within the atom reordering process for all of $P$ at the stage just prior to reordering the atoms of $[u,\hat{1}]_r$;  let  
%$F_r(u,v)$ and $G_r(u,v)$ be the sets of atoms of $[u,v]_r$ 
%calculated within the atom reordering process on $[\hat{0},v]$ at the stage just prior to reordering the atoms of $[u,v]_r$.  }
%
Let $r=\hat{0} \lessdot t_1 \lessdot t_2 \ldots \lessdot u $ be a root for the interval $[u, v]$ in $P$. Let $r^-$ be the root obtained  by eliminating $u$ from $r$, and let $u^-$ be the highest element of $r^-$. Recall from Definition \ref{F-u} that % the set 
$F^{\Lambda^{re}}_{r}(u, v)$ refers to the set of atoms of $[u, v]$ that cover an earlier atom of $[u^-, v]_{r^-}$ than $u$ in  $\Lambda^{re}|_{[u^-, v]_{r^-}}$. Similarly, $G^{\Lambda^{re}}_{r}(u, v)$   refers to the  set of atoms of $[u, v]$ that are not in  $F^{\Lambda^{re}}_{r}(u, v)$ 

Our  main task will be to prove that $F^{\Lambda^{re}}_r(u,\hat{1}) \cap [u,v]_r = F^{\Lambda^{re}}_r(u,v) $  and that 
$G^{\Lambda^{re}}_r(u,\hat{1})\cap [u,v]_r = G^{\Lambda^{re}}_r(u,v)$.
%Once this  is accomplished,  the desired result follows easily  by observing that the atom reordering process both for $P$ and for $[\hat{0},v]$
%will preserve the order of atoms within each of the sets \old{$F_r(u,v)$ and $G_r(u,v)$} $F^{\Lambda^{re}}_r(u,v)$ and $G^{\Lambda^{re}}_r(u,v)$ and will put all elements of \old{$F_r(u,v)$ earlier than
%all elements of $G_r(u,v)$} $F^{\Lambda^{re}}_r(u,v)$ earlier than
%all elements of $G^{\Lambda^{re}}_r(u,v)$.
%
%\commentplh{This was p. 20 l-17 in submitted version, so this is where referees' suggestion should be addressed.  I made an attempt at doing that.}
%If we can prove
%\old{We claim that $F_r(u,\hat{1})\cap [u,v]_r \subseteq  F_r(u,v)$   
%and  $G_r(u,\hat{1})\cap [u,v]_r \subseteq G_r(u,v)$, and we call this Claim (I).  Notice that Claim (I)  would    imply that  
%the union of sets $(F_r(u,\hat{0})\cap [u,v]_r)\cup (G_r(u,\hat{1})\cap [u,v]_r)$ is 
%also contained in the union of sets $F_r(u,v)\cup G_r(u,v)$.  }
We claim that $F^{\Lambda^{re}}_r(u,\hat{1})\cap [u,v]_r \subseteq  F^{\Lambda^{re}}_r(u,v)$   
and  $G^{\Lambda^{re}}_r(u,\hat{1})\cap [u,v]_r \subseteq G^{\Lambda^{re}}_r(u,v)$, and we call this Claim (I).  Notice that Claim (I)  would    imply that  
the union of sets $(F^{\Lambda^{re}}_r(u,\hat{1})\cap [u,v]_r)\cup (G^{\Lambda^{re}}_r(u,\hat{1})\cap [u,v]_r)$ is 
also contained in the union of sets $F^{\Lambda^{re}}_r(u,v)\cup G^{\Lambda^{re}}_r(u,v)$.  
But this last  set containment would actually be  an equality of sets   
by virtue of both of the sets in the containment equalling  the set of all  atoms in $[u,v]$.  This set equality  would imply 
that each of the  component  set containments would  also   be a  set equality, which would complete the proof.

What remains is to prove  Claim (I).  %the 
%set containments $F_r(u,\hat{1})\cap [u,v]_r \subseteq F_r(u,v)$ and $G_r(u,\hat{1})\cap [u,v]_r \subseteq G_r(u,v)$.  
We  %now prove these two set containments, doing 
do so  by induction on the length of the longest saturated chain from 
$\hat{0}$ to $u$.  % results.  
 %\sout{Let $r^-$ be the root  obtained  by eliminating $u$ from $r$, and let $u^-$ be the highest element of $r^-$.} \commentgs{moved earlier} 
The point is that  $a\in F^{\Lambda^{re}}_r(u,\hat{1})$ (resp. $a\in G^{\Lambda^{re}}_r(u,\hat{1})$) 
 implies that  %$a$ is greater 
there exists (resp. does not exist) an atom $a'$ of $[u^-,\hat{1}]_{r^-}$ with $a'<  a$ in $P$  such that $a'$ is earlier than $u$ in $\Lambda^{re}|_{[u^-, \hat{1}]_{r^-}}$. %\sout{$a'$  is an earlier atom of $[u^-,\hat{1}]_{r^-}$  
%than $u$ in  $\Lambda^{re}$.}  
But any such 
$a'$ is also  an atom of $[u^-,v]_{r^-}$ since $u^-\lessdot  a' <  a\le  v$ in $P$.  Moreover, our inductive hypothesis ensures that $a'$ is an earlier atom than $u$ in
$[u^-,v]_{r^-}$ regardless of whether we are 
 reordering within $P$ or within  $[\hat{0},v]$.
 This % together  
is  exactly what is needed to show that  % which is enough 
%\old{$a\in F_r(u,v)$.}
 $a\in F^{\Lambda^{re}}_r(u,v)$.
On the other hand, when such $a'$ does not exist within $P$, this implies  that no % the nonexistence of 
such $a'$ exists within $[\hat{0},v]$ since  $[\hat{0},v]$ is a 
subset of $P$, yielding
the desired claim that % \old{$G_r(u,\hat{1}) \cap [u,v]_r \subseteq  G_r(u,v)$.}
  $G^{\Lambda^{re}}_r(u,\hat{1}) \cap [u,v]_r \subseteq  G^{\Lambda^{re}}_r(u,v)$. 
\end{proof}

In light of Proposition \ref{restricts-consistently}, we may henceforth speak interchangeably of
 the restriction to $[u,v]_r$ of
the  atom reordering $\Lambda^{re}$  of  % $\Lambda $ 
 a  chain-atom ordering 
$\Lambda $ of a finite bounded poset $P$ % 
%restricted to $[u, v]_r$ 
and of  
the restriction to $[u,v]_r$ of 
the  atom reordering of $\Lambda|_{[\hat{0},v]}$   % \gs{$[\hat{0}, v]$} 
%restricted to $[u, v]_r$  % \sout{within $[\hat{0}, v]$} 
 for any given  $u < v$  in $P$ and any  root $r$ for $[u,v]$. %  in $P$. 

Next we give a variation  on condition (i)(b) from the definition of GRAO that will be useful in upcoming inductive arguments.

\begin{prop} 
Let $P$ be a finite bounded poset with  a GRAO.  Consider $t, u, v\in P $  such that $t \lessdot u < v$ and consider a choice of root $r$ for $[t,v]$. Then either the first atom 
of  $[u, v]_{r\cup u}$ in the GRAO is above an earlier atom of $[t, v]_r$  than $u$ or else  $u$ is the first atom % in the GRAO 
of $[t, v]_r$. 
\label{firstatomprop}
\end{prop}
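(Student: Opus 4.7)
The plan is to reduce this to Lemma \ref{graoibequiv} applied to the GRAO induced on the rooted interval $[t,\hat{1}]_r$, treating $u$ as an atom of that interval.

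First, I would note that iterated application of condition (i)(a) of Definition \ref{grao} endows $[t,\hat{1}]_r$ with an induced GRAO, and that $u$ is an atom of $[t,\hat{1}]_r$. Applying Lemma \ref{graoibequiv} to this induced GRAO with the role of $a_j$ played by $u$ and with upper bound $v$ yields a dichotomy: either (a) the first atom of $[u,v]_{r\cup t}$ is greater than some atom of $[t,\hat{1}]_r$ coming before $u$, or (b) no atom of $[u,v]_{r\cup t}$ lies above any atom of $[t,\hat{1}]_r$ coming before $u$.

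Case (a) immediately gives the first alternative of the proposition, since any atom of $[t,\hat{1}]_r$ that is strictly below some element of $[u,v]$ must itself be at most $v$, and hence is an atom of $[t,v]_r$ coming earlier than $u$.

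For case (b), I would argue by contradiction that $u$ must be the first atom of $[t,v]_r$. Suppose instead that there is an atom $u_0$ of $[t,v]_r$ coming before $u$ in the induced GRAO on $[t,\hat{1}]_r$. Applying condition (ii) of Definition \ref{grao} in $[t,\hat{1}]_r$ to the two atoms $u_0$ and $u$, both of which lie below $v$, produces an atom $w$ of $[t,\hat{1}]_r$ with $w$ preceding $u$, together with an element $z$ satisfying $u \lessdot z \leq v$ and $w < z$. Then $z$ is an atom of $[u,v]_{r\cup t}$ lying above an earlier atom of $[t,\hat{1}]_r$ than $u$, contradicting case (b). Hence $u$ is the first atom of $[t,v]_r$, yielding the second alternative.

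The only delicate point to verify is the inheritance of the GRAO to $[t,\hat{1}]_r$ and the observation that an ``earlier atom of $[t,\hat{1}]_r$'' that happens to be $\leq v$ is automatically an ``earlier atom of $[t,v]_r$'' (via Lemma \ref{GRAOrestricts} if one wishes to work directly with the restricted GRAO on $[t,v]_r$). Beyond this bookkeeping, the argument is a direct invocation of Lemma \ref{graoibequiv} together with condition (ii) of Definition \ref{grao}, and no new combinatorial ingredient is required.
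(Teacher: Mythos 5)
Your proof is correct and takes essentially the same approach as the paper: both invoke Lemma~\ref{graoibequiv} for the dichotomy and then, in the second case, use condition~(ii) of Definition~\ref{grao} to derive a contradiction with the nonexistence of an atom of $[u,v]_{r\cup t}$ above an earlier atom. The only difference is cosmetic — you are slightly more explicit about working within $[t,\hat{1}]_r$ and transferring ``earlier atom of $[t,\hat{1}]_r$'' to ``earlier atom of $[t,v]_r$,'' which the paper leaves implicit.
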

\begin{proof}  %If $P$ admits a GRAO, then 
Lemma \ref{graoibequiv} immediately  implies that either  the first atom % in the GRAO 
of  $[u, v]_{r\cup u}$  is above an earlier atom % in the GRAO 
of $[t, v]_r$ than $u$ or no atom of $[u, v]_{r\cup u}$  is above an earlier atom of $[t, v]_r$ than $u$.   It suffices to consider the case where    %  by way of contradiction  
no atom of $[u, v]_{r\cup u}$ is above an earlier atom than $u$ in the GRAO of $[t, v]_r$, and to show in this case  that $u$ is the first atom in the GRAO of 
$[t,v]_r$.  Suppose by way of contradiction that  $u'$ is  the first atom in the GRAO of $[t, v]_r$ for some
$u'\ne u$. Since both $u'$ and $u$ are below $v$, Definition \ref{grao}, part (ii),  
implies there must exist some atom $u''$ of $[t, v]_r$ that comes before $u$ in the GRAO and some element $x'$ such that $u \lessdot x'$ and $u'' < x' < v$. The existence of such $x'$  contradicts the fact  that no atom of $[u, v]_{r\cup u}$  is above an atom earlier than $u$ in the GRAO of $[t, v]_r$.  This completes the proof.
\end{proof}

%\commentph{Word was in wrong place.} %The next two sentences say almost the same thing as each other.}
Next is  % \sout{perhaps} 
 a  result that  plays an important role in explaining  (and in justifying) the fact that our atom reordering process  transforms  
any GRAO into an RAO.  

 \begin{lem} 
 Let $P$ be a finite, bounded poset with $\Lambda $  a GRAO for $P$. 
 %\sout{Let $\Lambda^{re}$ be the chain-atom ordering obtained by applying the atom reordering process to $\Lambda$.}  
 Then for any $u < v$ in $P$  and  any root $r$ for $[u, v]$, the first atom of $[u, v]_r$ in $\Lambda$
 % \sout{the GRAO }
 is
% remains
 the first atom 
  of $[u, v]_r$ in $\Lambda^{re}$, namely in the atom reordering of $\Lambda $. 
  % after applying the atom reordering process  %from Definition \ref{reorder-def}
  %to $\Sigma $. % the GRAO for $P$.% obtained from the GRAO. % $[u, \hat{1}]_r$. 
\label{alwaysfirst}
\end{lem}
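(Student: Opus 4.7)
The plan is to proceed by induction on the length of the longest saturated chain from $\hat{0}$ to $u$ in $P$. For the base case $u = \hat{0}$, there is only the trivial root, and the reordering procedure of Definition \ref{reord} leaves the atom ordering of $[\hat{0},\hat{1}] = P$ unchanged at stage $\hat{0}$; every subsequent stage only reorders atoms of some $[w,\hat{1}]_{r''}$ with $w > \hat{0}$ and therefore cannot perturb the atom ordering of any $[\hat{0},v]$. Hence the first atom of $[\hat{0},v]$ is preserved.

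For the inductive step, I would fix $u > \hat{0}$ and a root $r$ for $[u,v]$, let $u^-$ be the element of $r$ covered by $u$, and let $r^-$ be the root for $[u^-,v]$ obtained by deleting $u$ from $r$. Let $a$ denote the first atom of $[u,v]_r$ in $\Sigma$. Proposition \ref{firstatomprop}, applied to $u^- \lessdot u < v$ with root $r^-$, splits into two cases: either (a) $a$ lies above some atom of $[u^-,v]_{r^-}$ that precedes $u$ in $\Sigma$, or (b) $u$ itself is the first atom of $[u^-,v]_{r^-}$ in $\Sigma$. In case (b) the inductive hypothesis applied at $u^-$ with top element $v$ shows that $u$ remains the first atom of $[u^-,v]_{r^-}$ after reordering, so no atom of $[u^-,v]_{r^-}$ precedes $u$ at the start of stage $u$, which forces $F_r(u,v) = \emptyset$; the stage-$u$ reordering then preserves the relative order of the atoms of $[u,v]_r$, and $a$ remains first.

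The more delicate case is (a). I would pick an atom $a^-$ of $[u^-,v]_{r^-}$ with $a^- < a$ that precedes $u$ in $\Sigma$. Since $u^- \lessdot a^- < a$, the atom $a^-$ also belongs to $[u^-,a]_{r^-}$, so $u$ is not the first atom of $[u^-,a]_{r^-}$ in $\Sigma$; let $b'$ denote this first atom, so $b' \neq u$, and the chain $u^- \lessdot u \lessdot a$ rules out $b' = a$, giving $u^- \lessdot b' < a$. Invoking the inductive hypothesis at $u^-$ with top element $a$, the atom $b'$ remains first of $[u^-,a]_{r^-}$ after reordering, and hence precedes $u$ there. Because the atom orders on $[u^-,a]_{r^-}$ and $[u^-,v]_{r^-}$ are both restrictions of the same post-reordering atom ordering on $[u^-,\hat{1}]_{r^-}$, the element $b'$ also precedes $u$ among atoms of $[u^-,v]_{r^-}$. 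Consequently $a \in F_r(u,v)$; since $a$ was first of $[u,v]_r$ in $\Sigma$, it is first of $F_r(u,v)$ in $\Sigma$'s order, and the stage-$u$ reordering places $F_r(u,v)$ ahead of $G_r(u,v)$ while preserving the internal order of $F_r(u,v)$, so $a$ remains first.

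The main obstacle I anticipate is the bookkeeping between two different atom orderings on $[u^-,v]_{r^-}$: Proposition \ref{firstatomprop} furnishes information about the original GRAO $\Sigma$, whereas the membership $a \in F_r(u,v)$ must be checked against the ordering produced at the end of stage $u^-$. The inductive hypothesis is precisely the bridge between these two pictures, and the key maneuver in case (a) is to cut down to the subinterval $[u^-,a]_{r^-}$ so that the inductive hypothesis may be invoked at a strictly smaller rank.
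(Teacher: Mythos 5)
Your proof is correct and takes essentially the same route as the paper: an induction on the distance from $\hat{0}$ to $u$, a case split via Proposition~\ref{firstatomprop} into ``$u$ is first of $[u^-,v]_{r^-}$'' versus ``$a$ is above an earlier atom,'' with the first case giving $F_r(u,v)=\emptyset$ and the second case handled by passing to the subinterval $[u^-,a]_{r^-}$ so the inductive hypothesis can certify membership of $a$ in $F_r(u,v)$. The only differences are cosmetic: you argue directly rather than by contradiction, invoke Proposition~\ref{firstatomprop} in one step where the paper first applies Lemma~\ref{graoibequiv}, induct on the longest $\hat{0}$--$u$ chain rather than on the length of $r$, and start the induction at $u=\hat{0}$ rather than at $u$ an atom.
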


%\commentph{I think we need to replace $[u,v]$ by $[u,v]_r$ at lots of places in this proof, as the restricted GRAO definitely depends on the choice of root.  Likewise
%e.g. for $[t_n,v]$.}

%\commentph{We need to be careful in that it is not precise to say we are calculating $F_r(u,v)$ of $\Lambda^{pr}$ 
%at the step when we have done reordering for all poset elements before 
%$u$ but not yet for those after $u$ -- we need to specify whether or not we have yet done the reordering of atoms of $[u,\hat{1}]_r$ for various $r$, since we need to be 
%working here with respect to a chain-atom ordering, and we have one just before this step and a different one just after this step.}

\begin{proof} 
Proposition \ref{GRAOrestricts} ensures that     % allows us to restrict 
$\Lambda$ restricts 
 to a GRAO for $[u,v]_r$.
%\commentph{Next sentence is related to issue later in proof.  Issue also in Proposition 4.5.}  \commentgs{See proposed fix in light blue (I crossed out the problematic sentence} 
%\sout{ Proposition \ref{restricts-consistently} allows us to speak interchangeably 
%below about  the atom reordering process applied to $\Lambda|_{[u, v]}$, denoted $(\Lambda|_{[u, v]})^{re}$, 
%\sout{the restriction of $\Sigma $ to  $[u,v]$}
%and the restriction to $[u,v]$ of the atom reordering process applied to $\Lambda$, denoted $\Lambda^{re}|_{[u, v]} $. }
Proposition \ref{restricts-consistently} allows us to speak interchangeably 
%\sout{below} 
about % \gs{$(\Lambda|_{[\hat{0},v]})^{re}$}  
 the atom reordering of ${[\hat{0}, v]}$  restricted to $[u, v]_r$ 
%\sout{the restriction of $\Sigma $ to  $[u,v]$}
and % \gs{ $\Lambda^{re}$} 
 the atom reordering of $P$  restricted to $[u,v]_r$. 
%\sout{calculated on all of  $P$.  }}
Let $r=\hat{0} \lessdot t_1 \lessdot t_2 \ldots \lessdot t_n \lessdot u $ be a root for the interval $[u, v]$ in $P$ and let 
$r^-$ denote the root  for $[t_n,v]$  
obtained from $r$ by removing  $u$.  Recall  from Definition \ref{F-u} that  $F^{\Lambda}_{r}(u, v)$ refers to the set of atoms of $[u, v]_r$ that cover an earlier atom of $[t_n, v]_{r^-}$ than $u$ in $\Lambda([t_n, v]_{r^-})$. Also recall that 
$G^{\Lambda}_{r}(u, v)$ refers to the set of  atoms of $[u, v]$ that are not in $F^{\Lambda}_{r}(u, v)$.  
%\commentph{I deleted the rest of this paragraph.} %, thinking it is no longer needed and will help readers to get to the heard of the proof faster.}
%We let $\Lambda^{pr}([t_n, v]_{r^-})$ denote the reordering of atoms of $[t_n, v]_{r^-}$ that results when the reordering process (Algorithm \ref{reorder-def}) is applied to 
%$\Lambda|_{[\hat{0},v]}$.
%\sout{ Note that this reordering of atoms of $[t_n, v]_{r^-}$ is determined at the point in Algorithm \ref{reorder-def} when the atoms of  $[x, v]_{r'}$ have been reordered for all $x$ that come before $t_n$ in the linear extension of   $[\hat{0}, v]$ and all roots  $r'$ of $[x, v]$, but for no subsequent  $x$. }
%\commentph{I wondered if the next bit is now redundant.  I was trying to figure out if we could get to the heart of the proof a bit faster, not making readers work quite so hard to absorb lots of notation first.}   \sout{The set $F^{\Lambda^{pr}}_{r}(u, v)$ refers to the  atoms of $[u, v]_r$ that cover an earlier atom of $[t_n, v]_{r^-}$ than $u$ in $\Lambda^{pr}([t_n, v]_{r^-})$. The set $G^{\Lambda^{pr}}_{r}(u, v)$ refers to the atoms of $[u, v]_r$ that are not in $F^{\Lambda^{pr}}_{r}(u, v)$. } 

We will prove  that the first atom of $\Lambda([u, v]_r)$ is the first atom of 
%\commentph{the next term has not been defined since we only defined restrictions for full chain-atom orderings.  This could be corrected by defining such restriction sufficiently generally, but I really think it would be a lot easier on readers and on the referee  just to make $\Lambda^{pr}$ a full chain-atom ordering everywhere}  
$\Lambda^{re}([u,v]_r)$
%$\Lambda^{pr}([u, v]_r)$ 
for all $u \in P$  and all choices of root $r$.  
We will do so by induction on  the length $l$ of the root $r$ from $\hat{0}$ to $u$. 
%Since Algorithm \ref{reorder-def} sets \commentph{This next is incorrect (it has the wrong meaning)  and should say $\Lambda^{re}[u,\hat{1}]_r := \lambda^{pr}[u,\hat{1}]_r$ for all $u$ and $r$ (and hence $\Lambda^{re}[u,v]_r:= \Lambda^{pr}[u,v]_r$ for all $u,v$ and $r$), not the other way around and not using general $v$ (though we can easily deduce general $v$ from the $v=\hat{1}$ case) -- when you set one thing equal to another, you always list the one you are changing to equal the other first, and it is good form to put $:=$ to emphasize you are setting one equal to the other} 
% $\Lambda^{pr}([u, v]_r)=\Lambda^{re}([u, v]_r)$ for all $u \in P$ and all roots $r$, this will prove our result.   %We begin with the base case with $l=1$.
For the base case, 
let  %\sout{ $l=1$; in other words, let}
$u$ be an atom of $[\hat{0},v]$. Let $x$ be the first atom in $\Lambda([u, v]_{\hat{0}\lessdot u})$.
% \sout{the GRAO of $[u, v]_r$  }
%where  $r$  is the root $\hat{0} \lessdot u$.  %. In this case  $u$ covers $ \hat{0}$ so $r$ is %the saturated chain 
If $x$ is in $F^{\Lambda}_r(u, v)$, then $x$ is in $F^{\Lambda^{re}}_r(u, v)$ %for $u$ an atom 
% \sout{this is still true in the  reordering of $[\hat{0},v]$}, % (see Definition \ref{reorder-def}), 
since $u$ is an atom of $P$ and the atom reordering process does not change the order of the  elements covering $\hat{0}$.
%\commentph{I am concerned that in the next sentence we seem to be needing the output of our reordering algorithm in order to conduct a step in the middle of the algorithm -- I realize this is not really a problem, but I think as it is written, it will be very confusing to readers and make them think something is wrong -- at one point I proposed having a notation for a partially carried out reordering process  and making a convention that it refers to the chain-atom ordering we have at a certain stage in the reordering process.  Perhaps something like that would work better here?} \commentplh{Regarding my next comment, I think it would be more accurate to say something along the lines of ``when the reordering process for $\Lambda $ gets to the stage of reordering $[u,v]_r$'' -- what I just wrote may be clunky, but hopefully it is a good starting point for the correction I think is needed here.}
%  \commentph{I'm also worried it is not accurate to say we apply the reordering process
%to $\Lambda|_{[u,v]_r}$ since doing that would leave the order of the atoms of $[u,v]_r$ unchanged.  There may be places like this where it is tempting to make a 
%compact notation but where it is not possible to make a compact notation that is accurate and will not be misinterpreted.  Not everything necessarily needs a compact
%notation.} 
Since we put the elements of $F_r^{\Lambda^{pr}}(u, v)$ before all other atoms of $[u, v]_r$ during the atom reordering of $\Lambda|_{[\hat{0},v]}$,
%\sout{$[\hat{0}, v]$}, 
$x$ remains first among the atoms of $[u,v]_r$ in $\Lambda^{pr}([u, v]_r)$ in this $x\in F^{\Lambda }_r(u,v)$ case.  %$[u, v]_r$. % by definition of the reordering. 
Now suppose  $x$ is not in $F^{\Lambda}_r(u, v)$.
%\sout{, i.e. suppose $x$ is in $G^{\Sigma}_r(u, v)$. \sout{above an earlier atom than $u$ in the GRAO of $[\hat{0},v]$.  }}
By Proposition \ref{firstatomprop},
$u$ is the first atom in $\Lambda([\hat{0}, v])$.
%Again, s
Since the atom reordering process does not change the order of the elements covering $\hat{0}$, $u$ is also the first atom in $\Lambda^{re}([\hat{0}, v])$. 
%\sout{|the reordering of $[\hat{0}, v]$, because  the atom reordering process preserves the order of the atoms of $P$.}
The fact that  $u$ is first in $\Lambda^{re}([\hat{0}, v])$ implies that 
every atom in $[u, v]_r$ must be in $G^{\Lambda^{pr}}_r(u,v)$. 
 Since the atom reordering process preserves the relative ordering of the 
elements of $G^{\Lambda^{pr}}_r(u,v)$  given by $\Lambda([u, v]_r)$, % given by the GRAO, 
the atom reordering process preserves the ordering of all atoms of $[u, v]_r$.
 Thus,  $x$ remains  first in $\Lambda^{re}([u, v]_r)$. % the atom reordering of $[u, v]_r$, when $l=1$. % in this  case as well. 
 This completes the  base case. %, namely the case  with  $u$  an atom and $l=1$.  

For the inductive step, assume that the first atom in $\Lambda([u,v]_r)$ is also the first atom in $\Lambda^{re}([u,v]_r)$ for any $u < v$ and any root $r$ of length $l$, where %$1 \leq 
$l \leq n$. We will   % we assume by induction that
prove that  the first atom in $\Lambda([u,v]_r)$ is also the first atom in
%\commentph{we haven't  defined this next notation for partial chain-atom orderings such as $\Lambda^{pr}$ for general $u$ and $v$ -- I think it would be far easier on readers never to use partial chain-orderings but rather make it so $\Lambda^{pr}$ is a full chain-atom ordering}
 $\Lambda^{re}([u,v]_r)$ for any $u<v$ and any root $r$ of length $n+1$.  
Let  $x$ be  the first atom in $\Lambda([u, v]_r)$  where $r$ is a  root of length $n+1$.   %\commentph{next is too repetitive and makes sentence too long}
%\sout{where $r=\hat{0} \lessdot t_1 \lessdot t_2 \ldots \lessdot t_n \lessdot u $}.  %Let $t =  t_{n+1}$. 
Suppose by way of contradiction that $x$ is not first in $\Lambda^{re}([u,v]_r)$. 
%Since t
This implies $x$  is moved to a later position % shifted later 
by the atom reordering process applied to $\Lambda $,  %it must be the case that 
%it must be the case
hence it implies  $x\in G^{\Lambda^{pr}}_r(u,v)$. % calculated just prior to reordering the elements covering $u$.  
Since $x$ is the first atom  in $\Lambda([u, v]_r)$, Lemma \ref{graoibequiv}  %Proposition \ref{firstatomprop} 
gives us  that either (1)  $x$ is greater than  an atom that comes before $u$ in $\Lambda([t_n, v]_{r^-})$  %where 
% $r^-$ denotes  $\hat{0} \lessdot t_1 \lessdot t_2 \ldots \lessdot t_n$ 
or  (2) no atom of $[u, v]_r$ is greater than  an atom that comes before $u$ in $\Lambda([t_n, v]_{r^-})$.
We consider these two cases separately. %, obtaining a contradiction  in each case. 

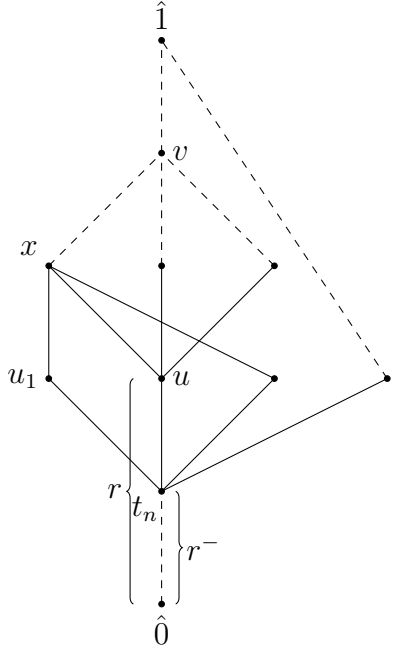
\begin{figure}
\begin{center}
\begin{tikzpicture}[scale=0.75]
\node [below] at (0, -2){$\hat{0}$};
\draw [fill] (0,-2) circle [radius=0.05];
\node [below left] at (0.13,0.1){$t_n$};
\draw [fill] (0,0) circle [radius=0.05];
\node [right] at (0,2){$u$};
\draw [fill] (0,2) circle [radius=0.05];

\node [right] at (0,6){$v$};
\draw [fill] (0,6) circle [radius=0.05];
\node [above] at (0, 8) {$\hat{1}$};
\draw [fill] (0,8) circle [radius=0.05];
\node [right] at (2,2){};
\draw [fill] (2,2) circle [radius=0.05];
\node [right] at (4,2){};
\draw [fill] (4,2) circle [radius=0.05];
\node [above left] at (-2,4){$x$};
\draw [fill] (-2,4) circle [radius=0.05];
\node [left] at (-2,2){$u_1$};
\draw [fill] (-2,2) circle [radius=0.05];
\node [right] at (0,4){};
\draw [fill] (0,4) circle [radius=0.05];
\node [right] at (2,4){};
\draw [fill] (2,4) circle [radius=0.05];
\draw[decoration={brace, mirror}, decorate] (0.25,-2) --node [right] {$r^-$} (0.25,0); 
\draw[decoration={brace}, decorate] (-0.5,-2) --node [left] {$r$} (-0.5,2); 

\draw (0,0)--(4,2);
\draw (0,0)--(0,2);
\draw (0,2)--(-2,4);
\draw [dashed] (4, 2)--(0, 8); 
\draw [dashed] (-2,4)--(0,6);
\draw (0,0)--(2,2)--(-2,4);
\draw (0,0)--(-2,2)--(-2,4);
\draw [dashed] (0, -2)--(0,0); 
\draw [dashed](0,6)--(0, 8);
\draw (0,2)--(0,4);
\draw [dashed] (0,4)--(0,6); 
\draw (0,2)--(2,4);
\draw [dashed] (2,4)--(0,6);
\end{tikzpicture}
\end{center}
\caption{Case 1 in the proof of Lemma \ref{alwaysfirst}.
}
\label{case1alwaysfirst}
\end{figure}

\underline{Case 1:}  This is the case in which  $x$ is above an earlier atom than $u$ in $\Lambda([t_n, v]_{r^-})$. See Figure \ref{case1alwaysfirst} for an illustration of this case.
Let $u_1$ be % the earliest such atom, meaning  $u_1$ is
 the first atom in $\Lambda([t_n, x]_{r^-})$. By our assumption about $x$, 
 %\commentph{next two words inserted so sentence will not be mis-parsed} 
 note that $u_1 \neq u$; this implies that $u_1$ comes earlier than $u$ in $\Lambda([t_n, v]_{r^-})$. 
Since we already showed  $x\in G^{\Lambda^{pr}}_r(u,v)$,  % as calculated just prior to reordering the elements covering $u$, 
 $u_1$ must come later than $u$ in $\Lambda^{re}([t_n, v]_{r^-})$. 
 %\commentgs{Does this next sentence need more justification?} \commentph{Yes, in one way in particular: you need to explain that $\Lambda^{re}$ coincides with $\Lambda^{re}$ whereever $\Lambda^{pr}$ is defined and  that this allows you to apply the 
 %proposition.}  
 Because of Proposition \ref{restricts-consistently}, $u_1$ must come later than $u$ in 
 % \commentph{you haven't defined $\Lambda^{pr}([u,v])$ for $v\ne \hat{1}$ though this issue would go away if $\Lambda^{pr}$ were a full chain-atom ordering -- I suspect that would remove many annoyances and complications like this.} 
   $\Lambda^{re}([t_n, x]_{r^-})$ as well. In particular, $u_1$ is not first in $\Lambda^{re}([t_n, x]_{r^-})$. % , since otherwise $x$ would be in $F^{\Sigma^{re}}_{r}(u,v)$ rather than $G^{\Sigma^{re}}_{r}(u,v)$. \sout{the atom reordering of $t_n$}.  
 But the root $r^-$ has length $n$, which contradicts our 
 inductive hypothesis that requires $u_1$ to remain the earliest atom in $\Lambda^{re}([t_n, x]_{r^-})$. 
 %\sout{ the atom reordering of $[t_n,v]_{r-}$.}  
 Thus, we rule out this case. 
 %, since otherwise $x$ would be in $F_r(u,v)$ rather than $G_r(u,v)$.  %ereas we already have shown $x\in G_r(u)$.  
% Since  $[t_n, x]_{r^-}$ has root of length $n$ and $u_1$ is first in the GRAO of $[t_n, x]_{r^-}$, the induction hypothesis says $u_1$ is the first atom in the reordering of $[t_n, x]_{r^-}$. In particular,  $u_1$ is before $u$ in the reordering, giving a contradiction that rules out this case.  
% of $u_1$ coming later than $u$ in the reordering. Hence $x$ must be first in the the atom reordering of $[u, v]_r$.  

%
%
\begin{figure}
\begin{center}
\begin{tikzpicture}[scale=0.75]
\node [below] at (0, -2){$\hat{0}$};
\draw [fill] (0,-2) circle [radius=0.05];
\node [below left] at (0,0){$t_n$};
\draw [fill] (0,0) circle [radius=0.05];
\node [left] at (0,2){};
\draw [fill] (0,2) circle [radius=0.05];
\node [right] at (0,6){$v$};
\draw [fill] (0,6) circle [radius=0.05];
\node [above] at (0, 8) {$\hat{1}$};
\draw [fill] (0,8) circle [radius=0.05];
\node [right] at (2,2){};
\draw [fill] (2,2) circle [radius=0.05];
\node [right] at (4,2){};
\draw [fill] (4,2) circle [radius=0.05];
\node [above left] at (-2,4){$x$};
\draw [fill] (-2,4) circle [radius=0.05];
\node [above left] at (-2,2){$u$};
\draw [fill] (-2,2) circle [radius=0.05];
\node [right] at (0,4){};
\draw [fill] (0,4) circle [radius=0.05];
\node [right] at (2,4){};
\draw [fill] (2,4) circle [radius=0.05];
\draw[decoration={brace, mirror}, decorate] (0.25,-2) --node [right] {$r^-$} (0.25,0); 
\draw[decoration={brace}, decorate] (-0.25,-2) --node [left] {$r$} (-2.25,2); 
\draw (0,0)--(4,2);
\draw (-2, 2)--(0,4); 
\draw (-2, 2)--(2,4); 
\draw (0,0)--(0,2);
\draw (0,2)--(-2,4);
\draw (0,4)--(4,2);
\draw [dashed] (-2,4)--(0,6);
\draw (0,0)--(2,2)--(-2,4);
\draw (0,0)--(-2,2)--(-2,4);
\draw [dashed] (0, -2)--(0,0); 
\draw [dashed](0,6)--(0, 8);
\draw (0,2)--(0,4);
\draw [dashed] (0,4)--(0,6); 
\draw (0,2)--(2,4);
\draw [dashed] (2,4)--(0,6);
\end{tikzpicture}
\end{center}
\caption{Case 2 in the proof of Lemma \ref{alwaysfirst}.  
}
\label{case2alwaysfirst}
\end{figure}
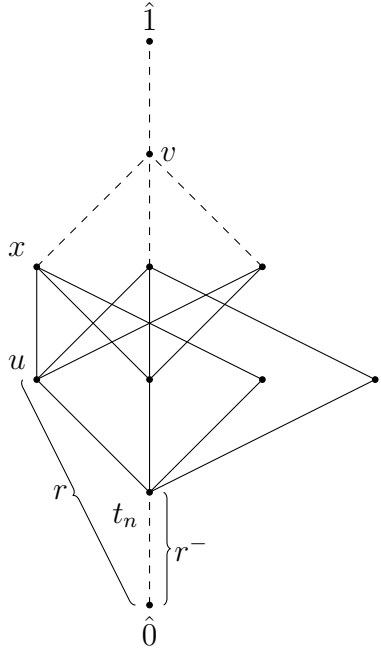

\underline{Case 2:}  %On the other hand, if 
This is the case in which no atom of $[u, v]_r$ is above an atom that is  earlier than $u$ in $\Lambda([t_n, v]_{r^-})$.  See Figure \ref{case2alwaysfirst} for an illustration of this case.
By Proposition \ref{firstatomprop}, $u$ must then be first in $\Lambda([t_n, v]_{r^-})$.
Since $r^-$ has length $n$, our inductive hypothesis ensures that  $u$ must also be  first in $\Lambda^{re}([t_n, v]_{r^-})$. This means that all atoms of $[u, v]_r$ are in $G^{\Lambda^{pr}}_r(u,v)$. %calculated for the appropriate partial reordering.
This implies that  the ordering of all atoms of $[u,v]_r$ is preserved by the atom reordering process. In particular,  $x$ remains first among the atoms of $[u, v]_r$,
  contradicting our assumption  that $x$ is not the first atom of $[u,v]_r$  
 in $\Lambda^{re}([u,v]_r)$. % induced by the reordering of  $P$.  % $[\hat{0},v]$.  
 This completes case 2. %completing case 2. 
\end{proof}

Next is a  lemma %we will use  later.  These
that  sheds light on what the reordering process % applied to  a chain-atom ordering  for  a finite bounded poset $P$ 
does to each  rooted 
interval $[u,\hat{1}]_r$ in a finite bounded poset  $P$ endowed with a chain-atom ordering.  This will be helpful for proving that applying the atom reordering process to a GRAO
yields a chain-atom ordering that is still a GRAO.

%\commentph{In the next result, we are using different notation for restriction to intervals than what I have just been proposing in the background section.}

\begin{lem}\label{equiv-of-chain-atom-orderings}
%\commentph{Below do we want to restrict like this or do a chain-atom ordering on all of $P$?}
Let $\Sigma $ be a chain-atom ordering % $\Sigma $
on a finite bounded poset $P$.  Given any $u\in P$ and any root $r$ for $[u,\hat{1}]$, consider the chain-atom ordering 
% \commentph{consistency whether parentheses like this}
 $\Sigma^{re}|_{[u,\hat{1}]_r}$
%$\Gamma $ 
on $[u,\hat{1}]_r$ 
that is obtained by applying the atom  reordering process to $\Sigma $   
and then restricting the resulting chain-atom ordering for $P$ to $[u,\hat{1}]_r$.   % Let $\mathcal{A}$ be the set of atoms of $[u,\hat{1}]$.  
\begin{enumerate} [label=\alph*)]
\item
The  chain-atom ordering $\Sigma^{re}|_{[u,\hat{1}]_r}$
% $\Gamma $
 may
alternatively be obtained  from $\Sigma|_{[u,\hat{1}]_r} $ by  first  % restricting $\Sigma $  to $[u,\hat{1}]_r$, then 
 permuting the 
atoms of $[u,\hat{1}]_r$   by a permutation $\pi $ % on the set of atoms of $[u,\hat{1}]$ 
to obtain a chain-atom ordering
%\commentph{Change in notation just in this lemma -- I thought the old notation was misleading:} $\pi (\Sigma|_{[u,\hat{1}]_r})$ 
 on $[u,\hat{1}]_r$,  and then   applying the atom  reordering process  %directly 
 to $\pi (\Sigma|_{ [u,\hat{1}]_r})$. % this chain-atom ordering. 
\item
 If  %the chain-atom ordering 
  $\Sigma $   % from Lemma \ref{equiv-of-chain-atom-orderings} for a finite bounded poset $P$ 
  is a GRAO for $P$, then the 
permutation $\pi $ from part (a) % Lemma \ref{equiv-of-chain-atom-orderings} on the atoms of $[u,\hat{1}]_r$  
is a product of adjacent transpositions in 
which each of these adjacent transpositions  fixes % applied in sequence 
 %preserves \commentph{fixes}  which 
%\commentph{preserves} \sout{fixes} 
 which  atom of $[u,v]_r$ is first for every $v>u$ in $P$.  
\item
%\commentph{statement had been written inconsistently with what we actually prove, and I think the stronger statement will help people understand the next proof}
%\commentph{Is this what we want, as opposed to a GRAO for $P$?}
If  %the chain-atom ordering 
 $\Sigma $ is a GRAO for $P$, then the chain-atom ordering   for $P$ 
%$\Sigma_{\pi [u,\hat{1}]_r}$  
obtained from $\Sigma $ by applying the permutation $\pi $ from (a) to permute the order of the atoms of 
$[u,\hat{1}]_r$ while otherwise leaving $\Sigma $ unchanged  
 % Lemma \ref{equiv-of-chain-atom-orderings}, part (b), %\ref{interval-reorder-via-red-exp} 
is a GRAO for $P$.  % $[u,\hat{1}]_r$.
\end{enumerate}
\end{lem}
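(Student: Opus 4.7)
Proof plan.

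The plan is to prove the three parts of the lemma in sequence, using Lemma \ref{alwaysfirst} and Theorem \ref{switch-thm} as the main tools. Throughout, I will write $F := F_r(u,\hat{1})$ and $G := G_r(u,\hat{1})$ for the sets computed during the global reordering of $P$ at the stage where the atoms of $[u,\hat{1}]_r$ are about to be reordered.

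For part (a), define $\pi$ to be the unique permutation of the atoms of $[u,\hat{1}]_r$ that sends the order of those atoms in $\Sigma$ to their order in $\Gamma$. By construction, $\Sigma_{\pi[u,\hat{1}]_r}$ and $\Gamma$ agree on the atoms of $[u,\hat{1}]_r$. I then argue by induction on the length of the longest saturated chain in $[u,\hat{1}]_r$ that they also agree on every higher rooted interval. The bottom step of the reordering process applied to $\Sigma_{\pi[u,\hat{1}]_r}$ leaves the atoms of $[u,\hat{1}]_r$ fixed, matching what happened globally at the step for $[u,\hat{1}]_r$. At any subsequent step reordering the atoms of $[v,\hat{1}]_{r'}$ for $v > u$, the $F$ and $G$ sets used depend only on the current chain-atom ordering of $[v^-,\hat{1}]_{(r')^-}$, which by the inductive hypothesis is the same in both processes; thus the $F$ and $G$ sets match and the two reorderings coincide at this step.

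For part (b), the permutation $\pi$ separates the atoms of $[u,\hat{1}]_r$ into those in $F$ followed by those in $G$, while preserving relative order within each block. Any such permutation factors via bubble sort into a sequence of adjacent transpositions, each swapping an adjacent pair $(a,b)$ with $a \in G$ and $b \in F$. I verify that each such swap fixes the first atom of $[u,v]_r$ for every $v > u$. Let $A_{u,v}$ denote the set of atoms of $[u,v]_r$; the swap can affect the first element of $A_{u,v}$ only if both $a$ and $b$ lie in $A_{u,v}$ and $a$ is currently first in $A_{u,v}$. Suppose this occurs. Because bubble sort preserves the relative order among $G$-elements, no $G$-element of $A_{u,v}$ other than $a$ preceded $a$ in $\Sigma$. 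Because bubble sort converts $(G,F)$ patterns to $(F,G)$ patterns and never the reverse, any $F$-element of $A_{u,v}$ that preceded $a$ in $\Sigma$ would still precede $a$ currently; since none does, none did. Therefore $a$ was already the first element of $A_{u,v}$ in $\Sigma$, so by Lemma \ref{alwaysfirst} it is also the first element of $A_{u,v}$ in $\Gamma$. But $\Gamma$ places the $F$-element $b$ ahead of the $G$-element $a$ within $A_{u,v}$, a contradiction. Hence the problematic case never arises, and every bubble-sort swap fixes the first atom of every $[u,v]_r$.

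For part (c), I apply Theorem \ref{switch-thm} iteratively. By Lemma \ref{GRAOrestricts} the restriction $\Sigma|_{[u,\hat{1}]_r}$ is already a GRAO on $[u,\hat{1}]_r$. For each adjacent transposition swapping $a \in G$ and $b \in F$ in the factorization of $\pi$, I check the hypothesis of Theorem \ref{switch-thm}: for every $w > u$ in $[u,\hat{1}]_r$ containing both $a$ and $b$, neither is the first atom of $[u,w]_r$ in the current chain-atom ordering. The argument from part (b) shows $a$ is not first, and since $a$ precedes $b$ in the current ordering while both belong to $A_{u,w}$, $b$ is not first either. Thus each transposition preserves the GRAO property, and the final chain-atom ordering $\Sigma_{\pi[u,\hat{1}]_r}$ is a GRAO. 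I expect the main technical obstacle to be the careful bookkeeping in part (b), where it is essential to track correctly whether the notions of $F$, $G$, and ``first atom'' refer to the global reordering of $P$, to $\Sigma$, or to the intermediate ordering during bubble sort, and to deploy Lemma \ref{alwaysfirst} at just the right moment.
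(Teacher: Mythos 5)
Your proof is correct and follows essentially the same three-step structure as the paper, using the same key tools (Proposition \ref{restricts-consistently} implicitly for part (a), Lemma \ref{alwaysfirst} for part (b), and Theorem \ref{switch-thm} iterated for part (c)). The one small divergence is in part (b): where the paper cites the weak-order fact about reduced expressions from \cite{BB} (that a pair which is a non-inversion of both the identity and of $\pi$ remains a non-inversion at every prefix of a reduced word for $\pi$), you instead pick the specific bubble-sort reduced word and verify the stability of the first atom of each $[u,v]_r$ directly from the fact that bubble sort only converts $(G,F)$ adjacencies to $(F,G)$ adjacencies; this is a self-contained substitute for the citation and is a fine alternative.
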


\begin{proof}
%\gs{Do we need superscripts on $F_r(u,\hat{1})$, etc. in this proof?}
First we prove (a).  
Applying the reordering process to all of $P$, the point will be to observe what happens to $[u,\hat{1}]_r$.  The earliest  step in which the chain-atom ordering for 
$[u,\hat{1}]_r$  
%\commentph{check correctness of next statement} %\commentph{next is wrong if $\Lambda^{pr}$ is only a portion of a chain-atom ordering} 
gets modified by  the reordering process is the step where the atoms of 
$[u,\hat{1}]_r$ get permuted by moving the elements of $F^{\Sigma }_r(u,\hat{1})$ ahead of the elements of $G^{\Sigma }_r(u,\hat{1})$,  preserving the order of the elements  within $F^{\Sigma }_r(u,\hat{1})$ 
and within $G^{\Sigma }_r(u,\hat{1})$.  This gives the desired permutation $\pi $ on the ordering of the atoms of $[u,\hat{1}]_r$.  Now one may observe that
 the subsequent  reordering  of the atoms of each rooted  interval $[u',\hat{1}]_{r'}$ for $u'\in (u,\hat{1}]$ and each root 
$r'$ containing $r$  depends only on $\pi $ and  on the  reordering of  $[u,\hat{1}]_r$ that takes place prior to reaching $u'$ and $r'$. More precisely, 
one  may confirm by 
induction on the length of the 
longest saturated chain from $u$ to $u'$  
that the reordering  of the atoms  of  $[u',\hat{1}]_{r'}$ accomplished within the course of reordering   $P$  %after $\pi $ is applied 
gives  the same atom ordering for $[u',\hat{1}]_{r'}$ as we get by first restricting
 %the original chain-atom ordering 
 $\Sigma $  to  %the rooted interval 
$[u,\hat{1}]_r$, then applying 
the permutation $\pi $ to the atoms of $[u,\hat{1}]_r$,  and finally  applying the reordering process just  to the resulting chain-atom ordering on  $[u,\hat{1}]_r$ (i.e. not working our  way up from $\hat{0}$).    This 
completes the proof of (a)

%\commentph{Switch to notation of Bj\"orner and Wachs for reduced expressions, to be consistent with treatment of Bruhat order later?}
Next we  prove (b).   Lemma \ref{alwaysfirst} implies    that the permutation $\pi $ from (a)  has the property for 
each $v>u$ and each root $r$  that 
%\commentph{Grace proposed change (which is not correct as stated, but Grace wonders if I can say something like this): $\pi $ fixes the first  atom of $[u,v]_r$.  I made an attempt in this direction with the second half of the next sentence.}
  the earliest atom of $[u,v]_r$ before applying  $\pi $  is still the earliest 
atom of $[u,v]_r$ after applying $\pi $; equivalently, the permutation that $\pi $ induces on % those atoms of $[u,\hat{1}]$ that belong to
the atoms of  $[u,v]_r$ fixes the first atom of 
$[u,v]_r$.   Let  $s_{1}\cdots s_{k}$ be any reduced expression for $\pi $, i.e.  any expression for $\pi $ as a product  of adjacent transpositions $(i,i+1)$  with $k$ as 
small as possible.  %\commentph{delete? In what follows, we will use this  same reduced expression for $\pi $  to handle  
%every  possible $v$ satisfying $v>u$ for our fixed choice of $u$ and $r$.  }
The natural correspondence between reduced expressions 
for  any fixed  $\pi \in S_n$ and saturated chains from $e$ to $\pi $  in left % \commentph{left or right?} 
weak order for $S_n$ (which is discussed extensively  e.g. in \cite{BB}) may be combined  with Corollary 3.1.4 in \cite{BB} to  give us the well-known  fact that for any $1\le l < m\le n$ satisfying $\pi (l) < \pi (m) $, we also must have  
$s_{j}\cdots s_{k}(l) < s_{j}\cdots s_{k}(m)$ for  $1\le j\le k$. % such that $1\le j\le k$.  

%\commentph{I just cleaned up the next paragraph, since it was written in a confusing way.}
We will apply  this property of reduced expressions to % this translates to saying % our usage of a reduced expression implies 
certain pairs  of  atoms % \commentph{use $[u,\hat{1}]_r$ here for clarity?}
 $a,a' \in [u,v]_r$ such that  $a$ comes  before $a'$  both in the ordering $\mathcal{A}$ on the atoms of $[u,\hat{1}]_r$ given by % according to
$\Sigma $
and in  the ordering $\pi (\mathcal{A}) $ on the atoms of $[u,\hat{1}]_r$  obtained by  applying the permutation  $\pi $ to $\mathcal{A}$. 
%\commentph{
%Given  $s_{1}\cdots s_k$ which is  a reduced expression for $\pi $, we may  first apply % obtain $\pi (\mathcal{A})$ by first applying 
% $s_k$ to 
%$\mathcal{A}$ to swap the order of two atoms, then apply $s_{k-1}$ to the resulting atom ordering, and  continue in this manner from right to left through 
%$s_1\cdots s_k$ to obtain $\pi (\mathcal{A})$ as the end result. 
%  It % The above  property of reduced expressions  %discussed at the end of the previous
%paragraph 
%guarantees for any pair of atoms $a,a'\in [u,\hat{1}]_r$ with $a$ coming before $a'$ both in $\mathcal{A} $ and in $\pi (\mathcal{A})$  that 
% The above property of permutations implies 
%that  $\pi $ 
%\commentgs{Can you check that this next sentence says what you've intended? I am having a hard time following it (e.g. do we need to repeat ``from right to left?")} %\sout{Applied to our setting,  %t
%this property of reduced expressions
%guarantees for any reduced expression $s_{1}\dots s_{k}$  for $\pi $ and any   such  %\commentph{should be pair, not pairs, next}  
%pair  $a,a'$ of atoms that}  
%$a$ must come earlier than $a'$ in each atom ordering $s_{j}\cdots s_{k}(\mathcal{A}) $ for $1\le j\le k$.
%\sout{  obtained by applying to $\mathcal{A}$  
%an initial segment (proceeding from right to left)  of the series of adjacent 
%transpositions  $s_{1}\cdots s_{k}$ which are applied to $\mathcal{A}$  from right to left.}  % to $\mathcal{A}$. % whose product is $\pi $.   
%\commentph{
For any  two atoms $a,a'$ of  an interval $[u,v]_r$ such that  $a$ is the first atom of $[u,v]_r$ both  in the  ordering for the atoms of 
 $[u,v]_r$  inherited from  
 $\mathcal{A}$  %(by restricting $\mathcal{A}$ to the subset of atoms of $[u,\hat{1}]_r$ belonging to $[u,v]_r$) 
  and in the  ordering  for the atoms of  $[u,v]_r$   inherited from  
$\pi (\mathcal{A})$,  $a$ comes before $a'$ both in $\mathcal{A}$ and in $\pi (\mathcal{A})$.  Thus, our  above 
property of reduced expressions implies that  
$a$ must come before $a'$ in $s_j\cdots s_k (\mathcal{A})$ for each $j\le k$.   This implies that $a$ must be the first atom in the ordering on the atoms 
of  $[u,v]_r$
% in the ordering on the   and hence in  
%the ordering of  the atoms of $[u,v]_r$ 
that is inherited from the 
ordering $s_{j}\cdots s_{k}(\mathcal{A})$ on the atoms of $[u,\hat{1}]_r$  for each $j\le k$.
%In particular, t
Thus, %is shows  that % \commentph{vague next few words}
each adjacent transposition in $s_1\dots s_k$ applied to $\mathcal{A}$  from right to left  preserves which atom is first in every rooted interval of $P$.
% \commentph{also ``fixes'' vs ''preserves'' in statement of (b)}

%\commentph{Here we prove something stronger than the above statement.  Also this is repetitive with the proof of the next theorem.}
Finally we prove (c). 
%Lemma \ref{equiv-of-chain-atom-orderings}, part (b), % \ref{interval-reorder-via-red-exp} 
Part (b)  tells us that each of the adjacent transpositions in $s_{1}\cdots s_{k}$  has the property when it is applied in turn to $\mathcal{A}$  that  it 
preserves which atom is first in each $[u,v]_r$. 
This enables us to 
use Theorem \ref{switch-thm} repeatedly, once for each of the adjacent transpositions in  $s_{1}\cdots s_{k}$,  %the reduced expression, 
to deduce that the chain-atom ordering obtained from 
$\Sigma $ by % restricting to $[u,\hat{1}]_r$ and then  r
reordering the atoms of $[u,\hat{1}]_r$ by $\pi $   is a GRAO for $P$. % $[u,\hat{1}]_r$.  
\end{proof}

We are now prepared   to prove that 
applying the atom reordering process to a GRAO yields a GRAO  and then to go on from there to prove that  the resulting GRAO is in fact an RAO.

\begin{lem}\label{reordering-as-swaps}
The atom reordering process applied to a GRAO for a finite bounded poset  $P$ yields  a GRAO for $P$.    
\end{lem}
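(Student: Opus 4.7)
The plan is to verify each of the three conditions of Definition~\ref{grao} for the chain-atom ordering $\Gamma$ that results from applying the reordering process to a given GRAO $\Sigma$ on $P$. I would proceed by induction on the length of $P$, with the base case (length at most~$1$) being trivial since any chain-atom ordering on such a poset is automatically a GRAO.

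First, I would observe that by Definition~\ref{reorder-def} the reordering process leaves the atoms of $P$ themselves in their original order, so $\Gamma$ and $\Sigma$ share the atom ordering $a_1,\ldots,a_t$. This immediately yields condition~(ii) of Definition~\ref{grao}, because (ii) is a statement purely about atoms and order relations in $P$ that does not reference any chain-atom ordering strictly above $\hat{0}$; since $\Sigma$ satisfies (ii) and $\Gamma$ has the identical atom ordering for $P$, condition (ii) transfers to $\Gamma$ unchanged.

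For condition~(i)(a), I would apply Lemma~\ref{equiv-of-chain-atom-orderings}(a) with $u=a_j$ and $r=\hat{0}\lessdot a_j$ to see that $\Gamma$ restricted to $[a_j,\hat{1}]_{\hat{0}\lessdot a_j}$ can alternatively be obtained by restricting $\Sigma$ to this interval, applying a certain permutation $\pi$ on its atoms to obtain $\Sigma_{\pi[a_j,\hat{1}]_{\hat{0}\lessdot a_j}}$, and then applying the reordering process to the result. Part~(c) of that same lemma tells us $\Sigma_{\pi[a_j,\hat{1}]_{\hat{0}\lessdot a_j}}$ is already a GRAO on $[a_j,\hat{1}]$, and since $[a_j,\hat{1}]$ has strictly smaller length than $P$, the inductive hypothesis then shows that reordering it again yields a GRAO on $[a_j,\hat{1}]$. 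This confirms~(i)(a) for $\Gamma$.

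For condition~(i)(b), fix an atom $a_j$ and $x,w\in P$ with $a_j\lessdot x\lessdot w$. Lemma~\ref{graoibequiv} applied to the GRAO $\Sigma$ gives the dichotomy that either the first atom of $[a_j,w]$ under $\Sigma$ is greater than some $a_i$ with $i<j$, or no atom of $[a_j,w]$ is greater than any such $a_i$. By Lemma~\ref{alwaysfirst}, the first atom of $[a_j,w]$ is preserved by the reordering process, so the first atom of $[a_j,w]$ under $\Gamma$ coincides with the first atom under $\Sigma$. Since the set of atoms of $[a_j,w]$ and the ordering of the atoms of $P$ are both unchanged by the reordering, the dichotomy transfers directly to $\Gamma$, verifying~(i)(b).

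The main (mild) obstacle is reconciling the global reordering procedure on $P$ with its local effect on each rooted interval $[u,\hat{1}]_r$: the reordering inside such an interval depends in a subtle way on the atom reorderings that have already been performed strictly below $u$, so one cannot simply restrict $\Sigma$ to $[u,\hat{1}]_r$ and reorder in isolation. Lemma~\ref{equiv-of-chain-atom-orderings} is precisely the bridge that handles this, factoring the local effect as an initial permutation $\pi$ (which already produces a GRAO on $[u,\hat{1}]_r$ by part~(c)) followed by a reordering to which the inductive hypothesis can be applied.
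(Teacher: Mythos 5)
Your proof is correct, but it takes a genuinely different route from the paper's. The paper gives a non-inductive argument: it observes (via Lemma~\ref{alwaysfirst} and Lemma~\ref{equiv-of-chain-atom-orderings}(b)) that the entire reordering process on $P$ can be realized as a sequence of adjacent transpositions, each of which preserves the first atom of every containing rooted interval, and then applies Theorem~\ref{switch-thm} once per swap to conclude that a GRAO is preserved at every stage. You instead run an induction on poset length, verifying each of the three GRAO conditions for the reordered chain-atom ordering directly: condition~(ii) is immediate since the reordering fixes the ordering of the atoms of $P$; condition~(i)(a) follows from Lemma~\ref{equiv-of-chain-atom-orderings}(a) and (c) together with the inductive hypothesis applied to $[a_j,\hat{1}]$; and condition~(i)(b) follows from Lemma~\ref{graoibequiv} plus the preservation of first atoms from Lemma~\ref{alwaysfirst}. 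Both arguments ultimately depend on Theorem~\ref{switch-thm} and the adjacent-transposition decomposition (you reach it through Lemma~\ref{equiv-of-chain-atom-orderings}(c), whose proof invokes that theorem), so they are equivalent in logical strength. What your version buys is that it avoids the somewhat delicate global claim in the paper's proof that the full reordering process on $P$ ``may be expressed as a series of such steps by considering each $u$ and each $r$ in turn'' — you sidestep this by handling the top layer of the poset explicitly and delegating the rest to induction via the clean factorization in part~(a). What the paper's version buys is brevity: it needs no induction and never touches Lemma~\ref{graoibequiv}. One small point worth making explicit if you write this up: in the base case you should include length~$2$ along with length~$1$ (or observe that the length-$2$ case is covered by the argument), since that is where the recursion in the GRAO definition actually bottoms out for intervals $[a_j,\hat{1}]$.
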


\begin{proof}
%\commentph{This paragraph and some of the language later in this proof are new as I am trying to figure out how to work with the new version of the atom
%reordering process.}
% -- changes necessitated by the recent change in what data structures are used at the intermediate steps of the atom reordering process.   We still should think about whether it is really okay to have the same thing mean two different things in two different parts of the paper --  I am worried it will confuse readers to define $\Lambda^{pr}$ to mean two different things but have been working hard at trying to make this as palatable as possible.  I am still somewhat concerned that this inconsistency/awkwardness in how we do things  could  increase the chance of  the paper getting rejected -- this might also require a comment near the algorithm itself when we are discussing in words what it does.}
Throughout this %e following  
proof, we  will work with % a  minor variation on
 the more 
 enriched variation  on the atom reordering process  % with a slight change to the data structure we use in the intermediate steps in the algorithm, as 
 discussed just prior to 
 Algorithm \ref{reorder-def}.   That is, % Specifically, 
 %in which 
%here 
 we regard $\Lambda^{pr}$ at each step in the 
process as being a full chain-atom ordering, % doing so as described shortly.
doing so as follows. We initialize $\Lambda^{pr}$ to equal $\Lambda $, and then otherwise run the algorithm just as in Algorithm \ref{reorder-def}.  
In other words, 
at  each  of the steps of the algorithm in %atom reordering process in 
which  
%$[u,\hat{1}]_r$ such that 
 $\Lambda^{pr}([u,\hat{1}]_r)$ has not yet been defined for a given $u\in P$ and a given choice of root $r$ in the usual algorithm,
  we set $\Lambda^{pr}([u,\hat{1}]_r)$  equal to  $\Lambda ([u,\hat{1}]_r)$ in our modified atom reordering process, 
  otherwise leaving  the process entirely unchanged. 
  %
%  \commentph{this next paragraph is repetitive as things now stand.}
Since  this is merely an  enrichment of the data  contained in $\Lambda^{pr}$ at the intermediate stages
of the algorithm, one may easily see that this  % is  a change in viewpoint and in notation  that  % in a way  that 
does not impact  the output of the algorithm or 
%that does not impact 
the applicability of the proofs of Lemmas \ref{restricts-consistently},  \ref{alwaysfirst} and  \ref{equiv-of-chain-atom-orderings}
to this enriched version of 
 %regarding 
 the atom reordering process.
%mainly a in viewpoint, and 
%\
%
%
  %Thus,  at each step of the atom reordering process we now treat  $\Lambda^{pr}$ as being a chain-atom ordering. 
   With these conventions,  $\Lambda^{pr}$ progressively 
   evolves from being the chain-atom ordering $\Lambda $ at the start of the algorithm   to equalling %and progressively 
   %evolves into 
    the chain-atom ordering
   $\Lambda^{re}$ at the end of the algorithm, as will be necessary for the argument below to apply.   
   %This is essentially a change in notation, though also somewhat of a change in viewpoint:  rather than treating 
   %$\Lambda^{pr}$ as being  built up over the course of the algorithm into a chain-atom ordering with the end-result being $\Lambda^{re}$, 
   %here we  instead regard it as evolving from the
   %existing chain-atom ordering  $\Lambda $ into the output chain-atom ordering  $\Lambda^{re}$.
 %   Some readers might find it convenient to  think of  $\Lambda^{pr}$ in this way at various points in the paper.

By Lemma \ref{alwaysfirst}, the atom reordering process for $\Lambda $ 
takes $[u,\hat{1}]_r$ for each  $u\in P$ and each root $r$ for $[u,\hat{1}]$ and permutes  the atoms $a_1,\dots ,a_m$  of $[u,\hat{1}]_r$ in a way that preserves which atom comes first in
% \commentph{I crossed out a statement that was mathematically incorrect here
%and replaced with a mathematically correct statement} \sout{fixes 
%the earliest atom of} 
every rooted  interval $[u,v]_r$ for $v>u$ with  our fixed root $r$.  
% Since the output of the modified atom reordering process is the same as the output of the atom reordering process, 
%Lemma \ref{alwaysfirst} likewise applies to the modified process.  
%  as the first atom of $[u,v]_r$  after this permutation of the atoms of $[u,\hat{1}]_r$.    
%\commentph{Possible replacement for next sentence (which I think is much more clear):  
Thus, we may 
apply Lemma \ref{equiv-of-chain-atom-orderings}, part (b), to deduce that this permutation on the atoms of $[u, \hat{1}]_r$  is expressible as a reduced expression comprised of a series of adjacent transpositions, each of which preserves which atom comes first in   $[u,v]_r$ 
%as the first atom of $[u,v]_r$ 
for each $v > u$.
%Thus, Lemma \ref{equiv-of-chain-atom-orderings}, part (b), implies that  
%this permutation on the atoms of $[u,\hat{1}]_r$  is expressible as a reduced expression comprised of 
%a series of adjacent transpositions, each of which keeps the earliest atom of $[u,v]_r$ as the first atom of $[u,v]_r$ for each $v>u$.
%, as is proven in Lemma \ref{equiv-of-chain-atom-orderings}, part (b).
%\commentph{The next sentence is not correct as stated now that $\Lambda^{pr}$ is not a chain-atom ordering at the intermediate steps in the algorithm.  I worry there could be other statements throughout the paper that I have not managed to locate yet that became incorrect when the reordering process was changed due to changing what the objects are that we have at the intermediate steps.}  
But this means that  we may express this enriched version of  the  atom reordering process  %discussed above 
% (in which $\Lambda^{pr}$ is an entire  chain-atom ordering at each step of the process)
as a series of such steps by considering each $u$ and each $r$ in turn, applying such  
a series of adjacent transpositions as in Lemma \ref{equiv-of-chain-atom-orderings}, part (b), %\ref{interval-reorder-via-red-exp} 
for each $u$ and each $r$.   %But then Lemma \ref{switch-thm} ensures that i
If we start with a GRAO, then Lemma \ref{switch-thm} tells us that  after each of these steps, i.e. after each of these adjacent transpositions, % of this process 
 we still have a GRAO.  In particular, % this means that
the end-result  of this enriched version of  the  atom reordering process  applied to a GRAO  for $P$ %, the resulting chain-atom ordering 
is a GRAO for $P$.  Since by design  this enriched version of the atom reordering process  has % is constructed so as to  have 
the same output $\Lambda^{re}$  as the atom reordering process given in Algorithm \ref{reorder-def}, this completes the 
proof.
\end{proof}

%\commentph{double check this next corollary is in the right place (and check the sentence before it) -- yes it is}

Before getting to our main result of this section, we mention a consequence of what we have just proven that could give  
useful insight into the atom reordering process.

\begin{cor}
The reordering
 process applied to a GRAO of a finite bounded poset $P$  preserves the sets $F_r(u,v)$ and $G_r(u,v)$ for each $u<v$ in $P$ and each root $r$ for $[u,v]$.
\end{cor}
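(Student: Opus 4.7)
The plan is to combine two earlier results: Lemma~\ref{F-and-G-preserved}, which says that a single adjacent swap of two atoms $a_i,a_{i+1}$ in some $[u,\hat 1]_r$---provided that neither is the first atom of any $[u,v]_r$ containing both---preserves every set $F_{r'}(u',v')$ and $G_{r'}(u',v')$; and Lemma~\ref{equiv-of-chain-atom-orderings} together with the proof of Lemma~\ref{reordering-as-swaps}, which realizes the reordering process as a sequence of exactly such adjacent swaps. The strategy is to verify the hypothesis of Lemma~\ref{F-and-G-preserved} at every atomic step and then iterate.

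First I would unpack the reordering as follows. Processing $u\in P$ from bottom to top as in Definition~\ref{reorder-def}, for each $u$ and each root $r$ the permutation $\pi$ on the atoms of $[u,\hat 1]_r$ shuffles $F_r(u,\hat 1)$ ahead of $G_r(u,\hat 1)$ while preserving internal order. By Lemma~\ref{alwaysfirst}, this $\pi$ fixes the earliest atom of every $[u,v]_r$ with $v>u$, and by Lemma~\ref{equiv-of-chain-atom-orderings}(b) it admits a reduced expression into adjacent transpositions each of which still fixes the first atom of every $[u,v]_r$. So globally the reordering process factors as a composition of adjacent transpositions in rooted intervals, each one fixing the first atom of every relevant subinterval.

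Next I would observe that at every intermediate stage the chain-atom ordering is still a GRAO: this is exactly what the proof of Lemma~\ref{reordering-as-swaps} establishes via repeated invocation of Theorem~\ref{switch-thm}. Moreover, for each single adjacent transposition of $a_i,a_{i+1}$ in some $[u,\hat 1]_r$ that fixes the first atom of every $[u,v]_r$, neither $a_i$ nor $a_{i+1}$ can itself be the first atom of any such $[u,v]_r$ containing both of them (if it were, swapping would change that first atom, contradicting the fixing property). Hence at each step the hypothesis of Lemma~\ref{F-and-G-preserved} is satisfied for the current GRAO, and the step preserves all sets $F_{r'}(u',v')$ and $G_{r'}(u',v')$ simultaneously. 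Composing over all the adjacent transpositions of the full reordering then yields preservation between the input GRAO and the output reordered GRAO, which is the conclusion of the corollary.

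The main obstacle I anticipate is the bookkeeping required to justify that the hypothesis of Lemma~\ref{F-and-G-preserved} applies at \emph{every} intermediate GRAO, not just at the start; but this is handled uniformly by the combination of Lemma~\ref{alwaysfirst} (the first atom of each subinterval is fixed throughout) and the fact, already used in Lemma~\ref{reordering-as-swaps}, that the GRAO property propagates through each swap. Once these two facts are in place the corollary follows by induction on the number of adjacent transpositions in the factorization.
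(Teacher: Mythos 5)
Your proposal is correct and matches the paper's approach: the paper's proof is the one-line ``This follows from Lemmas~\ref{F-and-G-preserved} and~\ref{reordering-as-swaps},'' and you have simply unpacked what that citation implicitly requires---factoring the reordering into adjacent transpositions via Lemma~\ref{equiv-of-chain-atom-orderings}(b), using Theorem~\ref{switch-thm} (as in the proof of Lemma~\ref{reordering-as-swaps}) to propagate the GRAO property through each swap, and checking that the ``neither atom is first'' hypothesis of Lemma~\ref{F-and-G-preserved} holds at every step because each swap fixes first atoms. Your explicit verification of that hypothesis is a detail the paper leaves to the reader, but it is the intended argument.
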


\begin{proof}
This follows from Lemma \ref{F-and-G-preserved} and the proof of Lemma  \ref{reordering-as-swaps}.
\end{proof}

Now we are ready to prove the main result of this section. 

%\commentph{I think ''Moreover''  is confusing in the next statement in the sense  that the second sentence only speaks about one direction of the result in the first 
%sentence.   I'm trying to think of something better to put here -- this is such an important result within the paper, I want to make sure it is well stated.}

%\commentph{In the proof of the next result, we do not currently put a root subscript on $[a_j,\hat{1}]$ throughout, whereas elsewhere we do.  I'm of two minds which is better, since in this case the subscript clutters things without adding any new information, though I suppose it may help people see how things do depend on root as we propagate upward.  Anyway, we might want to add the subscripts here.}
%\commentgs{Since the root here is always $\hat{0}\lessdot a_j$, and so there's no choice of root, I'm leaning towards not adding it}

\begin{thm} A finite bounded  %,  graded 
poset admits a generalized recursive atom ordering (GRAO) if and only if it admits a recursive atom ordering (RAO).  
%Moreover,  the atom reordering procedure given  in Algorithm ~\ref{reorder-def}  transforms any GRAO into an RAO.  \commentph{One proposed alternative to prior sentence: 
Moreover,  every RAO is a GRAO while  every GRAO may be transformed into an RAO by the atom reordering process.  
\label{cccl}
\end{thm}

\begin{proof}  
By Lemma \ref{raothengrao}, any RAO is a GRAO.
What remains is to prove that applying the reodering process from Algorithm \ref{reorder-def}  to any GRAO for a finite bounded poset $P$ yields an RAO.
We will do this  by induction on the length $l$  of the longest saturated chain in $P$.  
For $l=2$, any ordering of the atoms of $P$ is a recursive atom ordering, giving the base case for our  proof by   %\commentph{Insert: proof by}
 induction.

Now assuming the result for $l\le n$, it will suffice to show  that this implies the result for $l=n+1$.  % \commentph{Insert something like: 
Let $\Lambda $ be a GRAO for $P$, regarded as a chain-atom ordering. 
Let $a_1,a_2,\dots ,a_q$ be the ordering of the atoms of $P$ with $l=n+1$ in  $\Lambda $. % a GRAO for $P$.  
In this case, for each atom $a_j\in P$, we have that 
 $[a_j, \hat{1}]$ %_{s\cup u}$ 
 is an interval whose longest  saturated chain is of length  at most  $n$.   
 By condition (i)(a) in the definition of GRAO,  %\commentph{notation for restriction here?  and elsewhere in this proof? -- agreed} 
 %\commentph{replace next bit  with:
  $\Lambda|_{[a_j,\hat{1}]}$
 % the restriction of our GRAO for $P$ to 
 %$[a_j,\hat{1}]$ 
 is a GRAO for $[a_j,\hat{1}]$.  
 By Lemma \ref{equiv-of-chain-atom-orderings}, parts (a) and (b),   %  and \ref{interval-stays-GRAO}, 
 applying the atom reordering process to  $\Lambda $ 
 and 
 %\commentplh{here we think we should not switch to notation, as it is important here to emphasize the sequence of operations}
  then restricting the resulting chain-atom ordering  
  $\Lambda^{re} $  to
 $[a_j,\hat{1}]$ yields the same chain-atom ordering for $[a_j,\hat{1}]$ that 
 we get by instead permuting the atoms of $[a_j,\hat{1}]$ in a way that  yields a new GRAO for $[a_j,\hat{1}]$  that is denoted by
  $\pi(\Lambda|_{[a_j,\hat{1}]})$
 and then applying the atom reordering process directly to  $\pi (\Lambda|_{[a_j,\hat{1}]})$.
 % this 
 %new GRAO on $[a_j,\hat{1}]$.  
 By our inductive hypothesis, the result of this  latter  series of operations 
 is an RAO for $[a_j,\hat{1}]$.  This implies   that   %Thus %,  we assume 
  condition (i)(a) of Definition \ref{rao} holds for $\Lambda^{re}$. % the chain-atom ordering resulting from the atom reordering process. 
 By definition of the atom reordering process, 
 % \commentph{do we really need this next lemma here or is it extraneous?  I checked, and it is extraneous, so I commented
 %it out}  
 %and by
 %Lemma \ref{F-and-G-preserved},  %\commentph{what I crossed out could be misinterpreted, so I suggest something like: 
 %an initial segment of atoms  
%\sout{ the atoms that come first} 
%in the atom reordering of $[a_j,\hat{1}]$ %]_{s\cup u}$  
 %\sout{are} \commentph{consists of} 
  those atoms of $[a_j,\hat{1}]$  that are above an  earlier atom of $P$  than $a_j$  all come before all of the other atoms of $[a_j,\hat{1}]$  
  in $\Lambda^{re}$,
  % the atom
  %reordering of $P$, % $[a_j,\hat{1}]$, %of $[u,\hat{1}]_s$,
  so condition  (i)(b) of Definition \ref{rao} is also satisfied for  $\Lambda^{re}$.
  %the chain-atom ordering produced by  the atom reordering process. 

What remains is to prove  that condition (ii) of Definition \ref{rao} holds for $\Lambda^{re}$.  
% the chain-atom ordering obtained by applying the atom  reordering process to 
%$\Lambda $. %our GRAO for 
%$P$.  % $[u, \hat{1}]_s$. 
Here we may use Lemma \ref{reordering-as-swaps} which tells us that applying the atom reordering process to $\Lambda $ 
%\commentph{Replace ``showed'' earlier in this sentence  with  ``tells us that'' or ``implies'' or something like this} 
%our GRAO for $P$ 
yields a GRAO for $P$.  Therefore $\Lambda^{re}$  satisfies condition (ii) in the definition of GRAO.  But condition (ii)  in the definition of GRAO  is  
the same % (albeit phrased slightly differently) 
%exactly  \commentph{is it equivalent or is it the same.  We want it to be the same, so should adjust as needed.} 
%equivalent to
as  condition (ii) in the definition of RAO, albeit phrased slightly differently, completing our proof.
\end{proof}

\section{Self-consistency, the UE property and TCL-shellability}\label{UE-section}

In this section, we introduce a condition that a CC-labeling may have called self-consistency.  We also introduce a fairly natural and readily checkable property called the UE property that will imply self-consistency.  We prove that all CL-labelings have the UE property.

These notions are introduced in preparation for a result later in the paper where we will  prove %clarify the relationship between CC-shellability and CL-shellability, doing so by proving 
that a finite bounded poset is CL-shellable if and only if it is CC-shellable by way of  a  self-consistent CC-labeling.  To  help us further clarify the relationship between the established notions of  CC-shellability and CL-shellability,   %established notions of lexicographic shellability 
we also  introduce a variation on CC-shellability that  we call TCL-shellability.

\begin{defn}\label{SC-def}
 Consider a chain-edge labeling $\lambda$ such that each rooted interval has a unique lexicographically earliest saturated chain.   We define such $\lambda $  to be  \textbf{self-consistent} if for any rooted interval $[u, v]_r$ we have the following condition: if $a$ is the atom in the lexicographically first saturated chain of $[u, v]_r$ and $b \neq a$ is also an atom of $[u, v]_r$, then for any  $[u, v']_r$ containing $a$ and $b$, all saturated chains of $[u,v']_r$ containing $b$ come lexicographically later than all saturated chains  of $[u,v']_r$ containing $a$.
 If a chain-edge labeling is not self-consistent,  then it is said to be  \textbf{self-inconsistent}.
\end{defn}

Next we introduce a property that will imply  self-consistency that is more readily checkable.  

\begin{defn} A chain-edge labeling $\lambda$ of a finite bounded poset $P$ has the \textbf{unique earliest (UE) property} if for each rooted interval $[u, v]_r$ in $P$, the smallest label occurring on any cover relation upward from $u$ only occurs on one such cover relation. 
\label{ueprop}
\end{defn}

\begin{lem}\label{UEimpliesSC}
If a CC-labeling has the UE property, then it is self-consistent.
\end{lem}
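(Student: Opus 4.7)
The plan is to unpack the two definitions and argue directly, with the UE property pinning down the first label of the lex-first chain as uniquely achieving the minimum over all cover-labels above $u$. Let $\lambda$ be a CC-labeling of a finite bounded poset $P$ that has the UE property, fix a rooted interval $[u,v]_r$, let $a$ be the atom of $[u,v]_r$ lying on the lexicographically first saturated chain of $[u,v]_r$, and let $b \neq a$ be any other atom of $[u,v]_r$. Note that by the defining property of a chain-edge labeling, for any atom $c$ of $[u,v]_r$ the label $\lambda_r(u,c)$ depends only on the root $r$ and the cover $u \lessdot c$, not on how the chain is extended above $c$, so it makes sense to compare these first labels across different extensions.

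First I would show $\lambda_r(u,a) < \lambda_r(u,b)$. By the UE property at $u$ with root $r$, there is a unique cover relation $u \lessdot a'$ in $[u,v]_r$ whose label achieves the minimum of $\lambda_r(u,\,\cdot\,)$ over all atoms of $[u,v]_r$, and this minimum value is strictly smaller than $\lambda_r(u,c)$ for every other atom $c$. Consequently any saturated chain of $[u,v]_r$ passing through $a'$ lex-precedes any saturated chain passing through any other atom (since the comparison is already decided at the first label). Hence the lex-first chain of $[u,v]_r$ must pass through $a'$, forcing $a = a'$; in particular $\lambda_r(u,a) < \lambda_r(u,b)$.

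The self-consistency condition now follows at once. Let $[u,v']_r$ be any rooted interval containing both $a$ and $b$. For any saturated chain $c_a$ of $[u,v']_r$ through $a$ and any saturated chain $c_b$ of $[u,v']_r$ through $b$, the first labels are $\lambda_r(u,a)$ and $\lambda_r(u,b)$ respectively, and the strict inequality $\lambda_r(u,a) < \lambda_r(u,b)$ established above implies $c_a$ lex-precedes $c_b$. Thus every saturated chain of $[u,v']_r$ containing $a$ comes lexicographically earlier than every saturated chain containing $b$, which is exactly the self-consistency condition of Definition \ref{SC-def}.

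There is essentially no serious obstacle; the main care needed is the first step, where one must justify using the UE-minimizer as the atom on the lex-first chain. This is where both the UE hypothesis (uniqueness of the minimum) and the CC-labeling hypothesis (existence and uniqueness of a lex-first chain in each rooted interval) combine cleanly, and once that identification is in place the rest of the argument is a one-line comparison of first labels.
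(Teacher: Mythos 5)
Your proof is correct and follows essentially the same approach as the paper's: identify the atom with the uniquely smallest label above $u$ (using the UE property), observe it must lie on the lex-first chain, and then compare first labels to get self-consistency. You spell out the identification of the lex-first atom with the UE-minimizer a bit more explicitly than the paper does, but the argument is the same.
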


\begin{proof}
Let $a$ be the unique atom of $[u, v]_r$ for which $\lambda(u, a)$ is smallest among all labels upward from $u$. 
Consider any other atom $b\in [u,v]_r$ and any $v'$ satisfying $u < v'$ with $a,b\in [u,v']$. The label sequences on saturated chains of $[u,v']_r$ containing $a$ must be lexicographically smaller than those containing $b$ by virtue of  $\lambda (u,a)$ being smaller than $\lambda (u,b)$ with respect to root $r$.
\end{proof}

\begin{cor}\label{RAOimpliesSCCC}
Any recursive atom ordering for a finite bounded poset gives rise to a CC-labeling with the UE property, hence to a self-consistent CC-labeling.
\end{cor}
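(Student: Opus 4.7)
The plan is to apply Proposition \ref{RAOimpliesCC} to produce a CC-labeling from the given RAO, verify that this specific CC-labeling has the UE property by inspecting its construction, and then invoke Lemma \ref{UEimpliesSC} to upgrade UE to self-consistency. Since Lemma \ref{UEimpliesSC} does the heavy lifting for the second half of the statement, essentially all of the work is in checking the UE property for the canonical CC-labeling coming from an RAO.

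Concretely, I would first recall the labeling $\lambda$ constructed in the proof of Proposition \ref{RAOimpliesCC}: for every rooted interval $[u,\hat{1}]_r$, one chooses an RAO $a_1',a_2',\dots,a_{t'}'$ for $[u,\hat{1}]_r$ (compatible with the atom orderings already fixed lower in the poset and with the given RAO for $P$), and assigns the label $\lambda_r(u,a_i')=i$ to each cover relation $u\lessdot a_i'$. The key observation is that, by construction, the labels $1,2,\dots,t'$ on the cover relations immediately above $u$ in $[u,\hat{1}]_r$ are pairwise distinct.

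Next I would verify the UE property directly. Fix any rooted interval $[u,v]_r$ in $P$. The cover relations upward from $u$ inside $[u,v]_r$ form a subset of the cover relations upward from $u$ in $[u,\hat{1}]_r$, so by the previous paragraph they carry pairwise distinct labels with respect to the root $r$. In particular, the smallest such label is attained on exactly one cover relation, which is precisely the UE property of Definition \ref{ueprop}. Since Proposition \ref{RAOimpliesCC} already guarantees $\lambda$ is a CC-labeling, we conclude that $\lambda$ is a CC-labeling with the UE property.

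Finally, applying Lemma \ref{UEimpliesSC} to $\lambda$ immediately yields that $\lambda$ is self-consistent, completing the proof. I do not foresee any substantive obstacle: the main point is simply that the RAO-induced labeling assigns distinct positive integers to distinct atoms of each rooted interval, making UE essentially automatic, and self-consistency is then a one-line consequence of the previously established lemma.
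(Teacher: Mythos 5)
Your proof is correct and follows essentially the same route as the paper: the paper's own proof simply observes that the CC-labeling from Proposition \ref{RAOimpliesCC} has the UE property and then invokes Lemma \ref{UEimpliesSC}. You have merely spelled out the UE verification in slightly more detail, which is a harmless elaboration of the same argument.
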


\begin{proof}
Simply observe that the CC-labeling constructed in the proof of Proposition  ~\ref{RAOimpliesCC}  has the UE property, then apply Lemma \ref{UEimpliesSC}.
\end{proof}

The next result gives some evidence that the UE property is not an unduly burdensome  condition to impose.

\begin{lem}\label{CLhaveUE}
Every CL-labeling has the UE property.
\end{lem}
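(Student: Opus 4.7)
The plan is to argue by contradiction using the defining properties of a CL-labeling applied both to $[u,v]_r$ itself and to a suitable subinterval. Fix a rooted interval $[u,v]_r$ and let $\ell$ denote the minimum value of $\lambda_r(u,x)$ as $x$ ranges over atoms of $[u,v]_r$. Suppose for contradiction that two distinct atoms $a \neq a'$ of $[u,v]_r$ both attain this minimum, so $\lambda_r(u,a) = \lambda_r(u,a') = \ell$. Note that if $v$ covered $u$ there would be only one atom of $[u,v]_r$, so in our situation $[u,v]$ has length at least two and in particular $a,a' < v$, making the subintervals $[a,v]_{r \cup \{u,a\}}$ and $[a',v]_{r \cup \{u,a'\}}$ nontrivial.

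First I would identify the unique ascending chain $c^*$ of $[u,v]_r$. Since $c^*$ is lex first among all maximal chains of $[u,v]_r$ and $\ell$ is the minimum of the possible first labels, the first label of $c^*$ equals $\ell$. Let $a^*$ be the atom through which $c^*$ passes; then $\lambda_r(u,a^*) = \ell$. Since $a \neq a'$, at most one of them equals $a^*$, so after relabeling we may assume $a \neq a^*$.

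Next I would apply the CL property to the smaller rooted interval $[a,v]_{r \cup \{u,a\}}$ to produce its unique ascending chain, with label sequence $\beta_1 < \beta_2 < \cdots < \beta_m$. Prepending the cover relation $u \lessdot a$ yields a maximal chain $c$ of $[u,v]_r$ with label sequence $(\ell, \beta_1, \ldots, \beta_m)$. The proof then splits on the comparison of $\ell$ and $\beta_1$. If $\ell < \beta_1$, then $c$ is itself ascending in $[u,v]_r$ and passes through $a \neq a^*$, contradicting the uniqueness of the ascending chain $c^*$. If $\ell \geq \beta_1$, write the label sequence of $c^*$ as $(\ell, \alpha_1, \ldots, \alpha_{m'})$ with $\alpha_1 > \ell$ (since $c^*$ is ascending); the lex-first property of $c^*$ then forces $\alpha_1 \leq \beta_1$, yielding the impossible chain of inequalities $\alpha_1 \leq \beta_1 \leq \ell < \alpha_1$. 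Since both cases are ruled out, no such pair $a, a'$ can exist, which is exactly the UE property.

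The entire argument is short, and the only step requiring any thought — and in my view the main obstacle to finding the proof — is realizing that one must invoke the CL property on the smaller rooted interval $[a,v]_{r \cup \{u,a\}}$ to manufacture a competing ascending-from-$a$ chain that can then be compared lexicographically with $c^*$; after that, the contradiction falls out from routine lexicographic bookkeeping.
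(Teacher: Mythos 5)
Your proof is correct and takes essentially the same approach as the paper's: both identify the lex-first (ascending) chain through one minimal-label atom, form a competitor by prepending the other minimal-label atom onto the ascending chain of its upper interval, and derive a contradiction with uniqueness of the ascending chain. The only cosmetic difference is that you split into two cases on $\ell$ versus $\beta_1$, whereas the paper writes out the single chain of inequalities $\lambda_r(u,a') = \lambda_r(u,a) < \lambda_{r\cup u}(a,x_2) \le \lambda_{r\cup u}(a',y_2)$ directly, which simultaneously rules out your second case and forces your first.
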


\begin{proof}
Consider a pair of  elements $u < v$ in a finite bounded poset $P$ with a CL-labeling $\lambda $, and consider any root $r$ of $[u, v]$ from $\hat{0}$ to $u$.    Suppose that there are distinct atoms $a,a' \in [u,v]$ such that $\lambda_r (u,a) = \lambda_r (u,a')$. Further suppose $\lambda_r (u,a) \le \lambda_r (u,a'')$ for all other atoms $a''\in [u,v]_r$.    We may choose $a$ to belong to the lexicographically earliest saturated chain $M$ from $u$ to $v$ in $[u,v]_r$ since this saturated chain will begin with the smallest possible first label.  Let 
$u\lessdot a\lessdot x_2\lessdot x_3\lessdot \cdots \lessdot x_k\lessdot v$ be this lexicographically first saturated chain of $[u,v]_r$.  Let 
$u\lessdot a'\lessdot y_2\lessdot y_3\lessdot \cdots \lessdot y_l\lessdot v$ be the lexicographically first saturated chain of $[u,v]_r$ that contains $a'$.   Denote by $M'$ this 
latter saturated chain from $u$ to $v$.   
We must have  % \commentph{Shouldn't next $<$ be $\le $?  The same question applies a couple  sentences later when we have the same inequality...Need to be consistent throughout the paper whether ascending chains are weakly ascending or strictly ascending.} 
%\gs{Root should be $r\cup a$ not $r\cup u$.} 
$\lambda_r (u,a) < \lambda_{r\cup a} (a,x_2)$  since $M$ is an ascending chain.  But we also have $\lambda_{r\cup a} (a,x_2) \le \lambda_{r\cup a'}  (a',y_2)$ since the label sequences for $M$ and $M'$ both start with the same label and %have these as their second labels with 
$M$ has a lexicographically smaller label sequence than $M'$.  Thus, we have 
$$\lambda_r (u,a') = \lambda_r (u,a) < \lambda_{r\cup a}  (a,x_2) \le \lambda_{r\cup a'}  (a',y_2)$$ implying that $M'$ has an ascent at $a'$.  But $M'$ is also ascending from $a'$ to $v$ since $M'$ is lexicographically first in $[a',v]_{r\cup a'}$.  Thus, $M$ and $M'$ are both  ascending chains on $[u,v]_r$, giving a contradiction.
\end{proof}

\begin{ex}\label{varying-properties}
Figure \ref{nonue} shows a poset $P$ and three different CC-labelings (all of which are in fact EC-labelings). The leftmost  CC-labeling  is self-inconsistent  because 
both $[\hat{0}, x]$ and $[\hat{0}, y]$ contain $a$ and $b$ as atoms, but $a$ is first in $[\hat{0}, x]$ while $b$ is first in $[\hat{0}, y]$. 
Given that  this labeling is self-inconsistent, it also cannot have the UE property.   The CC-labeling in the middle of Figure \ref{nonue} is self-consistent but does not have the UE property because both cover relations up from $\hat{0}$ have  label 1.  The CC-labeling on the right has the UE property and is self-consistent.
\end{ex}

\begin{figure}

\begin{tikzpicture}[scale=1.25]
\node [below] at (0,0) {\tiny $\hat{0}$}; 
\draw [fill] (0,0) circle [radius=0.05]; 
\node [left] at (-1,1) {\tiny $a$}; 
\draw [fill] (-1,1) circle [radius=0.05]; 
\node [right] at (1,1) {\tiny $b$}; 
\draw [fill] (1,1) circle [radius=0.05]; 
\node [left] at (-1,2) {\tiny $x$}; 
\draw [fill] (-1,2) circle [radius=0.05]; 
\node [right] at (1,2) {\tiny $y$}; 
\draw [fill] (1,2) circle [radius=0.05]; 
\node [above] at (0,3) {\tiny $\hat{1}$}; 
\draw [fill] (0,3) circle [radius=0.05];
\draw (1,1)--(-1,2);
\draw (0,0)--node [midway, circle, fill=white, inner sep=0.5pt,minimum size=1pt] {\tiny 1} (-1,1)-- node [midway, circle, fill=white, inner sep=1pt,minimum size=1pt] {\tiny 1} (-1,2)-- node [midway, circle, fill=white, inner sep=0.5pt,minimum size=1pt] {\tiny 1} (0,3);
\draw (0,0)--node [midway, circle, fill=white, inner sep=0.5pt,minimum size=1pt] {\tiny 1}(1,1)--node [midway, circle, fill=white, inner sep=0.5pt,minimum size=1pt] {\tiny 3}(1,2)-- node [midway, circle, fill=white, inner sep=0.5pt,minimum size=1pt] {\tiny 2}(0,3);
\draw (-1,1)--node [near end, circle, fill=white, inner sep=0.5pt,minimum size=1pt] {\tiny 4}(1,2);
\draw (1,1)--node [near end, circle, fill=white, inner sep=0.5pt,minimum size=1pt] {\tiny 2} (-1,2);
%\node at (0,-1.6) {}; 
\end{tikzpicture}
\hskip 1cm
\begin{tikzpicture}[scale=1.25]
\node [below] at (0,0) {\tiny $\hat{0}$}; 
\draw [fill] (0,0) circle [radius=0.05]; 
\node [left] at (-1,1) {\tiny $a$}; 
\draw [fill] (-1,1) circle [radius=0.05]; 
\node [right] at (1,1) {\tiny $b$}; 
\draw [fill] (1,1) circle [radius=0.05]; 
\node [left] at (-1,2) {\tiny $x$}; 
\draw [fill] (-1,2) circle [radius=0.05]; 
\node [right] at (1,2) {\tiny $y$}; 
\draw [fill] (1,2) circle [radius=0.05]; 
\node [above] at (0,3) {\tiny $\hat{1}$}; 
\draw [fill] (0,3) circle [radius=0.05];
\draw (1,1)--(-1,2);
\draw (0,0)--node [midway, circle, fill=white, inner sep=0.5pt,minimum size=1pt] {\tiny 1} (-1,1)-- node [midway, circle, fill=white, inner sep=1pt,minimum size=1pt] {\tiny 1} (-1,2)-- node [midway, circle, fill=white, inner sep=0.5pt,minimum size=1pt] {\tiny 1} (0,3);
\draw (0,0)--node [midway, circle, fill=white, inner sep=0.5pt,minimum size=1pt] {\tiny 1}(1,1)--node [midway, circle, fill=white, inner sep=0.5pt,minimum size=1pt] {\tiny 4}(1,2)-- node [midway, circle, fill=white, inner sep=0.5pt,minimum size=1pt] {\tiny 2}(0,3);
\draw (-1,1)--node [near end, circle, fill=white, inner sep=0.5pt,minimum size=1pt] {\tiny 3}(1,2);
\draw (1,1)--node [near end, circle, fill=white, inner sep=0.5pt,minimum size=1pt] {\tiny 2} (-1,2);
%\node at (0,-1.6) {}; 
\end{tikzpicture}
\hskip 1cm
\begin{tikzpicture}[scale=1.25]
\node [below] at (0,0) {\tiny $\hat{0}$}; 
\draw [fill] (0,0) circle [radius=0.05]; 
\node [left] at (-1,1) {\tiny $a$}; 
\draw [fill] (-1,1) circle [radius=0.05]; 
\node [right] at (1,1) {\tiny $b$}; 
\draw [fill] (1,1) circle [radius=0.05]; 
\node [left] at (-1,2) {\tiny $x$}; 
\draw [fill] (-1,2) circle [radius=0.05]; 
\node [right] at (1,2) {\tiny $y$}; 
\draw [fill] (1,2) circle [radius=0.05]; 
\node [above] at (0,3) {\tiny $\hat{1}$}; 
\draw [fill] (0,3) circle [radius=0.05];
\draw (1,1)--(-1,2);
\draw (0,0)--node [midway, circle, fill=white, inner sep=0.5pt,minimum size=1pt] {\tiny 1} (-1,1)-- node [midway, circle, fill=white, inner sep=0.5pt,minimum size=1pt] {\tiny 2} (-1,2)-- node [midway, circle, fill=white, inner sep=0.5pt,minimum size=1pt] {\tiny 3} (0,3);
\draw (0,0)--node [midway, circle, fill=white, inner sep=0.5pt,minimum size=1pt] {\tiny 4}(1,1)--node [midway, circle, fill=white, inner sep=0.5pt,minimum size=1pt] {\tiny 6}(1,2)-- node [midway, circle, fill=white, inner sep=0.5pt,minimum size=1pt] {\tiny 7}(0,3);
\draw (-1,1)--node [near end, circle, fill=white, inner sep=0.5pt,minimum size=1pt] {\tiny 8}(1,2);
\draw (1,1)--node [near end, circle, fill=white, inner sep=0.5pt,minimum size=1pt] {\tiny 5}(-1,2);
%\node at (0,-1.6) {}; 
\end{tikzpicture}

\caption{Self-inconsistent and self-consistent CC-labelings} 
\label{nonue}
\end{figure}
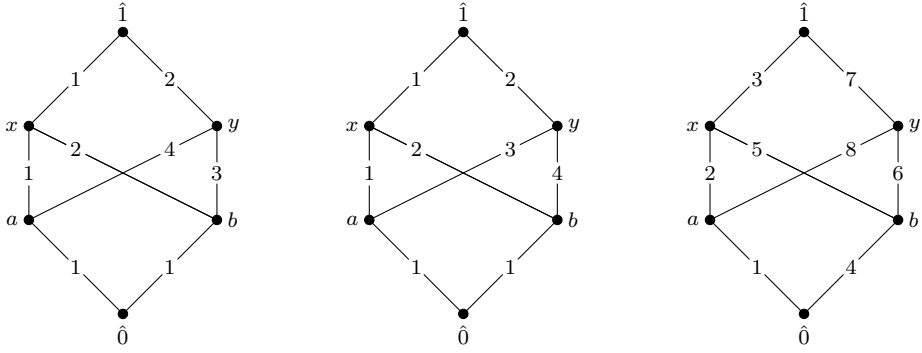

It seems plausible that many of the most interesting and the most 
natural examples of CC-labelings  will have the UE property.  % or will  at least be self-consistent. %in some manner.  % As some evidence for this, 
Section \ref{CC-UE} shows that an  EC-labeling from \cite{elect} for the  dual poset to the uncrossing order has the UE property, 
 allowing us to invoke  the upcoming 
Corollaries \ref{ConsistECimpliesCL} and \ref{co-RAO}
to deduce dual  CL-shellability for the uncrossing order. 
Section  \ref{CL-UE} shows how readily checkable the UE property  %may be, by carrying this out
is for numerous well-known EL-labelings and CL-labelings.

\subsection{TCL-shellability:  a  variation on CC-shellability}\label{topol-CL-section}
%\subsection{Topological CL-shellability: a  variation on CC-shellability}\label{topol-CL-section}

One may replace Kozlov's requirements of a CC-labeling (a) that  saturated chains have distinct label sequences and (b)  that no label sequence be a prefix of any other label sequence by simply requiring each rooted interval to have a unique lexicographically earliest saturated chain.  With just this requirement, the proof of Bj\"orner and Wachs that a CL-labeling yields a shelling of the order complex will carry over using topological descents in place of descents and using topological ascents in place of ascents.  In particular, the UE property suffices in place of these two aforementioned conditions of Kozlov to guarantee we get a shelling. 
 
  This yields a class of poset chain-edge labelings inducing shellings  that includes all CL-labelings, whether or not the labels upward from the lowest element $u$ in any rooted interval $[u,v]_r$ are all distinct from each other.
  %; in particular, unlike Kozlov's CC-shellability % in contrast to CC-shellability in its original formulation by Kozlov, 
   %this includes CL-labelings where not every interval has M\"obius function $0,1,$ or $-1$.  
   In justifying his assertion that CC-shellability is the most general possible version of lexicographic shellability, Kozlov notes in \cite{kozlov}  that any CL-labeling may be modified into a CC-labeling, implying that every CL-shellable poset is CC-shellable.

\begin{defn}\label{topol-CL}
A  \textbf{TCL-labeling}  of a finite bounded poset $P$ is a chain-edge labeling $\lambda $ for $P$ such that (i) each rooted interval $[u,v]_r$ has a unique saturated chain from 
$u$ to $v$ with lexicographically smallest label sequence, and (ii) every other saturated chain from $u$ to $v$ in $[u,v]_r$ has at least one topological descent.   If a poset has a TCL-labeling, then it is \textbf{TCL-shellable}. %  is  the property of having a TCL-labeling. 
\end{defn}

%\begin{defn}\label{topol-CL}
%A {\bf topological CL-labeling} of a finite bounded poset $P$ is a chain-edge labeling $\lambda $ for $P$ such that (i) each rooted interval $[u,v]_r$ has a unique saturated chain from 
%$u$ to $v$ with lexicographically smallest label sequence, and (ii) every other saturated chain from $u$ to $v$ in $[u,v]_r$ has at least one topological descent.   {\bf Topological CL-shellability} is  the property of having a topological CL-labeling. 
%\end{defn}

%\commentph{Question: If we delete this paragraph, do we adequately discuss elsewhere that there are self-inconsistent TCL-labelings?}
%We will sometimes refer to condition (i) in Definition \ref{topol-CL} as the {\bf UE-chain property}. \commentgs{It seems we never actually use the term UE-chain property; I suggest we remove this.}  Notice that it is implied by the UE property, but does not itself imply the UE property. 
 %In particular, there can be self-inconsistent topological CL-labelings (see, for example, Figure \ref{nonue}). 

\begin{thm}\label{CCimpliesTopolCL}
Every   %self-consistent 
CC-labeling is a TCL-labeling.  However, not every TCL-labeling is a  CC-labeling.
\end{thm}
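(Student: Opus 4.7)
The plan is to verify that a CC-labeling satisfies both conditions in the definition of a topological CL-labeling and then to exhibit a small poset on which a chain-edge labeling satisfies those conditions without being a CC-labeling. Fix a CC-labeling $\lambda$ of a finite bounded poset $P$ and a rooted interval $[u,v]_r$. Under the CC-labeling hypothesis any two saturated chains of $[u,v]_r$ have distinct label sequences, neither of which is a prefix of the other, so lex order compares them strictly and the lex smallest chain $\ell$ exists and is unique. This gives condition (i) in the definition of topological CL-labeling.

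The heart of the argument will be to show that $\ell$ is topologically ascending. I will suppose for contradiction that some consecutive triple $u_i \lessdot u_{i+1} \lessdot u_{i+2}$ of $\ell$ is a topological descent with respect to the saturated chain obtained by concatenating $r$ with the initial segment of $\ell$ up to $u_i$. By definition there exists $u_{i+1}' \neq u_{i+1}$ with $u_i \lessdot u_{i+1}' \lessdot u_{i+2}$ whose label pair is lexicographically strictly smaller than the pair appearing in $\ell$ at positions $i+1,i+2$. Replacing $u_{i+1}$ by $u_{i+1}'$ produces a saturated chain $\ell'$ of $[u,v]_r$ whose first $i$ labels match those of $\ell$ (by the chain-edge labeling axiom) and whose first differing label (at position $i+1$ or $i+2$) is strictly smaller. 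Since lex comparison is decided at the first position of disagreement, $\ell' <_{\mathrm{lex}} \ell$, contradicting the minimality of $\ell$. Hence $\ell$ is topologically ascending, and the uniqueness clause of a CC-labeling identifies $\ell$ with the unique topologically ascending chain of $[u,v]_r$; every other saturated chain is then not topologically ascending and so contains at least one topological descent, giving condition (ii).

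For the converse assertion, I plan to use the poset with $\hat 0 \lessdot a,b,c$ and $a,b,c \lessdot v$ equipped with the labels $\lambda(\hat 0,a)=1$, $\lambda(a,v)=1$, $\lambda(\hat 0,b)=\lambda(\hat 0,c)=2$, and $\lambda(b,v)=\lambda(c,v)=3$. The three saturated chains of $[\hat 0,v]$ carry label sequences $(1,1)$, $(2,3)$ and $(2,3)$; the chain through $a$ is the unique lex smallest, and the pair $(2,3)$ appearing on each of the other two chains is not strictly lex smaller than $(1,1)$, so it is a topological descent. Both conditions of topological CL-shellability are therefore met, but the chains through $b$ and $c$ share a label sequence, so the CC-labeling distinctness requirement fails.

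The main subtlety in the forward implication is that the saturated chain $\ell'$ has a different root past $u_{i+1}'$ and so may carry different labels from those of $\ell$ at positions beyond $i+2$; this is harmless because the lex comparison is resolved by the first position of disagreement, which necessarily lies at $i+1$ or $i+2$. Everything else in the argument is essentially bookkeeping once this swap observation is in hand.
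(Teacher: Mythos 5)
Your proof is correct and follows essentially the same route as the paper's: check that the two CC conditions yield the two topological CL conditions, then exhibit a rank-two counterexample with two saturated chains sharing a label sequence. The one genuinely useful thing you add is the explicit ``swap'' argument showing that the lex-smallest saturated chain of a rooted interval is topologically ascending (and hence coincides with the unique topologically ascending chain guaranteed by the CC hypothesis). The paper's proof treats that identification as immediate, but it is exactly what is needed to pass from ``unique topologically ascending chain'' to condition (ii), that every chain other than the lex-smallest has a topological descent; making it explicit, including the remark that only the first position of disagreement matters because the labels beyond $i+2$ may shift, tightens the argument. One small point worth being pedantic about: the definition of topological descent at $u_i \lessdot u_{i+1} \lessdot u_{i+2}$ only tells you some competing label pair is not strictly larger than yours; to upgrade that to \emph{strictly smaller} you need to invoke the CC distinctness hypothesis on the rooted interval $[u_i,u_{i+2}]$, which you use implicitly but should cite. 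Your counterexample $(1,1),(2,3),(2,3)$ works just as well as the paper's $(1,2),(2,1),(2,1)$; a minor bonus of yours is that it is a topological CL-labeling that is not even a CL-labeling, whereas the paper's example is already CL.
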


\begin{proof}
First notice that the requirement of a CC-labeling that the saturated chains in any rooted interval  have distinct label sequences in particular implies the uniqueness of the lexicographically earliest label sequence on each rooted interval. 
Since the saturated chain from $u$ to $v$ with the  lexicographically earliest label sequence on a rooted interval  $[u,v]_r$ 
cannot have any topological descents, 
%Also notice that %condition (CC) appearing in the definition given in \cite{kozlov} for a CC-labeling  amounts exactly  to the requirement of 
condition (1) in the definition of CC-labeling  implies that every other  saturated chain of $[u,v]_r$  must have a topological 
descent.
%that we give in the introduction; that is, it is equivalent to requiring  that each rooted interval have a unique topologically ascending chain.
This shows that  every % any self-consistent 
CC-labeling is a TCL-labeling.

To see that the converse fails, simply consider the CL-labeling $\lambda $ (which is also a TCL-labeling) for a graded bounded poset of rank 2 consisting  of elements 
$\hat{0},\hat{1},x_1,x_2,x_3$ with $\hat{0}\lessdot x_i \lessdot \hat{1}$ for $i=1,2,3$ having  $\lambda (\hat{0},x_1) = \lambda (x_2,\hat{1}) = \lambda (x_3,\hat{1}) = 1$ and 
$\lambda (\hat{0},x_2) = \lambda (\hat{0},x_3) = \lambda (x_1,\hat{1}) = 2$.  This fails the requirement for CC-labelings that distinct saturated chains have distinct label sequences.
\end{proof}

\begin{rmk}
It may very well be true that a finite bounded poset is CC-shellable if and only if it admits a TCL-labeling.   The labeling given in the second half of the 
proof of Theorem  \ref{CCimpliesTopolCL} can easily be modified into a CC-labeling, and indeed it remains open whether these two notions of shellability are equivalent in terms 
of which posets admit such shellings.  Likewise, as far as we know, it is open whether CC-shellability is equivalent to CL-shellability, though this seems to be quite a tricky question.
\end{rmk}

\begin{thm}\label{RAOimpliesTopolCL}
Any recursive atom ordering  of a finite bounded poset  $P$  induces a TCL-labeling for $P$ that is self-consistent.
\end{thm}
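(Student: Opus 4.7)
The plan is to obtain this result as a direct consequence of machinery already assembled earlier in the paper, rather than by constructing a topological CL-labeling from scratch. In particular, Proposition \ref{RAOimpliesCC} explicitly constructs from any recursive atom ordering $a_1,\dots,a_t$ of $P$ a chain-edge labeling $\lambda$ (assigning to each cover relation $u\lessdot a_i'$ in a rooted interval $[u,\hat 1]_r$ the index of $a_i'$ in the induced RAO of $[u,\hat 1]_r$) and proves $\lambda$ is a CC-labeling. So I would take this $\lambda$ as the candidate labeling and verify the two extra properties required.

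First, I would invoke Theorem \ref{CCimpliesTopolCL}, which states that every CC-labeling is a topological CL-labeling. Applied to the $\lambda$ of Proposition \ref{RAOimpliesCC}, this immediately yields that $\lambda$ is a topological CL-labeling of $P$; in particular, each rooted interval $[u,v]_r$ has a unique lexicographically smallest saturated chain, and every other saturated chain of $[u,v]_r$ contains a topological descent.

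Second, I would establish self-consistency. For this I would use Corollary \ref{RAOimpliesSCCC}, which states that any RAO gives rise to a CC-labeling with the UE property; inspecting its proof, the labeling produced is precisely the $\lambda$ of Proposition \ref{RAOimpliesCC}, so $\lambda$ has the UE property. By Lemma \ref{UEimpliesSC}, the UE property implies self-consistency, so $\lambda$ is self-consistent. Combining with the previous paragraph gives that $\lambda$ is a self-consistent topological CL-labeling, as desired.

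The only point requiring minor care is that self-consistency (Definition \ref{SC-def}) and the UE property (Definition \ref{ueprop}) are each properties of the chain-edge labeling itself, independent of whether we view the labeling through the lens of CC-shellability or topological CL-shellability; so once $\lambda$ has these properties as a CC-labeling, it retains them when we regard it as a topological CL-labeling. No genuine obstacle is expected: the proof is essentially a chain of citations to \ref{RAOimpliesCC}, \ref{RAOimpliesSCCC}, \ref{UEimpliesSC}, and \ref{CCimpliesTopolCL}. The main conceptual point being recorded is that the same concretely constructed labeling witnesses all three properties simultaneously.
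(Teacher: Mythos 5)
Your proposal is correct and follows essentially the same route as the paper: take the CC-labeling $\lambda$ constructed in Proposition~\ref{RAOimpliesCC}, observe it has the UE property (as noted in Corollary~\ref{RAOimpliesSCCC}) and hence is self-consistent by Lemma~\ref{UEimpliesSC}, and conclude it is a topological CL-labeling by Theorem~\ref{CCimpliesTopolCL}. This matches the paper's proof, which cites the same labeling and the same chain of facts.
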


\begin{proof}
One may simply check that the CC-labeling constructed in the proof of Theorem \ref{RAOimpliesCC} is in fact self-consistent (by virtue of having the UE property) and
  is  a TCL-labeling.  One may also invoke Theorem \ref{CCimpliesTopolCL} for the latter claim.
\end{proof}

\begin{thm}\label{tclthenshell}
If  a finite bounded poset $P$ admits a TCL-labeling, then this induces one or more lexicographic shellings for $\Delta (P)$.  Specifically, any linear order on 
the maximal chains of $P$ that is a linear extension of the partial order obtained by ordering maximal chains lexicographically is a shelling order.  
\end{thm}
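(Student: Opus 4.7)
The plan is to adapt the Bj\"orner--Wachs proof that a CL-labeling induces a shelling, with topological descents playing the role played by descents in the classical argument. Concretely, I will verify the standard reformulation of the shelling condition: for any two maximal chains $m_1$ and $m_2$ with $m_1$ preceding $m_2$ in the given linear extension, I want to exhibit a maximal chain $m_3$ also preceding $m_2$ such that $m_3$ and $m_2$ coincide as sets of poset elements in all but one element, and that distinguished element lies outside $m_1$. This will give $m_1 \cap m_2 \subseteq m_3 \cap m_2$ with $m_3 \cap m_2$ of codimension one in $m_2$, which is the standard equivalent formulation of the pure codimension-one intersection condition from Definition \ref{shelling-def}.

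To produce $m_3$, I will first locate $u$, the top of the common initial segment of $m_1$ and $m_2$ from $\hat{0}$, and take $r$ to be this common initial segment regarded as a root for $[u,\hat{1}]$. Next I will take $w$ to be the lowest element strictly above $u$ that lies on both $m_1$ and $m_2$; such a $w$ exists because both chains terminate at $\hat{1}$. Inside the rooted interval $[u,w]_r$, the restrictions $m_1|_{[u,w]}$ and $m_2|_{[u,w]}$ are saturated chains that share no elements other than $u$ and $w$. The crucial observation is that $m_2|_{[u,w]}$ cannot be the lexicographically earliest saturated chain of $[u,w]_r$: otherwise the uniqueness clause of Definition \ref{topol-CL} would force the label sequence of $m_1|_{[u,w]}$ to be strictly lex-larger than that of $m_2|_{[u,w]}$, and since both chains carry identical labels along the common root $r$, this would force $\lambda(m_1)$ to be lex-strictly larger than $\lambda(m_2)$, contradicting the assumption that $m_1$ precedes $m_2$.

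Clause (ii) of Definition \ref{topol-CL} then guarantees that $m_2|_{[u,w]}$ contains a topological descent at some consecutive triple $x_i \lessdot x_{i+1} \lessdot x_{i+2}$ inside $[u,w]$, and the definition of topological descent supplies a distinct element $x_{i+1}'$ for which the pair $(\lambda(x_i,x_{i+1}'), \lambda(x_{i+1}',x_{i+2}))$ (computed with respect to the root of $m_2$ up through $x_i$) is strictly lex-smaller than $(\lambda(x_i,x_{i+1}), \lambda(x_{i+1},x_{i+2}))$. I will then take $m_3$ to be $m_2$ with $x_{i+1}$ replaced by $x_{i+1}'$. Two final verifications complete the argument: first, $m_3$ is lex-strictly earlier than $m_2$, and hence earlier in any linear extension of the lex partial order, because the chain-edge labeling property makes $\lambda(m_3)$ agree with $\lambda(m_2)$ along $r$ and through position $i$ of $[u,\hat{1}]_r$ and then be strictly smaller at positions $i$ and $i+1$; second, $x_{i+1}$ sits strictly between $u$ and $w$ on $m_2$, so the minimality of $w$ as the reconvergence point forces $x_{i+1} \notin m_1$. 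The main subtlety to track carefully is that the swap at $x_{i+1}$ changes the root used to label cover relations above $x_{i+2}$, so labels of $m_3$ beyond position $i+1$ need not agree with those of $m_2$; however, the lex comparison between $m_3$ and $m_2$ is already decided at position $i$, so this does not affect the conclusion that $m_3$ precedes $m_2$ in the shelling order.
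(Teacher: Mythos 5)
Your argument is essentially the proof the paper gives: the authors simply observe that the Bj\"orner--Wachs shelling argument for CL-labelings carries over verbatim once ascents and descents are replaced by their topological counterparts, and you spell out that argument rather than citing it. Your choice of $w$ as the first element above $u$ shared by $m_1$ and $m_2$ is a clean way to force the removed element to lie outside $m_1$.

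One technicality is worth tidying, since the paper works throughout in the non-graded setting: a topological descent at $x_i\lessdot x_{i+1}\lessdot x_{i+2}$ does not automatically supply a \emph{single} replacement element $x_{i+1}'$. What Definition \ref{topol-CL}(i) guarantees is that the rooted interval $[x_i,x_{i+2}]$ has a unique lexicographically earliest saturated chain, and in a non-graded poset that chain may have length greater than two. The correct move is to let $m_3$ be the maximal chain obtained from $m_2$ by replacing the segment $x_i\lessdot x_{i+1}\lessdot x_{i+2}$ with this lex-earliest chain; then $m_3\cap m_2=m_2\setminus\{x_{i+1}\}$, which is exactly the codimension-one condition of Definition \ref{shelling-def}, even when $m_3$ and $m_2$ have different lengths. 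Your reformulation that $m_3$ and $m_2$ ``coincide in all but one element'' tacitly assumes gradedness. Relatedly, the lex comparison between $m_3$ and $m_2$ is settled somewhere among the labels on $[x_i,x_{i+2}]$, not necessarily at position $i$; but your key observation --- that it is settled before the change of root above $x_{i+2}$ can have any effect --- is exactly right.
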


\begin{proof}
The proof  of Bj\"orner and Wachs in \cite{BW82} that any CL-labeling of a finite bounded poset $P$ induces a lexicographic shelling 
for $\Delta (P)$ (for each choice of linear extension of the lexicographic order on saturated chains)   still applies unchanged for TCL-labelings
when we simply  replace 
ascents by topological ascents,  descents by topological descents, ascending chains by topologically ascending chains, and descending chains by 
topologically descending chains  throughout the proof.  
\end{proof}

\section{Equivalence of CL-shellability to self-consistent CC-shellability} % and to several other properties  a poset may possess}
\label{CLiffCCsection}

In this section, we will prove  that several different conditions on a finite bounded poset are equivalent to CL-shellability.

\begin{thm} If a finite bounded poset $P$ admits a generalized recursive atom ordering, then $P$ admits a self-consistent CC-labeling.
\label{grao2thenCC}
\end{thm}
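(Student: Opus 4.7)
The plan is to deduce the theorem by composing three implications that have already been established earlier in the paper, rather than by producing the CC-labeling from scratch. Specifically, I would chain together the equivalence of GRAOs with RAOs, the construction of a UE-property CC-labeling from an RAO, and the fact that the UE property forces self-consistency.

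In more detail, first I would invoke Theorem \ref{cccl}: applying the atom reordering procedure of Definition \ref{reord} to the given GRAO on $P$ yields a recursive atom ordering for $P$. Next I would apply Corollary \ref{RAOimpliesSCCC}, which says that any RAO of a finite bounded poset induces a CC-labeling with the UE property; the underlying construction is the one from Proposition \ref{RAOimpliesCC}, namely to label each cover relation $u \lessdot a_i$ in a rooted interval $[u,\hat{1}]_r$ by the index $i$ of $a_i$ in the RAO for that interval. Finally I would invoke Lemma \ref{UEimpliesSC} to conclude that this CC-labeling is self-consistent. Composing these three implications produces the self-consistent CC-labeling required, and the proof amounts to essentially one line once the references are in place.

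The one thing worth commenting on is that this proof is indirect: the self-consistent CC-labeling we exhibit is constructed from the RAO produced by reordering, not from the original GRAO itself. A direct construction — labeling $u \lessdot a_i$ by the position of $a_i$ in the GRAO for $[u,\hat{1}]_r$ — would be more informative but is also more delicate, because condition (CC) requires every rooted interval $[u,v]_r$ to possess a unique topologically ascending chain, whereas the GRAO axioms only pin down the behavior of the first atom of each such interval. The main obstacle in a direct approach would be to verify the unique topologically ascending chain property in each $[u,v]_r$, and handling this would require using Lemma \ref{alwaysfirst} and Proposition \ref{firstatomprop} to push the ``first atom'' property through the interval. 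Since the chain-of-results route is clean and all the ingredients are already in hand, I would defer the direct construction (as is indeed done in Section \ref{GRAO-SCCC-section}) and give the short composed proof here.
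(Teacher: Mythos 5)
Your proof is correct and matches the paper's own argument exactly: the paper likewise obtains Theorem \ref{grao2thenCC} by composing Theorem \ref{cccl} (GRAO implies RAO via atom reordering) with Corollary \ref{RAOimpliesSCCC} (RAO yields a UE, hence self-consistent, CC-labeling). Your closing remark about the direct construction also correctly anticipates what the paper does in Remark \ref{altgraothencc} and Theorem \ref{second-proof}.
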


\begin{proof}
We proved in Theorem \ref{cccl} that any finite bounded poset $P$ with a generalized recursive atom ordering also admits a recursive atom ordering. We showed in 
Corollary 
\ref{RAOimpliesSCCC} that any such $P$ admits a self-consistent CC-labeling, completing the proof.  
\end{proof}

\begin{rmk}
\label{altgraothencc}
One may alternatively prove Theorem \ref{grao2thenCC} by using any generalized recursive atom ordering for a finite bounded poset $P$ to construct a self-consistent CC-labeling 
for $P$ that is compatible  (in the sense of  Definition \ref{compatible-def})  
with the GRAO without needing to use the atom reordering process at all.  The construction of such a  CC-labeling is carried out  as follows.   
For each $u \in P$ and each root $r$ for $[u,\hat{1}]$, the GRAO gives an ordering $a_1,\dots ,a_{t(u,r)}$ of the atoms of the rooted interval 
$[u,\hat{1}]_r$.  Assign the label $i $ to the cover relation $u\lessdot a_i$ for this choice of root $r$.  One may prove that this produces a UE (and thus self-consistent) CC-labeling for $P$.  It is worth noting that this is typically a different self-consistent CC-labeling for $P$ than the one produced as in the proof above of Theorem \ref{grao2thenCC}, as we do not apply the atom reordering process to $P$ in this more direct approach.    See the proof of  Theorem \ref{second-proof} for all  of the details of this approach.
\end{rmk}

\begin{thm} If a finite bounded poset $P$ admits a self-consistent TCL-labeling, then $P$ admits a  generalized recursive atom ordering. 
\label{TopolCLthengrao}
\end{thm}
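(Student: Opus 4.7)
The plan is to construct a chain-atom ordering of $P$ directly from the topological CL-labeling $\lambda$ and then verify each of the three defining conditions of a GRAO. For each rooted interval $[u,\hat{1}]_r$, order the atoms so that $a$ precedes $a'$ if and only if the unique lexicographically smallest saturated chain of $[u,\hat{1}]_r$ beginning with $u\lessdot a$ is lex-smaller than the corresponding one beginning with $u\lessdot a'$. The UE-chain property ensures that distinct atoms yield distinct lex-smallest chains, so this is a well-defined strict total order, and specializing to $u=\hat{0}$ recovers the required ordering $a_1,\dots,a_t$ of the atoms of $P$.

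To verify condition (i)(a), I would proceed by induction on the length of $P$. For each atom $a_j$, the restriction of $\lambda$ to chains lying in $[a_j,\hat{1}]_{\hat{0}\lessdot a_j}$ is again a self-consistent topological CL-labeling of $[a_j,\hat{1}]$, and by construction the chain-atom ordering defined on $P$ restricts to precisely the ordering produced by applying the same construction directly to this smaller poset. The inductive hypothesis then provides a GRAO for $[a_j,\hat{1}]$.

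The key observation enabling (i)(b) and (ii) is that, by self-consistency applied with $v'=\hat{1}$, the first atom of every rooted interval $[u,v]_r$ under the restricted ordering coincides with the atom that begins the lex-first saturated chain of $[u,v]_r$. To verify (ii), assume $a_i<y$ and $a_j<y$ with $i<j$, let $c':a_j\lessdot z_1\lessdot\cdots\lessdot y$ be the lex-first chain of $[a_j,y]_{\hat{0}\lessdot a_j}$, and prepend $\hat{0}$ to form a chain $c''$ of $[\hat{0},y]$. Since $a_i\le y$ and $i<j$, the first atom of $[\hat{0},y]$ precedes $a_j$ in the global ordering, so $c''$ is not lex-first in $[\hat{0},y]$ and must contain a topological descent. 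Every triple of $c''$ past the initial one is a topological ascent (because $c'$ is lex-first in its rooted interval), so the descent must occur at $(\hat{0},a_j,z_1)$. The lex-first chain of $[\hat{0},z_1]$ therefore starts with some atom $a_k\ne a_j$, and self-consistency forces $k<j$; taking $z=z_1$ produces the required element. Condition (i)(b) follows from essentially the same descent analysis applied to $x^*$, the atom beginning the lex-first chain of $[a_j,w]_{\hat{0}\lessdot a_j}$, using that $a_i\le w$ whenever some atom of $[a_j,w]$ lies above $a_i$.

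The principal difficulty is aligning the ordering, which is defined via the large intervals $[u,\hat{1}]_r$, with the lex-first-chain structure of the much smaller subintervals $[u,v]_r$ that appear throughout the descent arguments and the inductive step. Self-consistency is the precise axiom that bridges this gap: it guarantees that the first atom of each $[u,v]_r$ under the restricted ordering agrees with the atom initiating the lex-first chain of $[u,v]_r$. Without this alignment, the critical topological descents at $(\hat{0},a_j,z_1)$ and $(\hat{0},a_j,x^*)$ could not be localized at the bottom of the chain, and neither the induction for (i)(a) nor the descent arguments for (i)(b) and (ii) would go through.
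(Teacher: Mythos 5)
Your proposal follows essentially the same route as the paper's proof: extract the chain-atom ordering from the lex order on saturated chains, use self-consistency to align first atoms of subintervals $[u,v]_r$ with the atoms of lex-first chains, prove (i)(a) by induction on length, and establish (i)(b) and (ii) by forcing a topological descent at the bottom triple of a chain through $a_j$. Your (ii) argument matches the paper's almost verbatim, and your (i)(b) sketch is sound (the paper instead argues (i)(b) by contradiction via two distinct topologically ascending chains in $[\hat{0},w]$, but the underlying mechanism is the same descent localization).

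One claim in your opening paragraph does not hold up: you assert that the UE-chain property ensures the lex-smallest chains through distinct atoms of $[u,\hat{1}]_r$ are strictly comparable, hence that your rule defines a strict total order. The UE-chain property only guarantees a unique overall lex-smallest chain in each rooted interval; it does not preclude two non-first atoms $a,a'$ from having lex-smallest chains with identical label sequences, since a topological CL-labeling (unlike a CC-labeling) does not require distinct saturated chains to carry distinct label sequences, and self-consistency compares a non-first atom only against the first atom of some subinterval containing both. The paper sidesteps this by building the ordering iteratively and explicitly permitting arbitrary tie-breaking among the remaining atoms at each step. Your argument survives the fix, because every comparison you actually invoke is between the first atom of some $[u,v]_r$ and another atom of that interval, where self-consistency does yield a strict inequality, and the inductive step for (i)(a) only needs the tie-breaks to be made consistently; but as written the "well-defined strict total order" step is unjustified.
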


\begin{proof} 
Let $\lambda $ be a self-consistent TCL-labeling for  $P$.   By definition, $\lambda $  restricts to  
a TCL-labeling for  $[u, \hat{1}]_r$ for each $u\in P$ and each root $r$.  
Consider  the following chain-atom ordering  of $P$ that 
is  compatible with $\lambda $ in the sense 
that each rooted interval $[u,v]_r$ will have its earliest atom in the chain-atom ordering belonging to the lexicographically earliest saturated chain
of $[u,v]_r$ according to $\lambda $.
For any $u\in P$ and any choice of root $r$, define  the first atom  of $[u,\hat{1}]_r$  in our  chain-atom ordering
 to be the unique atom belonging to the lexicographically first saturated chain of $[u,\hat{1}]_r$ with respect to $\lambda $.  Denote this atom as $a_1$.
 Now assuming we have already chosen atoms $a_1,a_2,\dots ,a_i$ of $[u,\hat{1}]_r$ for some $i\ge 1$,  choose $a_{i+1}$ as follows.    Among the saturated chains of 
 $[u,\hat{1}]_r$ that do not include any of the atoms $a_1,a_2,\dots ,a_i$, choose one that is as small as possible in lexicographic order.  There may be more than one choice, but pick any such saturated chain.  Let $a_{i+1}$ be the atom of $[u,\hat{1}]_r$ belonging to this chosen saturated chain.    Continuing in this manner, 
 we % selection of $a_1,a_2,\dots ,a_i$  
 specify  a total order $\Omega = a_1,\dots ,a_t$ on the  atoms of $[u,\hat{1}]_r$. %  for each $u\in P$ and each root $r$.  
 By construction,  $\Omega$ is  compatible with % the CC-labeling 
 $\lambda$ by virtue of $\lambda $ being self-consistent.

We will prove that $\Omega$ is a  generalized recursive atom ordering (GRAO), % i.e. that $\Omega $ satisfies all of the conditions of Definition \ref{grao}, 
doing so with a proof by induction on the length of the longest saturated chain of $P$. %  We sometimes refer to this quantity as the length of $P$.  
For $P$ of length 2, our base case, any ordering on the atoms of $P$ is a GRAO. Let $P$ be a finite bounded poset whose longest  saturated 
chain is of length $l \ge  3$.  %  To show that $\Omega $ is a GRAO  for such $P$, w
We assume by induction  that the ordering $\Omega$ %restricted to  
on the atoms of $[a_j,\hat{1}]$ % gives an ordering of  the atoms of $[a_j, \hat{1}]$ that we may assume by induction  on $l$  %This CC-labeling is self-consistent because 
 is a GRAO for $[a_j,\hat{1}]$, directly yielding condition (i)(a) of Definition \ref{grao}. %$\lambda $ is self-consistent.  

Next we prove that this GRAO on $[a_j, \hat{1}]$ satisfies condition  (i)(b) of Definition \ref{grao}.  That is, we verify  % We will do so by proving
 the following assertion  for any $w$ covering an element $x$ satisfying  $a_j \lessdot x$: % \lessdot w$:  
 if there exists an atom $x'$ of $[a_j,w]$ such that $x'$ is greater than $a_{i}$ for some atom $a_{i} $ with $i< j$, then  the first atom $x_1$ of $[a_j,w]$ is greater than an atom $a_{i'}$ with $i'<j$.
Suppose this assertion is false.
By definition of $\Omega $ and by its compatibility with $\lambda $,  $x_1$ must be  in the  %lexicographically first  
saturated chain of $[a_j, \hat{1}]$ with the lexicographically smallest label sequence among those saturated chains of $[a_j, \hat{1}]$ that 
contain $w$. % (and hence among  those that contain an atom of $[a_j, w]$ as well as containing $w$). 
Since $\lambda$ is self-consistent, % Lemma \ref{consistentlem} implies 
$x_1$ is also   in the % lexicographically first 
saturated chain of $[a_j, w]$ having the lexicographically smallest label sequence. % among saturated chains of  $[a_j,w]$. 
Let  $a_j \lessdot x_1 \lessdot u_1 \lessdot u_2 \lessdot \ldots \lessdot u_n \lessdot w$ be this lexicographically first  saturated chain in $[a_j, w]$ according to $\lambda $.   
Since $\lambda$ is a TCL-labeling, %$(\lambda(a_j, x_1), \lambda(x_1, u_1))$ must be a topological ascent, and likewise 
all pairs of consecutive cover relations in $a_j\lessdot x_1\lessdot u_1\lessdot\ldots \lessdot u_n\lessdot w$ must be  topological ascents.   But our assumption that 
$x_1$ is not above an earlier atom than $a_j$ implies that $a_j$ is the first atom of $[\hat{0},x_1]$.  Since the GRAO denoted by 
$\Omega $ and the TCL-labeling $\lambda $
are compatible, 
this implies that $\hat{0}\lessdot a_j\lessdot x_1$ is a topological ascent with respect to $\lambda $. %since the GRAO is compatible with the CC-labeling. 
 Thus, we have shown that 
$M = \hat{0}\lessdot a_j\lessdot x_1 \lessdot x_2\lessdot u_1\lessdot u_2\lessdot \cdots \lessdot u_n\lessdot w$ is a topologically ascending saturated
chain from $\hat{0}$ to $w$  with respect to the labeling $\lambda $.

We now  show how our assumption   that $x_1$ is not above an earlier atom than $a_j$ while  there is some  atom $x'$ for $[a_j,w]$ that  is above an earlier atom  
will imply  the existence of another topologically ascending chain for $[\hat{0},w]$ 
 besides $M$.
 Let $a_w$ be the first atom of $[\hat{0},w]$ with respect to the atom ordering $\Omega$.  %among those that are below $w$. 
 Since  $a_i < x' $ for some $i < j$, we have
 $a_i<w$ and hence have  $a_w \neq a_j$. % \commentgs{confirm change in this next sentence:} 
 Since 
 $\lambda$ is a TCL-labeling compatible with $\Omega$, % \sout{$\lambda$},
  $(\lambda(\hat{0}, a_w), \lambda(a_w, y))$ is a topological ascent for any $y$ such that $a_w \lessdot y < w$. Let $c$ be the unique topologically ascending chain in $[a_w, w]$. Then the  saturated chain $M'$ for $[\hat{0},w]$  obtained by appending $\hat{0}$ to $c$ is a  topologically ascending chain in $[\hat{0}, w]$.  Since $M'\ne M$, this contradicts $M$ being the unique topologically ascending  chain from $\hat{0}$ to $w$, completing our proof  of condition  
 (i)(b)  of Definition \ref{grao}.

Now let us check that  condition 
(ii) of Definition \ref{grao} holds for $\Omega$. Consider any atoms $a_i,a_j$ with $i<j$ satisfying  $a_i < y$ and $a_j < y$ for some element $y$. 
Let $M$ be the lexicographically first saturated chain in $[a_j, y]$ and  let  $a_j \lessdot x$ be the lowest  %bottom is not correct since that refers to being below everything else which 
cover relation in $M$.
 Let $a_y$ be the first atom of $[\hat{0},y]$   with respect to the atom ordering  $\Omega$. % among those atoms that are  less than  $y$. 
 Since $a_i$ comes before $a_j$ in $\Omega$ and %with $a_i$  satisfying 
 $a_i < y$, we must have   $a_j \neq a_y $. Let  $M'$ be the lexicographically first saturated  chain in $[a_y, y]$ and let $a_y \lessdot z$ be the lowest  
 cover relation in $M'$.  
 %\commentgs{confirm change in this next sentence:}
  Since $\lambda$ is a TCL-labeling compatible with $\Omega$, % \sout{$\lambda$},
    $a_y$ and $z$ are both  in the lexicographically first saturated chain of $[\hat{0}, y]$. 
 Thus,   $(\lambda(\hat{0}, a_y), \lambda(a_y, z))$ is  % in the lexicographically first saturated chain from $\hat{0}$ to $
 a topological ascent, implying $M'$ is a topologically ascending chain  from $\hat{0}$ to $y$.  
  Since % there exists only one topologically ascending chain in $[\hat{0}, y]$ and since 
 $M$ is topologically ascending from $a_j$ to $y$ but is not the 
 unique topologically ascending chain in $[\hat{0},y]$, % $[a_j,y]$, 
  $M$ must have a topological descent at $(\lambda(\hat{0}, a_j), \lambda(a_j, x))$.
  This means  there must exist $\hat{0}\lessdot a_k\lessdot u$ such that  $u \leq x$  and $\hat{0}\lessdot a_k\lessdot u$ belongs to 
 the lexicographically first  saturated chain of $[\hat{0}, x]$. Since $\lambda$ is self-consistent, $a_k$ is in the lexicographically first saturated  chain of $P$ that  contains an 
atom of $[\hat{0}, x]$. As $a_j$ and  $a_k$ are both atoms of $[\hat{0}, x]$, we conclude that  $a_k$ comes before $a_j$ in $\Omega$.  
This confirms that condition  (ii) of Definition \ref{grao} holds for $\Omega$. 
\end{proof}

\begin{thm}\label{TFAE-theorem}
 Let $P$ be a finite, bounded poset. Then the  following are equivalent:
\begin{enumerate} % [label=\alph*)]
\item $P$ admits a recursive atom ordering
\item $P$ admits a generalized recursive atom ordering
\item $P$ admits a CL-labeling
\item $P$ admits a CL-labeling with the UE property
\item $P$ admits a self-consistent CC-labeling. 
\item $P$ admits a CC-labeling with the UE property
\item $P$ admits a self-consistent TCL-labeling
\item $P$ admits a TCL-labeling with the UE property
\end{enumerate}
Moreover, all of these implications are proven constructively.  That is, 
for each implication, either a construction is given showing how to construct the latter type of object from the former or 
else the former type of object is proven also to be the latter type of object.
%i.e. either by constructing one type of object from the other or in some cases by proving one type of object already is an instance of the  other.
\end{thm}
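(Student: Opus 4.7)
The plan is to treat this as an assembly result: almost all of the substantive work has been carried out in the preceding sections, so the proof reduces to choosing an efficient directed graph of implications among the eight conditions that is strongly connected, and to verifying that each invoked step preserves the auxiliary property (UE, self-consistency, etc.) being tracked.

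First I would handle the equivalences among the ``RAO/CL side'' of the list. The equivalence (1) $\Leftrightarrow$ (2) is Theorem \ref{cccl} together with Lemma \ref{raothengrao}: every RAO is a GRAO, and the atom reordering process of Definition \ref{reord} transforms any GRAO into an RAO. The equivalence (1) $\Leftrightarrow$ (3) is the Bj\"orner--Wachs Theorem \ref{CL-iff-RAO}, whose proof goes both directions constructively. The equivalence (3) $\Leftrightarrow$ (4) is immediate: (4) $\Rightarrow$ (3) is trivial, and (3) $\Rightarrow$ (4) is Lemma \ref{CLhaveUE}, which shows that any CL-labeling automatically has the UE property.

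Next I would bring in the CC and topological CL conditions. From (1) I obtain (6) by Corollary \ref{RAOimpliesSCCC}, which produces from any RAO a CC-labeling with the UE property. From (6) we get (5) via Lemma \ref{UEimpliesSC} (UE $\Rightarrow$ self-consistent), and from (6) we get (8) via Theorem \ref{CCimpliesTopolCL}, which shows that every CC-labeling is a topological CL-labeling; the UE property, being a condition only on the labels on cover relations out of each element in each root, is preserved under this reinterpretation. Similarly (5) $\Rightarrow$ (7) by Theorem \ref{CCimpliesTopolCL} together with the observation that self-consistency is a property of the chain-edge labeling itself and so transfers unchanged. For (8) $\Rightarrow$ (7) I would note that the proof of Lemma \ref{UEimpliesSC} uses only that $\lambda$ is a chain-edge labeling and makes no use of the topological-ascent condition on rooted intervals, so it applies verbatim to topological CL-labelings.

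Finally, to close the cycle I would invoke Theorem \ref{TopolCLthengrao}, which produces from any self-consistent topological CL-labeling (condition (7)) a generalized recursive atom ordering (condition (2)). Combined with the earlier links this makes the implication graph on $\{(1),\dots,(8)\}$ strongly connected, proving all eight conditions equivalent. All of the implications invoked are constructive (either they produce the target object explicitly from the source, or they simply reinterpret one object as another), which yields the ``moreover'' clause. The main obstacle is really only bookkeeping, namely checking that the UE and self-consistency hypotheses really do pass through each step of the cycle; the most delicate point is the application of Lemma \ref{UEimpliesSC} and Theorem \ref{CCimpliesTopolCL} to the topological CL setting, which I would justify by re-examining those proofs and noting that neither uses the full CC-labeling hypothesis (distinct label sequences and no prefix relations) in the relevant step.
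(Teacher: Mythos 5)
Your proof is correct and takes essentially the same approach as the paper: both treat the theorem as an assembly of earlier results into a strongly connected implication graph, and both rely on the same key ingredients (Theorem \ref{cccl}, Theorem \ref{CL-iff-RAO}, Lemma \ref{CLhaveUE}, Lemma \ref{UEimpliesSC}, Corollary \ref{RAOimpliesSCCC}, Theorem \ref{CCimpliesTopolCL}, Theorem \ref{TopolCLthengrao}). The only cosmetic difference is the shape of the graph: the paper proves pairwise equivalences $(2)\Leftrightarrow(5)$, $(2)\Leftrightarrow(6)$, $(2)\Leftrightarrow(7)$, $(7)\Leftrightarrow(8)$ (invoking Remark \ref{altgraothencc} and Theorem \ref{second-proof} for $(2)\Rightarrow(6)$), whereas you trace a cycle $(1)\to(6)\to\{(5),(8)\}\to(7)\to(2)\to(1)$ and reach $(6)$ directly from $(1)$ via Corollary \ref{RAOimpliesSCCC}, which bypasses the direct GRAO-to-CC construction; you also correctly flag, and justify, the extension of Lemma \ref{UEimpliesSC} to topological CL-labelings, a point the paper leaves implicit.
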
 

\begin{proof}  
%\commentgs{This paragraph seems to just be a clunkier version of the last sentence of the statement of the theorem. I propose we remove it.} \commentph{We need to prove the final claim in the theorem, and this paragraph tells how we do that.  Without it many readers will be confused.}
%\sout{ For each of the implications that we will prove, we will not only show  
%that any finite bounded  poset endowed with one type of structure is also endowed 
%with another  type of structure, but also  that we have 
%an explicit construction to obtain the latter  structure from the former. }
%In our proof   below, t
The results we cite below for the various implications are all proven constructively.
%\sout{    In some cases, we do this by showing  that the given structure is always  an instance of the 
%desired type of structure, implying  that  no construction is necessary to transform the one to the other.  
%In all other cases, the results we cite below  are proven constructively, making  all of our implications below  constructive.}

We proved the % two directions of the 
equivalence of (1) and (2) %  constructively
 in Theorem %s ~\ref{raothengrao} and  
 ~\ref{cccl}.  Bj\"orner and Wachs proved 
the equivalence of (1) and (3)  % constructively
  in \cite{bw}.  We proved in Lemma \ref{CLhaveUE}  that every CL-labeling has the UE property, giving the equivalence of (3) and (4).

Next we confirm the equivalence of (2) and (5). % in a constructive manner. 
Theorem \ref{CCimpliesTopolCL} followed by Theorem \ref{TopolCLthengrao} gives  one
direction. The converse is given by Theorem \ref{grao2thenCC}.

Now we prove  the equivalence of (2) and (6). %constructively.  
 If $\lambda$ is a CC-labeling with the UE property, then  Lemma ~\ref{UEimpliesSC} implies $\lambda$ is a self-consistent CC-labeling.  This gives (5) which was already shown to 
 imply (2).   %, rendering no construction necessary.  
For the converse, Remark \ref{altgraothencc} explains how to construct a CC-labeling with the UE property from a GRAO; the justification that this indeed yields a CC-labeling
may be found within the proof of 
Theorem \ref{second-proof}.  It is self-evident from  how it is constructed that this CC-labeling has the UE property.  

Next we verify  the equivalence of (2) and (7).  %constructively.  
Theorem \ref{grao2thenCC} followed by Theorem \ref{CCimpliesTopolCL} %, each of which is proven constructively, 
gives the forward direction. The reverse direction is proven % by way of a construction 
in Theorem ~\ref{TopolCLthengrao}.

Finally we prove the equivalence of (7) and (8).  %, once again doing so constructively.  
To show that (8) implies (7), we use the result from Lemma \ref{UEimpliesSC}  that the UE property implies self-consistency. %, implying that no construction is needed.   
 For the other direction, first use the result already proven above that (7) implies (4). % constructively.  
 Then  use the fact that every CL-labeling is a TCL-labeling (by virtue of how these are defined)  
 to get that (4) implies (8). % with no construction needed.  
 Combining these implications shows that (7) implies (8). % in a constructive way, completing our proof.
\end{proof} 

%The above theorem may immediately be rephrased as follows:

%\begin{cor}
% Let $P$ be a finite, bounded poset. Then the  following are equivalent:
%\begin{enumerate} % [label=\alph*)]
%\item $P$ admits a recursive atom ordering
%\item $P$ admits a generalized recursive atom ordering
%\item $P$ is CL-shellable % admits a CL-labeling
%\item $P$ is CL-shellable by way of a CL-labeling with the UE property
%\item $P$ admits a self-consistent CC-labeling. 
%\item $P$ admits a CC-labeling with the UE property
%\item $P$ admits a self-consistent topological CL-labeling
%\item $P$ admits a topological CL-labeling with the UE property
%\end{enumerate}
%Moreover, all of these implications are proven constructively.  That is, 
%for each implication either a construction is given showing how to construct the latter type of object from the former or 
%else the former type of object is proven also to be the latter type of object.
%\end{cor}

We conclude this section with  two simple consequences of the above results. They are included because they 
will both be used in Section \ref{CC-UE} in our proof that  the uncrossing order is
dual CL-shellable.

\begin{cor}\label{ConsistECimpliesCL}
Let $P$ be a finite bounded poset with a self-consistent EC-labeling $\lambda $.  Then $\lambda $ may be transformed algorithmically into a CL-labeling for $P$.
\end{cor}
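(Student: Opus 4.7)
The plan is to chain together several constructive implications already established in the excerpt. Since an EC-labeling is by definition a special kind of CC-labeling (an edge labeling is a chain-edge labeling that happens to ignore the chain), a self-consistent EC-labeling $\lambda$ is in particular a self-consistent CC-labeling. Thus $P$ satisfies condition (5) of Theorem \ref{TFAE-theorem}, which is equivalent to condition (3), namely that $P$ admits a CL-labeling. Since Theorem \ref{TFAE-theorem} explicitly asserts that all these implications are constructive, we obtain the desired algorithmic transformation.

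More concretely, the intended pipeline is: first invoke Theorem \ref{TopolCLthengrao} (via Theorem \ref{CCimpliesTopolCL}, which says that any CC-labeling is a topological CL-labeling) to build from $\lambda$ a generalized recursive atom ordering $\Omega$ of $P$, by ordering the atoms of each rooted interval $[u,\hat{1}]_r$ according to the lexicographically earliest saturated chain through each atom. Next, apply the atom reordering process of Definition \ref{reord} to $\Omega$; by Theorem \ref{cccl}, the resulting chain-atom ordering is a recursive atom ordering of $P$. Finally, apply the construction of Bj\"orner and Wachs (Theorem \ref{CL-iff-RAO}), which turns any RAO into a CL-labeling.

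Each stage is algorithmic: the construction of $\Omega$ from $\lambda$ only requires computing, for each rooted interval, the lexicographically smallest label sequence in order to read off the first atom, and then iterating on the subchains; the atom reordering process is explicitly described as an inductive procedure sweeping bottom-to-top through $P$ and partitioning atoms into $F_r(u)$ and $G_r(u)$; and the Bj\"orner--Wachs CL-labeling from an RAO is the standard labeling reviewed earlier in the paper. There is no real obstacle beyond correctly assembling these references, since self-consistency of the starting label $\lambda$ is exactly the hypothesis needed to invoke Theorem \ref{TopolCLthengrao} on the EC-labeling viewed as a (topological) CL-labeling. The only point to verify carefully is that the resulting CL-labeling is genuinely \emph{constructed from} $\lambda$ rather than merely asserted to exist; this is automatic because at every step we have cited a result whose proof is algorithmic.
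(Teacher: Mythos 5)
Your proposal is correct and matches the paper's own proof, which simply observes that every EC-labeling is a CC-labeling and then invokes Theorem \ref{TFAE-theorem}. Your additional unpacking of the constructive pipeline (via Theorems \ref{CCimpliesTopolCL}, \ref{TopolCLthengrao}, \ref{cccl}, and \ref{CL-iff-RAO}) is a faithful elaboration of what ``constructive'' means in that theorem, not a different argument.
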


\begin{proof}
This is immediate from Theorem \ref{TFAE-theorem}, % \ref{ConsistCCimpliesCL}, 
using the fact that every EC-labeling is a CC-labeling.
\end{proof}

%\commentplh{Keep dual here.}
%added back dual statement here since we need it in the next section and indeed reference this result (just before the start of the section on TCL-shellability).}
%\commentph{Not sure if we should remove the next corollary, since we will use this one in the next section.  We could move it closer to where we use it?}

\begin{cor}\label{co-RAO}
A  finite bounded poset  $P$ admits a generalized recursive coatom ordering if and only if it admits a recursive coatom ordering. 
\end{cor}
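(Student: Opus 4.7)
The plan is to reduce this corollary directly to Theorem \ref{cccl} via the duality built into the definitions. Recall that a generalized recursive coatom ordering of $P$ was defined in Section \ref{graosection} to be a GRAO of the dual poset $P^*$, and similarly a recursive coatom ordering of $P$ was defined earlier (right after Theorem \ref{CL-iff-RAO}) to be an RAO of $P^*$. Since $P^*$ is itself a finite bounded poset whenever $P$ is, Theorem \ref{cccl} applies to $P^*$.

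Concretely, I would argue as follows. Suppose $P$ admits a generalized recursive coatom ordering. By definition, this means $P^*$ admits a GRAO. Theorem \ref{cccl} then guarantees that $P^*$ admits an RAO, which is precisely the statement that $P$ admits a recursive coatom ordering. The converse direction is equally immediate: an RCO of $P$ is an RAO of $P^*$, which is a GRAO of $P^*$ by Lemma \ref{raothengrao}, which is a GRCO of $P$ by definition. Moreover, the construction is effective because the atom reordering process of Definition \ref{reorder-def} is explicit; applying it to a GRAO of $P^*$ yields an RAO of $P^*$, and this translates back to an algorithm on $P$ via the identification between saturated chains of $P$ and of $P^*$.

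Since no new ingredients are required beyond invoking Theorem \ref{cccl} on the dual poset and unfolding the two definitions, there is no real obstacle to the proof. The only thing worth explicitly noting for the reader is that all relevant hypotheses (finite, bounded) are self-dual, so nothing is lost in passing between $P$ and $P^*$.
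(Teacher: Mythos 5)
Your proof is correct and is essentially the paper's own argument: the paper's proof is simply ``Simply apply Theorem~\ref{cccl} to $P^*$,'' and you have spelled out the same dualization in more detail.
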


\begin{proof}
Simply apply Theorem \ref{cccl} to $P^*$.
\end{proof}

\section{%Application: d
Dual CL-shellability of   the uncrossing order}  % and other posets known to be CC-shellable}
\label{CC-UE} %  for new classes of posets and balanced complexes}

As an application of our earlier results, in this section we  prove  that the uncrossing order  $P_n$  is dual CL-shellable.  Interest in the family of uncrossing posets stems from their 
 role as the face posets of naturally arising stratified spaces of planar electrical networks (see e.g. ~\cite{lam}).      In ~\cite{lam}, Lam proved $P_n$ to be Eulerian and
conjectured that the 
dual to the uncrossing order is lexicographically shellable.  Indeed, the uncrossing order was proven dual EC-shellable in ~\cite{elect}.  Our 
results clarifying the relationship between CL-shellability and CC-shellability (as well as EC-shellability)  will allow  us to deduce dual CL-shellability of $P_n$  from 
the dual EC-shelling of ~\cite{elect}  for $P_n$  once we  prove that the EC-labeling from ~\cite{elect}    has the UE property.

Let us begin by recalling  the definition of $P_n$.
Figure \ref{P3cover} depicts   two cover relations in $P_3$  while Figure \ref{P3} shows  all of $P_3$.  These  examples may be helpful 
 for understanding both  the definition of $P_n$ and how its cover relations are labeled. 
The poset $P_n$ 
has a unique minimal element $\hat{0}$.  The other elements of $P_n$ are  the various   complete matchings on a set of  $2n$ vertices (called boundary nodes) positioned 
around a circle and labeled clockwise $1,2,\dots ,2n$.  By convention, we will typically mark the node labeled 1 (by making it larger than the other nodes) and leave the other nodes unlabeled in our examples.   
These complete  matchings giving rise to the elements of $P_n \setminus \{ \hat{0} \}$  
may be represented by collections of $n$ strands, with each strand  connecting  a matched pair of boundary nodes.  
$P_n$ is graded with the rank of any 
element other than $\hat{0}$ being one more than 
the minimal number of strand crossings needed to represent its strand diagram in the plane.  One of these strand diagrams $u$ will be covered by
another strand diagram $v$ whenever the rank of $u$ is one less than the rank of $v$ and $u$ is obtainable from $v$ by uncrossing a single pair of strands.  
Thus, a  strand diagram $u$ will be less than a strand diagram $v$  in $P_n$ whenever  $u$ may be obtained from $v$ by a series of steps, with each step 
uncrossing a pair of strands. 
The elements of $P_n$ covering $\hat{0}$ are exactly those strand diagrams  %which are realizable in the plane 
with  no two strands crossing each other; thus, for $C_n$ the $n$-th Catalan number,  there are exactly  $C_n$ such elements covering $\hat{0}$.

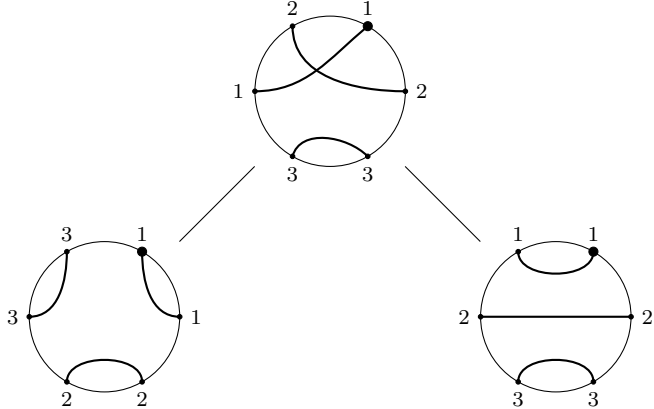
\begin{figure}
\begin{center}
\begin{tikzpicture}
 
\node (2) at (3,6){\begin{tikzpicture}
\draw (0,0) circle [radius=1];
\node [above] at (60:1){\tiny 1};
\draw [fill] (60:1) circle [radius=0.06];
\node [above] at (120:1){\tiny 2};
\draw [fill] (120:1) circle [radius=0.03];
\node [left] at (180:1){\tiny 1};
\draw [fill] (180:1) circle [radius=0.03];
\node [below] at (240:1){\tiny 3};
\draw [fill] (240:1) circle [radius=0.03];
\node [below] at (300:1) {\tiny 3};
\draw [fill] (300:1) circle [radius=0.03];
\node [right] at (360:1){\tiny 2};
\draw [fill] (360:1) circle [radius=0.03];
\draw [thick] (60:1) to [out=220, in=0] (180:1);
\draw [thick] (360:1) to [out=180, in=270] (120:1);
\draw [thick] (300:1) to [out=135, in=75] (240:1);
\end{tikzpicture}};

\node (6) at (0,3){\begin{tikzpicture}
\draw (0,0) circle [radius=1];
\node [above] at (60:1){\tiny 1};
\draw [fill] (60:1) circle [radius=0.06];
\node [above] at (120:1){\tiny 3};
\draw [fill] (120:1) circle [radius=0.03];
\node [left] at (180:1){\tiny 3};
\draw [fill] (180:1) circle [radius=0.03];
\node [below] at (240:1){\tiny 2};
\draw [fill] (240:1) circle [radius=0.03];
\node [below] at (300:1) {\tiny 2};
\draw [fill] (300:1) circle [radius=0.03];
\node [right] at (360:1){\tiny 1};
\draw [fill] (360:1) circle [radius=0.03];
\draw [thick] (60:1) to [out=270, in=180] (360:1);
\draw [thick] (240:1) to [out=90, in=90] (300:1);
\draw [thick] (120:1) to [out=270, in=0] (180:1);
\end{tikzpicture}};
 
\node (10) at (6,3){\begin{tikzpicture}
\draw (0,0) circle [radius=1];
\node [above] at (60:1){\tiny 1};
\draw [fill] (60:1) circle [radius=0.06];
\node[above] at (120:1){\tiny 1};
\draw [fill] (120:1) circle [radius=0.03];
\node [left] at (180:1){\tiny 2};
\draw [fill] (180:1) circle [radius=0.03];
\node [below] at (240:1){\tiny 3};
\draw [fill] (240:1) circle [radius=0.03];
\node [below] at (300:1) {\tiny 3};
\draw [fill] (300:1) circle [radius=0.03];
\node [right] at (360:1){\tiny 2};
\draw [fill] (360:1) circle [radius=0.03];
\draw [thick] (60:1) to [out=270, in=270] (120:1);
\draw [thick] (180:1) to [out=0, in=180] (360:1);
\draw [thick] (240:1) to [out=90, in=90] (300:1);
\end{tikzpicture}};
 
\draw (1, 4)--(2,5);
\draw (5, 4)--(4, 5);
 
\end{tikzpicture}
\caption{Cover relations uncrossing strands in two different ways in $P_3$.}
\label{P3cover}
\end{center}
\end{figure}

Next we describe an  encoding of  the elements of  $P_n\setminus \{ \hat{0} \}$ that was introduced in \cite{elect}.  This will be used  
to  define the dual $EC$-labeling for $P_n$ from \cite{elect}.  Again, Figures 7 and 8 may help with 
parsing this discussion by illustrating the case with  $n=3$.   % See Figure 6.
Start by choosing one of the 2n boundary nodes  to treat as a basepoint in each of the strand diagrams of $P_n$.  In Figures 7 and 8, this base node is enlarged and labeled with a 1.  We associate to each strand diagram a sequence of 2n numbers as follows.   %of length 2n comprised of 2 copies of each of the numbers from 1 to $n$ as follows.   
First we assign the number $1,2,\dots ,n$ to the $n$ strands by numbering the strands in the order they are  first encountered as we  proceed clockwise about the circle starting at our basepoint.  Next, we label each strand endpoint with the number that has been assigned to  its strand.  %hen (b) reading off the sequence of strands encountered clockwise from the 
Finally, we proceed clockwise about the circle from the basepoint reading off the sequence of numbers assigned to the $2n$ nodes.  
This procedure  assigns to each strand diagram  a sequence of length $2n$ comprised of two copies of each of the numbers from 1 to $n$.  Among such sequences, the ones that arise this way are exactly those in which  the first copy of $i$ occurs earlier than the first copy of $j$ for each $i<j$, and this map from $P_n\setminus \{ \hat{0}\} $ to the set of such sequences (introduced in \cite{elect})  is a bijection.

As an example, Figure \ref{P3cover} shows the strand diagram giving rise to the sequence $123312$ as well as the  two 
strand diagrams it covers that are obtained by uncrossing the pair of strands in the two different possible ways.  These two elements it covers give rise to the sequences 
$112233$ and $123321$, with the former shown on the left and the latter shown on the right in Figure \ref{P3cover}.  See Figure \ref{P3} for all of the poset $P_3$ with the strand diagrams each 
labeled numerically so that the associated sequence may be read off by proceeding clockwise from the basepoint.

Uncrossing the pair of strands labeled $i$ and $j$  for $i<j$  in a strand diagram will change a  sequence having the subsequence $ijij$ into a sequence having  either the subsequence  $iijj$ or the subsequence $ijji$ (with some of these letters possibly appearing in new positions in the sequence), depending on which of the two  possible ways the uncrossing is done.  This is discussed in more depth in \cite{elect}, but these further details are  not critical to our upcoming proof that the dual EC-labeling of \cite{elect} (which is also a dual CC-labeling)  %\sout{CC-labeling of \cite{elect}}
 may be transformed into  
a dual CL-labeling.   

  \begin{figure}[h]
  \label{uncrossing-poset-example}
        \begin{picture}(325,500)(-80,-25)
 
       \put(77,441){\line(1,0){36}}
      \put(80,462){\line(1,-1){27}}
      \put(83,434){\line(1,1){28}}

        \put(83,434){\circle*{4}}
              \put(113,441){\circle*{4}}
       \put(77,441){\circle*{4}}
       \put(108,434){\circle*{4}}
       \put(80,462){\circle*{6}}
       \put(111,462){\circle*{4}}

 \put(70,465){$1$}
 \put(115,462){$2$}
 \put(118,439){$3$}
 \put(112,425){$1$}
 \put(72,425){$2$}
 \put(66,439){$3$}

\put(40,445){$\hat{1} = $}
       \put(95,450){\circle{100}}
       
       \curve(5,392,  80,420)
       \curve(95,392,  95,420)
       \curve(180,392,  110,420)
       
       \put(5, 410){$(1,2)$}
       \put(98, 400){$(2,3)$}
       \put(155, 410){$(3,1)$}

               \put(95,365){\circle{100}}             
      \put(80,377){\line(1,-1){27}}
\curve(77, 356, 94, 362, 111,377)
\curve(83,349, 98, 356, 113,356)

        \put(83,349){\circle*{4}}
              \put(113,356){\circle*{4}}
       \put(77,356){\circle*{4}}
       \put(108,349){\circle*{4}}
       \put(80,377){\circle*{6}}
       \put(111,377){\circle*{4}}
       
       \put(70, 379){$1$}
       \put(115, 379){$2$}
       \put(117, 353){$3$}
       \put(110,340){$1$}
       \put(73,340){$3$}
       \put(65,352){$2$}

        \put(-5,365){\circle{100}}
\curve(-17, 349,  -15, 363, -20, 377)
\curve(8,349, 5, 363,  11, 377)
\curve(-23,356,  13, 356)

        \put(-17,349){\circle*{4}}
              \put(13,356){\circle*{4}}
       \put(-23,356){\circle*{4}}
       \put(8,349){\circle*{4}}
       \put(-20,377){\circle*{6}}
       \put(11,377){\circle*{4}}

 \put(-30, 379){$1$}
 \put(14, 379){$2$}
 \put(18, 353){$3$}
 \put(10, 340){$2$}
 \put(-27,340){$1$}
 \put(-34,353){$3$}
 
        \put(195,365){\circle{100}}
      \put(183,349){\line(1,1){28}}
\curve(180,377, 197, 363, 213, 356)
\curve(177,356,  192, 357, 208, 349)

        \put(183,349){\circle*{4}}
              \put(213,356){\circle*{4}}
       \put(177,356){\circle*{4}}
       \put(208,349){\circle*{4}}
       \put(180,377){\circle*{6}}
       \put(211,377){\circle*{4}}

\put(170,379){$1$}
\put(214,379){$2$}
\put(217,351){$1$}
\put(210,338){$3$}
\put(175,338){$2$}
\put(167, 350){$3$}
        
        \curve(-15,335,  -45,250)
        \curve(-5,335,  5,250)
        \curve(5,335, 60,250)
        \curve(15,335, 115,250)
        
        \put(-57,295){$(1,3)$}
        \put(-26, 290){$(2,3)$}
        \put(26, 256){$(3,2)$}
        \put(52, 273){$(3,1)$}

        \put(41, 325){$(1,2)$}
        \put(65, 301){$(1,3)$}
        \put(93, 306){$(3,1)$}
        \put(122, 328){$(2,1)$}
        
        \put(86, 286){$(1,2)$}
        \put(110, 264){$(3,2)$}
        \put(188, 290){$(2,3)$}
        \put(222, 295){$(2,1)$}

        \curve(80,335, -35, 250)
        \curve(90,335,  10, 250)
        \curve(100,335,  170, 250)
        \curve(110,335,  230, 250)
        
        \curve(175, 335, 70,250) 
        \curve(185, 335, 125, 250) 
        \curve(195, 335, 180, 250)
        \curve(205, 335, 240, 250)

\put(65,220){\circle{100}}
\curve(81,232, 78, 221, 83, 211)
\curve(50,232, 56, 218, 53,204)
\curve(47,211, 62, 213, 78,204)

        \put(53,204){\circle*{4}}
              \put(83,211){\circle*{4}}
       \put(47,211){\circle*{4}}
       \put(78,204){\circle*{4}}
       \put(50,233){\circle*{6}}
       \put(81,232){\circle*{4}}

\put(40, 235){$1$}
\put(80, 235){$2$}
\put(80,195){$3$}
\put(87,207){$2$}
\put(45,195){$1$}
\put(38,207){$3$}

\put(125,220){\circle{100}}
\curve(110,232, 127, 217, 143,211)
\curve(107,211, 114,211, 113,204)
\curve(141,232, 135, 218, 138,204)

        \put(113,204){\circle*{4}}
              \put(143,211){\circle*{4}}
       \put(107,211){\circle*{4}}
       \put(138,204){\circle*{4}}
       \put(110,233){\circle*{6}}
       \put(141,232){\circle*{4}}

\put(100, 235){$1$}
\put(140, 235){$2$}
\put(140,195){$2$}
\put(147,207){$1$}
\put(105,195){$3$}
\put(98,207){$3$}

\put(5,220){\circle{100}}
\curve(-13,211, 4, 216,  21, 232)
\curve(23,211, 17, 210, 18, 204)
\curve(-10, 232, -4, 218,  -7, 204)

        \put(-7,204){\circle*{4}}
              \put(23,211){\circle*{4}}
       \put(-13,211){\circle*{4}}
       \put(18,204){\circle*{4}}
       \put(-10,233){\circle*{6}}
       \put(21,232){\circle*{4}}

\put(-20,235){$1$}
\put(20,235){$2$}
\put(20,195){$3$}
\put(27,207){$3$}
\put(-15,195){$1$}
\put(-22,207){$2$}

\put(-55,220){\circle{100}}
\curve(-73,211, -67, 221, -70,232)
\curve(-67,204, -51, 214, -37,211)
\curve(-39,232, -45, 218, -42,204)

        \put(-67,204){\circle*{4}}
              \put(-37,211){\circle*{4}}
       \put(-73,211){\circle*{4}}
       \put(-42,204){\circle*{4}}
       \put(-70,233){\circle*{6}}
       \put(-39,232){\circle*{4}}

\put(-80,235){$1$}
\put(-40,235){$2$}
\put(-40,195){$2$}
\put(-34,207){$3$}
\put(-75,195){$3$}
\put(-82,207){$1$}

\put(185,220){\circle{100}}
\curve(170,232, 187, 217,  203,211)
\curve(173,204, 185, 207, 198,204)
\curve(201,232,  184, 217, 167,211)

        \put(173,204){\circle*{4}}
              \put(203,211){\circle*{4}}
       \put(167,211){\circle*{4}}
       \put(198,204){\circle*{4}}
       \put(170,233){\circle*{6}}
       \put(201,232){\circle*{4}}

\put(160,235){$1$}
\put(200,235){$2$}
\put(200,195){$3$}
\put(207,207){$1$}
\put(165,195){$3$}
\put(158,207){$2$}

\put(245,220){\circle{100}}
\curve(230,232, 245, 228, 261,232)
\curve(227,211, 242, 211,  258,204)
\curve(233,204,  248, 211, 263,211)

        \put(233,204){\circle*{4}}
              \put(263,211){\circle*{4}}
       \put(227,211){\circle*{4}}
       \put(258,204){\circle*{4}}
       \put(230,232){\circle*{6}}
       \put(261,232){\circle*{4}}

\put(220,235){$1$}
\put(260,235){$1$}
\put(260,195){$3$}
\put(267,207){$2$}
\put(225,195){$2$}
\put(218,207){$3$}

\curve(-55, 190,  -35, 105)
\curve(-45, 190,  25, 105)
\put(-75, 150){$(2,3)$}
\put(-34, 177){$(3,2)$}

\curve(0, 190, -25, 105)
\curve(10, 190, 90, 105)
\put(-21, 115){$(1,2)$}
\put(5, 160){$(2,1)$}

\curve(60, 190, 35, 105)
\curve(70, 190, 145, 105)
\put(16,130){$(1,3)$}
\put(65,160){$(3,1)$}

\curve(120, 190, 100, 105)
\curve(130, 190, 155, 105)
\put(91,175){$(2,1)$}
\put(142,148){$(1,2)$}

\curve(180, 190, 45, 105)
\curve(190, 190, 210, 105)
\put(67,137){$(1,2)$}
\put(192,181){$(2,1)$}

\curve(240, 190, 115, 105)
\curve(250, 190, 220, 105)
\put(161,128){$(3,2)$}
\put(233,130){$(2,3)$}

\put(95,75){\circle{100}}
\curve(80,87, 95, 83, 111,87)
\curve(77,66, 83,66,  83,59)
\curve(108,59, 108,66, 113,66)

        \put(83,59){\circle*{4}}
              \put(113,66){\circle*{4}}
       \put(77,66){\circle*{4}}
       \put(108,59){\circle*{4}}
       \put(80,88){\circle*{6}}
       \put(111,87){\circle*{4}}

\put(70,90){$1$}
\put(113,90){$1$}
\put(117,63){$2$}
\put(109,49){$2$}
\put(75,49){$3$}
\put(67,63){$3$}

\put(155,75){\circle{100}}
\curve(140,87, 168,59)
\curve(171,87, 168, 76, 173,66)
\curve(143,59, 143,66, 137,66)

        \put(143,59){\circle*{4}}
              \put(173,66){\circle*{4}}
       \put(137,66){\circle*{4}}
       \put(168,59){\circle*{4}}
       \put(140,88){\circle*{6}}
       \put(171,87){\circle*{4}}

\put(130,90){$1$}
\put(173,90){$2$}
\put(177,63){$2$}
\put(169,49){$1$}
\put(137,49){$3$}
\put(127,63){$3$}

\put(35,75){\circle{100}}
\curve(20,87, 23, 76, 17,66)
\curve(23,59, 35, 63, 48,59)
\curve(53,66, 48, 76,  51,87)

        \put(23,59){\circle*{4}}
              \put(53,66){\circle*{4}}
       \put(17,66){\circle*{4}}
       \put(48,59){\circle*{4}}
       \put(20,88){\circle*{6}}
       \put(51,87){\circle*{4}}

\put(10,90){$1$}
\put(53,90){$2$}
\put(57,63){$2$}
\put(50,49){$3$}
\put(15,49){$3$}
\put(8,63){$1$}

\put(-25,75){\circle{100}}
\curve(-43,66,  -37, 76, -40,87)
\curve(-37,59, -9,87)
\curve(-7,66, -13,66, -12,59)

        \put(-37,59){\circle*{4}}
              \put(-7,66){\circle*{4}}
       \put(-43,66){\circle*{4}}
       \put(-12,59){\circle*{4}}
       \put(-40,88){\circle*{6}}
       \put(-9,87){\circle*{4}}

\put(-50,90){$1$}
\put(-7,90){$2$}
\put(-4,63){$3$}
\put(-10,49){$3$}
\put(-45,49){$2$}
\put(-52,63){$1$}

\put(215,75){\circle{100}}
\curve(200,87, 215, 83,  231,87)
\curve(203,59, 215, 62, 228,59)
\curve(233,66, 215, 69, 197,66)

        \put(203,59){\circle*{4}}
              \put(233,66){\circle*{4}}
       \put(197,66){\circle*{4}}
       \put(228,59){\circle*{4}}
       \put(200,87){\circle*{6}}
       \put(231,87){\circle*{4}}

\put(190,89){$1$}
\put(233,89){$1$}
\put(238,62){$2$}
\put(231,50){$3$}
\put(194,50){$3$}
\put(188,60){$2$}

\curve(75, -5, -20, 47)
\curve(85, -5, 35, 47)
\curve(95,-5, 95, 47)
\curve(105,-5, 150, 47)
\curve(115, -5, 200, 47)

\put(-3,22){$L$}
\put(40,22){$L$}
\put(83,22){$L$}
\put(118,22){$L$}
\put(177,22){$L$}

\put(92,-25){$\hat{0}$}
          
        \end{picture}
        \caption{$P_3$ and its edge labeling inducing a shelling} 
        \label{P3}
\end{figure}
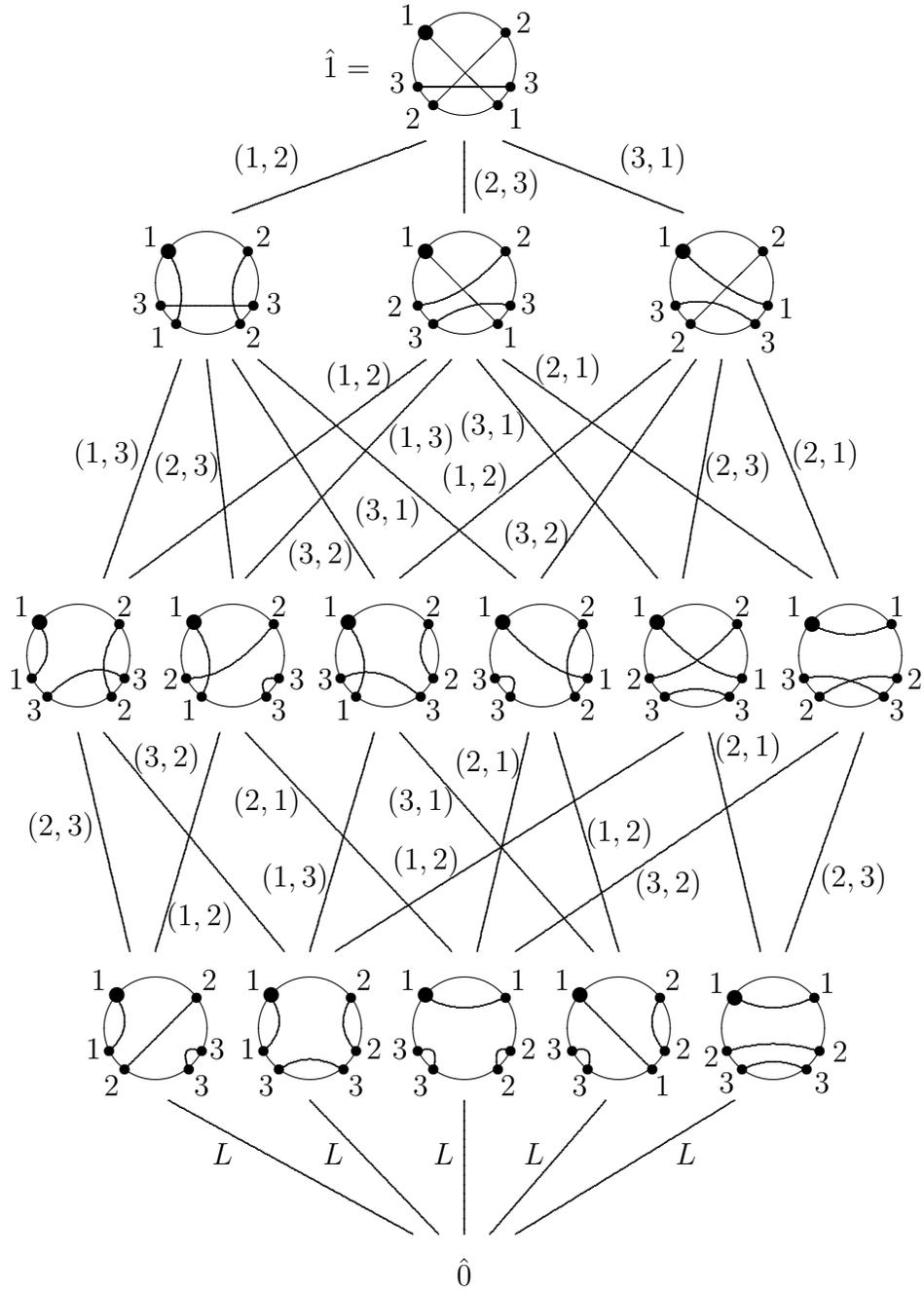

With this encoding in hand, we next describe the dual EC-labeling $\lambda $ for $P_n$ from \cite{elect}.  
Given a cover relation $u\lessdot v$ which replaces subsequence  $ijij$ in $v$ with $ijji$ in $u$, we assign the label %this the poset edge label 
$\lambda (u,v) = (i,j)$.    %We label the
Given a  cover relation $u\lessdot v$ which replaces $ijij$ in $v$ with $iijj$ in $u$, we assign the label $\lambda(u, v)=(j, i)$. % label this with poset edge label $(j,i)$.  
Label each cover relation which proceeds downward from a strand diagram not having any crossings to the element  $\hat{0}$ with the label $L$.  Totally order the label set as follows:
\begin{itemize}
\item  For $i<j$ we have $(i,j) < L < (j,i)$.
\item We make the label $(i_1,j_1)$ smaller than the label $(i_2,j_2)$ for $i_1<j_1$ and $i_2<j_2$ if and only if we either have $i_1<i_2$ or we have $i_1=i_2$ and $j_1<j_2$.  In other words, we order these labels lexicographically.
\item We make the label $(j_1,i_1)$ smaller than the label $(j_2,i_2)$ for $i_1<j_1$ and $i_2<j_2$ if and only if we either have $j_1>j_2$ or we have $j_1=j_2$ and $i_1>i_2$.  In other words, we order these labels in the reverse order to lexicographic order.
\end{itemize}
It is proven  in \cite{elect} that this edge labeling  $\lambda $ 
is a dual EC-labeling for the uncrossing order $P_n$  (i.e. with label sequences proceeding from top to bottom in $P_n$).

\begin{thm}\label{uncrossing-is-CL}
The uncrossing poset $P_n$  is  dual CL-shellable.
\end{thm}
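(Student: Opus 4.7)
The plan is to leverage our equivalence results by verifying that the dual EC-labeling $\lambda$ of $P_n$ constructed in \cite{elect} has the UE property on the dual poset $P_n^*$. Since $\lambda$ is an EC-labeling (hence a CC-labeling) of $P_n^*$, once UE is verified, Lemma \ref{UEimpliesSC} will give self-consistency, and then Theorem \ref{TFAE-theorem} (the implication $(5)\Rightarrow(3)$ applied to $P_n^*$) will yield a CL-labeling of $P_n^*$, which is precisely the dual CL-shellability of $P_n$. Equivalently, Corollary \ref{ConsistECimpliesCL} would produce this CL-labeling directly.

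To verify UE, I would fix $v \in P_n \setminus \{\hat{0}\}$ and analyze the cover relations upward from $v$ in $P_n^*$, i.e.\ the cover relations $u \lessdot v$ in $P_n$. Because $\lambda$ is in fact an edge labeling, no root-dependence needs to be tracked. If $v$ has rank $1$, then the only cover relation downward in $P_n$ goes to $\hat{0}$ with label $L$, and UE is immediate. If $v$ has rank at least $2$, every cover relation $u \lessdot v$ arises by uncrossing a single crossing pair of strands. For strand indices $i < j$ whose strands cross in $v$, the encoding of $v$ contains the subsequence $ijij$, and the two possible uncrossings turn this subsequence into $ijji$ (label $(i,j)$) or $iijj$ (label $(j,i)$).

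The central combinatorial fact I would use is that any two distinct strands in a strand diagram cross at most once, so the subsequence $ijij$ appears at most once in the encoding of $v$. Consequently, each label $(i,j)$, each label $(j,i)$, and the label $L$ appears on \emph{at most one} cover relation downward from $v$. Because every label occurs at most once, the smallest label on any cover relation upward from $v$ in $P_n^*$ automatically occurs uniquely, confirming that $\lambda$ satisfies UE on $P_n^*$.

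The main (and in fact quite modest) obstacle is simply this bookkeeping: confirming that each label uniquely encodes both the pair of strand indices being uncrossed and the direction of the uncrossing, and noting that the defined total order on labels (under which $(i,j) < L < (j,i)$ for $i<j$, with $(i,j)$ labels ordered lexicographically and $(j,i)$ labels in reverse-lexicographic order) plays no essential role beyond selecting which label happens to be smallest. Once UE is established, the chain of implications UE $\Rightarrow$ self-consistency $\Rightarrow$ CL-labeling of $P_n^*$, via Lemma \ref{UEimpliesSC} and Theorem \ref{TFAE-theorem}, delivers the dual CL-shellability of $P_n$.
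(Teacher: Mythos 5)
Your proposal is correct and takes essentially the same approach as the paper: verify the UE property for the dual EC-labeling from \cite{elect} by observing that the labels on cover relations downward from any fixed $v\in P_n$ are distinct (because each label $(i,j)$ or $(j,i)$ encodes both the pair of strands being uncrossed and the direction of the uncrossing, hence determines $u$ from $v$ uniquely), then invoke Lemma \ref{UEimpliesSC} and Corollary \ref{ConsistECimpliesCL}. Your explicit treatment of the rank-one case and the remark that two strands cross at most once are simply unpackings of the paper's observation that the label uniquely determines $u$ given $v$.
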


\begin{proof}
By Corollary ~\ref{ConsistECimpliesCL}, %{CL-iff-CC}, 
it suffices to show that the dual  EC-labeling $\lambda $  for $P_n$ given in \cite{elect} is self-consistent.  To this end,
we will  show that  the labels on the cover relations downward from a given element $v\in P_n$ are all distinct.  This will immediately  imply that  the resulting edge labeling for $P_n^*$ has the UE property.  Lemma ~\ref{UEimpliesSC} shows that the UE property implies that the labeling is self-consistent.

Given any fixed element $v\ne \hat{0}$ in $P_n$ and given a label $K$ on an edge down from $v$, we may determine the unique $u$ for which $\lambda(u, v)=K$. %one may uniquely determine  from the label  $K$  on a cover relation downward from $v$ what is the unique element $u$ such that  $\lambda (u,v) = K$.  
 This holds because  we can read from the label $K$  the names of the strands being uncrossed as well as which way they are being uncrossed. %as we proceed down a cover relation from the strand diagram  $v$ within   a saturated chain in $P_n$. 
 As these two pieces of information are the only data used to determine $u$ from $v$ when $u \lessdot v$, this means the labels on the cover relations downward from $v$ uniquely determine each $u$ and thus must be  distinct. %since the label uniquely determines $u$.  This %sort of label distinctness implies that the smallest label occurs only once, hence 
This implies this dual EC-labeling for $P_n$ has the UE property.  
%By Lemma \ref{UEimpliesSC}, this implies we have a self-consistent EC-labeling for $P_n^*$.  % and hence implies that the dual EC-labeling is self-consistent.  
%Since any EC-labeling is a CC-labeling, we have a self-consistent CC-labeling for the dual uncrossing order.  By  Theorem  ~\ref{ConsistCCimpliesCL}, this  labeling may be transformed into a dual CL-labeling for  $P_n$.
\end{proof}
% a class of posets which were not known to be CL-shellable but which indeed are CL-shellable by virtue of our equivalency of CL-shellability to CC-shellability.

\section{Further results, remarks, and open questions}\label{Consequences-section}

We conclude with some further connections between our work and results in the literature, some open questions, and assorted other remarks.

\subsection{Classical EL-labelings and CL-labelings with the UE property}\label{CL-UE}

In light of the role of self-consistency % (defined in Section \ref{graosection}) 
in constructing a GRAO or a CL-labeling  from a CC-labeling and  also given the fact that self-consistency  is implied by the  %seemingly more readily checkable  
UE property (see Lemma \ref{UEimpliesSC}),  it is natural to ask how readily checkable  the UE property is.   We already proved that every CL-labeling (and hence every EL-labeling) is self-consistent in Lemma \ref{CLhaveUE}.  We now show how  easy it is % many of the most well-known CL-labelings and EL-labelings indeed have the UE property and the
 to verify the UE property directly  for many of the classic examples of EL-labelings and CL-labelings.  This together with the proof in 
 Section \ref{CC-UE} of the UE property for the EC-labeling for dual uncrossing orders from \cite{elect} may give readers some sense of how the UE property may well be readily  
 verifiable for many of 
the sorts of CC-labelings and EC-labelings that  people are likely to construct for posets of interest in the future.   

\begin{prop}
Let $L$ be a geometric lattice and let $a_1,\dots ,a_t$ be any total order on the atoms of $L$. For each $x\in L$, denote by $A(x)$ the set of atoms $a\in L$ satisfying $a\le x$.   
Then the EL-labeling $\lambda $ for  geometric lattices which  labels each cover relation $u\lessdot v$ with the smallest atom in $A(v)\setminus A(u)$ satisfies the UE property.
\end{prop}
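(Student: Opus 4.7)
The plan is to observe that $\lambda$ is an edge labeling (not a genuine chain-edge labeling), so it suffices to verify the following for every $u \in L$: the smallest label occurring among cover relations $u \lessdot w$ appears on only one such cover. In fact, I will prove the stronger statement that \emph{every} label appearing on a cover upward from $u$ appears on only one such cover, which immediately yields the UE property.

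The key ingredient is the defining property of geometric lattices: if $a$ is an atom and $a \not\le u$, then $u \vee a$ covers $u$. Suppose $u \lessdot w_1$ and $u \lessdot w_2$ both receive the label $a_i$, meaning $a_i$ is the smallest atom in $A(w_1) \setminus A(u)$ and also the smallest atom in $A(w_2) \setminus A(u)$. In particular, $a_i \le w_1$, $a_i \le w_2$, and $a_i \not\le u$. Since $u \le w_k$ and $a_i \le w_k$ for $k = 1, 2$, we get $u \vee a_i \le w_k$. The geometric lattice property guarantees $u \lessdot u \vee a_i$, and combined with the cover relation $u \lessdot w_k$, this forces $u \vee a_i = w_k$ for $k = 1, 2$. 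Therefore $w_1 = u \vee a_i = w_2$.

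This shows that each label on any cover relation upward from $u$ determines the upper endpoint of that cover relation uniquely. In particular the smallest such label appears on exactly one cover relation upward from $u$. Since this conclusion does not depend on any choice of root (the labeling is an edge labeling), the UE property holds for every rooted interval $[u,v]_r$ in $L$.

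There is no significant obstacle here; the argument is a one-step application of the geometric lattice covering property. The only thing worth noting is that the argument does not use the choice of total ordering $a_1, \dots, a_t$ on the atoms at all beyond the fact that it is a total order, so the UE property holds for every choice of atom ordering used to define this classical EL-labeling.
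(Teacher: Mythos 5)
Your proof is correct and takes essentially the same approach as the paper: both observe that $v = u\vee a_i$ so the upper endpoint of a cover relation upward from $u$ is determined by its label, hence all such labels are distinct. Your version simply fills in the detail (via the geometric-lattice covering property and the squeeze $u < u\vee a_i \le w_k = w_k$) of why $v = u \vee a_i$, a step the paper asserts without elaboration.
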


\begin{proof}
Notice for each $u\in L$ and any cover relation $u\lessdot v$  % upward  from $u$ 
with edge label $a_i$ that we must have % this  implies 
$v = u\vee a_i$.  In particular, $v$ may be recovered from $u$ and 
$\lambda (u,v)$, implying that the labels on the cover relations upward from $u$ are all distinct from each other.  Thus, the smallest such label can only occur once, as needed.
\end{proof}

\begin{prop}
Let $L$ be a finite, distributive lattice. Consider the EL-labeling obtained by first interpreting $L$ as the poset $J(P)$ of order ideals in $P$ ordered by containment and then labeling the cover 
relation $I\lessdot I\cup \{ x\} $ with the label $x$, using any linear extension on $P$ to order the set of edge labels.  Then this EL-labeling $\lambda $ satisfies the UE property.
\end{prop}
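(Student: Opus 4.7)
The plan is to observe that this EL-labeling has a particularly strong form of the UE property: not just the smallest label, but \emph{every} label on a cover relation upward from a fixed element $I$ within any interval $[I,J]$ is unique.

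First, I would recall the structure of cover relations in $J(P)$: for an order ideal $I \in J(P)$, the elements covering $I$ are exactly those of the form $I \cup \{x\}$, where $x$ ranges over the minimal elements of $P \setminus I$. Thus, given an interval $[I,J] \subseteq J(P)$, the cover relations upward from $I$ that remain in $[I,J]$ are precisely the $I \lessdot I \cup \{x\}$ with $x$ a minimal element of $J \setminus I$. Under the labeling $\lambda$, the label on $I \lessdot I \cup \{x\}$ is exactly $x$.

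Next, I would note that if $I \lessdot I \cup \{x\}$ and $I \lessdot I \cup \{x'\}$ are two distinct cover relations upward from $I$, then $x \neq x'$ (since the cover relations are determined by the added element). Consequently $\lambda(I, I \cup \{x\}) = x \neq x' = \lambda(I, I \cup \{x'\})$, so the labels on cover relations upward from $I$ within any interval $[I,J]$ are pairwise distinct.

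The conclusion is then immediate: in any interval $[I,J]$, the set of labels on cover relations upward from $I$ is a set of distinct elements of $P$, so its minimum (with respect to the chosen linear extension) is attained by a unique cover relation. Hence $\lambda$ satisfies the UE property. There is no real obstacle here; the crux is simply recognizing that the labeling is \emph{injective} on the set of cover relations out of each fixed $I$, which is a direct consequence of the fact that in a distributive lattice the element covering $I$ is determined by the single element being adjoined.
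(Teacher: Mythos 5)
Your proof is correct and takes essentially the same approach as the paper: both arguments show that the labels on the cover relations upward from a fixed element $I$ are pairwise distinct (since the cover $I \cup \{x\}$, and hence the label $x$, is determined by the added element), from which the UE property follows immediately.
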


\begin{proof}
Again the proof is based on observing for the cover relations upward from $u$  that any element $v\in L$ covering $u$  may be determined from  knowing $u$ and $\lambda (u,v)$.  This  implies that the edge labels  on cover relations upward from a fixed element $u$ are all distinct.  Since the smallest such label only occurs once, this implies  the UE property.
\end{proof}

Next we consider an example in which some %not every 
labels upward from a poset element $u$ occur %only once, 
more than once, but  where the smallest upward from $u$ % does 
occurs only once:

\begin{prop}
Let   $\Pi_n$ be the partition lattice, namely  the lattice of  set partitions of $\{ 1,2,\dots ,n\} $ ordered by refinement.  That is, $\Pi_n$ has $\pi \le \tau $ if and only if 
$\pi $ is a refinement 
of $\tau $ or, in other words, each block of $\pi $ is contained in a block of $\tau $.  
Consider the EL-labeling $\lambda $  of $\Pi_n$  given by 
$$\lambda (\pi,\tau) = \max (\min B_1,\min B_2)$$ where $B_1$ and $B_2$ are the two blocks of $\pi$ being merged to obtain $\tau$. 
%by merging blocks $B_1$ and $B_2$ of $u$.  
Then $\lambda $ has the UE property.
\end{prop}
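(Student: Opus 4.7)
The plan is to fix an arbitrary $u \in \Pi_n$ and explicitly identify the smallest label on a cover relation upward from $u$, showing that it is achieved by exactly one such cover. Because $\lambda$ is an edge labeling (not merely a chain-edge labeling), no choice of root needs to be considered; it suffices to verify the UE property element by element.

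Let $u$ have blocks $B_1, B_2, \ldots, B_k$, and reorder them so that $\min B_1 < \min B_2 < \cdots < \min B_k$. (Since the minima of distinct blocks are distinct integers, this ordering is unambiguous.) Every cover relation $u \lessdot v$ corresponds to merging exactly two blocks $B_i, B_j$ with $i \ne j$, and by the definition of $\lambda$ its label is $\max(\min B_i, \min B_j)$. Assuming $i < j$ in our indexing, this label equals $\min B_j$. Thus the set of labels on cover relations upward from $u$ is $\{\min B_j : 2 \le j \le k\}$, where each value $\min B_j$ for a fixed $j \ge 2$ is attained by exactly those covers that merge $B_j$ with some $B_i$ for $i < j$.

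The smallest element of this set is $\min B_2$, and the key observation is that $\min B_2$ can only arise from merging $B_j$ with $B_i$ where $i < j = 2$, forcing $i = 1$. Hence the single cover relation obtained by merging $B_1$ with $B_2$ is the unique one carrying the smallest label $\min B_2$. This is exactly the UE property from Definition \ref{ueprop}.

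I do not anticipate any obstacle here: the argument is essentially a one-line observation once the blocks are sorted by minimum. The only mild subtlety worth flagging is that since $\lambda$ is an honest edge labeling, the label on a cover $u \lessdot v$ does not depend on a root $r$, so the UE property collapses to the statement that for each $u$ the smallest label on an upward cover is attained uniquely; the verification above handles this directly without any induction or case analysis.
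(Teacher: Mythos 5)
Your proof is correct and follows essentially the same approach as the paper's: sort the blocks by their minima, observe that the smallest achievable label upward from $u$ is $\min B_2$, and note that it can only arise from merging $B_1$ with $B_2$. The extra explicit justification that merging $B_i$ and $B_j$ with $i<j$ yields label $\min B_j$ is a welcome clarification but does not change the argument's structure.
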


\begin{proof}
%In this case, there may be repetition in the labels upward from $u$, but we will show that there is no repetition in the smallest such label.  To see this, o
Order % the smallest elements of the various blocks in a set partition $u$ from smallest to largest.  Now order 
the  %corresponding
 blocks $B_1,\dots ,B_r$ of  a set partition $\pi$ %in this manner, i.e. where 
 in such a way  
 that the smallest element of $B_i$ is smaller than the smallest element of $B_j$ for  $i<j$.    By definition of $\lambda $, the smallest edge label  that is achievable 
 on any cover relation upward from $u$
is $\min B_2$.  The only way this label arises is  by merging blocks $B_1$ and $B_2$.  In particular, this smallest label only occurs on one cover relation upward from 
$\pi$.
% only occurs on the cover relation $u\lessdot v$ where $v$ is obtained from $u$ by merging the blocks $B_1,B_2$ having the overall two  smallest minimal elements.
%  This cover relation has label $\min B_2$, and one may see by construction no other cover relation upward from $u$ may have this label or a smaller label. 
 This completes the proof of the UE property for this  EL-labeling of  $\Pi_n$. %in this example.
\end{proof}

%\commentplh{In response to referee, we can say that we now review the labeling before turning to the proof, so as to make this more self-contained.}  
%Below we need to either state what is the labeling of Bj\"orner and Wachs in the next proof  more precisely or follow the referee's alternative suggestion of saying the next proposition may be easily derived from ... \cite{BW82}..I think the issue is that the proof was written sloppily and would be hard to follow without going to \cite{BW82}...so I made an attempt to fix  it.}

%\commentph{Should we first describe the CL-labeling of Bj\"orner and Wachs carefully, so then the statement of the proposition is self-contained in that it does not rely on stuff in the proof of the proposition and/or in Bj\"orner-Wachs?  I'm not at all sure we should do this, but perhaps.}

Next we will verify  the UE property for  the  chain-edge labeling  $\lambda $
described next  for  the dual poset to the Bruhat order of any finite Coxeter group $W$.  
Bj\"orner and Wachs
introduced this chain-edge labeling and proved it was a CL-labeling in \cite{BW82}.   In order  to work with 
the dual poset to Bruhat order, 
we will speak of maximal chains proceeding from top to bottom in Bruhat order, allowing the label on a cover relation $u\lessdot  v$ % in Bruhat order 
to  depend on $u,v$ and the choice of saturated chain downward from $\hat{1}$ to $v$.  We call  each saturated chain  from $\hat{1}$ to $v$
a co-root.   We sometimes denote a cover relation $u\lessdot v$ by $v\rightarrow u$ below to highlight the fact that we are proceeding down each  cover
relation.  %,  since it is dual to what we  typically call a root.  

%\commentph{Perhaps it could help to use the notation $s_{i_{j_1}}\cdots s_{i_{j_{l(v)}}}$ below  
%for our subexpression of $s_{i_1}\cdots s_{i_d}$ that is a reduced expression 
%for $v$, so then the label is the index $j_r$ of the letter being deleted to obtained a reduced expression for $u$, which is not to be confused with $r$.  I'm not sure if this is clearer than our usage of $t_1\cdots t_d$ below, but it's an alternative approach to this that might be preferable.}

%Let  $w_0$ denote  the longest element of $W$.  
Choose a  reduced expression $R_{w_0}=s_{1}\cdots s_{d}$  for the longest element, denoted $w_0$, in $W$.   
 Label each cover relation $w_0\rightarrow v$ %downward from $w_0$
by the  position $j$  of the  unique letter  $s_j$ that may be deleted from 
$R_{w_0}$ %=s_{1}\cdots s_{d}$
 to obtain a reduced expression  for $v$.  Let $R_{v,r} = s_{1}\cdots s_{j-1}\hat{s}_{j}s_{j+1}\cdots s_{d}$  be this reduced expression for $v$, with $r$ denoting 
 the co-root downward from $\hat{1}$ to  $v$.
 % that is allowed to depend on the choice of co-root $r$. 
%\commentph{ Denote by $R(v,r)$ this reduced expression for $v$ where we also keep track of the original positions that the remaining letters
%each had within $R_{w_0}$; more specifically, l}
%Let $R(v,r)=t_{i_1}\cdots t_{i_d}$ where $t_{i_j}=e$ and $t_{i_k}=s_{i_k}$  for $k\ne j$.  
Continue  proceeding  down a saturated chain from $\hat{1}$  in Bruhat order,  labeling each downward cover relation 
$y \rightarrow x $  that we encounter  in turn % \commentph{
as follows.   Assume  inductively that we have chosen a co-root $r$ downward from $\hat{1}$ to  $y\in W$, and assume that  $y$ and $r$ together have been used to
specify  a subexpression $R_{y,r}=s_{i_1}\cdots s_{i_k}$ of $R_{w_0}$ that is a reduced expression for $y$. The set $\{ i_1,\dots ,i_k\} $ of indices in $R_{y,r}$ 
indicates the
positions of the letters in $R_{w_0}$ that appear  in $R_{y,r}$;   the co-root  $r$ uniquely specifies this set of indices.  Label the cover relation
downward from $y$ to  $x$ for our given choice of co-root $r$ with the position $i_l$ within $R_{w_0}$ of the unique letter $s_{i_l}$ that may 
be deleted from $R_{y,r}$ to obtain a reduced expression   for $x$. % that we denote by 
%$R_{x,r'}$.  % \commentgs{Something seems wrong with this next sentence starting at "thereby", but I'm not sure what it's supposed to say.} 
Denote by   $R_{x,r'}$ this reduced expression
$s_{i_1}\cdots s_{i_{l-1}}\hat{s}_{i_l}s_{i_{l+1}}\cdots s_{i_k}$  %be this reduced expression
 for $x$ that is determined by $x$ and $r'$ where % the data 
%consisting of  $x$ together 
%with
$r'$ is  the  co-root  %given by obtained by letting  
$r' := r\cup \{ x \} $.  
%That is, 
 %\commentph{I'm still working on the next bit here -- it probably should still be written better.}
%let $\lambda (y,x;r) = i_j$ in this case.
%  be  
% the position within  $R_{w_0}$ % $R(y,r)$
%   of the unique  letter that may be deleted from  $R_{y,r}$ to  obtain a reduced expression for $x$.
   %, specifically using as the label 
 % the position that this letter being 
% deleted had in the original word $R_{w_0}$; l
% Let $R_{x,r'}$ be this resulting  reduced expression for $x$ which may also depend on the choice of co-root $r'$; here we use $r'$  to 
% keep  track for each  letter of $R_{x,r'}$  what position it had appeared in within $R_{w_0}$.  

As an example, let  $W$ be the symmetric group $S_3$ generated by simple reflections $a=(1,2)$ and $b=(2,3)$.  The cover relation $aba \rightarrow ba$ is labeled 1 while the cover relation $ba \rightarrow a$ proceeding further down this maximal chain is labeled 2; the latter label reflects the fact that $b$ is the second letter in 
 $aba$ even though it is the first letter in $ba$.  The cover relation $a \rightarrow e$ is either labeled 1 or 3, depending on the choice of co-root 
 downward from $aba$ to $a$.

\begin{prop}\label{Bruhat-UE}
Consider the dual CL-labeling for Bruhat order for any finite Coxeter group $W$  that was given by  Bj\"orner and Wachs in \cite{BW82} and is recalled above.   This
 CL-labeling for the dual poset to Bruhat order  has the UE property.
\end{prop}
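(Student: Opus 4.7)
The plan is to verify UE directly by observing that, in the Bj\"orner--Wachs dual CL-labeling of Bruhat order, the labels on all cover relations going upward from any fixed element in the dual poset are pairwise distinct; distinctness is a much stronger condition than UE, so UE follows immediately.

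Recall the setup from \cite{BW82}: $W$ is a finite Coxeter group, $T$ is its set of reflections, and one fixes a reflection ordering on $T$. For a cover relation $u \lessdot v$ in Bruhat order, there is a unique $t \in T$ with $v = u t$, and this reflection, ordered via a reflection ordering possibly conjugated or twisted according to the root $r$ leading up to $u$, serves as the CL-label of the rooted cover. Passing to the dual poset $P^*$ preserves which reflection is attached to which cover, so the cover $u \gtrdot u t$ in Bruhat order (that is, the cover upward from $u$ in $P^*$) is again labeled by $t$ via the root-dependent ordering.

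Next I would invoke the standard fact from the theory of Bruhat order: if $u \gtrdot u_1$ and $u \gtrdot u_2$ are two distinct covers downward from $u$ in Bruhat order, with $u_i = u t_i$ for reflections $t_i \in T$, then $t_1 \neq t_2$, since $u_i$ is determined by $u$ and $t_i$. Consequently, for any rooted interval $[u, v]_r$ in $P^*$, the labels on the cover relations upward from $u$ in that rooted interval are pairwise distinct elements of $T$ (under whatever root-dependent total ordering BW82 impose on $T$). A fortiori, the minimum of these labels is attained on exactly one cover, which is precisely the UE property.

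The main potential obstacle is only notational: one needs to recall carefully the precise form in which \cite{BW82} twist the reflection ordering along a root so as to satisfy the CL chain-edge condition, and to verify that under this twist the label is still a function of the reflection $t$ alone (so that distinct reflections yield distinct labels for the fixed root $r$). Once that bookkeeping is in hand, the argument is immediate: distinctness of labels upward from $u$ is a direct consequence of distinctness of the associated reflections, and UE is an immediate weakening of this distinctness.
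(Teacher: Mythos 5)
Your overall strategy matches the paper's exactly: show that for any fixed $v$ and fixed root $r$ leading down to $v$, the labels on the cover relations downward from $v$ are pairwise distinct because the label together with $v$ and $r$ determines the covered element $u$ uniquely; distinctness then immediately gives UE. This is precisely what the paper does.

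However, you have misremembered the Bj\"orner--Wachs labeling itself. The dual CL-labeling in \cite{BW82} is not built from a reflection ordering on $T$. That construction (a fixed reflection ordering, possibly twisted along the root) is due to Dyer and appears in later treatments such as Bj\"orner--Brenti, but it is not the one in \cite{BW82}. The BW82 labeling fixes a reduced expression for $w_0$ and proceeds downward from $\hat{1}$, keeping track (via the root) of a reduced word for the current element obtained by deleting letters; a cover $v \gtrdot u$ is then labeled by the \emph{position} of the letter being deleted, so the labels are integers in $\{1,\dots,\ell(w_0)\}$, not reflections. The key point your argument needs is that this position label, together with $v$ and the root-determined reduced word for $v$, recovers $u$ uniquely --- which holds, by the strong exchange condition, because the inversion sequence associated to a reduced word consists of distinct reflections. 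So once the construction is corrected, your deduction goes through and coincides with the paper's. As written, though, the proof is arguing about a different labeling than the one named in the statement, and the phrase about a ``root-dependent total ordering'' on $T$ does not describe what \cite{BW82} actually does; this is a substantive mis-description rather than mere notational bookkeeping.
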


\begin{proof}
Let $s_{1}\cdots s_{d}$ be our chosen reduced expression $R_{w_0}$ for  $w_0$.  
%For any $v\in W$ and any choice of saturated chain downward from $w_0$ to $v$, let 
%$t_{1}\cdots t_{d}$ be the  expression for $v$  where $t_{j}=e$ for each $j$ such   that $s_{j}$  
%has been deleted from $R_{w_0}$ to obtain $R_v$ and where $t_{j} = s_{j} $ for each $j$ such that $s_{j}$ has not been deleted from $R_{w_0}$ to obtain 
%$R_v$.  
%\commentph{We need to be more precise about which reduced expression we are using to go down from $v$ to $u$, i.e. we pick one for the longest element and then use the fact that we have a chain-labeling and hence a series of deletions which specifies a reduced expression for $v$...}
%First we recall the dual CL-labeling from \cite{BW82}.   
%\commentph{What had been below had not been  a correct depiction of how the labeling works, since $v$ does not have all $d$ letters in it unless $v=w_0$.  
%This needed fixing, and I should still proofread this yet again.}
%Since t
The label on the cover relation  downward from an element $v$ to an element $u$ for a choice of co-root $r$   %covered by $v$
 is the position  $i_j$ within $s_1\cdots s_d$  of the unique  letter that may be  deleted from $R_{v,r} = s_{i_1}\cdots s_{i_k}$ 
 to obtain a reduced expression for $u$, as described above.
 % $t_{j}$ being changed from $s_{j}$ to $e$ within  $R(v,r) = t_{1}\dots t_{d}$,  %\commentph{Replace rest of this phrase with something more precise such as: ``the fixed reduced expression for $v$ that we obtain from our chosen reduced expression for $w_o(W)$ by the series of deletions'' carried out by the maximal chain in proceeding downward from $w_o(W)$ to $v$''} 
%a  fixed reduced expression for $v$, 
%using this same reduced expression for every cover relation downward from $v$, 
 Therefore the  label $i_j$  uniquely determines the reduced expression $R_{u,r'} = s_{i_1}\cdots s_{i_{j-1}}\hat{s}_{i_j}s_{i_{j+1}}\cdots s_{i_k}$  
 that  % = s_{i_1}\cdots s_{i_{j-1}}\hat{s}_{i_j}s_{i_{j+1}}\cdots s_{i_d}$ 
 we get by deleting $s_{i_j}$ from $R_{v,r}$, letting  
  %replacing  $s_{j}$ with $e$ in $R(v,r)$  and then deleting  all of the copies of the identity element $e$; 
 $r' = r\cup \{ u\} $.  %  denotes the new co-root obtained from
 %$r$ by appending $u$ to it. 
 One may determine  $u$ from $v, r,$ and $i_j$   as the unique Coxeter group element   for which $R_{u,r'}$ is an expression.  This shows that   $u$ is uniquely determined by  $v$, $r$ and  the label $\lambda_r (v,u)$. %  \commentph{ $\lambda (v,u;r)$}. 
  Thus, for any fixed $v$ and $r$, the value of the label on a cover relation  $v\rightarrow u$
 downward from $v$ 
 uniquely determines  this  element  $u$ that is covered by $v$.  
 Since  all cover relations downward from $v$  for fixed co-root $r$ lead to distinct elements, the labels on these cover relations  must  therefore be distinct. 
 This implies that  this  dual CL-labeling %In particular, distinctness of all labels  implies 
has the UE property.
\end{proof}

\subsection{Obtaining a self-consistent CC-labeling directly from a GRAO}\label{GRAO-SCCC-section}

Our next result  shows that one does not need to use the atom reordering process to convert a GRAO to an RAO in order to show that every finite bounded poset with a 
GRAO has a self-consistent CC-labeling. It shows that alternatively, one may obtain a self-consistent CC-labeling directly  from any GRAO.  % in a more direct manner.   

\begin{defn}
\label{compatible-def}
Any CC-labeling
$\lambda $  for a finite bounded poset $P$ gives rise to a total order on the maximal chains of $P$ by ordering the label sequences  for  the maximal chains 
lexicographically.  On the other hand, any   % \commentph{fix next bit of notation}
GRAO %  \sout{$\Gamma $}
 for this same poset  $P$ gives rise  to a total order on the maximal  chains of $P$ as we describe shortly.  Let $\Gamma $ be such
a GRAO.  
We say that $\Gamma $ is {\bf consistent} with $\lambda $ if $\Gamma $ and $\lambda $ both give rise to 
the same total order on the maximal  chains of $P$.  
  
  Now we describe  the total order on maximal chains  induced by $\Gamma $.
Given any two maximal  chains $m_1,m_2$  of $P$, let  $x$ (resp. $x'$)  be the lowest 
element in $m_1$ (resp. $m_2$)  that is not in 
$m_2$ (resp. $m_1$).   Let $u$ be the element in both $m_1$ and $m_2$ that is covered by both $x$ and $x'$.
% where both  $m_1$ and $m_2$.  
Denote by $r$ the unique  saturated chain from $\hat{0}$ to % some element 
$u$  %that is covered by both  $x$ and $x'$ 
with the property that $r$ is contained in both $m_1$ and $m_2$.  
In this case, $m_1$ comes earlier than $m_2$ in the total order on maximal chains induced by $\Gamma $ if and only if $x$ is an earlier atom than $x'$ in the GRAO for 
$[u,\hat{1}]_r$ induced by $\Gamma $.  
\end{defn}

\begin{thm}\label{second-proof} 
Any generalized recursive atom ordering of a finite bounded poset $P$ gives rise to a self-consistent CC-labeling which orders the maximal 
chains of $P$ consistently with the GRAO.
\end{thm}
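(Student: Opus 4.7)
The plan is to verify that the chain-edge labeling $\lambda$ described in Remark \ref{altgraothencc} is a CC-labeling with the UE property, whereupon self-consistency is automatic by Lemma \ref{UEimpliesSC}. Recall the construction: for each $u\in P$ and each root $r$ for $[u,\hat{1}]$, the GRAO orders the atoms of $[u,\hat{1}]_r$ as $a_1,\dots,a_{t(u,r)}$, and one sets $\lambda_r(u,a_i):=i$. The UE property is immediate, since the smallest label on any cover relation upward from $u$ that lies in $[u,v]_r$ is precisely the position (in the GRAO of $[u,\hat{1}]_r$) of the unique first atom of $[u,v]_r$. Distinctness of label sequences and the no-prefix condition on each rooted interval both follow from the observation that the pair $(u,r)$ together with a label $i$ uniquely determines the cover relation $u\lessdot a_i$: identical label sequences force identical chains, and a proper prefix would force termination strictly below $v$.

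The substantive step is showing that each rooted interval $[u,v]_r$ has a unique topologically ascending chain. For existence, I would define the greedy chain $u=x_0\lessdot x_1\lessdot\cdots\lessdot x_k=v$ by taking $x_{i+1}$ to be the first atom of $[x_i,v]$ in the GRAO (with the root obtained by extending $r$ through $x_1,\dots,x_i$). Then by definition $x_i$ is the first atom of $[x_{i-1},v]$, and since any atom of $[x_{i-1},\hat{1}]$ preceding $x_i$ in the GRAO fails to lie below $v$ (by Lemma \ref{GRAOrestricts}), no such atom can lie below $x_{i+1}\le v$. Hence $x_i$ is also the first atom of $[x_{i-1},x_{i+1}]$, which by UE makes $(\lambda(x_{i-1},x_i),\lambda(x_i,x_{i+1}))$ the lexicographically smallest label sequence on the rank-two interval $[x_{i-1},x_{i+1}]$: exactly the topological ascent condition.

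For uniqueness, let $y_0\lessdot\cdots\lessdot y_k=v$ be any topologically ascending chain with $y_0=u$. I will show by downward induction on $i$ that $y_i$ is the first atom of $[y_{i-1},v]$ in the GRAO. The base case $i=k-1$ follows from the topological ascent at $y_{k-2}\lessdot y_{k-1}\lessdot v$, which forces $y_{k-1}$ to be the first atom of $[y_{k-2},v]$. For the inductive step, apply Proposition \ref{firstatomprop} to the configuration $y_{i-1}\lessdot y_i<v$: either $y_i$ is already the first atom of $[y_{i-1},v]$ (the desired conclusion), or the first atom of $[y_i,v]$---equal to $y_{i+1}$ by the inductive hypothesis---lies above some atom $a''$ of $[y_{i-1},v]$ preceding $y_i$ in the GRAO. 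In the latter case $a''\in[y_{i-1},y_{i+1}]$ still precedes $y_i$ in the GRAO restricted to $[y_{i-1},y_{i+1}]$, contradicting the topological ascent at $y_{i-1}\lessdot y_i\lessdot y_{i+1}$. This forces $y_i=x_i$ for all $i$ and completes uniqueness.

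The remaining claims are light. Self-consistency follows from the UE property by Lemma \ref{UEimpliesSC}. For consistency of orderings, note that if two maximal chains of $P$ first differ at an element $u$ with common root $r$, continuing to distinct atoms $x,x'$ of $[u,\hat{1}]_r$, then the labels $\lambda_r(u,x)$ and $\lambda_r(u,x')$ are exactly the GRAO positions of $x$ and $x'$, so the lex order on label sequences agrees with the GRAO-induced order on maximal chains from Definition \ref{compatible-def}. The main obstacle throughout is the uniqueness of the topologically ascending chain; this is precisely where the structural content of a GRAO beyond its first-atom behavior---packaged in Proposition \ref{firstatomprop}---enters the argument essentially.
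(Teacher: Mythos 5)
Your proof is correct, and it organizes the argument differently from the paper. The paper establishes the CC condition by a minimal-counterexample argument: it supposes some rooted interval $[u,v]_r$ with a non-lex-earliest saturated chain $M$ having no topological descents, minimizes over the length of $[u,v]_r$, and derives a contradiction from that minimality, in two cases depending on whether the second element of $M$ is the first atom of $[u_1,v]_{r\cup u}$ (invoking GRAO condition (ii) and Lemma \ref{graoibequiv}). You instead treat existence and uniqueness of the topologically ascending chain separately: you construct the greedy chain (first atom of $[x_i,v]$ at each step), verify directly via the UE property that it is topologically ascending, and then prove uniqueness by downward induction using Proposition \ref{firstatomprop}. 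Since Proposition \ref{firstatomprop} is itself a packaging of Lemma \ref{graoibequiv} and GRAO condition (ii), the structural ingredients are the same; what differs is the scaffolding. Your decomposition has the modest advantage of making the existence half explicit — the paper's argument only shows that non-lex-earliest chains have descents, leaving it implicit that the lex-earliest chain is actually topologically ascending. One small wording point: your version correctly sets $\lambda_r(u,a_i)$ to the position of $a_i$ in the GRAO of $[u,\hat{1}]_r$ as in Remark \ref{altgraothencc}, which is what makes it a bona fide chain-edge labeling (the paper's in-proof phrasing ``atom of $[u,v]_r$'' would, read literally, make the label depend on data above $a_i$); this is the right choice and does not affect the topological ascent/descent data, but it is worth stating as you did.
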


\begin{proof}
Given a generalized recursive atom ordering (GRAO)  % (resp. GRAO)  
for  the poset $P$,  construct the following 
chain-edge labeling $\lambda $ for $P$.   %that is compatible % consistent (compatible????) 
%with the GRAO chain-atom ordering
In the rooted interval $[u,v]_r$, label the  cover relation  $u\lessdot a_i  $  %upward from $u$
 with the   %with labels $1,2,\dots ,n$, where $n$ is the number of atoms of $[u,v]_r$, as follows.  %, doing so 
%Assign the 
label $i$, % is assigned to the cover relation $u\lessdot a_i$ 
where $a_i$ is  %upward from $u$ to 
the $i$th atom in the GRAO % atom ordering  
of $[u,v]_r$ that is induced by the GRAO of $P$ as justified in Lemma ~\ref{GRAOrestricts}.% given by the GRAO.  

By construction, $\lambda $ has the property that distinct cover relations upward from $u$ within $[u,v]_r$ will have distinct labels on them, ensuring that this 
chain-edge labeling will have the UE property and hence be self-consistent.
Also by construction, $\lambda $ has the  following  pair of properties required for a CC-labeling (see Definition \ref{cc}): 

\begin{enumerate}
\item The saturated chains on a rooted interval $[u,v]_r$ have distinct label sequences.
\item  No saturated chain  in $[u,v]_r$
has as its label sequence a prefix of the label sequence of another saturated chain in $[u,v]_r$.
\end{enumerate}

%\commentgs{Should the C's in this next paragraph be M's? }\commentph{Not necessarily. 
%$M$ has a particular meaning which  is 
%different than what we need here.  However, it is not problematic to make this $M$, so I will do that}
For any rooted interval $[u,v]_r$, consider any saturated chain  $M$ 
from $u$ to $v$ other than the one % saturated chain from $u$ to $v$ 
 which comes earliest with respect to $\lambda$ %the GRAO 
 for the choice of root $r$.  
We will show that $M$ must have a topological descent with respect to the labeling $\lambda $,  %and the root $r$, 
thereby proving that $\lambda $ is  indeed a self-consistent CC-labeling.

Suppose there exists some $u<v$ in $P$ and some root $r$ with   a saturated chain  $M = u\lessdot u_1\lessdot u_2\lessdot \cdots \lessdot u_s \lessdot v$ in $[u,v]_r$  that is not the lexicographically earliest saturated chain of $[u,v]_r$ but  where $M$ nonetheless does not have any topological descents.  Among all such triples $u,v, r$, choose one for which the length of the longest saturated chain from $u$ to $v$ is as small as possible.  This choice 
ensures  % for every $r$ 
that   $u\lessdot u_1$ does not belong to the lexicographically smallest saturated chain in $[u,v]_r$, since 
otherwise, $M$ %restricted to $[u_1,v]_{r\cup u}$ would give a contradiction to our minimality assumption for the length of the longest saturated chain from $u$ to $v$ in choosing 
restricted to $[u_1,v]_{r\cup u}$ would have the same properties within an interval with strictly shorter longest  saturated chain, contradicting the sort of minimality $[u,v]_r$ was 
chosen to have.

We may deduce that $u_1$ does not come first amongst the atoms of $[u,v]_r$ in the %atom ordering given by the 
GRAO, as $M$ is not the lexicographically earliest saturated chain in $[u,v]_r$.
Consider first  the case where $u_2$ is the  first atom in the GRAO of $[u_1,v]_{r\cup u_1}$. % in the GRAO. %(resp. GRAO) 
 Condition (ii) in Definition \ref{grao} % \ref{graot} % (resp. GRAO)
then ensures that there  exists an atom $u_1'$ of $[u, v]_r$ %with $u\lessdot u_1'  < v$ 
where $u_1'$ comes earlier than $u_1$ in our GRAO  %(resp. GRAO)  
  and  that there exists some element $z$ such that $u_1 \lessdot z < v$  and 
$u_1'  <  z < v$.  Lemma %s \ref{GRAOrestricts}  
\ref{graoibequiv}  then  % condition (i) in the definition of GRAO-2 %  (but not in the definition of GRAO) 
guarantees  %for  the earliest atom above $u_1$ in the rooted  interval $[u_1,v]_{r\cup u}$, namely for $u_2$, 
that $u_2$ must satisfy $u_1''  <  u_2$ % <  v$ %y???$  
for some atom  $u_1''$ in $[u,v]_r$  with $u_1''$ coming earlier than $u_1$ in the GRAO.
This implies that  $(\lambda(u, u_1), \lambda(u_1, u_2))$ is a topological descent, contradicting our assumption that $M$ does not have any topological descents. 

Next consider the case where $u_2$ is not the smallest atom in the GRAO %-2 % (resp. GRAO) 
 of $[u_1,v]_{r\cup u_1}$.  Our minimality assumption 
 in choosing  $[u, v]_r$ above implies that 
 $M$ restricted to  $[u_1,v]_{r\cup u_1}$ must have a topological descent. % by virtue of  % this restricted $M$
  But this topological descent also gives a topological descent in $M$ itself, again giving  a contradiction.
  This completes the proof that  % we have justified that % the chain-labeling 
  $\lambda $ % derived from the GRAO
   is % indeed % meets the requirement needed to be 
a self-consistent CC-labeling.
\end{proof} 

\begin{rmk}
The key difference between this construction and our use of a GRAO to construct an RAO (and thereby a CL-labeling and hence a self-consistent CC-labeling) is that this construction we have just given goes directly from a 
GRAO to a self-consistent CC-labeling without needing to invoke the atom reordering process or construct an RAO as an intermediate step.
\end{rmk}

%\commentph{I just moved the Bj\"orner-Wachs  from  here to the end of section 2.}

\subsection{Dual CC-shellability of face lattices of $d$-complexes}\label{d-complexes}

Bj\"orner proved in  \cite{Bj84}  that  shellability  of  a $d$-complex
is equivalent to  dual CL-shellability of its  face lattice.  

Recall that a  \textbf{d-complex} is a polyhedral complex whose  maximal faces  are all  of dimension $d$ for a fixed $d$.   The  \textbf{face lattice} $L(\Delta )$ of  a
$d$-complex $\Delta $   is the partial order on its cells in which one cell is less than or equal to another if the former is in the closure of the latter, with a unique minimal element
$\hat{0}$ representing the empty cell and with an additional maximal element denoted by $\hat{1} $ appended if there is not a unique maximal cell  in $\Delta $.

\begin{prop}\label{topol-d-complex}
Shellability of a $d$-complex $\Delta $  is equivalent to $L(\Delta )$ admitting a self-consistent dual CC-labeling.  This is also equivalent to 
$L(\Delta )$ admitting a self-consistent dual TCL-labeling. % for its face lattice.  
\end{prop}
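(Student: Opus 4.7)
The plan is to derive this proposition as an essentially immediate corollary of Bj\"orner's theorem from \cite{Bj84} combined with Theorem \ref{TFAE-theorem} applied to the dual poset $L(\Delta)^*$. No new combinatorial machinery should be needed.

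First, I would invoke Bj\"orner's result from \cite{Bj84}, which states that shellability of a $d$-complex $\Delta$ is equivalent to $L(\Delta)$ being dual CL-shellable, i.e.\ to $L(\Delta)^*$ admitting a CL-labeling. Next, since $L(\Delta)$ is the face lattice of a finite polyhedral complex (with a maximal element adjoined if necessary to obtain boundedness), $L(\Delta)$ and hence $L(\Delta)^*$ is a finite bounded poset, so Theorem \ref{TFAE-theorem} applies to $L(\Delta)^*$. Reading off the equivalences $(3)\Leftrightarrow(5)\Leftrightarrow(7)$ of Theorem \ref{TFAE-theorem} for the poset $L(\Delta)^*$ yields: $L(\Delta)^*$ admits a CL-labeling if and only if $L(\Delta)^*$ admits a self-consistent CC-labeling if and only if $L(\Delta)^*$ admits a self-consistent topological CL-labeling.

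Finally, translating these three statements back through duality (a CC-labeling, resp.\ topological CL-labeling, of $L(\Delta)^*$ is by definition exactly a dual CC-labeling, resp.\ dual topological CL-labeling, of $L(\Delta)$, and self-consistency is a property of the chain-edge labeling itself that is preserved under this reinterpretation) converts the conditions on $L(\Delta)^*$ into the corresponding dual conditions on $L(\Delta)$. Chaining all of the resulting equivalences with Bj\"orner's equivalence then delivers both assertions of the proposition. The only point that requires any care at all is verifying that the finiteness and boundedness hypotheses of Theorem \ref{TFAE-theorem} are satisfied by $L(\Delta)^*$, and this is immediate from the definition of the augmented face poset; I do not anticipate any genuine obstacle.
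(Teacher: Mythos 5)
Your proposal matches the paper's proof: both invoke Bj\"orner's theorem from \cite{Bj84} and then apply Theorem~\ref{TFAE-theorem} to $L(\Delta)^*$, reading off the relevant equivalences and translating back through duality. The only difference is that you spell out the duality bookkeeping and the finiteness/boundedness check a bit more explicitly, which is harmless and correct.
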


\begin{proof}
We use Bj\"orner's result  from \cite{Bj84} that a $d$-complex $\Delta $ is shellable if and only if its face lattice  $L(\Delta )$
  is dual CL-shellable.  We combine this  with Theorem \ref{TFAE-theorem}  applied to $L(\Delta )^*$, % Corollary \ref{CL-iff-CC}  %applied to $L(\Delta)^*$ 
  giving the equivalence of CL-shellability to self-consistent  TCL-shellability and to self-consistent CC-shellability.
\end{proof}

As an application of % Proposition \ref{topol-d-complex} (or of 
Bj\"orner's aforementioned result from \cite{Bj84}, 
we may deduce directly  from Theorem \ref{uncrossing-is-CL} that the $d$-complexes having  the uncrossing posets  as their posets of closure relations are shellable complexes.    
%\commentph{Proposed alternative: These $d$-complexes have a unique maximal cell. 
 The recursive nature of the definition of shelling for polyhedral complexes in particular thereby yields  %means that in particular this yields 
shellability of the boundary of the unique maximal cell in  these $d$-complexes coming from uncrossing orders.
%\sout{The recursive nature of the definition of shelling for polyhedral complexes means that in particular this yields shellability of the boundary of the unique maximal cell in such a $d$-complex.  }
%\gs{The recursive nature of the definition of shelling for polyhedral complexes %means that 
%in particular 
%%%this 
%yields shellability of the boundary of the unique maximal cell in such a $d$-complex.  }

\subsection{A possible further direction}\label{further-section} 
The notions of CC-shellability and CL-shellability are generalized  in \cite{hersh}  from poset order complexes to more general balanced simplicial  complexes (and balanced 
boolean cell complexes).  % ~\cite{hersh}.  
The class of balanced  quotient complexes $\Delta (B^{2n})/S_2\wr S_n $ is proven % re are results proving specific classes of balanced complexes
 to be CC-shellable in \cite{hersh}  but is  not %without also  being 
 proven to be CL-shellable.  

\begin{qstn}
  It remains open whether  self-consistent CC-shellability is equivalent to CL-shellability in the more general setting of balanced complexes. 
  \end{qstn}
  
  This could be an interesting avenue for further study, with potential applications to quotient cell 
  complexes such as $\Delta (B^{2n})/S_2\wr S_n$ (see \cite{wreath} for non-shellability of
  $\Delta (B^{kn})/S_k\wr S_n$ for $k>2$ and see \cite{hersh} for CC-shellability when $k=2$).

\end{document}